\documentclass[a4paper,reqno]{amsart}
\pdfoutput=1
\usepackage[allcolors=blue,colorlinks]{hyperref}
\usepackage{amssymb,amsfonts}
\usepackage{mathrsfs}
\usepackage[numbers]{natbib}
\usepackage[all,cmtip]{xy}
\usepackage{color}

\newcommand{\LF}[2]{\langle #1,#2\rangle}
\newcommand{\DLF}[2]{\langle\!\langle #1,#2\rangle\!\rangle}
\newcommand{\vx}{\vec{x}}
\newcommand{\vc}{\vec{c}}
\newcommand{\vom}{\vec{\omega}}
\newcommand{\ra}{\rightarrow}
\newcommand{\lra}{\longrightarrow}
\newcommand{\ZZ}{\mathbb{Z}}

\newcommand{\QQ}{\mathbb{Q}}
\newcommand{\RR}{\mathbb{R}}
\newcommand{\Rr}{\mathcal{R}}
\newcommand{\bR}{\mathbf{R}}
\newcommand{\NN}{\mathbb{N}}
\newcommand{\AAA}{\mathbb{A}}
\newcommand{\XX}{\mathbb{X}}
\newcommand{\Cc}{\mathcal{C}}
\newcommand{\Qq}{\mathcal{Q}}
\newcommand{\Dd}{\mathcal{D}}
\newcommand{\bDd}{\mathcal{D}^{b}}
\newcommand{\Oo}{\mathcal{O}}
\newcommand{\Aa}{\mathcal{A}}
\newcommand{\Xx}{\mathcal{X}}

\newcommand{\Ll}{\mathcal{L}}
\newcommand{\Hh}{\mathcal{H}}
\newcommand{\QHh}{\vec{\mathcal{H}}}
\newcommand{\Tt}{\mathcal{T}}

\newcommand{\PP}{\mathbb{P}}
\newcommand{\Ff}{\mathcal{F}}
\newcommand{\Gg}{\mathcal{G}}
\newcommand{\Kk}{\mathcal{K}}

\newcommand{\vSs}{\mathscr{S}}
\newcommand{\Uu}{\mathcal{U}}
\newcommand{\Ww}{\mathcal{W}}
\newcommand{\Bb}{\mathcal{B}}
\newcommand{\Mm}{\mathcal{M}}
\newcommand{\bp}{\mathbf{p}}
\newcommand{\ovp}{\bar{p}}
\newcommand{\bt}{\mathbf{t}}
\newcommand{\bq}{\mathbf{q}}
\newcommand{\bL}{\mathbf{L}}

\newcommand{\spitz}[1]{\langle #1\rangle}
\newcommand{\rperp}[1]{#1^{\perp}}
\newcommand{\lperp}[1]{{}^{\perp}#1}
\newcommand{\rperpo}[1]{#1^{\perp_0}}
\newcommand{\lperpo}[1]{{}^{\perp_0}#1}
\newcommand{\rperpe}[1]{#1^{\perp_1}}
\newcommand{\lperpe}[1]{{}^{\perp_1}#1}

\newcommand{\eps}{\varepsilon}

\DeclareMathOperator{\Knull}{K_0}
\DeclareMathOperator{\D}{D}

\DeclareMathOperator{\Ker}{Ker}

\DeclareMathOperator{\vect}{vect}
\DeclareMathOperator{\coh}{coh}
\DeclareMathOperator{\Qcoh}{Qcoh}

\DeclareMathOperator{\Dis}{Dis}
\DeclareMathOperator{\PC}{PC}
\DeclareMathOperator{\Mod}{Mod}
\renewcommand{\mod}{\operatorname{mod}}
\DeclareMathOperator{\End}{End}
\DeclareMathOperator{\Hom}{Hom}
\DeclareMathOperator{\RHom}{\bR\Hom}
\DeclareMathOperator{\Ext}{Ext}
\DeclareMathOperator{\Gen}{Gen}

\DeclareMathOperator{\add}{add}
\DeclareMathOperator{\Add}{Add}

\DeclareMathOperator{\rad}{rad}
\DeclareMathOperator{\can}{can}
\DeclareMathOperator{\cc}{cc}
\DeclareMathOperator{\op}{op}

\DeclareMathOperator{\her}{her}
\DeclareMathOperator{\Pres}{Pres}

\DeclareMathOperator{\fp}{fp}
\DeclareMathOperator{\pd}{pd}
\DeclareMathOperator{\pdim}{pdim}
\DeclareMathOperator{\rk}{rk}
\DeclareMathOperator{\matring}{M}

\DeclareMathOperator{\soc}{soc}
\newcommand{\maxspec}{\operatorname{Max-Spec}}
\newcommand{\tube}{\ensuremath{\mathbf{t}}}

\newcommand{\Der}{\mathcal{D}}
\newcommand{\Derived}[1]{\Der(#1)}
\newcommand{\bDerived}[1]{\Der^b(#1)}

\newtheorem*{theorem-1}{Theorem~1}
\newtheorem*{theorem-2}{Theorem~2}
\newtheorem*{theorem-3}{Theorem~3}
\newtheorem*{theorem-4}{Theorem~4}
\newtheorem*{theorem-5}{Theorem~5}

\newtheorem{proposition}{Proposition}[section]
\newtheorem{theorem}[proposition]{Theorem}
\newtheorem{corollary}[proposition]{Corollary}

\newtheorem{lemma}[proposition]{Lemma}

\theoremstyle{definition}
\newtheorem{definition}[proposition]{Definition}
\newtheorem{notation}[proposition]{Notation}
\newtheorem{remark}[proposition]{Remark}
\newtheorem{numb}[proposition]{\!\!}

\newtheorem{example}[proposition]{Example}
\newtheorem*{definit}{Definition}

\numberwithin{equation}{section}
\setcounter{tocdepth}{1}

\begin{document}

\title[Large tilting sheaves over weighted curves]{Large tilting
  sheaves over weighted \\ noncommutative regular projective curves}
\author[L. Angeleri H\"ugel]{Lidia Angeleri H\"ugel}
\address{Universit\`a degli Studi di Verona\\
  Strada Le Grazie 15 - Ca' Vignal 2\\
  I - 37134 Verona\\
  Italy} \email{lidia.angeleri@univr.it} \author[D. Kussin]{Dirk
  Kussin} \address{{Graduate School of Mathematics \\ Nagoya
    University \\ Furo-cho \\ Chikusa-ku \\ Nagoya 464-8602 \\ Japan}}
\email{dirk@math.uni-paderborn.de}
\subjclass[2010]{Primary: 14A22, 18E15, Secondary: 14H45, 
   14H52, 16G70}
 \keywords{weighted noncommutative regular projective curve, tilting sheaf,
   resolving class,  Pr\"ufer sheaf, genus zero, domestic curve, tubular curve, elliptic
   curve, slope of quasicoherent sheaf} 
\begin{abstract}
  Let $\XX$ be a weighted noncommutative regular projective curve over
  a field $k$. The category $\Qcoh\XX$ of quasicoherent sheaves is a
  hereditary, locally noetherian Grothendieck category. We classify
  all tilting sheaves which have a non-coherent torsion subsheaf. In
  case of nonnegative orbifold Euler characteristic we classify all
  large (that is, non-coherent) tilting sheaves and the corresponding
  resolving classes. In particular we show that in the elliptic and in
  the tubular cases every large tilting sheaf has a well-defined
  slope.
\end{abstract}
\maketitle
\tableofcontents
\section{Introduction}
Tilting theory is a well-established technique to relate different
mathematical theories. An overview of its role in various areas of
mathematics can be found in~\cite{angeleri:happel:krause:2007}. One of
the first results along these lines, due to
Beilinson~\cite{beilinson:1978}, establishes a connection between
algebraic geometry and representation theory of finite dimensional
algebras. For instance, the projective line $\XX=\PP_1(k)$ over a
field $k$ turns out to be closely related with the Kronecker algebra
$\Lambda$, the path algebra of the quiver
$\xy\xymatrixcolsep{2pc}\xymatrix{ \bullet \ar@<0.5ex>[r]
  \ar@<-0.5ex>[r] & \bullet } \endxy$
over $k$. The connection is provided by the vector bundle
$T=\Oo\oplus\Oo(1)$, which is a tilting sheaf in $\coh\XX$ with
endomorphism ring $\Lambda$. The derived Hom-functor $\bR\Hom(T,-)$
then defines an equivalence between the derived categories of
$\Qcoh\XX$ and $\Mod\Lambda$. There are many more such examples, where
a noetherian tilting object $T$ in a triangulated category $\Dd$
provides an equivalence between $\Dd$ and the derived category of
$\End(T)$. We refer
to~\cite{geigle:lenzing:1987,hille:vandenbergh:2007,herschend:iyama:minamoto:oppermann:2014},
and
to~\cite{buan:marsh:reineke:reiten:todorov:2006,kussin:lenzing:meltzer:2013}
for the context of Calabi-Yau and cluster categories.

The weighted projective lines introduced
in~\cite{geigle:lenzing:1987}, and their generalizations
in~\cite{lenzing:1998}, called noncommutative curves of genus zero
in~\cite{kussin:2009}, provide the basic framework for the present
article. They are characterized by the existence of a tilting bundle
in the category of coherent sheaves $\coh\XX$. In this case the
corresponding (derived-equivalent) finite-dimensional algebras are the
(concealed-) canonical
algebras~\cite{ringel:1984,ringel:crawley-boevey:1990,lenzing:delapena:1999},
an important class of algebras in representation theory. A
particularly interesting and beautiful case is the so-called tubular
case. Here every indecomposable coherent sheaf is semistable (with
respect to the slope), and the semistable coherent sheaves of slope
$q$ form a family of tubes, for every $q$
(\cite{lenzing:meltzer:1992,kussin:2009}). This classification is akin
to Atiyah's classification of indecomposable vector bundles over an
elliptic curve~\cite{atiyah:1957}.

\medskip

The tilting objects mentioned so far are small in the sense that they
are noetherian objects, and that their endomorphism rings are
finite-dimensional algebras. For arbitrary rings $R$ there is the
notion of a (not necessarily noetherian or finitely generated) tilting
module $T$, which was extended to Grothendieck categories
in~\cite{colpi:1999, colpi:fuller:2007}.
\begin{definit} An object $T$ in a Grothendieck category $\QHh$ is
  called \emph{tilting} if $T$ generates precisely the objects in
  $\rperpe{T}=\{X\in\QHh\mid\Ext^1(T,X)=0\}$. The class $\rperpe{T}$
  is then called a \emph{tilting class}.
\end{definit}
Such ``large'' tilting objects in general do not produce derived
equivalences in the way mentioned above. But they yield recollements
of triangulated categories
\cite{bazzoni:2010,angeleri:koenig:liu:2011,chen:xi:2012}, still
providing a strong relationship between the derived categories
involved.

Large tilting modules occur frequently. For example, they arise when
looking for complements to partial tilting modules, or when computing
intersections of tilting classes given by classical tilting modules,
and they parametrize resolving subcategories of finitely presented
modules.  We refer to~\cite{angeleri:2013} for a survey on these
results.

Another reason for the interest in large tilting modules is their deep
connection with localization theory. This is best illustrated by the
example of a Dedekind domain $R$.  The tilting modules over $R$ are
parametrized by the subsets $V\subseteq\maxspec{R}$, and they arise
from localizations at sets of simple modules. More precisely, the
universal localization $R\hookrightarrow R_V$ at the simples supported
in $V$ yields the tilting module $T_V=R_V\oplus R_V/R$, and the set
$V=\emptyset$ corresponds to the regular module $R$, the only finitely
generated tilting module~\cite[Cor.~6.12]{angeleri:sanchez:2011}.

Similar results hold true in more general contexts. Over a commutative
noetherian ring, the tilting modules of projective dimension one
correspond to categorical localizations in the sense of
Gabriel~\cite{angeleri:pospisil:stovicek:trlifaj:2014}. Over a
hereditary ring, tilting modules parametrize universal
localizations~\cite{angeleri:marks:vitoria:2015}.

An interesting example is provided by the Kronecker algebra
$\Lambda$. Here we have a complete analogy to the Dedekind case if we
replace the maximal spectrum by the index set $\mathbb X$ of the
tubular family $\tube=\coprod_{x\in\mathbb X}\mathcal U_x$. Indeed,
the infinite dimensional tilting modules are parametrized by the
subsets $V\subseteq\mathbb X$, and they arise from localizations at
sets of simple regular modules. Again, the universal localization
$\Lambda\hookrightarrow\Lambda_V$ at the simple regular modules
supported in $V$ yields the tilting module
$T_V=\Lambda_V\oplus \Lambda_V/\Lambda$, and the set $V=\emptyset$
corresponds to the Lukas tilting module $\mathbf L$.

For arbitrary tame hereditary algebras, the classification of tilting
modules is more complicated due to the possible presence of finite
dimensional direct summands from non-homogeneous tubes. Infinite
dimensional tilting modules are parametrized by pairs $(B,V)$ where
$B$ is a so-called branch module, and $V$ is a subset of $\mathbb
X$.
The tilting module corresponding to $(B,V)$ has finite dimensional
part $B$ and an infinite dimensional part which is of the form $T_V$
inside a suitable subcategory, see~\cite{angeleri:sanchez:2013}.

\medskip

In the present paper, we tackle the problem of classifying large
tilting objects in hereditary Grothendieck categories. In particular,
we will consider the category $\Qcoh\XX$ of quasicoherent sheaves over
a weighted noncommutative regular projective curve $\XX$ over a field
$k$, in the sense of~\cite{kussin:2014}. We will discuss how the
results described above for tame hereditary algebras extend to this
more general setting.

As in module categories, a crucial role will be played by the following notion.
\begin{definit}
  Let $\QHh$ be a locally coherent Grothendieck category, and let
  $\Hh$ the class of finitely presented objects in $\QHh$. We call a
  class $\vSs\subseteq\Hh$ \emph{resolving} if it generates $\QHh$ and
  has the following closure properties: $\vSs$ is closed under
  extensions, direct summands, and $S'\in\vSs$ whenever
  $0\ra S'\ra S\ra S''\ra 0$ is exact with $S,\,S''\in\vSs$.
\end{definit}
We will use \cite{saorin:stovicek:2011} to show the following general existence result for  tilting
objects.
\begin{theorem-1}\label{theorem-1}{\rm [Theorem~\ref{thm:tilting-from-resolving}]}
  Let $\QHh$ be a locally coherent Grothendieck category and
  $\vSs\subseteq\Hh$ be resolving with $\pd(S)\leq 1$ for all
  $S\in\vSs$. Then there is a tilting object $T$ in $\QHh$ with
  $\rperpe{T}=\rperpe{\vSs}$.
\end{theorem-1}
Tilting classes as above of the form $\rperpe{T}=\rperpe{\vSs}$ for
some class $\vSs$ of finitely presented objects are said to be of
\emph{finite type}.

When $\QHh=\Qcoh\XX$, the category of finitely presented objects
$\Hh=\coh\XX$ is given by the coherent sheaves, and we have
\begin{theorem-2}\label{theorem-2}{\rm [Theorem~\ref{thm:class-correspondence}]}
  Let $\XX$ be a weighted noncommutative regular projective curve and
  $\QHh=\Qcoh\XX$. The assignment
  $\vSs\mapsto\rperpe{\vSs}$  defines a  bijection between
  \begin{itemize}
  \item   resolving classes $\vSs$ in $\Hh$, and
  \item   tilting classes $\rperpe{T}$  of finite type.
  \end{itemize}
\end{theorem-2}
In a module category, all tilting classes have finite type
by~\cite{bazzoni:herbera:2008}. In well behaved cases we can import
this result to our situation. The complexity of the category $\coh\XX$
of coherent sheaves over $\XX$ depends on the orbifold Euler
characteristic $\chi'_{orb}$.  If $\chi'_{orb}(\XX)>0$, then the
category $\coh\XX$ is of (tame) domestic type, and it is
derived-equivalent to the category $\mod H$ for a (finite-dimensional)
tame hereditary algebra $H$. In this case, all tilting classes have
finite type, and we obtain a complete classification of all large
tilting sheaves (Theorem~\ref{thm:full-classif-domestic}), which - not
surprisingly - is very similar to the classification
in~\cite{angeleri:sanchez:2013}.  But also in the tubular case, where
$\XX$ is weighted of orbifold Euler characteristic
$\chi'_{orb}(\XX)=0$, tilting classes turn out to have finite type.

\medskip

Before we discuss our classification results, let us give some details
on the tools we will employ.  Our starting point is given by the
following splitting property.
\begin{theorem-3}\label{theorem-3}{\rm [Theorem~\ref{thm:torsion-splitting}]}
  Let $T\in\Qcoh\XX$ be a sheaf with $\Ext^1(T,T)=0$. Then there is a
  split exact sequence $0\ra tT\ra T\ra T/tT\ra 0$ where
  $tT\subseteq T$ denotes the (largest) torsion subsheaf of $T$ and is
  a direct sum of finite length sheaves and of injective sheaves.
\end{theorem-3}
This result shows that the classification of large (= non-coherent)
tilting sheaves splits, roughly speaking, into two steps:
\begin{enumerate}
\item[(i)] The first is the classification of large tilting sheaves
  $T$ which are torsionfree (that is, with $tT=0$). This seems to be a
  very difficult problem in general, but it turns out that in the
  cases when $\XX$ is a noncommutative curve of genus zero which is of
  domestic or of tubular type, we get all these tilting sheaves with
  the help of Theorem~\ref{theorem-1}.
\item[(ii)] If, on the other hand, the torsion part $tT$ of a large
  tilting sheaf $T$ is non-zero, then it is quite straightforward to
  determine the shape of $tT$; it is a direct sum of Pr\"ufer sheaves
  and a certain so-called branch sheaf $B$, which is coherent. We can
  then apply perpendicular calculus to $B$, in order to reduce the
  problem to the case that $tT$ is a direct sum of Pr\"ufer sheaves,
  or to $tT=0$, which is the torsionfree case~(i).
\end{enumerate}
If Pr\"ufer sheaves occur in the torsion part, then the corresponding
torsionfree part is uniquely determined. This leads to the following,
general result:
\begin{theorem-4}\label{theorem-4}{\rm [Corollary~\ref{cor:nctp}]}
  Let $\XX$ be a weighted noncommutative regular projective curve. The
  tilting sheaves in $\Qcoh\XX$ which have a non-coherent torsion
  subsheaf are up to equivalence in bijective correspondence with
  pairs $(B,V)$, where $V$ is a non-empty subset of $\XX$ and $B$ is a
  branch sheaf.
\end{theorem-4}
We will see in Section~\ref{calculus} that the tilting sheaf
corresponding to $(B,V)$ has coherent part $B$ and a non-coherent part
$T_V$ formed inside a suitable subcategory by a construction which is
analogous to universal localization. In particular, the torsionfree
part of $T_V$ can be interpreted as a projective generator of the
quotient category obtained from $\Qcoh\XX$ by localization at the
simple objects supported in $V$.  Of course, there are also tilting
sheaves given by pairs $(B,V)$ with $V=\emptyset$. Here the
non-coherent part is the Lukas tilting sheaf inside a suitable
subcategory, that is, it is given by the resolving class formed by all
vector bundles. Altogether, the pairs $(B,V)$ correspond to Serre
subcategories of $\coh\XX$, and tilting sheaves are closely related
with Gabriel localization, like in the case of tilting modules over
commutative noetherian rings,
cf.~also~\cite[Sec.~5]{angeleri:kussin:2015}.

Let us now discuss the tubular
case. Following~\cite{reiten:ringel:2006}, we define for every
$w\in\RR\cup\{\infty\}$ the class $\Mm(w)$ of quasicoherent sheaves of
slope $w$. Reiten and Ringel have shown~\cite{reiten:ringel:2006} that
every indecomposable object has a well-defined slope. Our main result
is as follows.
\begin{theorem-5}\label{theorem-5}{\rm [Theorem~\ref{thm:tubular-full-classification}]}
  Let $\XX$ be of tubular type. Then every large tilting sheaf in
  $\Qcoh\XX$ has a well-defined slope $w$. If $w$ is irrational, then
  there is up to equivalence precisely one tilting sheaf of slope
  $w$. If $w$ is rational or $\infty$, then the large tilting sheaves
  of slope $w$ are classified like in the domestic case.
\end{theorem-5}
In Section~\ref{sec:elliptic}, we will briefly discuss the elliptic
case, where $\chi'_{orb}(\XX)=0$ and $\XX$ is non-weighted.  Some of
our main results will extend to this situation. In particular,
Theorem~\ref{thm:elliptic-tilting} will resemble the tubular case
described above.  As it turns out, this will be much easier than in
the (weighted) tubular case, using an Atiyah's~\cite{atiyah:1957} type
classification, namely, that all coherent sheaves lie in homogeneous
tubes.

When the orbifold Euler characteristic $\chi'_{orb}(\XX)\ge 0$, our
results also yield a classification of certain resolving classes in
$\coh\XX$ (see Corollaries~\ref{resdom} and~\ref{restub} and
Theorem~\ref{thm:elliptic-tilting}(5)).
If $\chi'_{orb}(\XX)<0$, then $\coh\XX$ is wild. We stress that
Theorem 4
also holds in this case, but we have not
attempted to classify the torsionfree large tilting sheaves in the
wild case.

There is one main difference to the module case. We recall that one of
the standard characterising properties of a tilting module
$T\in\Mod R$ is the existence of an exact sequence
$$0\ra R\ra T_0\ra T_1\ra 0$$ 
with $T_0,\,T_1\in\Add(T)$. As already mentioned, in $\Qcoh\XX$ we
have lack of a projective generator. When $\XX$ has genus zero, the replacement for the ring $R$
in our category is a tilting bundle $T_{\can}$ whose endomorphism ring
is a canonical algebra. It will be a non-trivial result that for every
large tilting sheaf $T$ we can always find such a tilting bundle
$T_{\can}$ and a short exact sequence $0\ra T_{\can}\ra T_0\ra T_1\ra
0$, even with $T_0,\,T_1\in\add(T)$. If $T$ has a non-coherent torsion
part, then we can even do this with $\Hom(T_1,T_0)=0$,
cf.~Theorem~\ref{thm:axiom-TS3-for-T_VB}.

Since noncommutative curves of genus zero are derived-equivalent to
canonical algebras in the sense of Ringel and
Crawley-Boevey~\cite{ringel:crawley-boevey:1990}, our results are
closely related to the classification of large tilting modules over
canonical algebras. The module case is treated more directly in
\cite{angeleri:kussin:2015}, where we also address the dual concept of cotilting modules
and the classification of pure-injective modules.

\section{Weighted noncommutative regular projective curves}
\label{sec:sheaves-and-modules}
In this section we collect some preliminaries on the category of
quasicoherent sheaves we are going to study and we introduce large
tilting sheaves.
\subsection*{The axioms}
We define the class of noncommutative curves, which we will study in
this paper, by the axioms (NC~1) to (NC~5) below; the condition (NC~6)
will follow from the others. Such a curve $\XX$ is given by a category
$\Hh$ which is regarded as the category $\coh\XX$ of \emph{coherent
  sheaves} over $\XX$. Formally it has the same main properties like a
category of coherent sheaves over a (commutative) regular projective
curve over a field $k$ (we refer to~\cite{kussin:2014}):
\begin{enumerate}
\item[(NC 1)] $\Hh$ is small, connected, abelian and every object in
  $\Hh$ is noetherian;
\item[(NC 2)] $\Hh$ is a $k$-category with finite-dimensional Hom- and
  Ext-spaces;
\item[(NC 3)] There is an autoequivalence $\tau$ on $\Hh$, called
  Auslander-Reiten translation, such that Serre
  duality $$\Ext^1_{\Hh}(X,Y) = \D\Hom_{\Hh}(Y,\tau X)$$ holds, where
  $\D=\Hom_k (-,k)$. (In particular $\Hh$ is then hereditary.)
\item[(NC 4)] $\Hh$ contains an object of infinite length.
\end{enumerate}
\subsection*{Splitting of coherent sheaves}
Assume $\Hh$ satisfies (NC~1) to (NC~4). The following rough picture
of the category $\Hh$ is very useful
(\cite[Prop.~1.1]{lenzing:reiten:2006}). Every indecomposable coherent
sheaf $E$ is either of finite length, or it is \emph{torsionfree},
that is, it does not contain any simple sheaf; in the latter case $E$
is also called a (vector) bundle. We thus write $\Hh=\Hh_+\vee\Hh_0$,
with $\Hh_+=\vect\XX$ the class of vector bundles and $\Hh_0$ the class of
sheaves of finite length; we have $\Hom(\Hh_0,\Hh_+)=0$. Decomposing
$\Hh_0$ in its connected components we
have $$\Hh_0=\coprod_{x\in\XX}\Uu_x,$$ where $\XX$ is an index set
(explaining the terminology $\Hh=\coh\XX$) and every $\Uu_x$ is a
connected uniserial length category.
\subsection*{Weighted noncommutative regular projective curves}
Assume that $\Hh$ is a $k$-category satisfying properties (NC~1) to
(NC~4) and the following additional condition.
\begin{enumerate}
\item[(NC 5)] $\XX$ consists of infinitely many points. 
\end{enumerate}
Then we call $\XX$ (or $\Hh$) a \emph{weighted} (or \emph{orbifold})
\emph{noncommutative regular projective curve} over $k$. ``Regular''
can be replaced by ``smooth'' if $k$ is a perfect field; we refer
to~\cite[Sec.~7]{kussin:2014}. We refer also
to~\cite{lenzing:reiten:2006}; we excluded certain degenerated cases
described therein by our additional axiom (NC~5). It is shown in~\cite{kussin:2014} that
a weighted noncommutative regular projective curve $\XX$ satisfies
automatically also the following condition. 
\begin{enumerate}
\item[(NC 6)] For all points $x\in\XX$ there are (up to isomorphism)
  precisely $p(x)<\infty$ simple objects in $\Uu_x$, and for almost
  all $x$ we have $p(x)=1$.
\end{enumerate}
We remark that the ``classical'' case $\Hh=\coh X$ with $X$ a regular
projective curve is included in this setting. This classical case is
extended into two directions: (1) curves with a noncommutative
function field $k(\XX)$ are allowed; here $k(\XX)$ is a skew field
which is finite dimensional over its centre, which has the form $k(X)$
for a regular projective curve $X$; (2) additionally (a finite number
of) weights are allowed.
\subsection*{Genus zero}
We consider also the following condition.
\begin{enumerate}
\item[(g-0)] $\Hh$ admits a tilting object. 
\end{enumerate}
It is shown in~\cite{lenzing:delapena:1999} that then $\Hh$ even
contains a torsionfree tilting object $T_{\can}$ whose endomorphism
algebra is canonical, in the sense
of~\cite{ringel:crawley-boevey:1990}. We call such a tilting object
canonical, or, by considering the full subcategory formed by the
indecomposable summands of $T_{\can}$, \emph{canonical
  configuration}. We recall that $T\in\Hh$ is called \emph{tilting},
if $\Ext^1(T,T)=0$, and if for all $X\in\Hh$ we have $X=0$ whenever
$\Hom(T,X)=0=\Ext^1(T,X)$. (This notion will be later generalized to
quasicoherent sheaves.) If $\Hh$ satisfies (NC~1) to (NC~4) and (g-0),
then we say that $\XX$ is a \emph{noncommutative curve of genus zero};
the condition (NC~5) is then automatically satisfied, we refer
to~\cite{kussin:2009}. The weighted projective lines, defined by
Geigle-Lenzing~\cite{geigle:lenzing:1987}, are special cases of
noncommutative curves of genus zero.
\subsection*{The Grothendieck group and the Euler form}
We write $[X]$ for the class of a coherent sheaf $X$ in the
Grothendieck group $\Knull(\Hh)$ of $\Hh$. The Grothendieck group is
equipped with the Euler form, which is defined on classes
of objects $X$, $Y$ in $\Hh$ by
  $$\LF{[X]}{[Y]}=\dim_k
  \Hom(X,Y)-\dim_k \Ext^1(X,Y).$$
  We will usually write $\LF{X}{Y}$, without the brackets.

  In case $\XX$ is of genus zero, $\Hh$ admits a tilting object whose
  endomorphism ring is a finite dimensional algebra, and thus the
  Grothendieck group $\Knull(\Hh)$ of $\Hh$ is finitely generated free
  abelian. (From this it follows more directly that every $\XX$ of
  genus zero satisfies (NC~6).)

  \bigskip \emph{In the following, if not otherwise specified, let
  $\Hh=\coh\XX$ be a weighted noncommutative regular projective curve.}
\subsection*{Homogeneous and exceptional tubes}
For every $x\in\XX$ the connected uniserial length categories are
called \emph{tubes}. The number $p(x)\geq 1$ is called the \emph{rank}
of the tube $\Uu_x$. Tubes of rank $1$ are called \emph{homogeneous},
those with $p(x)>1$ \emph{exceptional}. If $S_x$ is a simple sheaf in
$\Uu_x$, then $\Ext^1(S_x,S_x)\neq 0$ in the homogeneous case, and
$\Ext^1(S_x,S_x)=0$ in the exceptional case. More generally, a
coherent sheaf $E$ is called \emph{exceptional}, if $E$ is
indecomposable and $E$ has no self-extensions. It follows then by a
well-known argument of Happel and Ringel that $\End(E)$ is a skew
field. It is well-known and easy to see that the exceptional sheaves
in $\Uu_x$ are just those indecomposables of length $\leq p(x)-1$
(which exist only for $p(x)>1$). In particular there are only finitely
many exceptional sheaves of finite length.

If $p=p(x)$, then all simple sheaves in $\Uu_x$ are given (up to
isomorphism) by the Auslander-Reiten orbit $S_x=\tau^pS_x,\,\tau
S_x,\dots,\tau^{p-1}S_x$. 

For the terminology on wings and branches in exceptional tubes we refer to~\ref{wings}.

\subsection*{Non-weighted curves}
By a \emph{(non-weighted) noncommutative regular projective curve}
over the field $k$ we mean a category $\Hh=\coh\XX$ satisfying
axioms~(NC~1) to (NC~5), and additionally
\begin{list}{(NC 6')}{\setlength{\topsep}{2.2pt plus 2.2pt}}
\item $\Ext^1(S,S)\neq 0$ (equivalently: $\tau S\simeq S$)
  holds for all simple objects $S\in\Hh$.
\end{list}
This condition means that all tubes are homogeneous, that is, $p(x)=1$
for all $x\in\XX$; therefore these curves are also called homogeneous
in~\cite{kussin:2009}. For a detailed treatment of this setting we
refer to~\cite{kussin:2014}. We stress that thus, by abuse of
language, non-weighted curves are special cases of weighted curves.
\subsection*{Grothendieck categories with finiteness conditions}
Let us briefly recall some notions we will need in the sequel.  An
abelian category $\Aa$ is a \emph{Grothendieck category}, if it is
cocomplete, has a generator, and direct limits are exact. Every
Grothendieck category is also complete and has an injective
cogenerator. A Grothendieck category is called \emph{locally coherent}
(resp.\ \emph{locally noetherian}, resp.\ \emph{locally finite}) if it
admits a system of generators which are coherent (resp.\ noetherian,
resp.\ of finite length). In this case every object in $\Aa$ is a
direct limit of coherent (resp.\ noetherian, resp.\ finite length)
objects. If $\Aa$ is locally coherent then the coherent and the
finitely presented objects coincide, and the full subcategory
$\fp(\Aa)$ of finitely presented objects is abelian. For more details
on Grothendieck categories we refer
to~\cite{gabriel:1962,stenstroem:1975,herzog:1997,krause:1997}.
\subsection*{The Serre construction}
$\Hh=\coh\XX$ is a noncommutative noetherian projective scheme in the
sense of Artin-Zhang~\cite{artin:zhang:1994} and satisfies Serre's
theorem. This means that there is a positively $H$-graded (not
necessarily commutative) noetherian ring $R$ (with $(H,\leq)$ an
ordered abelian group of rank one) such that
\begin{equation}
  \label{eq:Serre-Thm-small}
  \Hh=\frac{\mod^H(R)}{\mod^H_0(R)},
\end{equation}
the quotient category of the category of finitely generated $H$-graded
modules modulo the Serre subcategory of those modules which are
finite-dimensional over $k$. (We refer
to~\cite[Prop.~6.2.1]{kussin:2009}, \cite{kussin:2014}
and~\cite[Lem.~IV.4.1]{reiten:vandenbergh:2002}.) With this
description we can define $\QHh=\Qcoh\XX$ as the quotient category
\begin{equation}
  \label{eq:Serre-Thm-large}
  \QHh=\frac{\Mod^H(R)}{\Mod^H_0(R)},
\end{equation}
where $\Mod^H_0(R)$ denotes the localizing subcategory of $\Mod^H(R)$
of all $H$-graded torsion, that is, locally finite-dimensional,
modules. 
The category
$\QHh$ is hereditary abelian, and a locally noetherian Grothendieck
category; every object in $\QHh$ is a direct limit of objects in $\Hh$
(therefore the symbol $\QHh$). 
The full abelian subcategory $\Hh$ consists of the coherent
(= finitely presented = noetherian) objects in $\QHh$, we also write
$\Hh=\fp(\QHh)$. Every indecomposable coherent sheaf has a local
endomorphism ring, and $\Hh$ is a Krull-Schmidt category. 

We remark that $\QHh$ can, by~\cite[II.~Thm.~1]{gabriel:1962}, also be
recovered from its subcategory $\Hh$ of noetherian objects as
the category of left-exact (covariant) $k$-functors from $\Hh^{\op}$ to
$\Mod(k)$.
\subsection*{Pr\"ufer sheaves}
Let $E$ be an indecomposable sheaf in a tube $\Uu_x$. By the
\emph{ray} starting in $E$ we mean the (infinite) sequence of all the
indecomposable sheaves in $\Uu_x$, which contain $E$ as a
subsheaf. The corresponding monomorphisms (inclusions) form a direct
system. If the socle of $E$ is the simple $S$, then the corresponding
direct limit of this system is the \emph{Pr\"ufer sheaf}
$S[\infty]$. In other words, $S[\infty]$ is the union of all
indecomposable sheaves of finite length containing $S$ (or
$E$). Dually we define \emph{corays} ending in $E$ as the sequence of
all indecomposable sheaves in $\Uu_x$ admitting $E$ as a factor.

If $S$ is a simple sheaf, then we denote by $S[n]$ the (unique)
indecomposable sheaf of length $n$ with socle $S$. Thus, the
collection $S[n]$ $(n\geq 1$) forms the ray starting in $S$, and their
union is $S[\infty]$. The Pr\"ufer sheaves form an important class of
indecomposable (we refer to~\cite{ringel:1975}), quasicoherent,
non-coherent sheaves.
\subsection*{Structure sheaf. Rank. Line bundles.} 
The structure sheaf $L\in\Hh$ is the sheaf associated with the graded
$R$-module $R$ under the equivalence~\eqref{eq:Serre-Thm-small}.  Let
$\Hh/\Hh_0$ be the quotient category of $\Hh$ modulo the Serre
category of sheaves of finite length, let $\pi\colon\Hh\ra\Hh/\Hh_0$
the quotient functor, which is exact. The endomorphism ring $k(\Hh)$
of the class of $\pi L$ in the quotient category $\Hh/\Hh_0$ is a skew
field, called the \emph{function field} of $\Hh$, and,
by~\cite[Prop.~3.4]{lenzing:reiten:2006}, we have
$\Hh/\Hh_0=\mod(k(\Hh))$ and $\QHh/\vec{\Hh_0}=\Mod(k(\Hh))$. The
$k(\Hh)$-dimension on $\Hh/\Hh_0$ induces the \emph{rank} function on
$\Hh$ by the formula $\rk(F):=\dim_{k(\Hh)}(\pi F)$. It is additive on
short exact sequences and thus induces a linear form
$\rk\colon\Knull(\Hh)\ra\ZZ$. The objects in $\Hh_0$ are just the
objects of rank zero, every non-zero vector bundle has a positive
rank,~\cite[Prop.~1.2]{lenzing:reiten:2006}. The vector bundles of
rank one are called \emph{line bundles}. The structure sheaf $L$ is a
so-called \emph{special} line bundle which means that for each
$x\in\XX$ there is (up to isomorphism) \emph{precisely one} simple
sheaf $S_x$ concentrated in $x$ with $\Ext^1(S_x,L)\neq 0$; we refer
also to~\cite[Prop.~7.8]{kussin:2014}. We will later require an
additional property for $L$, when treating the orbifold Euler
characteristic.

Furthermore,  every non-zero morphism from a line bundle $L'$ to a
vector bundle is a monomorphism, and $\End(L')$ is a skew
field,~\cite[Lem.~1.3]{lenzing:reiten:2006}. Every vector bundle has a
line bundle filtration,~\cite[Prop.~1.6]{lenzing:reiten:2006}.

More details on the role of line bundles in connection with canonical
configurations will be given in
Remark~\ref{rem:canonical-subconfiguration}
and~\ref{nr:canonical-sequences}.

\subsection*{The sheaf of rational functions}
The \emph{sheaf} $\Kk$ \emph{of rational functions} is the injective
envelope of the structure sheaf $L$ in the category $\QHh$. This is,
besides the Pr\"ufer sheaves, another very important quasicoherent,
non-coherent sheaf. It is torsionfree by~\cite[Lem.~14]{kussin:2000},
 and it is a generic
sheaf in the sense of~\cite{lenzing:1997}; its endomorphism ring is
the function field, $\End_{\QHh}(\Kk)\simeq\End_{\Hh/\Hh_0}(\pi
L)\simeq k(\Hh)$. 
\subsection*{The derived category}
Since $\QHh=\Qcoh\XX$ is a hereditary category, the derived
category
\begin{equation}
  \label{eq:def-derived-cat}
  \Dd=\Derived{\QHh}=\Add\biggl(\bigvee_{n\in\ZZ}\QHh[n]\biggr)
\end{equation}
is the repetitive category of $\QHh$. This means: Every object in $\Dd$
can be written as $\bigoplus_{i\in I}X_i [i]$ for a subset
$I\subseteq\ZZ$ and $X_i\in\QHh$ for all $i$, and for all objects
$X,\,Y\in\QHh$ and all integers $n,\,m$ we
have $$\Ext^{n-m}_{\QHh}(X,Y)=\Hom_{\Dd}(X[m],Y[n]).$$ Similarly, we
have the \emph{bounded} derived category $\bDd=\bDerived{\QHh}$, where
in~\eqref{eq:def-derived-cat}, $\Add$ is replaced by $\add$, and the
subset $I$ in $\ZZ$ as above is finite.
\subsection*{Generalized Serre duality} 
It follows easily from~\cite[Thm.~4.4]{krause:2005} that on $\QHh$ we
have Serre duality in the following sense. Let $\tau$ be the
Auslander-Reiten translation on $\Hh$ and $\tau^-$ its (quasi-)
inverse. For all $X\in\Hh$ and all $Y\in\QHh$ we
have
$$\D\Ext^1_{\QHh}(X,Y)=\Hom_{\QHh}(Y,\tau
X)\quad\text{and}\quad\Ext^1_{\QHh}(Y,X)=\D\Hom_{\QHh}(\tau^- X,Y),$$
with $\D$ denoting the duality $\Hom_k (-,k)$.
\begin{remark}\label{rem:pure-injective}
  \emph{Every coherent sheaf $F\in\Hh$ is pure-injective.} Indeed, if
  $\mu$ is a pure exact sequence in $\QHh$, then
  $\Hom_{\QHh}(\tau^-F,\mu)$ is exact. Since $\Ext^2_{\QHh}(-,-)$
  vanishes, this amounts to exactness of $\Ext^1_{\QHh}(\tau^-F,\mu)$,
  and hence of $\D\Ext^1_{\QHh}(\tau^-F,\mu)$, which in turn is
  equivalent to exactness of $\Hom_{\QHh}(\mu, F)$ by Serre
  duality. This gives the claim.
\end{remark}
\subsection*{Almost split sequences} Since the objects of $\Hh$ are
pure-injective, it follows directly from~\cite[Prop.~3.2]{krause:2005}
that the category $\Hh$ has almost split sequences which also satisfy
the almost split properties in the larger category $\QHh$; more
precisely: for every indecomposable $Z\in\Hh$ there is a non-split
short exact sequence $$0\ra X\stackrel{\alpha}\lra
Y\stackrel{\beta}\lra Z\ra 0$$ in $\Hh$ with $X=\tau Z$ indecomposable such
that for every object $Z'\in\QHh$ any morphism $Z'\ra Z$ that is not a
retraction factors through $\beta$ (and equivalently, for every object
$X'\in\QHh$ any morphism $X\ra X'$ that is not a section factors
through $\alpha$). 
\subsection*{Orbifold Euler characteristic and representation type}
For the details of all the notions and results in this subsection we
refer to~\cite{kussin:2014}. Let $\Hh$ be a weighted noncommutative
regular projective curve over $k$ with structure sheaf $L$.  The
centre of the function field $k(\Hh)$ is of the form $k(X)$, the
function field of a unique regular projective curve $X$ over $k$. We
call $X$ the \emph{centre curve} of $\Hh$. The dimension
$[k(\Hh):k(X)]$ is finite, a square number, which we denote by
$s(\Hh)^2$. The (closed) points of $X$ are in one-to-one
correspondence to the (closed) points of $\XX$. Let $\Oo=\Oo_X$ be the
structure sheaf of $X$. For every $x\in X$ we have the local rings
$(\Oo_x,\mathfrak{m}_x)$, and the residue class field
$k(x)=\Oo_x/\mathfrak{m}_x$. For all $x\in\XX$ there are the
ramification indices $e_{\tau}(x)\geq 1$. There exist only finitely
many points $x\in\XX$ with $p(x)e_{\tau}(x)>1$. By a result of Reiten
and van den Bergh~\cite{reiten:vandenbergh:2002} the category $\Hh$
can be realized as $\Hh=\coh(\Aa)$, the category of coherent
$\Aa$-modules, where $\Aa$ is a torsionfree coherent sheaf of
hereditary $\Oo$-orders in a finite-dimensional central simple
$k(X)$-algebra. Moreover, $\QHh=\Qcoh(\Aa)$. For our structure sheaf
$L$ we can and will always additionally assume, that it is an
indecomposable direct summand of $\Aa$.\medskip

Let $\ovp$ be the least common multiple of the weights $p(x)$. One
defines the \emph{average Euler form}
$\DLF{E}{F}=\sum_{j=0}^{\ovp-1}\LF{\tau^j E}{F}$, and then the
(normalized) \emph{orbifold Euler characteristic} of $\Hh$ by
$\chi'_{orb}(\XX)=\frac{1}{s(\Hh)^2\ovp^2}\DLF{L}{L}$. If $k$ is
perfect, one has a nice formula to compute the Euler characteristic: 
\begin{equation}
  \label{eq:orbifold-euler-char}
  \chi'_{orb}(\XX)=\chi(X)-\frac{1}{2}
  \sum_{x}\Bigl(1-\frac{1}{p(x)e_{\tau}(x)}\Bigr)[k(x):k].
\end{equation}
Here, $\chi(X)=\dim_k\Hom_X(\Oo,\Oo)-\dim_k\Ext^1_X(\Oo,\Oo)$ is the
Euler characteristic of the centre curve $X$. If $k$ is not perfect,
there is still a similar formula, we refer
to~\cite[Cor.~13.13]{kussin:2014}.\medskip

The orbifold Euler characteristic determines the representation type
of the category $\Hh=\coh\XX$ (see also
Theorem~\ref{thm:stability} below):
\begin{itemize}
\item  $\XX$ is domestic: $\chi'_{orb}(\XX)>0$
\item $\XX$ is elliptic: $\chi'_{orb}(\XX)=0$, and $\XX$ non-weighted
  ($\ovp=1$)
\item $\XX$ is tubular: $\chi'_{orb}(\XX)=0$, and $\XX$ properly
  weighted ($\ovp>1$)
\item $\XX$ is wild: $\chi'_{orb}(\XX)<0$.
\end{itemize}
In this paper we will prove some general results for all
representation types, and we will obtain finer and complete
classification results in the cases of nonnegative orbifold Euler
characteristic. \medskip

If $\XX$ is non-weighted with structure sheaf $L$, then we call the
number $g(\XX)=[\Ext^1(L,L):\End(L)]$ the \emph{genus} of $\XX$. The
condition $g(\XX)=0$ is equivalent to condition (g-0), the condition
$g(\XX)=1$ to the elliptic case.  \medskip

If $\XX$ is weighted then there is an underlying non-weighted curve
$\XX_{nw}$, which follows from~(NC~6) by perpendicular
calculus~\cite{geigle:lenzing:1991}.  Then $\Hh=\coh\XX$ contains a
tilting bundle (that is, $\Hh$ satisfies (g-0)) if and only if
$g(\XX_{nw})=0$. In other words, $\Hh$ satisfies (g-0) if the genus,
in the non-orbifold sense, is zero.
\subsection*{Degree and slope}
We define the \emph{degree} function $\deg\colon\Knull(\Hh)\ra\ZZ$, by
$$\deg(F)=\frac{1}{\kappa\varepsilon}\DLF{L}{F}-\frac{1}{\kappa\varepsilon}\DLF{L}{L}\rk(F),$$ 
with $\kappa=\dim_k\End(L)$ and $\varepsilon$ the positive integer such
that the resulting linear form $\Knull(\Hh)\ra\ZZ$ becomes
surjective. We have $\deg(L)=0$, and $\deg$ is positive and
$\tau$-invariant on sheaves of finite length. The \emph{slope} of a
non-zero coherent sheaf $F$ is defined as
$\mu(F)=\deg(F)/\rk(F)\in\widehat{\QQ}=\QQ\cup\{\infty\}$. Moreover,
$F$ is called \emph{stable} (\emph{semi-stable}, resp.) if for every
non-zero proper subsheaf $F'$ of $F$ we have $\mu(F')<\mu(F)$ (resp.\
$\mu(F')\leq\mu(F)$).

More details on these numerical invariants will be given in~\ref{nr:numerical-invariants}.

\subsection*{Stability} 
The stability notions are very useful for the classification of vector
bundles (we refer to~\cite[Prop.~5.5]{geigle:lenzing:1987},
\cite{lenzing:reiten:2006}, \cite[Prop.~8.1.6]{kussin:2009},
\cite{kussin:2014}):
\begin{theorem}\label{thm:stability}
  Let $\Hh=\coh\XX$ be a weighted noncommutative regular projective
  curve over $k$.
  \begin{enumerate}
  \item If $\chi'(\XX)>0$ (domestic type), then every indecomposable
    vector bundle is stable and exceptional. Moreover, $\coh\XX$
    admits a tilting bundle. \smallskip
  \item If $\chi'(\XX)=0$ (elliptic or tubular type), then every
    indecomposable coherent sheaf is semistable. For every
    $\alpha\in\widehat{\QQ}$ the full subcategory $\Cc_\alpha$ of
    $\Hh$ formed by the semistable sheaves of slope $\alpha$ is a
    non-trivial abelian unsiserial category whose connected components
    form stable tubes; the tubular family $\Cc_\alpha$ is parametrized
    again by a weighted noncommutative regular projective curve
    $\XX_\alpha$ over $k$ with $\chi'(\XX_{\alpha})=0$ and which is
    derived-equivalent to $\XX$.  Moreover, if $\XX$ is tubular (that
    is, $\ovp>1$), then $\coh\XX$ admits a tilting bundle. If $\XX$ is
    elliptic (that is, $\ovp=1$) then every indecomposable coherent
    sheaf is non-exceptional. \smallskip
  \item If $\chi'(\XX)<0$, then every Auslander-Reiten component in
    $\Hh_+=\vect\XX$ is of type $\ZZ A_{\infty}$, and $\Hh$ is of
    wild representation type. ($\coh\XX$ may or may not
    satisfy~(g-0).) \qed
  \end{enumerate}
\end{theorem}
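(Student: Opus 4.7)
The statement summarizes classical and recent results, and my plan is to assemble the arguments from the cited sources case by case, verifying them inside the axiomatic framework (NC~1)--(NC~5).

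For (1), the strategy is to first invoke that in domestic type the underlying non-weighted curve has genus zero, so $\XX$ admits a tilting bundle $T_{\can}$ and $\bDerived{\coh\XX}$ is triangle-equivalent to $\bDerived{\mod H}$ for a tame concealed-canonical algebra $H$. Under this equivalence $\vect\XX$ corresponds to the union of preprojective and preinjective modules, which are all exceptional, and stability follows from the strict monotonicity of the slope along $\tau$-orbits of such modules (equivalently, from the fact that the Coxeter transformation has no periodic vectors outside $\Knull(\Hh_0)$). I would cite Geigle--Lenzing for the weighted projective line case and Lenzing--Reiten for the noncommutative generalization.

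For (2), the central fact is that the Coxeter transformation on $\Knull(\Hh)$ preserves slopes (in the elliptic case it is unipotent; in the tubular case it permutes rational slopes). Semistability is established first: if an indecomposable $F$ were not semistable, its Harder--Narasimhan filtration would, after applying a sufficient power of $\tau$, split off its maximal-slope subsheaf via Serre duality, contradicting indecomposability. For the parametrization $\Cc_\alpha\simeq\coh\XX_\alpha$, I would construct an autoequivalence of $\bDd$ (a tilting telescope/tubular mutation in the tubular case, or tensoring with a line bundle in the elliptic case) carrying $\Cc_\alpha$ to $\Cc_\infty=\Hh_0$, and then observe that the latter, together with appropriate extending line bundles recovered from a shifted $t$-structure, again satisfies (NC~1)--(NC~5) with the same orbifold Euler characteristic. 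The distinction between tubular (admitting a tilting bundle) and elliptic (no exceptional indecomposables) is then read off the formula for $\chi'_{orb}$: in the elliptic case $\ovp=1$ and $\Ext^1(S,S)\neq 0$ for every simple $S$, and Serre duality then rules out exceptionality of any indecomposable. Key references are Lenzing--Meltzer, Atiyah, and \cite{kussin:2009, kussin:2014}.

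For (3), with $\chi'_{orb}(\XX)<0$ the Euler form on $\Knull(\Hh)$ is hyperbolic, so no indecomposable vector bundle can be $\tau$-periodic; each Auslander--Reiten component in $\vect\XX$ is then a stable translation quiver with Serre duality, hereditary, and aperiodic, which forces the shape $\ZZ A_{\infty}$ by the standard Riedtmann/Happel--Preiser--Ringel classification. Wild representation type is obtained by choosing a suitable exceptional sequence in $\coh\XX$ whose perpendicular category contains a full exact embedding of $\mod\Lambda$ for a wild hereditary algebra $\Lambda$. The hardest step I expect is the parametrization in~(2): proving that $\Cc_\alpha=\coh\XX_\alpha$ for a new weighted noncommutative regular projective curve of the same $\chi'_{orb}$ requires producing a canonical configuration inside $\Cc_\alpha$ and verifying all of (NC~1)--(NC~5), which in the tubular case rests on a delicate telescope construction of derived auto-equivalences and is where the bulk of the work in \cite{kussin:2009} is actually spent.
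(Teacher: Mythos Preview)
The paper does not supply its own proof of this theorem: it is stated with a trailing \qed and preceded by the references \cite[Prop.~5.5]{geigle:lenzing:1987}, \cite{lenzing:reiten:2006}, \cite[Prop.~8.1.6]{kussin:2009}, \cite{kussin:2014}, from which the result is imported wholesale. Your proposal is therefore not competing against an argument in the paper but reconstructing what those sources do, and on the whole your sketch is faithful to them and correct in outline.

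Two small imprecisions are worth flagging. In~(2), the phrase ``after applying a sufficient power of $\tau$'' is misleading: the point is simply that $\chi'_{orb}=0$ forces $\delta(\vom)=0$, so $\tau$ preserves slope, and a single application of Serre duality already gives $\Ext^1(F/F',F')=\D\Hom(F',\tau(F/F'))=0$ for the top Harder--Narasimhan piece $F'$, splitting the filtration. In~(3), ``hyperbolic Euler form'' is not the argument used in the sources; aperiodicity of $\tau$ on $\vect\XX$ is extracted from the fact that for $\chi'_{orb}<0$ the radical of the averaged Euler form contains no class of positive rank, and then the Happel--Preiser--Ringel/Riedtmann classification applies as you say. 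Your assessment that the parametrization $\Cc_\alpha\simeq\coh\XX_\alpha$ in~(2) is the hardest step is accurate: this is exactly where the interval-category and telescope machinery of \cite[Ch.~8]{kussin:2009} and \cite{kussin:2014} (and Lenzing--Meltzer in the weighted-projective-line case) does the real work.
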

\subsection*{Orthogonal and generated classes} Let $\Xx$ be a class of
objects in $\QHh$. We will use the following notation:
\begin{gather*}
  \rperpo{\Xx}=\{F\in\QHh\mid\Hom(\Xx,F)=0\},\quad
  \rperpe{\Xx}=\{F\in\QHh\mid\Ext^1(\Xx,F)=0\},\\
  \lperpo{\Xx}=\{F\in\QHh\mid\Hom(F,\Xx)=0\},\quad
  \lperpe{\Xx}=\{F\in\QHh\mid\Ext^1(F,\Xx)=0\},\\
    \rperp{\Xx}=\rperpo{\Xx}\cap\rperpe{\Xx},
    \quad \lperp{\Xx}=\lperpo{\Xx}\cap\lperpe{\Xx}.
\end{gather*}
Following~\cite{geigle:lenzing:1991} we call $\lperp{\Xx}$ (resp.\
$\rperp{\Xx}$) the \emph{left-perpendicular} (resp.\
\emph{right-per\-pen\-dicu\-lar}) category of $\Xx$. By $\Add(\Xx)$
(resp.\ $\add(\Xx)$) we denote the class of all direct summands of direct sums of
the form $\bigoplus_{i\in I}X_i$, where $I$ is any set (resp.\ finite
set) and $X_i\in\Xx$ for all $i$. By $\Gen(\Xx)$ we denote
the class of all objects $Y$ \emph{generated by} $\Xx$, that is, such
that there is an epimorphism $X\ra Y$ with $X\in\Add(\Xx)$.\medskip

Let $(I,\leq)$ be an ordered set and $\Xx_i$ classes of objects for
all $i\in I$, in any additive category.  We write
$\bigvee_{i\in I}\Xx_i$ for $\add(\bigcup_{i\in I}\Xx_i)$ if
additionally $\Hom(\Xx_j,\Xx_i)=0$ for all $i<j$ is satisfied. In
particular, notations like $\Xx_1\vee\Xx_2$ and
$\Xx_1\vee\Xx_2\vee\Xx_3$ make sense (where $1<2<3$).\medskip

The following induction technique will be very important.
\subsection*{Reduction of weights}
Let $S$ be an exceptional simple sheaf. In other words, $S$ lies on
the mouth of a tube, with index $x$, of rank $p(x)>1$. Then the right
perpendicular category $\rperp{S}$ is equivalent to $\Qcoh\XX'$,
where $\XX'$ is a curve such that the rank $p'\!(x)$ of the tube of
index $x$ is $p'\!(x)=p(x)-1$ and all other weights and all the numbers
$e_{\tau}(y)$ are preserved. We refer to~\cite{geigle:lenzing:1991}
for details. From the formula~\eqref{eq:orbifold-euler-char}
(and~\cite[Cor.~13.13]{kussin:2014}, which holds over any field) of
the orbifold Euler characteristic we see $\chi'(\XX')>\chi'(\XX)$, and
we conclude that $\XX'$ is of domestic type if $\XX$ is tubular or
domestic.
\subsection*{Tubular shifts}
If $x\in\XX$ is a point of weight $p(x)\geq 1$, then there is an
autoequivalence $\sigma_x$ of $\Hh$ (which extends to an
autoequivalence of $\QHh$), called the \emph{tubular shift} associated
with $x$. We refer to~\cite[Sec.~0.4]{kussin:2009} for more
details. We just recall that for every vector bundle $E$ there is a
universal exact sequence
\begin{equation}
  \label{eq:def-tubular-shift}
  0\ra E\ra\sigma_x(E)\ra E_x\ra 0,
\end{equation}
where $E_x=\bigoplus_{j=0}^{p(x)-1}\Ext^1(\tau^j S_x,E)\otimes\tau^j
S_x\in\Uu_x$ with the tensor product taken over the skew field 
$\End(S_x)$. We also
write $$\sigma_x(E)=E(x)\quad\text{and}\quad(\sigma_x)^n(E)=E(nx),$$
and we will use the more handy notation
$$E_x=\bigoplus_{j=0}^{p(x)-1}(\tau^j S_x)^{e(j,x,E)}$$
with the exponents given by the
multiplicities $$e(j,x,E)=[\Ext^1(\tau^j S_x,E):\End(S_x)].$$ In the
particular case when $E=L$ is the structure sheaf (which is a special
line bundle), and $S_x$ is such that $\Hom(L,S_x)\neq 0$, we have
$e(j,x,L)=e(x)$ for $j=p(x)-1$ and $=0$ otherwise.
\subsection*{Tilting sheaves}
Let $\QHh$ be a Grothendieck category, for instance $\QHh=\Qcoh\XX$.
\begin{definition}
  An object $T\in\QHh$ is called a \emph{tilting object} or
  \emph{tilting sheaf} if $\Gen(T)=\rperpe{T}$. Then $\Gen(T)$ is called
  the associated \emph{tilting class}.
\end{definition}
This definition is inspired by~\cite[Def.~2.3]{colpi:1999}, but we
dispense with the self-smallness assumption made there. In a module
category, we thus recover the definition of a \emph{tilting module}
(of projective dimension one) from~\cite{colpi:trlifaj:1995}.
\begin{lemma}[{\cite[Prop.~2.2]{colpi:1999}}]
  An object $T\in\QHh$ is tilting if and only if the following
  conditions are satisfied:
  \begin{enumerate}
    \item[(TS0)] $T$ has projective dimension  $\pd(T)\leq 1$.\smallskip
    \item[(TS1)] $\Ext^1(T,T^{(I)})=0$ for every cardinal
      $I$.\smallskip
    \item[(TS2)] $\rperp{T}=0$, that is: if $X\in\QHh$ satisfies
      $\Hom(T,X)=0=\Ext^1(T,X)$, then $X=0$.
    \end{enumerate}
\end{lemma}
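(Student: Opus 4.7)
My plan is a standard $(\Rightarrow)/(\Leftarrow)$ argument playing $\Gen(T)$ off against $\rperpe{T}$ via long exact sequences, in the spirit of Colpi's original proof. Assume first that $T$ is tilting, so $\Gen(T)=\rperpe{T}$; the three conditions follow in turn. Any copower $T^{(I)}$ is trivially generated by $T$, hence lies in $\Gen(T)=\rperpe{T}$, proving (TS1). If $X\in\rperp{T}$ then $\Ext^1(T,X)=0$ puts $X$ in $\Gen(T)$, while $\Hom(T,X)=0$ forbids a non-zero morphism from any copower of $T$ onto $X$, forcing $X=0$, which is (TS2). For (TS0) I embed an arbitrary $X\in\QHh$ in an injective object $E$ and consider $0\to X\to E\to E/X\to 0$. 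Every injective belongs to $\rperpe{T}=\Gen(T)$, and $\Gen(T)$ is automatically closed under epimorphic images, so $E/X\in\Gen(T)=\rperpe{T}$. Applying $\Hom(T,-)$ then yields $\Ext^2(T,X)=0$, and since $X$ was arbitrary, $\pd(T)\le 1$.

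For the converse I assume (TS0)--(TS2) and prove both inclusions. For $\Gen(T)\subseteq\rperpe{T}$, given $Y\in\Gen(T)$ with some epimorphism $T^{(I)}\to Y$ having kernel $K$, the fragment
$$\Ext^1(T,T^{(I)})\to\Ext^1(T,Y)\to\Ext^2(T,K)$$
has outer terms vanishing by (TS1) and (TS0) respectively. For the reverse inclusion I take $Y\in\rperpe{T}$ and form the canonical evaluation $T^{(\Hom(T,Y))}\to Y$ with image $Y_0\subseteq Y$ and cokernel $Y_1=Y/Y_0$. By construction every morphism $T\to Y$ factors through $Y_0$, so $\Hom(T,Y_1)=0$. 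Applying $\Hom(T,-)$ to $0\to Y_0\to Y\to Y_1\to 0$, and using both the hypothesis $\Ext^1(T,Y)=0$ and (TS0) in the form $\Ext^2(T,Y_0)=0$, I obtain $\Ext^1(T,Y_1)=0$. Thus $Y_1\in\rperp{T}$, so (TS2) forces $Y_1=0$ and $Y=Y_0\in\Gen(T)$.

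No serious obstacle arises; the whole argument is cohomological bookkeeping with two short exact sequences. The one subtle point worth underlining is the verification of (TS0) in the forward direction, which depends on two small but essential observations: that injectives sit inside $\rperpe{T}$ (immediate from the $\Ext^1$-vanishing) and that $\Gen(T)$, being the class of epimorphic images of copowers of $T$, is automatically closed under further epimorphic images. In our specific setting $\QHh=\Qcoh\XX$ the hypothesis (TS0) is in fact automatic because $\QHh$ is hereditary, but the lemma is phrased and proved for a general Grothendieck category.
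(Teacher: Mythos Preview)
Your proof is correct. Note, however, that the paper does not give its own proof of this lemma: it is stated with a citation to \cite[Prop.~2.2]{colpi:1999} and no argument is supplied. Your argument is precisely the standard one from Colpi's paper, so there is nothing to compare beyond observing that you have reproduced the cited proof faithfully.
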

We will mostly consider hereditary categories $\QHh$ where (TS0) is
automatically satisfied.  In case $\QHh=\Qcoh\XX$ with $\XX$ of genus
zero, we will also consider the following condition, where
$T_{\can}\in\Hh$ is a tilting bundle such that
$\End(T_{\can})=\Lambda$ is a canonical algebra.
\begin{enumerate}
\item[(TS3)] There are an autoequivalence $\sigma$ on $\Hh$ and an
  exact sequence $$0\ra\sigma(T_{\can})\ra T_0\ra T_1\ra 0$$ such that
  $\Add(T_0\oplus T_1)=\Add(T)$; if this can be realized with the
  additional property $\Hom(T_1,T_0)=0$, then we say that $T$
  satisfies condition (TS3+).
\end{enumerate}
Since $\sigma (T_{\can})$ is a tilting bundle, (TS3) implies (TS2). As
it will turn out, in case of genus zero, all tilting sheaves we
construct will satisfy (TS3), and some will even satisfy (TS3+), see
Example~\ref{ex:filtration}, Corollary~\ref{TS3tub}, and
Section~\ref{sec:TS3}. \medskip

\emph{Let $\QHh$ additionally be locally coherent with
  $\Hh=\fp(\QHh)$.}
\begin{lemma}\label{lem:porperties-tilting}
  Let $T\in\QHh$ be tilting.
  \begin{enumerate}
  \item $\Gen(T)=\Pres(T)$.
  \item $\rperpe{T}\cap\lperpe{(\rperpe{T})}=\Add(T)$. 
  \item If $X\in\Hh$ is coherent having a local endomorphism ring and
    $X\in\Add(T)$, then $X$ is a direct summand of $T$.
  \end{enumerate}
\end{lemma}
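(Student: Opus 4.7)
The overall strategy is to prove the three claims in order, using (1) as a technical input for (2), and exploiting coherence plus local endomorphism rings for (3).

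\emph{For part (1),} the inclusion $\Pres(T)\subseteq\Gen(T)$ is clear. For the converse, let $X\in\Gen(T)=\rperpe{T}$ and take the canonical evaluation $\eta\colon T^{(I)}\to X$ with $I=\Hom(T,X)$. Since $X$ is generated by $T$, $\eta$ is an epimorphism; by its very construction the induced map $\Hom(T,\eta)$ is (split) surjective. Setting $K=\Ker(\eta)$ and applying $\Hom(T,-)$ to the sequence $0\to K\to T^{(I)}\to X\to 0$, the long exact sequence together with $\Ext^1(T,T^{(I)})=0$ from (TS1) and the surjectivity of $\Hom(T,\eta)$ forces $\Ext^1(T,K)=0$. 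Hence $K\in\rperpe{T}=\Gen(T)$, and resolving $K$ by a further epimorphism $T^{(J)}\to K$ produces a presentation of $X$ by sums of copies of $T$.

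\emph{For part (2),} the inclusion $\Add(T)\subseteq\rperpe{T}\cap\lperpe{(\rperpe{T})}$ follows from (TS1) combined with the standard isomorphism $\Ext^1(T^{(I)},Y)\cong\prod_I\Ext^1(T,Y)$, which is available since $\QHh$ is Grothendieck and hence has enough injectives; these vanishings then descend to direct summands. For the reverse inclusion, suppose $X\in\rperpe{T}\cap\lperpe{(\rperpe{T})}$. By (1), $X$ fits into an exact sequence $0\to K\to T^{(I)}\to X\to 0$ with $K\in\rperpe{T}$. The hypothesis $X\in\lperpe{(\rperpe{T})}$ then gives $\Ext^1(X,K)=0$, so this sequence splits and $X$ is a direct summand of $T^{(I)}$, i.e.\ $X\in\Add(T)$.

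\emph{For part (3),} $X\in\Add(T)$ means there are maps $f\colon X\to T^{(I)}$ and $g\colon T^{(I)}\to X$ with $gf=1_X$. Since $X$ is coherent, $\Hom(X,-)$ commutes with direct limits, so $f$ factors through a finite subsum $T^n\hookrightarrow T^{(I)}$. Hence $X$ is already a direct summand of $T^n$, exhibited by maps $(f_1,\dots,f_n)\colon X\to T^n$ and $(g_1,\dots,g_n)\colon T^n\to X$ with $\sum_{i=1}^n g_i f_i=1_X$. Because $\End(X)$ is local, some $g_i f_i$ is a unit $u$, and then $f_i$ together with $u^{-1}g_i$ splits $X$ off $T$. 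The one point requiring attention is step (2), where one must use (1) to produce a presentation whose kernel already lies in $\rperpe{T}$; this is what makes the hypothesis $X\in\lperpe{(\rperpe{T})}$ bite and forces splitting. The rest is routine homological bookkeeping plus the standard local-ring trick.
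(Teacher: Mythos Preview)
Your proof is correct and follows essentially the same approach as the paper: part~(1) is precisely the Colpi--Trlifaj argument the paper cites, part~(2) is the ``easy consequence of~(1)'' via the splitting of the presentation sequence, and part~(3) is exactly the passage $\Add(T)\to\add(T)$ via coherence followed by the local-ring trick. You have simply made explicit the details the paper leaves to the reader.
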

\begin{proof}
  (1) The same proof as in~\cite[Lemma~1.2]{colpi:trlifaj:1995} works
  here.
  
  (2) Is an easy consequence of~(1).

  (3) Since $X$ is coherent, we get $X\in\add(T)$. Since $X$ has local
  endomorphism ring, the claim follows. 
\end{proof}
\begin{definition}
  Two tilting objects $T$, $T'\in\QHh$ are \emph{equivalent}, if they
  generate the same tilting class. This is equivalent to
  $\Add(T)=\Add(T')$. A tilting sheaf $T\in\QHh$ is called
  \emph{large} if it is not equivalent to a coherent tilting sheaf.
\end{definition}
\emph{For the rest of this section we assume that $\XX$ is of genus
  zero and $\QHh=\Qcoh\XX$ with a fixed special line bundle $L$.}
\subsection*{Tilting bundles and concealed-canonical algebras} 
We fix a tilting bundle $T_{\cc}\in\Hh$. Its endomorphism ring
$\Sigma$ is a concealed-canonical $k$-algebra. Every
concealed-canonical algebra arises in this way, we refer
to~\cite{lenzing:delapena:1999}. Especially for $T_{\cc}=T_{\can}$, a
so-called canonical configuration, we get a canonical algebra. We
remark that $T_{\cc}$ is in particular a noetherian tilting object in
$\QHh$. It is well-known that $T_{\cc}$ is a (compact) generator of
$\Dd$ inducing an equivalence
$$ \RHom_{\Dd}(T_{\cc},-)\colon\Derived{\Qcoh\XX}
\longrightarrow\Derived{\Mod\Sigma}$$
of triangulated categories (cf.~\cite[Prop.~1.5]{bondal:kapranov:1989}
and \cite[Thm.~8.5]{keller:2007}). Via this equivalence the module
category $\Mod\Sigma$ can be identified (like
in~\cite[Thm.~3.2]{lenzing:delapena:1997} and~\cite{lenzing:1997})
with the full subcategory $\Add(\Tt_{\cc}\vee\Ff_{\cc}[1])$ of $\Dd$,
where $(\Tt_{\cc},\Ff_{\cc})$ is the torsion pair in $\QHh$ given by
$\Tt_{\cc}=\Gen(T_{\cc})=\rperpe{{T_{\cc}}}$ and
$\Ff_{\cc}=\rperpo{{T_{\cc}}}$. This torsion pair induces a split
torsion pair $(\Qq,\Cc)=(\Ff_{\cc}[1],\Tt_{\cc})$ in
$\Mod\Sigma$. Moreover,
$\mod\Sigma=(\Tt_{\cc}\cap\Hh)\vee(\Ff_{\cc}\cap\Hh)[1]$.
\subsection*{Correspondences between tilting objects}
Following~\cite{bazzoni:herbera:2008}, we call a tilting sheaf
$T\in\QHh$ of \emph{finite type} if the tilting class $\rperpe{T}$ is
determined by a class of finitely presented objects $\vSs\subseteq\Hh$
such that $\rperpe{T}=\rperpe{\vSs}$. If $T$ is of finite type, then
$\vSs:=\lperpe{(\rperpe{T})}\cap\Hh$ is the largest such class. We are
now going to see that all tilting sheaves lying in $\Tt_{\cc}$ are of
finite type.

We call  an object $T$ in the triangulated category
$\bDd=\bDerived{\Qcoh\XX}$ a \emph{tilting complex} if the
following two conditions hold.
\begin{enumerate}
\item[(TC1)]
$\Hom_{\Dd}(T,T^{(I)}[n])=0$  for all cardinals $I$ and all $n\in\ZZ$,
$n\neq 0$.\smallskip 
\item[(TC2)] If $X\in\bDd$ satisfies $\Hom_{\Dd}(T,X[n])=0$ for all
  $n\in\ZZ$, then $X=0$.
\end{enumerate}
\begin{proposition}\label{prop:tilting-sheaf-complex-module}
  The following statements are equivalent for $T\in\Tt_{\cc}$ (viewed
  as a complex concentrated in degree zero).
  \begin{enumerate}
  \item[(1)] $T$ is a tilting sheaf in $\QHh$.
  \item[(2)] $T$ is a tilting complex in $\bDd$.
  \item[(3)] $T$ is a tilting module in $\Mod\Sigma$ (of projective
    dimension at most one).  
  \end{enumerate}
  In particular, every tilting sheaf $T\in\QHh$ lying in
  $\Tt_{\cc}$ is of finite type.
\end{proposition}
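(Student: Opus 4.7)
The plan is to pass through the derived equivalence $F=\RHom(T_{\cc},-)\colon\bDd\to\bDerived{\Mod\Sigma}$, which identifies $\Mod\Sigma$ with $\Add(\Tt_{\cc}\vee\Ff_{\cc}[1])$ inside $\bDd$. Since $T\in\Tt_{\cc}=\rperpe{T_{\cc}}$, the complex $F(T)$ is concentrated in degree zero and equals $M:=\Hom(T_{\cc},T)\in\Mod\Sigma$. As $\QHh$ is hereditary, (TS0) is automatic and every object of $\bDd$ splits as $\bigoplus_n H^n(-)[-n]$; hence the tilting-complex axioms (TC1), (TC2) for $T\in\bDd$ are equivalent to the tilting-sheaf axioms (TS1), (TS2) for $T\in\QHh$. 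Specifically, (TC1) at $n=1$ is exactly (TS1), the other nonzero $n$ are automatic by heredity, and (TC2) applied to each shifted cohomology $H^i(X)[-i]$ of $X\in\bDd$ recovers (TS2) applied to $H^i(X)$. This yields (1)$\Leftrightarrow$(2).

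The equivalence (2)$\Leftrightarrow$(3) is transported along $F$: the axioms (TC1), (TC2) for $T$ in $\bDd$ translate verbatim to the same conditions on $M$ in $\bDerived{\Mod\Sigma}$. Combined with the projective-dimension bound verified next, these conditions say precisely that $M$ is a classical tilting module of projective dimension at most one. To check $\pd_\Sigma M\leq 1$, take any $N\in\Mod\Sigma$: its preimage $F^{-1}(N)$ lies in $\add(\Tt_{\cc}\vee\Ff_{\cc}[1])$, so for $n\geq 2$
\[
\Ext^n_\Sigma(M,N)=\Hom_\bDd(T,F^{-1}(N)[n])=0,
\]
since $\QHh$ is hereditary and $T\in\QHh$.

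For the finite-type assertion, invoke the Bazzoni--Herbera theorem from \cite{bazzoni:herbera:2008}: every tilting module over any ring is of finite type, so $\rperpe{M}=\rperpe{\vSs'}$ for some class $\vSs'\subseteq\mod\Sigma$ of finitely presented $\Sigma$-modules. Using the decomposition $\mod\Sigma=(\Tt_{\cc}\cap\Hh)\vee(\Ff_{\cc}\cap\Hh)[1]$, every element of $\vSs'$ is, up to shift, a coherent sheaf in $\Hh$; translating the $\Ext^1_\Sigma$-vanishing conditions back through $F$ then yields a class $\vSs\subseteq\Hh$ with $\rperpe{T}=\rperpe{\vSs}$. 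The main obstacles are (a) establishing the projective-dimension bound for $M$ in (3) as indicated above, and (b) carrying out the ``unshifting'' cleanly, namely checking that the $\Ext^1_\Sigma$-vanishing across the two tilted pieces $\Tt_{\cc}\cap\Hh$ and $(\Ff_{\cc}\cap\Hh)[1]$ reassembles into a single description of $\rperpe{T}$ as the right-Ext-perpendicular of a set of coherent sheaves.
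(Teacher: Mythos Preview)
Your treatment of (1)$\Leftrightarrow$(2)$\Leftrightarrow$(3) matches the paper's approach. One remark: the direction ``tilting module $\Rightarrow$ (TC2) for $M$ in $\bDerived{\Mod\Sigma}$'' is not automatic, since $\Mod\Sigma$ is not hereditary and complexes there need not split into shifts. The paper (and implicitly your transport argument) handles this by working on the hereditary side $\bDd$: given $X=\bigoplus X_i[i]\in\bDd$ with all $\Hom_{\bDd}(T,X[n])=0$, the vanishing at $n=i+1$ gives $X_i\in\Gen(T)\subseteq\Tt_{\cc}$, so each $X_i$ is already a $\Sigma$-module, and then the module-tilting axioms force $X_i=0$.

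The real gap is in the finite-type assertion. Your obstacle (b) is not a cleanup but the crux, and your sketch does not resolve it. If some indecomposable in $\vSs'$ has the form $F[1]$ with $F\in\Ff_{\cc}\cap\Hh$, then for $X\in\Tt_{\cc}$ one computes $\Ext^1_\Sigma(F[1],X)=\Hom_{\bDd}(F,X)=\Hom_{\QHh}(F,X)$, a \emph{Hom}-condition in $\QHh$ rather than an $\Ext^1$-condition; so $\rperpe{\vSs'}$ (taken in $\Mod\Sigma$) cannot in general be rewritten as the $\Ext^1_{\QHh}$-perpendicular of a class of coherent sheaves unless such $F[1]$ are first ruled out. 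The paper does exactly this: with $\widetilde{\vSs}=\lperpe{(\rperpe{T_\Sigma})}\cap\mod\Sigma$, it proves $\widetilde{\vSs}\subseteq\Tt_{\cc}$ via Serre duality. If $F[1]\in\widetilde{\vSs}$ with $F\in\Ff_{\cc}$ indecomposable, then $\Ext^1_{\QHh}(T,\tau F)=\D\Hom_{\QHh}(F,T)=\D\Ext^1_\Sigma(F[1],T)=0$ gives $\tau F\in\Gen(T)\subseteq\Tt_{\cc}$, while $\Hom_{\QHh}(T_{\cc},\tau F)=\D\Ext^1_{\QHh}(F,T_{\cc})=\D\Ext^2_\Sigma(F[1],T_{\cc})=0$ (using $\pdim_\Sigma F[1]\le 1$), so $\tau F\in\rperp{T_{\cc}}=0$, a contradiction. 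Once $\widetilde{\vSs}\subseteq\Tt_{\cc}\cap\Hh$, the functors $\Ext^1_\Sigma$ and $\Ext^1_{\QHh}$ agree on the relevant pairs and the translation is immediate.
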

\begin{proof}
  Clearly (2) implies (1) and (3). We show that (1) implies (2). Since
  $\QHh$ is hereditary, $\Ext^1_{\QHh}(T,T^{(I)})=0$ is equivalent to
  $\Hom_{\Der}(T,T^{(I)}[n])=0$ for all $n\neq 0$. Let
  $X=\bigoplus_{i=-s}^{s}X_i\in\bDd$ be such that $X_i\in\QHh[i]$, and
  assume
  \begin{equation}
    \label{eq:vanishing-exts}
   \Hom_{\Der}(T,X_i[n])=0\ \text{for all}\ n\in\ZZ\ \text{and all}\ i.    
  \end{equation}
  Since $X_i[-i]\in\QHh$, this implies for $n=-i$ and $n=-i+1$
  the condition $$\Hom_{\QHh}(T,X_i
  [-i])=0=\Ext^1_{\QHh}(T,X_i[-i]).$$ By (1) we conclude $X_i[-i]=0$,
  and thus $X_i=0$. Finally, we conclude $X=0$. 

  The proof that (3) implies (2) is similar. We just have to observe
  that condition~\eqref{eq:vanishing-exts} yields
  $\Ext^1_{\QHh}(T,X_i[-i])=0$, that is,
  $X_i[-i]\in\Gen(T)\subseteq{\Tt_{\cc}}$, and thus $X_i$ is, up to
  shift in the derived category, a $\Sigma$-module.

  Assume that $T$ satisfies condition~(1). In order to show that $T$
  is of finite type, we set $\vSs=\lperpe{(\rperpe{T})}\cap\Hh$ and
  verify $\rperpe{\vSs}=\rperpe{T}$. The inclusion
  $\rperpe{\vSs}\supseteq\rperpe{T}$ is trivial. Further, since
  $T\in\Tt_{\cc}$, we have $T_{\cc}\in\vSs$, and thus
  $\rperpe{\vSs}\subseteq\Tt_{\cc}$ consists of $\Sigma$-modules.  We
  view $T$ as a tilting $\Sigma$-module and exploit the corresponding
  result in $\Mod\Sigma$ from~\cite{bazzoni:herbera:2008}. It states
  that the tilting class
  $\rperpe{{T_\Sigma}}=\{X\in\Mod\Sigma\mid\Ext^1_{\Sigma}(T,X)=0\}$
  is determined by a class
  $\widetilde{\vSs}=\lperpe{({\rperpe{{T_\Sigma}}})}\cap\mod\Sigma$ of
  finitely presented modules of projective dimension at most one, that
  is, $\rperpe{{T_\Sigma}}=\rperpe{{\widetilde{\vSs}}}$.
  
  Notice that $\widetilde{\vSs}\subseteq\Tt_{\cc}$.  Otherwise there
  would be an indecomposable $F\in\Ff_{\cc}$ with
  $F[1]\in\widetilde{\vSs}$. Then $\Ext^1_{\QHh}(T,\tau
  F)=\D\Hom_{\QHh}(F,T)=\D\Ext^1_{\Sigma}(F[1],T)=0$, that is, $\tau
  F\in\Gen(T)\subseteq\Tt_{\cc}$, and $\Ext^1_{\QHh}(T_{\cc},\tau
  F)=0$. But also $\Hom_{\QHh}(T_{\cc},\tau F)=
  \D\Ext^1_{\QHh}(F,T_{\cc})=\D\Hom_{\Der}(F[1],T_{\cc}[2])=\D\Ext^2_{\Sigma}(F[1],T_{\cc})=
  0$ since $\pdim_\Sigma F[1]\le 1$, and so $F[1]=0$, a contradiction.

  Now any object $X$ in $\Tt_{\cc}$ can be viewed both in $\Mod\Sigma$
  and ${\QHh}$, and the functors $\Ext^1_{\Sigma}(X,-)$ and
  $\Ext^1_{\QHh}(X,-)$ coincide on $\Tt_{\cc}$. In particular,
  $\widetilde{\vSs}\subseteq{\vSs}$, and if $X$ is a sheaf in
  $\rperpe{\vSs}$, then $X$ is a $\Sigma$-module with
  $\Ext^1_{\Sigma}(S,X)=0$ for all $S\in\widetilde{\vSs}$, hence
  $\Ext^1_{\QHh}(T,X)=\Ext^1_{\Sigma}(T,X)=0$, that is,
  $X\in\rperpe{T}$. This finishes the proof.
\end{proof}
We will construct and classify a certain class of large tilting
sheaves independently from the representation type, even independently
of the genus, namely the tilting sheaves with a large torsion part. A
complete classification of all large tilting sheaves will be obtained
in the domestic and the tubular (that is: in the non-wild) genus zero
cases.\bigskip

The domestic case is akin to the tame hereditary case:
\subsection*{Tame hereditary algebras}
There is a tilting bundle $T_{\cc}$ such that $H=\End(T_{\cc})$ is a
tame hereditary algebra if and only if $\XX$ is of domestic type. In
this case it follows from
Proposition~\ref{prop:tilting-sheaf-complex-module} that the large
tilting $H$-modules (of projective dimension at most one), as
classified in~\cite{angeleri:sanchez:2013}, correspond (up to
equivalence) to the large tilting sheaves in $\Qcoh\XX$. Indeed,
recall that $T_{\cc}$ induces a torsion pair $(\Tt_{\cc},\Ff_{\cc})$
in $\Qcoh\XX$ and a split torsion pair $(\mathcal{Q},\mathcal{C})$ in
$\Mod H$. By~\cite[Thm.~2.7]{angeleri:sanchez:2013} every large tilting
$H$-module lies in the class $\Cc\subseteq\Mod H$, and it will be
shown in Proposition~\ref{prop:tilting-sheaves-finite-type} below that
every large tilting sheaf lies in $\Tt_{\cc}$.

\section{Torsion, torsionfree, and divisible sheaves}
\emph{In this section let $\QHh=\Qcoh\XX$, where $\XX$ is a weighted
  noncommutative regular projective curve over a field $k$.} Our main
aim is to prove that every tilting sheaf splits into a direct sum of
indecomposable sheaves of finite length, Pr\"ufer sheaves, and a
torsionfree sheaf.
\begin{definition}\label{def:tordiv}
  Let $V\subseteq\XX$ be a subset. A quasicoherent sheaf $F$ is called
  \mbox{$V$-\emph{torsionfree}} if $\Hom(S_x,F)=0$ for all $x\in V$
  and all simple sheaves $S_x\in\Uu_x$. In case $V=\XX$ the sheaf $F$
  is \emph{torsionfree}.  We set $$\vSs_V=\coprod_{x\in V}\Uu_x$$ and
  denote by $$\mathcal{F}_V =\rperpo{{\vSs_V}}$$ the class of
  $V$-torsionfree sheaves.

  Similarly, a quasicoherent sheaf $D$ is called $V$-\emph{divisible}
  if $\Ext^1(S_x,D)=0$ for all $x\in V$ and for all simple sheaves
  $S_x\in\Uu_x$. In case $V=\XX$ we call $D$ just \emph{divisible}.
  We denote by
   $$\Dd_V=\rperpe{{\vSs_V}}$$ the class of $V$-divisible sheaves. It
   is closed under direct summands, set-indexed direct sums,
   extensions and epimorphic images. Furthermore, we call $D$
   \emph{precisely $V$-divisible} if $D$ is $V$-divisible, and if
   $\Ext^1(S,D)\neq 0$ for every simple sheaf $S\in\vSs_{\XX\setminus
     V}$.
\end{definition}
\begin{remark}\label{rem:rperp} 
  The class $\vSs_V$ is a Serre subcategory in $\Hh=\fp(\QHh)$, its
  direct limit closure $\Tt_V=\vec{\vSs_V}$ is a localizing
  subcategory in $\QHh$ of finite type, and $(\Tt_V,\Ff_V)$ is a
  hereditary torsion pair in $\QHh$. In particular, the canonical
  quotient functor $\pi\colon\QHh\ra\QHh/\Tt_V$ has a right-adjoint
  $s\colon\QHh/\Tt_V\ra\QHh$ which commutes with direct limits. The
  class of $V$-torsionfree and $V$-divisible sheaves
  \begin{equation}
    \label{eq:rperp=Mod-general}
    \rperp{{\vSs_V}}=\rperp{{\Tt_V}}\simeq\QHh/\Tt_V
  \end{equation}
  is a full exact subcategory of $\QHh$, that is, the inclusion
  functor $j\colon\rperp{{\vSs_V}}\ra\QHh$ is exact and induces an
  isomorphism $\Ext^1_{\rperp{{\vSs_V}}}(A,B)\simeq\Ext^1_{\QHh}(A,B)$
  for all $A,\,B\in\rperp{{\vSs_V}}$. In particular,
  $\Ext^1_{\rperp{{\vSs_V}}}$ is right exact, so that the category
  $\QHh/\Tt_V\simeq\rperp{{\vSs_V}}$ is hereditary.  For details we
  refer to \cite[Prop.~1.1, Prop.~2.2, Cor.~2.4]{geigle:lenzing:1991},
  \cite[Thm.~2.8]{herzog:1997}, \cite[Lem.~2.2, Thm.~2.6, Thm.~2.8,
  Cor.~2.11]{krause:1997}.
\end{remark}
We note that in case $V=\XX$ the subclass $\vSs_\XX=\Hh_0$ of $\Hh$ is
the class of finite length sheaves, $\Tt=\Tt_\XX$ in $\QHh$ forms the
class of torsion sheaves, $\Ff=\Ff_\XX$ the class of torsionfree
sheaves, and $\Ff\cap\Hh=\vect\XX$ the class of vector bundles.
\begin{remark}\label{rem:basic-properties}
  Let $X\in\QHh$. Let $tX$ be the largest subobject of $X$ which lies
  in $\Tt$, the \emph{torsion subsheaf} of $X$. Then the quotient
  $X/tX$ is torsionfree. \emph{The canonical sequence
  $$\eta\colon 0\ra tX\ra X\ra X/tX\ra 0$$ is pure-exact}, that is,
$\Hom(E,\eta)$ is exact for every $E\in\Hh=\fp(\QHh)$. Indeed, we have
$E=E_+\oplus E_0$, where $E_+$ is a vector bundle and $E_0$ is of
finite length. It follows that
$\Ext^1(E,tX)=\Ext^1(E_+,tX)\oplus\Ext^1(E_0,tX)$. The left summand is
zero by Serre duality, since every vector bundle is
torsionfree. Moreover, $\Hom(E_0,X/tX)=0$, so
$\Hom(E,X)\ra\Hom(E,X/tX)$ is surjective.
\end{remark}
\begin{lemma}
  A quasicoherent sheaf is injective if and only if it is divisible.
\end{lemma}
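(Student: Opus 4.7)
My plan is to prove the nontrivial direction (divisible $\Rightarrow$ injective) by reducing to $\Ext^1(N,D)=0$ for every coherent $N$, which suffices since $\QHh=\Qcoh\XX$ is locally noetherian (Baer-type criterion). Writing $N = N_+\oplus N_0$ with $N_+\in\vect\XX$ and $N_0\in\Hh_0$, I will handle the two summands separately.

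For the finite-length summand $N_0$, I will induct on the length: the base case is the divisibility hypothesis $\Ext^1(S,D)=0$, and the inductive step uses any short exact sequence $0\to N_0'\to N_0\to S\to 0$ with $S$ simple together with the fact that $\QHh$ is hereditary (so the long exact Ext sequence terminates at $\Ext^2=0$).

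For the vector-bundle summand $N_+$, I will invoke Serre duality to rewrite
\[
\D\Ext^1(N_+,D)=\Hom(D,\tau N_+),
\]
and reduce to the following claim: \emph{for any divisible sheaf $D$ and any vector bundle $V$, $\Hom(D,V)=0$}. Suppose $\phi\colon D\to V$ is nonzero and let $M=\Image(\phi)$. Since $V\in\Hh$ is noetherian and $\QHh$ is locally noetherian, $M$ is noetherian, hence coherent; being a subsheaf of a vector bundle it is torsionfree, so $M\in\vect\XX$ is a nonzero vector bundle. On the other hand, as a quotient of $D$, the sheaf $M$ inherits divisibility (divisibility is closed under quotients, because $\Ext^2=0$). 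I will then derive a contradiction by exhibiting a simple quotient of $M$: since $M$ is noetherian and nonzero, a Zorn/ACC argument produces a maximal proper subsheaf $M'\subsetneq M$, whose quotient $S=M/M'$ is simple and therefore of finite length (all simples in $\QHh$ lie in $\Hh_0$, since no vector bundle is simple due to the existence of line bundle filtrations and tubular shift subsheaves). Then $\Hom(M,S)\neq 0$, and Serre duality gives $\Ext^1(\tau^{-}S,M)\neq 0$, contradicting the divisibility of $M$.

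The step I expect to be the main obstacle is precisely the verification that no divisible sheaf maps nontrivially to a vector bundle; the key input making it work is the combination of locally noetherian-ness (to force the image to be coherent) together with the specific structure of $\coh\XX=\Hh_+\vee\Hh_0$ that guarantees every nonzero vector bundle admits a simple (finite length) quotient. Once this Hom-vanishing is established, Serre duality settles the vector-bundle case, and combined with the length-induction for $N_0$ and Baer's criterion, injectivity of $D$ follows.
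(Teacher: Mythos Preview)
Your proof is correct but proceeds quite differently from the paper's. The paper exploits that line bundles form a generating system for $\QHh$ and that every nonzero subobject of a line bundle is again a line bundle; by Baer's criterion it then suffices to extend a morphism $L''\to Q$ along an inclusion $L''\subseteq L'$ of line bundles, which follows because the cokernel $E=L'/L''$ has finite length and divisibility of $Q$ forces the pushout sequence $0\to Q\to X\to E\to 0$ to split. This is short and avoids Serre duality entirely. Your route instead checks $\Ext^1(N,D)=0$ for all coherent $N$ and handles the vector-bundle summand via Serre duality together with the auxiliary claim $\Hom(D,\vect\XX)=0$; that claim is essentially what the paper proves later (for $V$-divisible sheaves with $V\neq\emptyset$) when showing $\Dd_V\subseteq\rperpe{(\vect\XX)}$, so your argument front-loads this ingredient. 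One minor simplification in your write-up: there is no need to argue separately that vector bundles are never simple --- any nonzero noetherian object has a simple quotient, and a simple object has length one and hence lies in $\Hh_0$ automatically, so the contradiction with divisibility of $M$ is immediate.
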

\begin{proof}
  Trivially every injective sheaf is divisible. Conversely, every
  divisible sheaf $Q$ is $L'$-injective for every line bundle $L'$:
  this means that if $L''\subseteq L'$ is a sub line bundle of $L'$,
  then every morphism $f\in\Hom(L'',Q)$ can be extended to
  $L'$. Indeed, there is commutative diagram with exact
  sequences $$\xymatrix{ 0 \ar @{->}[r] & L''\ar @{->}[r] \ar
    @{->}[d]_-{f} & L'\ar @{->}[r] \ar
    @{->}[d] & E \ar @{->}[r] \ar @{=}[d] & 0\\
    0 \ar @{->}[r] & Q\ar @{->}[r] & X\ar @{->}[r] & E\ar @{->}[r] &
    0}$$ with $E$ of finite length. Since $Q$ is divisible, the lower
  sequence splits, and it follows that $f$ lifts to $L'$. This shows
  that $Q$ is $L'$-injective. Since the line bundles form a
  system of generators of $\QHh$, we obtain by the
  version~\cite[V.~Prop.~2.9]{stenstroem:1975} of Baer's criterion
  that $Q$ is injective in $\QHh$.
\end{proof}
\begin{remark}\label{rem:divisible-reduced}
  By the closure properties mentioned above, the class $\Dd$ of
  divisible sheaves is a torsion class.  Given an object $X\in\QHh$,
  we denote by $dX$ the largest divisible subsheaf of $X$. Since $dX$
  is injective, $$X\simeq dX\oplus X/dX.$$ The sheaves with $dX=0$,
  called \emph{reduced}, form the torsion-free class corresponding to
  the torsion class $\Dd$.
\end{remark}
\begin{proposition}\label{prop:main-properties-torsion-sheaves}
{\  }
  \begin{enumerate}
  \item[(1)] The indecomposable injective sheaves are (up to
    isomorphism) the sheaf $\Kk$ of rational functions and the
    Pr\"ufer sheaves $S[\infty]$ ($S\in\Hh$ simple).\smallskip
  \item[(2)] Every torsion sheaf $F$ is of the form 
    \begin{equation}
    \label{eq:torsion-x-parts}
    F=\bigoplus_{x\in\XX}F_x\quad\text{with}\quad
    F_x\in\vec{\Uu_x}\ \text{unique}, 
  \end{equation}
  and there are pure-exact sequences
  \begin{equation}
    \label{eq:x-pure-exact}
    0\ra E_x\ra F_x\ra P_x\ra 0
  \end{equation}
  in $\vec{\Uu_x}$ with $E_x$ a direct sum of indecomposable finite
  length sheaves and $P_x$ a direct sum of Pr\"ufer sheaves (for all
  $x\in\XX$).\smallskip
\item[(3)] Every sheaf of finite length $F$ is $\Sigma$-pure-injective,
  that is, $F^{(I)}$ is pure-injective for every cardinal $I$.
  \end{enumerate}
\end{proposition}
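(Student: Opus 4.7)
Since $\QHh$ is locally noetherian, every indecomposable injective $Q$ is the injective envelope $E(U)$ of an indecomposable (uniform) noetherian subobject $U\subseteq Q$. The splitting $\coh\XX=\vect\XX\vee\Hh_0$ forces $U$ to be either a simple sheaf $S$ or a vector bundle, so I distinguish two cases. If $U=S$ is simple, then $Q=E(S)$, and by the preceding lemma (divisible $\Leftrightarrow$ injective) it suffices to show that the Pr\"ufer sheaf $S[\infty]$ is divisible and contains $S$ essentially; writing $S[\infty]=\varinjlim S[n]$ and commuting $\Ext^1(S',-)$ with this direct limit --- valid since $S'$ is finitely presented in a locally coherent category --- any extension $0\ra S[n]\ra X\ra S'\ra 0$ splits upon pushing out along $S[n]\hookrightarrow S[n+p]$ for $p$ sufficiently large. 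If $U$ is a vector bundle, the line bundle filtration provides a line bundle $L'\subseteq U$, hence $Q=E(L')$. Via the quotient functor $\pi\colon\QHh\ra\QHh/\vec{\Hh_0}\simeq\Mod(k(\Hh))$ and its right adjoint section functor $s$, the unit $L'\ra s(\pi L')$ is an embedding into $s(k(\Hh))\simeq\Kk$ because $\pi L'\simeq k(\Hh)$ (all line bundles have rank one). Since $\Kk$ is torsionfree indecomposable injective --- corresponding under $s$ to the unique indecomposable injective $k(\Hh)$ in $\Mod(k(\Hh))$ --- it is the common injective envelope of every line bundle, and so $Q\simeq\Kk$.

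\textbf{Plan for (2).} The decomposition $F=\bigoplus_{x\in\XX}F_x$ is obtained by writing $F=\varinjlim F_\alpha$ with $F_\alpha\in\Hh_0$, observing that each $F_\alpha$ already decomposes as $\bigoplus_x(F_\alpha)_x$ with only finitely many nonzero summands (by $\Hh_0=\coprod_x\Uu_x$), and noting that the transition maps preserve this splitting because $\Hom(\Uu_x,\Uu_y)=0$ for $x\neq y$; uniqueness of the decomposition follows from the analogous vanishing of $\Hom$ between $\vec{\Uu_x}$ and $\vec{\Uu_y}$. For the pure-exact sequence inside each tube I plan to mimic the classical construction of a \emph{basic submodule} of a torsion module over a discrete valuation ring: by Zorn's lemma, choose a pure subsheaf $E_x\subseteq F_x$ that is a direct sum of indecomposable finite length sheaves in $\Uu_x$ and is maximal with these properties. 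Maximality forces $P_x=F_x/E_x$ to be divisible, for otherwise some simple $S\in\Uu_x$ would admit a non-split extension by $P_x$, and purity of $E_x\subseteq F_x$ would lift a suitable indecomposable of $\Uu_x$ to a direct summand strictly enlarging $E_x$, contradicting maximality. Divisibility of $P_x$, combined with the preceding lemma and part~(1), identifies $P_x$ as a direct sum of Pr\"ufer sheaves $(\tau^j S_x)[\infty]$.

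\textbf{Plan for (3) and the main obstacle.} A finite length sheaf $F$ has $\End(F)$ finite-dimensional over $k$ (by (NC~2) and finite length), hence is endofinite; Crawley-Boevey's theorem that endofinite objects in locally finitely presented categories are $\Sigma$-pure-injective then yields the claim at once. The main obstacle in the whole proof is the purity assertion in~(2): I must ensure that the embedding $E_x\hookrightarrow F_x$, built by maximality inside $\vec{\Uu_x}$, is pure in the ambient $\QHh$. This reduces --- via Serre duality as in Remark~\ref{rem:pure-injective}, which kills the vector bundle contribution to $\Ext^1(-,F_x)$ --- to testing against coherent finite length sheaves only, and for test objects in $\Uu_y$ with $y\neq x$ the relevant $\Ext^1$'s vanish trivially; so purity inside the tube propagates to purity in $\QHh$, and the maximality construction suffices.
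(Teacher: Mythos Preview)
Your treatments of (1) and (3) are correct. For (1) the paper shows $S[\infty]$ is divisible via uniseriality and Serre duality rather than by commuting $\Ext^1$ with the direct limit, and for the torsionfree case it argues that any two line bundles become isomorphic in $\Hh/\Hh_0$ rather than invoking the section functor; these differences are cosmetic. For (3) your route is cleaner than the paper's: the paper first realises $\vec{\Uu_x}$ as the torsion modules over a bounded hereditary noetherian prime ring $R$, deduces $\Sigma$-pure-injectivity over $R$ from finite $k$-dimension via Zimmermann, and then transfers back to $\QHh$ using the summation-map criterion; your direct appeal to endofiniteness in the locally finitely presented category $\QHh$ bypasses this detour entirely.

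The genuine gap is in (2), and you have mislocated it. The transfer of purity from $\vec{\Uu_x}$ to $\QHh$ is easy (your Serre-duality reduction is fine, and in any case the Proposition only asserts pure-exactness \emph{in} $\vec{\Uu_x}$). The hard step is the one you gloss over: ``maximality forces $P_x$ to be divisible''. Your justification does not work as written --- a non-split extension of a simple $S$ by $P_x$ lives \emph{outside} $F_x$ and does not by itself produce anything inside $F_x$ with which to enlarge $E_x$. What you actually need is that a non-divisible object of $\vec{\Uu_x}$ always has a finite-length indecomposable direct summand (which you can then lift via purity, as you intend). That statement is true but requires a real argument: already for abelian $p$-groups one must, among height-zero elements, select one of minimal order and verify it generates a pure cyclic subgroup; the analogous uniserial argument is needed in the tube. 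This is precisely the content of Kulikov's basic-subgroup theorem and Singh's generalisation to HNP rings, which is why the paper does not argue directly but instead identifies $\vec{\Uu_x}$ with torsion $R$-modules (via Gabriel's pseudo-compact duality) and then quotes Singh. Your more elementary route can be completed, but not without supplying this missing piece.
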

\begin{proof}
  (1) It is well-known that in a locally noetherian category every
  injective object is a direct sum of indecomposable injective
  objects. Every indecomposable injective object has a local
  endomorphism ring and is the injective envelope of each of its
  non-zero subobjects. For details we refer to~\cite{gabriel:1962}.

  Let $E$ be an indecomposable injective sheaf. We consider its
  torsion part  $tE$. If $tE\neq 0$,
  then $E$ has a simple subsheaf $S$. It follows that $E$ is 
  injective envelope of $S$, and thus it contains the direct
  family $S[n]$ $(n\ge 1)$ and its  union $S[\infty]$.
  We claim that $E=S[\infty]$.  Indeed, it is easy to
  see that $S[\infty]$ is uniserial, with each proper subobject of the
  form $S[n]$ for some $n\geq 1$. If there were a simple object $U$ with
  $0\neq\Ext^1(U,S[\infty])=\D\Hom(S[\infty],\tau U)$, then there would be a
  surjective map $S[\infty]\ra\tau U$, whose kernel would have to be a (maximal)
  subobject of $S[\infty]$, hence of the form $S[n]$, which is
  impossible since $S[\infty]$ has infinite length. It follows that
  $S[\infty]$ is divisible, thus injective, and we
  conclude $E=S[\infty]$. 
  
  If, on the other hand, $tE=0$, then $E$ is
  torsionfree and contains a line bundle $L'$ as a subobject. Then $E$
  is the injective envelope of $L'$. In the quotient category
  $\Hh/\Hh_0$ the structure sheaf $L$ and $L'$ become isomorphic
  (\cite{lenzing:reiten:2006}), and thus (by definition of the
  morphism spaces in the quotient category) there is a third line
  bundle $L''$ which maps non-trivially to both, $L'$ and $L$. It
  follows that $L'$ has the same injective envelope as $L$, namely
  $\Kk$.

  (2) The torsion class $\Tt$ is a locally finite Grothendieck
  category with injective cogenerator given by the direct sum of all
  the Pr\"ufer sheaves. We have the coproduct of (locally finite)
  categories
  \begin{equation}
    \label{eq:torsion-coprod}
    \Tt=\coprod_{x\in\XX}\vec{\Uu}_x,
  \end{equation}
  from which we derive~\eqref{eq:torsion-x-parts}.\medskip

  Let $\Uu=\Uu_x$ be a tube of rank $p\geq 1$, with simple objects
  $S,\,\tau S,\dots,\tau^{p-1}S$, and $E$ the injective cogenerator
  of $\vec{\Uu}$ given by $\bigoplus_{j=0}^{p-1}\tau^j S[\infty]$.  Its
  endomorphism ring, $R=\End(E)$ is a pseudo-compact $k$-algebra, such
  that $\vec{\Uu}$ is equivalent to the full subcategory $\Dis(R)$ of
  $\Mod(R)$, given by the discrete topological modules. Indeed,
  by~\cite[IV.4.~Cor.~1]{gabriel:1962} the category $\vec{\Uu}$ is
  dual to $\PC(R^{\op})$, the category of left pseudo-compact
  $R$-modules; note that in~\cite{gabriel:1962} left modules are
  considered, whereas we consider right modules, like
  in~\cite{vandenbergh:2001}. Since
  $\soc(E)=\bigoplus_{i=0}^{p-1}\tau^i S$, we get
  $R/\rad(R)\simeq\End(\soc(E))\simeq D^p$ as $k$-algebras, with
  $D=\End(\tau^i S)$, by~\cite[IV.4.~Prop.~12]{gabriel:1962}. In
  particular, the simple left $R$-modules are finite dimensional. It
  follows that $R^{\op}$ is cofinite in the sense
  of~\cite{vandenbergh:2001}. From~\cite[Prop.~4.10]{vandenbergh:2001}
  we get $\PC(R^{\op})=\Dis(R)^{\op}$. Thus, $\vec{\Uu}$ is dual to
  $\Dis(R)^{\op}$, therefore equivalent to $\Dis(R)$, as
  claimed.\medskip

  Moreover, as in~\cite[p.~373]{ringel:1979} one shows that
  $R\simeq H_p(V,\mathfrak{m})$, given by matrices
  $(a_{ij})\in\matring_p(V)$ with $a_{ij}\in\mathfrak{m}$ for $j>i$;
  here $V=\End(\tau^iS[\infty])$ is a (noncommutative) complete local
  principal ideal domain with maximal ideal $\mathfrak{m}$, so that
  every non-zero one-sided ideal is a power of $\mathfrak{m}$; such
  rings are called complete discrete valuation rings
  in~\cite{ringel:1979}. In particular, $R$ is a semiperfect, bounded
  hereditary noetherian prime ring. It is easily shown that
  $M\in\Mod(R)$ is discrete (with respect to the topology given
  by~\cite[IV.4.~Prop.~13]{gabriel:1962}, which is bounded) if and
  only if each element from $M$ is annihilated by a non-zero ideal in
  $R$, that is, precisely when $M$ is torsion (in the sense
  of~\cite[p.~373]{ringel:1979}). So the objects of $\Dis(R)$ are just
  the torsion $R$-modules.\medskip

  Now, in the terminology of~\cite{singh:1978}, the
  sequence~\eqref{eq:x-pure-exact} expresses that $E_x$ is a
  \emph{basic submodule} of $F_x$, and the existence of such a pure
  submodule is given by~\cite[Thm.~1]{singh:1978}.\medskip

  (3) Each indecomposable $R$-module $F$ of finite length has finite
  endolength, since it is finite dimensional over $k$, by the argument
  from the preceding part. From~\cite[Beisp.~2.6 (1)]{zimmermann:1977}
  we obtain that $F$ is a $\Sigma$-pure-injective $R$-module. Since an
  object $M$ in a locally noetherian category is pure-injective if and
  only if the summation map $M^{(I)}\ra M$ factors through the
  canonical embedding $M^{(I)}\ra M^I$ for every $I$ (we refer
  to~\cite[Thm.~5.4]{prest:2011}), we conclude that $F$ is
  $\Sigma$-pure-injective also in $\QHh$.
\end{proof}
If $F$ is a torsion sheaf like in~\eqref{eq:torsion-x-parts}, we call
the set of those $x\in\XX$ with $F_x\neq 0$ the \emph{support} of
$F$. If the support of $F$ is of the form $\{x\}$, we call $F$
\emph{concentrated} in $x$.
\begin{corollary}\label{prop:basic-subsheaf}
  Let $F\in\QHh$ be a torsion sheaf.
  \begin{enumerate}
  \item[(1)] There is a pure-exact sequence
  \begin{equation}
    \label{eq:basic-subsheaf}
    0\ra E\stackrel{\subseteq}\lra F\ra F/E\ra 0
  \end{equation}
  such that $E$ is a direct sum of finite length sheaves and $F/E$ is
  injective.\smallskip
  \item[(2)] If $F$ has no non-zero direct summand of finite length,
  then $F$ is a direct sum of Pr\"ufer sheaves.
  \item[(3)] If $F$ is a reduced torsion sheaf and 
    $E_1,\dots,E_n$ are the only indecomposable direct summands of $F$
  of finite length, then $F$ is  pure-injective and isomorphic to
  $\bigoplus_{j=1}^n{E_j}^{(I_j)}$ for suitable sets $I_j$.
    \end{enumerate}
\end{corollary}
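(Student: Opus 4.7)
The plan is to derive all three parts from Proposition~\ref{prop:main-properties-torsion-sheaves}, which gives $F=\bigoplus_{x\in\XX}F_x$ together with pure-exact sequences $0\to E_x\to F_x\to P_x\to 0$ with $E_x$ a direct sum of indecomposable finite length sheaves and $P_x$ a direct sum of Pr\"ufer sheaves.

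For part~(1), I would take direct sums of these $x$-wise sequences to obtain $0\to E\to F\to F/E\to 0$ with $E=\bigoplus_x E_x$ and $F/E=\bigoplus_x P_x$. Purity is preserved under arbitrary direct sums because $\Hom(G,-)$ commutes with direct sums for $G\in\Hh=\fp(\QHh)$, so this sequence is pure-exact. Each Pr\"ufer summand $S[\infty]$ is injective by Proposition~\ref{prop:main-properties-torsion-sheaves}(1), and since $\QHh$ is locally noetherian, arbitrary direct sums of injectives are injective; hence $F/E$ is injective.

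For part~(2), assume $F$ has no non-zero finite length direct summand. Given an indecomposable summand $E'$ of $E$ (which is of finite length), the composition $E'\hookrightarrow E\hookrightarrow F$ is a pure monomorphism, as purity is transitive and the inclusion $E'\hookrightarrow E$ obviously splits. By Proposition~\ref{prop:main-properties-torsion-sheaves}(3), $E'$ is pure-injective, so this pure mono splits and $E'$ is a direct summand of $F$. Our hypothesis then forces $E'=0$, so $E=0$ and $F=F/E$ is a direct sum of Pr\"ufer sheaves. The only subtle point here is to have internalized that ``pure-injective target of a pure mono splits'', which is the standard definition.

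For part~(3), apply~(1) to the reduced torsion sheaf $F$. Since $F/E$ is injective the sequence splits, giving $F\simeq E\oplus F/E$. But $F/E$ is a direct sum of Pr\"ufer sheaves, each of which is divisible (equivalently, injective), and $F$ is reduced, so the divisible summand $F/E$ must vanish (Remark~\ref{rem:divisible-reduced}), hence $F=E$. Thus $F$ decomposes as a direct sum of indecomposable finite length sheaves, and every such summand is, by hypothesis, isomorphic to one of $E_1,\dots,E_n$; grouping isomorphic copies yields $F\simeq\bigoplus_{j=1}^{n}E_j^{(I_j)}$. Finally, each $E_j$ is $\Sigma$-pure-injective by Proposition~\ref{prop:main-properties-torsion-sheaves}(3), so each $E_j^{(I_j)}$ is pure-injective, and a finite direct sum of pure-injectives is pure-injective, yielding pure-injectivity of $F$. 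The only nontrivial step is the vanishing of $F/E$, which rests on distinguishing ``reduced'' (no divisible \emph{subobject}) from the fact that $F/E$ appears as a direct summand of $F$ after the sequence splits.
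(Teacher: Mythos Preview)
Your proof is correct and follows essentially the same strategy as the paper. The one noteworthy variation is in part~(3): the paper first argues that $E$ has the form $\bigoplus_j E_j^{(I_j)}$ (using that each indecomposable summand of $E$, being pure-injective, is a summand of $F$), then uses $\Sigma$-pure-injectivity of the $E_j$ to conclude that $E$ is pure-injective and hence the pure-exact sequence splits. You instead split the sequence immediately via injectivity of the cokernel $F/E$, then identify $F=E$ and read off the decomposition afterward. Your route is slightly more direct, since it avoids needing the structure of $E$ before the splitting step; the paper's route has the minor advantage of making the pure-injectivity of $F$ visible as a property of $E$ before any identification with $F$ is made.
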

\begin{proof}
  (1) The direct sum of all pure-exact sequences~\eqref{eq:x-pure-exact}
  ($x\in\XX$) is pure-exact.

  (2) This follows from~(1) by purity. (Locally, in $x$, we can also
  refer to~\cite[Thm.~10]{singh:1975}.)

(3)
  We consider the pure-exact sequence~\eqref{eq:basic-subsheaf}. By
  assumption, $E$ must be of the form $\bigoplus_{j=1}^n{E_j}^{(I_j)}$
  (indeed, since $E$ is pure in $F$,  its direct summands of
  finite length, being  pure-injective, are also direct summands of
  $F$). Now $E$ is, by part~(3) of
  Proposition~\ref{prop:main-properties-torsion-sheaves},
  pure-injective, and thus $F\simeq E\oplus F/E$. Since $F$ is
  reduced, we conclude $F\simeq E$.
\end{proof}
The following basic splitting property will be crucial for our
treatment of large tilting sheaves.
\begin{theorem}\label{thm:torsion-splitting}
  Let $T\in\QHh$ be a sheaf such that $\Ext^1(T,T)=0$ holds.
  \begin{enumerate}
  \item[(1)] The torsion part $tT$ is a direct sum of indecomposable
    sheaves of finite length and Pr\"ufer sheaves. In particular, it
    is pure-injective.\smallskip
  \item[(2)] The canonical exact sequence $0\ra tT\ra T\ra T/tT\ra 0$
    splits.
  \end{enumerate}
\end{theorem}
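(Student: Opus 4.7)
The plan is to establish rigidity of $tT$ (namely $\Ext^1(tT,tT)=0$), then analyse $tT$ tube by tube using the basic subsheaf sequence, and finally invoke pure-exactness of the canonical torsion sequence to obtain the splitting in~(2).

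First I would extract rigidity of $tT$ from the hypothesis $\Ext^1(T,T)=0$. Applying $\Hom(-,T)$ to the canonical sequence $0\to tT\to T\to T/tT\to 0$, hereditariness of $\QHh$ together with $\Ext^1(T,T)=0$ forces $\Ext^1(tT,T)=0$. Applying $\Hom(tT,-)$ to the same sequence and using $\Hom(tT,T/tT)=0$ (torsion to torsionfree) then yields $\Ext^1(tT,tT)=0$. Moreover, $\Hom$ and $\Ext^1$ vanish between objects of distinct tubes: for coherent objects this follows from Serre duality together with $\Hom(\Uu_x,\Uu_y)=0$ when $x\neq y$, and the general case reduces to the coherent one by writing each object in $\vec{\Uu}_x$ as a filtered colimit of coherent ones and applying the usual $\varprojlim$--$\varprojlim^1$ exact sequence. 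Consequently, the decomposition $tT=\bigoplus_{x\in\XX}(tT)_x$ from Proposition~\ref{prop:main-properties-torsion-sheaves}(2) inherits rigidity summand-by-summand, giving $\Ext^1((tT)_x,(tT)_x)=0$ for every $x$.

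Next, for each $x$ I would exploit the pure-exact basic subsheaf sequence from Corollary~\ref{prop:basic-subsheaf}(1),
$$0\to E_x\to(tT)_x\to P_x\to 0,$$
in which $E_x$ is a direct sum of indecomposable finite-length sheaves and $P_x$ is a direct sum of Pr\"ufer sheaves (hence injective). The crucial step is to show this sequence splits, yielding $(tT)_x=E_x\oplus P_x$ in the form required by~(1). The argument combines rigidity of $(tT)_x$ with the $\Sigma$-pure-injectivity of finite-length sheaves from Proposition~\ref{prop:main-properties-torsion-sheaves}(3): $\Sigma$-pure-injectivity of $E_x$, together with the purity of $E_x\hookrightarrow(tT)_x$ and the vanishing of the relevant Ext class arising from $\Ext^1((tT)_x,(tT)_x)=0$, should upgrade the pure embedding to a split one, so that $(tT)_x$ becomes pure-injective and decomposes as claimed.

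Summing over $x$ then establishes the structural assertion of~(1): $tT$ is a direct sum of indecomposable finite-length sheaves and Pr\"ufer sheaves. Pure-injectivity of $tT$ follows because each Pr\"ufer summand is injective by Proposition~\ref{prop:main-properties-torsion-sheaves}(1), each finite-length summand is $\Sigma$-pure-injective, and the orthogonality between distinct tubes reduces global pure-injectivity to the tube-local statement already established. Finally, for~(2), Remark~\ref{rem:basic-properties} shows that the canonical sequence $0\to tT\to T\to T/tT\to 0$ is pure-exact, and pure-injectivity of $tT$ splits the pure monomorphism $tT\hookrightarrow T$, giving~(2). The principal obstacle is the tube-wise splitting of the basic subsheaf sequence: controlling it using only rigidity and $\Sigma$-pure-injectivity of $E_x$ is delicate, since $E_x$ itself may involve infinitely many isomorphism classes of finite-length indecomposables within the tube, and pure-injectivity of such an infinite direct sum is not automatic.
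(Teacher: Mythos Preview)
Your framework is sound and very close to the paper's, but you leave open precisely the point you flag at the end, and that is the heart of the proof. The phrase ``vanishing of the relevant Ext class'' does no work here: what you need is pure-injectivity of $E_x$, and for that you must control the isomorphism types occurring in $E_x$. The missing step is this: every indecomposable summand $U$ of $E_x$ is already a direct summand of $tT$, because $U$ is coherent hence pure-injective (Remark~\ref{rem:pure-injective}), and the composite $U\hookrightarrow E_x\hookrightarrow (tT)_x$ of a split monomorphism with a pure monomorphism is pure, hence splits. Your rigidity $\Ext^1(tT,tT)=0$ then forces $\Ext^1(U,U)=0$, so $U$ is exceptional. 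Since exceptional finite-length sheaves lie only in the finitely many exceptional tubes and in a tube of rank $p$ have length at most $p-1$, there are only finitely many isomorphism classes of such $U$ altogether. This dissolves your obstacle: each $E_x$ is a direct sum of copies of finitely many $\Sigma$-pure-injective sheaves, hence pure-injective by Proposition~\ref{prop:main-properties-torsion-sheaves}(3), and the basic subsheaf sequence splits.

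The paper reaches exceptionality of the finite-length summands by a slightly different route: instead of first proving $\Ext^1(tT,tT)=0$, it takes such a summand $E$, uses hereditariness to get $\Ext^1(E,T)=0$ from $\Ext^1(T,T)=0$, then forms a pushout along the projection $tT\twoheadrightarrow E$ to manufacture an epimorphism $T\twoheadrightarrow E$ (the pushed-out sequence splits by Serre duality since $T/tT$ is torsionfree), whence $\Ext^1(E,E)=0$. Your rigidity-of-$tT$ argument is a cleaner shortcut to the same conclusion. Once finiteness of types is in hand, the paper concludes via the divisible/reduced splitting (Remark~\ref{rem:divisible-reduced}) together with Corollary~\ref{prop:basic-subsheaf}(3), which is equivalent to splitting your basic subsheaf sequences tube by tube.
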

\begin{proof}
  By Remark~\ref{rem:basic-properties} it suffices to prove
  part~(1). By Remark~\ref{rem:pure-injective} the assertion is true
  in case $tT$ is coherent. If $tT$ does not admit any non-zero
  summand of finite length, then we conclude from
  Corollary~\ref{prop:basic-subsheaf}~(2) that $tT$ is a direct sum
  of Pr\"ufer sheaves, and then $tT$ is in particular
  pure-injective. Let now $E$ be an indecomposable summand of $tT$ of
  finite length. The composition of embeddings $E\ra tT\ra T$ gives a
  surjection $\Ext^1(T,T)\ra\Ext^1(E,T)$, showing that
  $\Ext^1(E,T)=0$. Forming the push-out, the projection $tT\ra E$
  yields the following commutative exact diagram. $$\xymatrix{ 0\ar
    @{->}[r]& tT\ar @{->}[r]\ar @{->>}[d] & T\ar @{->}[r]\ar @{->}[d]&
    T/tT\ar @{->}[r]\ar @{=}[d]& 0 \\ 0\ar @{->}[r]& E\ar @{->}[r]&
    T'\ar @{->}[r]& T/tT\ar @{->}[r]& 0.}$$ Using Serre duality
  $\Ext^1(T/tT,E)=\D\Hom(\tau^- E,T/tT)=0$, the lower sequence splits,
  showing that there is an epimorphism $T\ra E$. This gives a
  surjective map $\Ext^1(E,T)\ra\Ext^1(E,E)$, showing that
  $\Ext^1(E,E)=0$. Therefore $E$ must belong to an exceptional tube of
  some rank $p>1$, and has length $<p$. Thus there are only finitely
  many such $E$. From Corollary~\ref{prop:basic-subsheaf}
  and Remark~\ref{rem:divisible-reduced} we conlude that $tT$ is a
  direct sum of copies of these finitely many indecomposables of
  finite length and of Pr\"ufer sheaves. This proves the theorem.
\end{proof}
 Given a  tilting sheaf $T\in\QHh$, we will
often write $$T=T_+\oplus T_0$$ with $T_0=tT$ the \emph{torsion} and
$T_+\simeq T/tT$ the \emph{torsionfree part} of $T$.
We will say that $T$ has a \emph{large torsion
  part} if $tT$ is large in the sense that
there is no coherent sheaf $E$ such that $\Add(tT)=\Add(E)$.

\section{Tilting sheaves induced by resolving classes}
In this section we introduce the notion of a resolving class, and we
employ it to construct the torsionfree Lukas tilting sheaf $\bL$ and
the tilting sheaves $T_{(B,V)}$. We further classify all tilting
sheaves with large torsion part, and we establish a bijection between
resolving classes and tilting classes of finite type.
\begin{numb}\label{nr:pre-def-weakly-resolving}
  Let $\QHh$ be a locally coherent Grothendieck category with
  $\Hh=\fp(\QHh)$. Let $T$ be a tilting object of finite type in
  $\QHh$, that is,
$$\Bb:=\Gen(T)=\rperpe{T}=\rperpe{\vSs}$$ for some $\vSs\subseteq\Hh$,
which we choose to be the largest class with this
property $$\vSs=\lperpe{\Bb}\cap\Hh.$$ Applying $\Ext^1(S,-)$ to the
sequence
\begin{equation}
  \label{eq:inj-envelope}
  0\ra X\ra E(X)\ra E(X)/X\ra 0
\end{equation}
where $X\in\QHh$ is arbitrary and
$E(X)$ is its injective envelope, we see that
\begin{enumerate}
\item[(o)] $\vSs$  consists of objects $S$ with 
$\pd_{\QHh}(S)\leq 1$.
\end{enumerate} 
 We list further properties of $\vSs$ that can be verified by the reader:
\begin{enumerate}
\item[(i)] $\vSs$ is closed
under extensions; 
\item[(ii)] $\vSs$ is closed under direct summands;
\item[(iii)] $S'\in\vSs$ whenever $0\ra S'\ra S\ra S''\ra 0$ is exact with
$S,\,S''\in\vSs$.
\end{enumerate}
\end{numb}
\begin{definition}
  Let $\QHh$ be a locally coherent Grothendieck category. We call a
  class $\vSs\subseteq\Hh=\fp(\QHh)$ \emph{resolving} if it satisfies
  (i), (ii), (iii), and generates $\QHh$.
\end{definition}
\begin{remark}
  A generating system $\vSs\subseteq\Hh$ is resolving whenever it is
  closed under extensions and subobjects. In case $\QHh=\Qcoh\XX$ the
  converse also holds true; we refer to
  Corollary~\ref{cor:Qcoh-resolving} below.
\end{remark}
\begin{theorem}\label{thm:tilting-from-resolving}
  Let $\QHh$ be locally coherent and $\vSs$ a resolving class such
  that $\pd_{\QHh}(S)\leq 1$ for all $S\in\vSs$. Then there is a
  tilting object $T$ in $\QHh$ with $\rperpe{T}=\rperpe{\vSs}$.
\end{theorem}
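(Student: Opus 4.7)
The plan is to construct $T$ as the middle term of a special $\Bb$-preenvelope of a generator, mirroring the module-theoretic construction. First I would set $\Bb = \rperpe{\vSs}$ and $\Aa = \lperpe{\Bb}$, and invoke the machinery of \cite{saorin:stovicek:2011} to conclude that the cotorsion pair $(\Aa, \Bb)$ is complete in $\QHh$, with $\Aa$ consisting of direct summands of objects admitting a transfinite $\vSs$-filtration. This is the Grothendieck-category analogue of the Eklof-Trlifaj argument: it requires $\vSs$ to be a \emph{set} of finitely presented objects (which holds since $\vSs \subseteq \Hh$) and replaces projective resolutions by the small object argument. I would also observe that $\Bb$ is closed under arbitrary coproducts (because $\vSs \subseteq \fp(\QHh)$) and under epimorphic images (because $\pd(S) \le 1$ for $S \in \vSs$, so the relevant $\Ext^2$ vanishes).

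Next, since $\vSs$ generates $\QHh$, the direct sum $G = \bigoplus_{S \in \vSs} S$ is a generator of $\QHh$, and $G \in \Aa$ as $\Aa$ is closed under coproducts. Completeness of the cotorsion pair applied to $G$ yields a short exact sequence
\[
0 \longrightarrow G \longrightarrow T_0 \longrightarrow T_1 \longrightarrow 0
\]
with $T_0 \in \Bb$ and $T_1 \in \Aa$. I then set $T := T_0 \oplus T_1$ and verify that $T \in \Aa \cap \Bb$. Applying $\Ext^1(S,-)$ for $S \in \vSs$ and using $\pd(S) \le 1$ (so $\Ext^2(S,G) = 0$) collapses the long exact sequence to force $\Ext^1(S, T_1) = 0$, hence $T_1 \in \Bb$. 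Dually, applying $\Hom(-,B)$ for $B \in \Bb$ sandwiches $\Ext^1(T_0, B)$ between vanishing terms, so $T_0 \in \Aa$.

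The tilting axioms then fall out. Axiom (TS1) is the statement $T \in \lperpe{\Bb}$ applied to $T^{(I)} \in \Bb$. For (TS0), given any $X \in \QHh$, the injective envelope sequence $0 \to X \to E(X) \to E(X)/X \to 0$ has both $E(X)$ and $E(X)/X$ in $\Bb$ (injectives belong to $\Bb$, and $\Bb$ is closed under quotients), so $\Ext^2(T,X) \simeq \Ext^1(T, E(X)/X) = 0$. For (TS2), suppose $\Hom(T,X) = \Ext^1(T,X) = 0$; applying $\Hom(-,X)$ to the defining sequence and using $\pd(T_1) \le 1$, every term involving $T_0$ or $T_1$ vanishes, so $\Hom(G,X) = 0$, whence $X = 0$ since $G$ is a generator. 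Finally, the identity $\Gen(T) = \rperpe{T} = \Bb$ follows from the standard trace-cokernel argument combined with (TS2).

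The main obstacle is the completeness of the cotorsion pair $(\Aa,\Bb)$ in a Grothendieck category that need not have enough projectives; this is precisely the technical content imported from \cite{saorin:stovicek:2011}, whose small object argument supplies the required approximations in place of projective resolutions, and it is what makes the bounded projective dimension hypothesis on $\vSs$ essential throughout.
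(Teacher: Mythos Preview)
Your proposal is correct and follows essentially the same route as the paper: both invoke \cite{saorin:stovicek:2011} to obtain a special $\Bb$-preenvelope $0\to G\to T_0\to T_1\to 0$ of a generator $G$ built from $\vSs$, and set $T=T_0\oplus T_1$. The only organizational difference is that you verify (TS0)--(TS2) separately and then appeal to the Colpi equivalence, whereas the paper establishes $\Gen(T)=\rperpe{T}$ directly via a pushout diagram; these amount to the same argument.
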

\begin{proof}
  The class $\Bb=\rperpe{\vSs}$ is pretorsion, that is, it is closed
  under direct sums (recall that $\vSs \subseteq\Hh$ consists of
  finitely presented objects) and epimorphic images (here we need the
  assumption on the projective dimension). Further, it is special
  preenveloping by~\cite[Cor.~2.15]{saorin:stovicek:2011}. By
  assumption, $\vSs$ contains a system of generators $(G_i, i\in I)$
  for $\QHh$. Set $G=\bigoplus_{i\in I}G_i$, and let
  \begin{equation}
    \label{eq:preenvelope-0}
    0\ra G\ra T_0\ra T_1\ra 0
  \end{equation}
  be a special $\Bb$-preenvelope of $G$ with $T_0\in\Bb$ and
  $T_1\in\lperpe{\Bb}$. We claim that $T=T_0\oplus T_1$ is the desired
  tilting object. Since $\Bb$ is pretorsion we have
  $\Gen(T)\subseteq\Bb$.  Moreover, for every $X\in\QHh$
  there is a natural isomorphism
  \begin{equation}
    \label{eq:Ext1-coprod-prod}
    \Ext^1\Bigl(\bigoplus_{i\in I}G_i,X\Bigr)\simeq\prod_{i\in I}\Ext^1(G_i,X).
  \end{equation}
  (This we get from the natural isomorphism
  $\Hom(\bigoplus_{i\in I}G_i,X)\simeq\prod_{i\in I}\Hom(G_i,X)$ by
  applying $\Hom(G_i,-)$ and $\Hom(\bigoplus_{i\in I}G_i,-)$ to the
  exact sequence~\eqref{eq:inj-envelope}.)  Since $G_i\in\vSs$ for all
  $i\in I$, we deduce
  \begin{equation}
    \label{eq:Ext-Lambda-X=0}
    \Ext^1(G,X)=0\quad\text{for all}\ X\in\Bb.
  \end{equation}
  Hence $G\in\lperpe{\Bb}$, and (\ref{eq:preenvelope-0}) shows that
  $T_0$ and $T$ belong to $\lperpe{\Bb}$ as
  well. So $$\Gen(T)\subseteq\Bb\subseteq\rperpe{T}.$$ Let now
  $X\in\rperpe{T}$. Since $G$ is a generator, there is an epimorphism
  $G^{(J)}\ra X$ and a commutative exact diagram
  $$\xymatrix{ 0\ar @{->}[r]& G^{(J)}\ar @{->}[r]\ar @{->}[d] &
    (T_0)^{(J)}\ar @{->}[r]\ar @{->}[d]& (T_1)^{(J)}\ar @{->}[r]\ar
    @{=}[d]& 0 \\ 0\ar @{->}[r]& X\ar @{->}[r]& X'\ar @{->}[r]&
    (T_1)^{(J)}\ar @{->}[r]& 0.}$$ Since $X\in\rperpe{{T_1}}$ and thus
  by~(\ref{eq:Ext1-coprod-prod}) also
  $X\in\rperpe{({T_1}^{(J)})}$, the lower sequence splits. Therefore
  we get an epimorphism ${T_0}^{(J)}\ra X$, showing that
  $X\in\Gen(T)$.  We conclude that $T$ is a tilting object with
  $\Gen(T)=\Bb$.
\end{proof}
\subsection*{Applications}
\emph{Let now $\QHh=\Qcoh\XX$, where $\XX$ is a weighted
  noncommutative regular projective curve over a field $k$.} We
exhibit two applications of the theorem. The first one is quite easy.
\begin{proposition}\label{prop:large-tilting-torsionfree-in-general}
  Let $\QHh=\Qcoh\XX$, where $\XX$ is a weighted noncommutative
  regular projective curve. There is a torsionfree large tilting sheaf
  $\bL$, called \emph{Lukas tilting sheaf}, such that
  $\rperpe{\bL}=\rperpe{(\vect\XX)}$. \end{proposition}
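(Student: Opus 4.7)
My plan is to apply Theorem~\ref{thm:tilting-from-resolving} with $\vSs=\vect\XX$. I first verify that $\vect\XX\subseteq\Hh$ is resolving. Generation of $\QHh$ holds because line bundles are vector bundles and they form a system of generators of $\QHh$. Closure under direct summands is immediate. For extension-closure inside $\Hh$: an extension of two torsionfree coherent sheaves is again torsionfree and coherent, since $\Ff$ is closed under extensions and $\Hh$ is closed under extensions. Finally, if $0\to S'\to S\to S''\to 0$ is exact in $\Hh$ with $S,S''\in\vect\XX$, then $S'$ is a subsheaf of the torsionfree noetherian object $S$, hence itself torsionfree and coherent, so $S'\in\vect\XX$. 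The projective-dimension hypothesis $\pd(S)\le 1$ is automatic since $\QHh$ is hereditary. Theorem~\ref{thm:tilting-from-resolving} thus produces a tilting sheaf $\bL$ with $\rperpe{\bL}=\rperpe{(\vect\XX)}$.

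Next, I would argue that $\bL$ is torsionfree by inspecting the construction in the proof of Theorem~\ref{thm:tilting-from-resolving}: one has $\bL=T_0\oplus T_1$ sitting in a short exact sequence $0\to G\to T_0\to T_1\to 0$, where $G$ is a direct sum of objects of $\vect\XX$ (hence torsionfree) and $T_1$ is the cokernel of a special $\Bb$-preenvelope in the sense of \cite{saorin:stovicek:2011}, which is filtered by objects of $\vSs=\vect\XX$. The torsionfree class $\Ff=\rperpo{\Hh_0}$ is closed under subobjects, extensions, and direct limits in $\QHh$ (since $(\Tt,\Ff)$ is a hereditary torsion pair of finite type, cf.\ Remark~\ref{rem:rperp}), hence also under transfinite extensions. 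Therefore $T_1\in\Ff$, and then $T_0$, as an extension of the torsionfree sheaves $G$ and $T_1$, also lies in $\Ff$. Consequently $\bL\in\Ff$.

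Finally, I would show that $\bL$ is large. Suppose for a contradiction that $\bL$ is equivalent to a coherent tilting sheaf $T'$. Then $T'\in\Add(\bL)$, and since $\bL$ is torsionfree, so is $T'$; thus $T'\in\vect\XX$. The Auslander--Reiten translation $\tau$ is an autoequivalence of $\Hh$ preserving the decomposition $\Hh=\Hh_+\vee\Hh_0$, so $\tau^{-1}T'\in\vect\XX$, and hence $\Ext^1(\tau^{-1}T',T')=0$ by the defining property $\rperpe{\bL}=\rperpe{(\vect\XX)}$. Serre duality then yields $\D\Hom(T',T')\cong\Ext^1(\tau^{-1}T',T')=0$, forcing $T'=0$, which contradicts $T'$ being a tilting sheaf. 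The subtlest point I expect is the torsionfreeness of $\bL$: it depends on being able to peel open the construction of \cite{saorin:stovicek:2011} far enough to see that $T_1$ really is a transfinite extension of vector bundles, and then on the direct-limit closure of $\Ff$ in $\QHh$. Everything else is a routine application of Serre duality and the general machinery already set up.
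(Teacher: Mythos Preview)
Your proof is correct, but your route to torsionfreeness differs from the paper's. You open up the Saor\'{\i}n--\v{S}\v{t}ov\'{\i}\v{c}ek construction and argue that $T_1$ is $\vect\XX$-filtered, hence lies in the torsionfree class $\Ff$ (which is closed under transfinite extensions because $(\Tt,\Ff)$ is a hereditary torsion pair of finite type), and then that $T_0$ is an extension of torsionfree objects. The paper instead argues by contradiction using Theorem~\ref{thm:torsion-splitting}: if $\bL$ had a nonzero torsion summand $T_0$, one picks a simple $S$ with $\Hom(S,T_0)\neq 0$ and observes that $\tau S\in\rperpe{(\vect\XX)}$ but $\Ext^1(T_0,\tau S)\simeq\D\Hom(S,T_0)\neq 0$, so $\rperpe{\bL}\subsetneq\rperpe{(\vect\XX)}$, a contradiction. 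The paper's argument has the advantage of showing that \emph{every} tilting sheaf with tilting class $\rperpe{(\vect\XX)}$ is torsionfree (not just the particular one produced by the construction), and it avoids any dependence on the internal structure of the special preenvelope. Your argument is more direct and self-contained once one grants the filtration property, which is indeed recorded later in the paper (Remark~\ref{rem:filtration}).

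For largeness, the paper simply writes ``clearly''; your Serre-duality argument that a coherent $T'\in\Add(\bL)$ would satisfy $\Hom(T',T')\simeq\D\Ext^1(\tau^{-}T',T')=0$ is exactly the kind of verification one would supply to justify this, and it is correct.
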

\begin{proof}
  The class $\vSs=\vect\XX$ is  resolving. By
  Theorem~\ref{thm:tilting-from-resolving} there is a tilting sheaf
  $\bL$ with $\rperpe{(\vect\XX)}=\rperpe{\bL}$. We show that $\bL$ is
  torsionfree. Assume that $\bL$ has a non-zero torsion part $T_0$. By
  Theorem~\ref{thm:torsion-splitting} this is a direct summand of
  $\bL$. Then $$\rperpe{(\vect\XX)}=
  \rperpe{{\bL}}\subseteq\rperpe{{T_0}}\cap
  \rperpe{(\vect\XX)}\subsetneq\rperpe{(\vect\XX)},$$ where the last
  inclusion is proper because there exists a simple sheaf
  $S$ with $\Hom(S,T_0)\neq 0$ and thus $\tau
  S\in\rperpe{(\vect\XX)}\setminus\rperpe{{T_0}}$. Thus we get a
  contradiction. We conclude that $T_0=0$. Clearly, $\bL$ is then also
  large.
\end{proof}
We record the following observation for later reference.
\begin{lemma}\label{lem:V-divisibility-lemma}
  $\rperpe{\bL}$ contains the class $\mathcal{D}_V$ of $V$-divisible
  sheaves for any $\emptyset\neq V\subseteq\XX$.
  \end{lemma}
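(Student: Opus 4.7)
\bigskip

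\noindent\textbf{Proof plan.}
The plan is to reduce the claim, by Serre duality, to showing that a $V$-divisible sheaf admits no non-zero morphism into any vector bundle, and then to find an obstruction using a line bundle quotient. Concretely, since $\rperpe{\bL}=\rperpe{(\vect\XX)}$, I need to show that every $V$-divisible sheaf $D$ satisfies $\Ext^1(F,D)=0$ for every vector bundle $F$. By the generalized Serre duality recalled in the excerpt, $\D\Ext^1(F,D)=\Hom(D,\tau F)$, and since $\tau$ preserves the splitting $\Hh=\Hh_+\vee\Hh_0$, the vector bundle $\tau F$ ranges over all of $\vect\XX$ as $F$ does. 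Hence it suffices to prove that
\[
\Hom(D,F)=0 \quad\text{for every } F\in\vect\XX.
\]

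The first step is to reformulate $V$-divisibility. Applying Serre duality to a simple sheaf $S_x$ with $x\in V$ gives $\D\Ext^1(S_x,D)=\Hom(D,\tau S_x)$, so $\Ext^1(S_x,D)=0$ is equivalent to $\Hom(D,\tau S_x)=0$. Since the simples in $\Uu_x$ form the $\tau$-orbit $S_x,\tau S_x,\dots,\tau^{p(x)-1}S_x$, $V$-divisibility is equivalent to
\[
\Hom(D,S')=0 \quad\text{for every simple sheaf } S' \text{ concentrated in some } x\in V.
\]

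Now suppose $\phi\colon D\to F$ is a non-zero morphism with $F$ a vector bundle. The image $F':=\Image(\phi)$ is a subsheaf of a coherent sheaf, hence coherent; it is torsionfree as a subsheaf of $F$, and non-zero. Thus $F'$ is a non-zero vector bundle, and by the line bundle filtration property it admits a line bundle quotient $F'\twoheadrightarrow L_1$. Fix any point $x\in V$ (here we use $V\neq\emptyset$) and consider the tubular shift sequence $0\to L_1\to\sigma_x L_1\to (L_1)_x\to 0$. As this shift is non-trivial (for example one can check via the degree), $(L_1)_x\neq 0$, so there is some index $j$ with $\Ext^1(\tau^j S_x,L_1)\neq 0$. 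By Serre duality this yields a non-zero morphism $L_1\to\tau^{j-1}S_x$ into a simple sheaf $S''$ concentrated in $x\in V$. The composite
\[
D \twoheadrightarrow F' \twoheadrightarrow L_1 \longrightarrow S''
\]
is non-zero because its first two components are epimorphisms and the last is non-zero. This contradicts the reformulated $V$-divisibility of $D$.

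The main obstacle I expect is verifying the elementary facts used above: that the image of a morphism from a quasicoherent sheaf into a coherent one is coherent (which follows from local noetherianness of $\QHh$), and that the tubular shift is genuinely non-trivial on every line bundle so that $(L_1)_x\neq 0$. Both are standard consequences of the setup recalled in Section~\ref{sec:sheaves-and-modules}, so the argument should go through cleanly.
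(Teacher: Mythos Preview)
Your proof is correct and follows essentially the same route as the paper's: both reformulate via Serre duality, reduce to producing an epimorphism from the sheaf onto a line bundle, and then observe that any line bundle maps onto some simple sheaf concentrated at a chosen point of $V$. The only difference is that you justify this last step explicitly via the tubular shift (with a harmless index slip: Serre duality gives $\Hom(L_1,\tau^{j+1}S_x)\neq 0$ rather than $\tau^{j-1}$), whereas the paper simply asserts it.
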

\begin{proof}
  With the notation of Definition~\ref{def:tordiv}, we have
  $\rperpe{{\vSs_V}}=\lperpo{{\vSs_V}}$ and
  $\rperpe{(\vect\XX)}=\lperpo{\vect\XX}$ by Serre duality. Let $F$ be
  a sheaf such that there is a non-zero morphism to a vector bundle,
  and consequently also to a line bundle. Since every non-zero
  subsheaf of a line bundle is a line bundle again, there is even an
  epimorphism from $F$ to a line bundle. This line bundle maps onto a
  simple sheaf concentrated in $x\in V$. We conclude that $F$ is not
  $V$-divisible.
\end{proof}
The second application is the classification of all tilting sheaves
having a large torsion part. We first introduce some terminology. 
\begin{numb}\label{wings} Let
$\Uu=\Uu_x$ be a tube of rank $p>1$. We recall that an indecomposable
sheaf $E\in\Uu$ is exceptional (that is, $\Ext^1(E,E)=0$) if and only
if its length is $\leq p-1$; in particular, there are only finitely
many such $E$. If $E$ is exceptional in $\Uu$, then we call the
collection $\Ww$ of all the subquotients of $E$ the \emph{wing rooted
  in} $E$. The set of all simple sheaves in $\Ww$ is called the
\emph{basis} of $\Ww$. It is of the form
$S,\,\tau^- S,\dots,\tau^{-(r-1)}S$ for an exceptional simple sheaf
$S$ and an integer $r$ with $1\leq r\leq p-1$ which equals the length
of the \emph{root} $E$; we call such a set of simples a \emph{segment}
in $\Uu$, and we say that two wings (or segments) in $\Uu$ are
\emph{non-adjacent} if the segments of their bases (or the segments)
are disjoint and their union consists of $<p$ simples and is not a
segment~\cite[Ch.~3]{lenzing:meltzer:1996}.\medskip

We remark that the full subcategory $\add\Ww$ of $\Hh$ is equivalent
to the category of finite-dimensional representations of the linearly
oriented Dynkin quiver $\vec{\AAA}_{r}$. It is well-known that any
tilting object $B$ in the category $\add\Ww$ has precisely $r$
non-isomorphic indecomposable summands $B_1,\dots,B_r$ forming a
so-called \emph{connected branch} in $\Ww$: one of the $B_i$ is
isomorphic to the root $E$, and for every $j$ the wing rooted in $B_j$
contains precisely $\ell_j$ indecomposable summands of $B$, where
$\ell_j$ is the length of $B_j$. In particular, for every $j$ we have
the (full) \emph{subbranch} rooted in $B_j$; if $B_j$ is different
from the root of $\Ww$, we call this subbranch \emph{proper}.\medskip

Following~\cite[Ch.~3]{lenzing:meltzer:1996}, we call a sheaf $B$ of
finite length a \emph{branch sheaf} if it is a multiplicity free
direct sum of connected branches in pairwise non-adjacent wings; it
then follows that $\Ext^1(B,B)=0$.  Clearly, there are only finitely
many isomorphism classes of branch sheaves.
\end{numb}

\begin{theorem}\label{thm:large-tilting-sheaves-in-general}
  Let $\QHh=\Qcoh\XX$, where $\XX$ is a weighted noncommutative
  regular projective curve.
  \begin{enumerate}
  \item[(1)] Let $\emptyset\neq V\subseteq\XX$ and $B\in\Hh_0$ be a
    branch sheaf. There is, up to equivalence, a unique large tilting
    sheaf $T=T_+\oplus T_0$ whose torsion part is given by
    \begin{equation}
      \label{eq:large-torsion-part}
      T_0=B\oplus\bigoplus_{x\in V}\bigoplus_{j\in \Rr_x}\tau^j
      S_x[\infty],
    \end{equation}
    and whose torsionfree part $T_+$ is $V$-divisible; the non-empty
    sets $\Rr_x\subseteq\{0,\dots,p(x)-1)\}$ are uniquely determined
    by $B$, see~\eqref{eq:Def_R_x}.\smallskip
  \item[(2)] Every tilting sheaf with large torsion part is, up to
    equivalence, as in~(1).
  \end{enumerate}
\end{theorem}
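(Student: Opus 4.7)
The plan is to combine the torsion splitting of Theorem~\ref{thm:torsion-splitting} with the construction of tilting objects from resolving classes provided by Theorem~\ref{thm:tilting-from-resolving}.

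For part~(1), existence, I would first extract combinatorial data from $B$ tube by tube. In any exceptional tube $\Uu_x$ of rank $p(x)$ the connected branch of $B$ supported in $\Uu_x$ sits inside a single wing whose basis is a segment of simples, and the positions in $\{0,\dots,p(x)-1\}$ not ``blocked'' by this wing determine a nonempty subset $\Rr_x$; at points $x\in V$ untouched by $B$ one has $\Rr_x=\{0,\dots,p(x)-1\}$. I then assemble a resolving class $\vSs\subseteq\Hh$ containing all vector bundles, together with the proper subbranches of $B$ and the simples $\tau^j S_x$ for $x\in V$ and $j\in\Rr_x$; this class is designed so that $\rperpe{\vSs}$ encodes precisely $V$-divisibility together with the $\Ext^1$-orthogonality that selects $B$ among the finite-length sheaves. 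Applying Theorem~\ref{thm:tilting-from-resolving} produces a tilting sheaf $T$ with $\Gen(T)=\rperpe{\vSs}$. To identify $T$, I would check that every summand of $B$ and every Pr\"ufer sheaf $\tau^j S_x[\infty]$ with $x\in V$, $j\in\Rr_x$ lies both in $\rperpe{\vSs}$ and in $\lperpe{(\rperpe{\vSs})}$; being pure-injective indecomposables with local endomorphism rings (Proposition~\ref{prop:main-properties-torsion-sheaves}(3) and Corollary~\ref{prop:basic-subsheaf}), they must then be direct summands of $T$ by the pure-injective version of Lemma~\ref{lem:porperties-tilting}(3), yielding the announced $T_0$. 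The $V$-divisibility of $T_+=T/tT$ follows from $\Ext^1(T,S_x[\infty])=0$ for $x\in V$ together with Serre duality, cf.~Lemma~\ref{lem:V-divisibility-lemma}.

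For part~(2) and the uniqueness in~(1), let $T$ be any tilting sheaf with large torsion part. Theorem~\ref{thm:torsion-splitting} gives $T=T_+\oplus T_0$ with $T_0$ a direct sum of indecomposable finite-length sheaves and Pr\"ufer sheaves. The vanishing of self-extensions of $T$ forces each finite-length summand to lie in some exceptional tube, the summands in a fixed tube to form a connected branch (otherwise $\Ext^1$ between two of them, or between one of them and a Pr\"ufer in the same tube, would not vanish), and summands in distinct tubes to occupy non-adjacent wings; hence the finite-length part of $T_0$ is a branch sheaf $B$. Letting $V$ be the (nonempty, by largeness) set of $x$ at which a Pr\"ufer summand occurs, the mutual $\Ext^1$-vanishing between the branch in $\Uu_x$ and the Pr\"ufer summands in $\Uu_x$, together with Serre duality applied along the rays and corays of the tube, forces the indices of the occurring Pr\"ufers in $\Uu_x$ to form exactly the combinatorially determined set $\Rr_x$. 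The torsionfree part $T_+$ is $V$-divisible by the same Serre duality argument as above. Uniqueness up to equivalence follows by observing that two tilting sheaves with the same torsion part $T_0$ and $V$-divisible torsionfree parts generate the same tilting class: after perpendicular reduction at $B$ the torsionfree part in $\rperp{B}$ is determined by the resolving class formed by the vector bundles in $\rperp{B}$ together with the simples $\tau^j S_x$ ($x\in V$, $j\in\Rr_x$), which is exactly the class used in~(1).

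The main obstacle I anticipate is the tube-local combinatorics: establishing that the connected branch in $\Uu_x$ and the subset $\Rr_x$ really are complementary in the sense that the Ext-orthogonality conditions characterising the tilting class force precisely this pairing of a branch with a family of Pr\"ufers. This requires explicit inspection of the ray/coray structure of each exceptional tube and the $\Ext^1$-formulas between finite-length sheaves and Pr\"ufer sheaves inside $\Uu_x$, and is where the argument meshes with the analogous classification of tilting modules over tame hereditary algebras in~\cite{angeleri:sanchez:2013}.
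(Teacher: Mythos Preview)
Your broad strategy matches the paper's: construct a tilting sheaf from a resolving class via Theorem~\ref{thm:tilting-from-resolving}, and for the converse combine the torsion splitting of Theorem~\ref{thm:torsion-splitting} with tube-local combinatorics. However, several steps are underspecified or incorrect as stated.

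First, your resolving class is not right. The paper's class (Lemma~\ref{lemma:undercut}) is
\[
\vSs=\add\Bigl(\vect\XX\cup\tau^-(B^{>})\cup\bigcup_{x\in V}\{\tau^j S_x[n]\mid j\in\Rr_x,\ n\geq 1\}\Bigr),
\]
where the \emph{undercut} $B^{>}$ is defined differently for the interior part $B_{\mathfrak i}$ (supported in $V$) and the exterior part $B_{\mathfrak e}$ (supported outside $V$); neither this distinction nor the shift by $\tau^-$ appears in your description, and ``proper subbranches of $B$'' does not reproduce $\tau^-(B^{>})$. Note also that $B_x$ need not lie in a single wing but may occupy several pairwise non-adjacent wings in $\Uu_x$.

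Second, there is no ``pure-injective version'' of Lemma~\ref{lem:porperties-tilting}(3): that lemma applies only to \emph{coherent} summands, and Pr\"ufer sheaves are not coherent. The paper instead uses the criterion~\eqref{eq:Pruefer-criterion}: $S[\infty]$ is a summand of $T$ if and only if the entire ray $\{S[n]\mid n\geq 1\}$ lies in $\lperpe{(\rperpe{T})}$. This is precisely why $\vSs$ must contain the full rays $\tau^j S_x[n]$ and not just the simples.

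Third, for uniqueness the paper does not use perpendicular reduction at $B$ (that machinery is developed only in Section~\ref{calculus}, after the theorem). Instead it shows directly that for \emph{any} tilting sheaf $T$ with large torsion part one has $\lperpe{(\rperpe{T})}\cap\Hh=\vSs$, whence $\rperpe{T}=\rperpe{\vSs}$ and equivalence follows. The inclusion $\vSs\subseteq\lperpe{(\rperpe{T})}\cap\Hh$ combines Lemmas~\ref{lem:V-divisibility-lemma} and~\ref{lem:T+-divisibility-lemma} with the Pr\"ufer criterion; the reverse inclusion is exactly the case analysis you flag as the main obstacle, and it is here that the interior/exterior distinction for $B$ becomes essential.
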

For the proof we need several preparations. We start by describing the
torsion part of a tilting sheaf.
\begin{lemma}\label{lem:exceptional-tube}
  Let $T$ be a tilting sheaf and $x$ an exceptional point of weight
  $p=p(x)>1$ such that $(tT)_x\neq 0$. 
  There are two possible cases:
  \begin{enumerate}
  \item[(1)] $(tT)_x$ contains no Pr\"ufer sheaf, but at most $p-1$
    indecomposable summands of finite length, which are arranged in
    connected branches in pairwise non-adjacent wings.\smallskip
  \item[(2)] $(tT)_x$ contains precisely $s$ Pr\"ufer sheaves, where  $1\leq
    s\leq p$, and precisely $p-s$ indecomposable summands of finite
    length. The latter lie in wings of the following form: if
    $S[\infty]$, $\tau^{-r}S[\infty]$ are summands of $T$ with $2\leq
    r\leq p$, but the Pr\"ufer sheaves
    $\tau^{-}S[\infty],\dots,\tau^{-{(r-1)}}S[\infty]$ in between are
    not, then there is a (unique) connected branch in the wing $\Ww$ rooted in
    $S[r-1]$ 
    that occurs as a summand of $T$.
  \end{enumerate}
\end{lemma}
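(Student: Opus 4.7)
My approach would first invoke Theorem~\ref{thm:torsion-splitting} to write $(tT)_x$ as a direct sum of Prüfer sheaves $\tau^j S[\infty]$ and exceptional finite length sheaves of length~$<p$ (forced by $\Ext^1(E,E)=0$). The entire analysis then rests on four Hom/Ext computations inside the tube $\Uu_x$, each immediate from Serre duality: (a)~Prüfers are injective, so $\Ext^1(-,\tau^j S[\infty])=0$; (b)~$\Ext^1(\tau^i S[\infty],\tau^j S[\infty])=0$ since distinct Prüfers in a tube admit no morphisms; (c)~for $E$ of finite length, $\Ext^1(\tau^j S[\infty],E)=\D\Hom(E,\tau^{j+1}S[\infty])$ is nonzero iff $\soc(E)=\tau^{j+1}S$; and (d)~$\Hom(\tau^j S[\infty],E)=0$ for such $E$, since the identification $\tau^j S[\infty]/\tau^j S\simeq\tau^{j-1}S[\infty]$ shows that Prüfers in $\Uu_x$ admit no finite length quotients.

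In Case~(1), where no Prüfer appears, the finite length summands of $(tT)_x$ form a set of pairwise Ext-orthogonal exceptional sheaves in $\Uu_x$. The classical combinatorics of partial tilting objects in tubes (cf.~\ref{wings} and~\cite{lenzing:meltzer:1996}) identifies such sets exactly as unions of connected branches in pairwise non-adjacent wings, with total cardinality at most~$p-1$.

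For Case~(2), write the Prüfer summands as $\tau^{j_1}S[\infty],\dots,\tau^{j_s}S[\infty]$. By~(c) every finite length summand $E$ must satisfy $\soc(E)\notin\{\tau^{j_k+1}S\}$. These forbidden socles split the cyclic list of simples of $\Uu_x$ into $s$ arcs, and one checks that the finite length summands sitting between consecutive Prüfers $S[\infty]$ and $\tau^{-r}S[\infty]$ (with none of $\tau^{-1}S[\infty],\dots,\tau^{-(r-1)}S[\infty]$ being summands) are trapped in the wing $\Ww$ rooted in $S[r-1]$, where they form a partial tilting object of $\add\Ww\simeq\mod\vec{\AAA}_{r-1}$.

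The main obstacle is to promote each such partial tilting object to a full connected branch of $r-1$ summands, yielding the total count $\sum_k(r_k-1)=p-s$. I would argue by contradiction using (TS2): if the branch in some wing $\Ww$ were strictly partial, the $\vec{\AAA}_{r-1}$-tilting theory furnishes an indecomposable $X\in\add\Ww$ orthogonal in $\add\Ww$ to the existing branch. Combining~(a)--(d) with the identities $\Ext^1(T_+,X)=\D\Hom(X,\tau T_+)=0$ (as $\tau T_+$ is torsionfree and $X$ has finite length) and $\Hom(T_+,X)=0$ (as $X$ is torsion and $T_+$ torsionfree), together with $\soc(X)$ lying in the wing basis and hence not among the forbidden socles, one verifies that $X\in\rperp{T}$ in $\QHh$, contradicting (TS2). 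Thus every wing must contribute a complete connected branch, finishing both the combinatorial description and the count.
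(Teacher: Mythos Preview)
Your approach has a genuine gap in the final step. The claim ``$\Hom(T_+,X)=0$ (as $X$ is torsion and $T_+$ torsionfree)'' is false: the torsion pair $(\Tt,\Ff)$ gives $\Hom(\Hh_0,\vect\XX)=0$, not the other way round. Torsionfree sheaves routinely map onto finite length sheaves (any line bundle maps onto simples), and nothing prevents the quasicoherent $T_+$ from doing so as well. Without $\Hom(T_+,X)=0$ you cannot place $X$ in $\rperp{T}$, so the (TS2) contradiction collapses. There is also a secondary inaccuracy in your fact~(c): $\Ext^1(\tau^j S[\infty],E)=\D\Hom(E,\tau^{j+1}S[\infty])$ is nonzero iff $\tau^{j+1}S$ is a \emph{composition factor} of $E$, not merely its socle. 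You actually need this stronger version both to confine the finite length summands to the wing $\Ww$ (their length must be bounded, not just their socle constrained) and to verify $\Ext^1(\text{Pr\"ufer summands},X)=0$ in the last step.

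The paper avoids the $\Hom(T_+,-)$ obstacle entirely by aiming for $\Add(T)=\rperpe{T}\cap\lperpe{(\rperpe{T})}$ rather than $\rperp{T}$: for a complement $E\in\Ww$ one shows $\Ext^1(T,E)=0$ and $\Ext^1(E,T)=0$, which forces $E$ to be a summand of $T$, contradicting $E\notin B$. The only delicate point is $\Ext^1(E,T_+)=0$, and here the paper first proves directly that the root $S[r-1]$ is a summand of $T$ (so $\Ext^1(S[r-1],T_+)=0$), and then for any $E\in\Ww$ uses the short exact sequence $0\to F\to S[r-1]\to E'\to 0$ with $E\hookrightarrow E'$: applying $\Hom(-,T_+)$ and using $\Hom(F,T_+)=0$ (here the torsion pair \emph{does} apply, since $F$ is torsion) gives $\Ext^1(E',T_+)=0$ and hence $\Ext^1(E,T_+)=0$. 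This is the missing ingredient your argument needs; once you have it, the route via $\Add(T)$ goes through cleanly without ever needing control over $\Hom(T_+,-)$.
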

\begin{proof}
  Given a simple object $S\in\Uu_x$, the corresponding Pr\"ufer sheaf
  $S[\infty]$ is $S[p]$-filtered, and thus
  by~\cite[Prop.~2.12]{saorin:stovicek:2011} we have
  \begin{equation}
    \label{eq:Pruefer-criterion}
    S[\infty]\ \text{is a
      summand of}\ T\;\Leftrightarrow\;\lperpe{(\rperpe{T})}\ \text{contains the ray}\ 
    \{S[n]\mid\,n\geq 1\}.  
  \end{equation}
  If no such ray exists, then $(tT)_x$ has at least one indecomposable
  summand of finite length, and it is well-known that all such
  summands are arranged in branches in pairwise non-adjacent wings,
  compare~\cite[Ch.~3]{lenzing:meltzer:1996}.\medskip

  Assume now that, say, $S[\infty]$ and $\tau^{-r} S[\infty]$ are
  summands of $T$, but no Pr\"ufer sheaf ``in between'' is a summand,
  where $2\leq r\leq p$ (when $r=p$, there is precisely one Pr\"ufer
  summand). We show that $S[r-1]$ is a summand of $T$. By
  (\ref{eq:Pruefer-criterion}) this is equivalent to show
  $\Ext^1(T,S[r-1])=0$. If this is not the case, then
  $\Hom(\tau^-S[r-1],T)\neq 0$, and thus there exists an
  indecomposable summand $E$ of $T$ lying on a ray starting in $\tau^-
  S[r-1],\dots,\tau^{-(r-2)}S[2]$ or $\tau^{-(r-1)}S$. But for such
  an $E$ we have
  $0\neq\D\Hom(\tau^-E,\tau^{-r}S[\infty])=\Ext^1(\tau^{-r}S[\infty],E)$,
  contradicting the fact that $T$ has no self-extension. Thus $S[r-1]$
  is a direct summand of $T$. The latter argument also shows that
  every indecomposable summand of $T$ of finite length and lying on a
  ray starting in $S,\tau S,\dots,\tau^{-(r-1)}S$ actually lies in the
  wing $\Ww$ rooted in $S[r-1]$.\medskip

  We claim that the direct sum $B$ of all indecomposable summands of
  $T$ lying in $\Ww$ forms a tilting object in $\add\Ww$. We have
  $\Ext^1(B,B)=0$. Assume that $B$ is not a tilting object in
  $\Ww$. Then there is an indecomposable $E\in\Ww$, not a direct
  summand of $B$, such that $\Ext^1(E\oplus B,E\oplus B)=0$. Let $E'$
  be the indecomposable quotient of $S[r-1]$ such that $E$ embedds
  into $E'$. We have a short exact sequence
  $0\ra F\ra S[r-1]\ra E'\ra 0$ with indecomposable $F\in\Ww$. Let
  $T_+$ be the torsionfree part of $T$. Then exactness of
  $0=\Hom(F,T_+)\ra\Ext^1(E',T_+)\ra\Ext^1(S[r-1],T_+)=0$ shows
  $\Ext^1(E',T_+)=0$, and then also $\Ext^1(E,T_+)=0$. Moreover
  $\Ext^1(T_+,E)=\D\Hom(\tau^- E,T_+)=0$, and since $E\in\Ww$, there
  are no extensions between $E$ and Pr\"ufer summands of $T$. We
  conclude that $E\in\rperpe{T}\cap\lperpe{(\rperpe{T})}=\Add(T)$, a
  contradiction. Thus $B$ is tilting, and it forms a connected
  branch.\medskip

  Doing this with every ``gap'' between Pr\"ufer sheaves in $(tT)_x$,
  one sees that $(tT)_x$ contains precisely $p-s$ indecomposable
  summands of finite length.
\end{proof}
\begin{lemma}\label{lem:T+-divisibility-lemma}
  In  the preceding lemma, the torsionfree part $T_+$ of $T$ belongs to $\Ww^{\perp_1}$
  for every wing $\Ww$ occurring in (1) or (2), and it is even
   $x$-divisible in case (2).
\end{lemma}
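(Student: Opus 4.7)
The plan is to prove the two assertions separately, in each case by straightforward short-exact-sequence manipulations. Two generic facts will be used repeatedly: (a) $\Hom(F,T_+)=0$ for every $F$ of finite length (the image of such a map is a subobject of the torsionfree sheaf $T_+$, yet a quotient of the finite-length object $F$, hence simultaneously torsionfree and torsion, hence zero); and (b) $\Ext^1(D,T_+)=0$ for every direct summand $D$ of $T$, coming from $\Ext^1(T,T)=0$.

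\emph{Perpendicularity with a wing.} The observation to start from is that in both cases~(1) and~(2) of Lemma~\ref{lem:exceptional-tube}, the connected branch appearing in $\Ww$ as a summand of $T$ must contain the root $E$ of $\Ww$ itself: by the description of connected branches recalled in~\ref{wings}, a connected branch in $\Ww$ has precisely $\ell(E)$ non-isomorphic indecomposable summands, one of them being the root. Hence $E\in\add(T)$ and $\Ext^1(E,T_+)=0$ by~(b). Now any $W\in\Ww$ is, by definition of the wing, a subquotient of $E$, say $W=U/V$ with $V\subseteq U\subseteq E$. The long exact sequence of $\Hom(-,T_+)$ applied to $0\to V\to U\to W\to 0$ contains
\[\Hom(V,T_+)\longrightarrow\Ext^1(W,T_+)\longrightarrow\Ext^1(U,T_+),\]
whose first term vanishes by~(a). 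Applied to $0\to U\to E\to E/U\to 0$, it contains
\[\Ext^1(E,T_+)\longrightarrow\Ext^1(U,T_+)\longrightarrow\Ext^2(E/U,T_+)=0,\]
the last equality using hereditarity of $\QHh$. Hence $\Ext^1(U,T_+)=0$, and therefore $\Ext^1(W,T_+)=0$, as required.

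\emph{$x$-divisibility in case~(2).} Fix a Pr\"ufer summand $S[\infty]$ of $T$ contained in $\Uu_x$; then $\Ext^1(S[\infty],T_+)=0$ by~(b). For each $i\geq 0$, the injective uniserial sheaf $\tau^{-i}S[\infty]$, divided by its socle $\tau^{-i}S$, is again injective (quotients of injectives are injective in a hereditary category) with socle $\tau^{-i-1}S$, hence isomorphic to $\tau^{-i-1}S[\infty]$. This produces the short exact sequence
\[0\longrightarrow\tau^{-i}S\longrightarrow\tau^{-i}S[\infty]\longrightarrow\tau^{-i-1}S[\infty]\longrightarrow 0.\]
Applying $\Hom(-,T_+)$ and using $\Hom(\tau^{-i}S,T_+)=0$ from~(a), one obtains for each $i$ an exact sequence
\[0\longrightarrow\Ext^1(\tau^{-i-1}S[\infty],T_+)\longrightarrow\Ext^1(\tau^{-i}S[\infty],T_+)\longrightarrow\Ext^1(\tau^{-i}S,T_+)\longrightarrow 0.\]
An induction on $i$, based on $\Ext^1(S[\infty],T_+)=0$, then yields both $\Ext^1(\tau^{-i}S[\infty],T_+)=0$ and $\Ext^1(\tau^{-i}S,T_+)=0$ for every $i\geq 0$. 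Since the simples in $\Uu_x$ are precisely $\tau^{-i}S$ for $0\leq i\leq p-1$, this is the $x$-divisibility of $T_+$.

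Neither part presents a serious obstacle; the whole argument reduces to the two generic vanishings~(a) and~(b) together with hereditarity. The only conceptual point worth underlining is that in case~(2) one needs to invoke only one Pr\"ufer summand of $T$ in $\Uu_x$, because the socle-quotient induction then propagates the vanishing of $\Ext^1(-,T_+)$ around the entire $\tau$-orbit of simples in $\Uu_x$, independently of which other Pr\"ufer sheaves of $\Uu_x$ happen to be summands of $T$.
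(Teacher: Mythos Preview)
Your proof is correct. The first part---reducing to $\Ext^1(\text{root},T_+)=0$ and then passing to subquotients via the long exact sequence and hereditarity---is essentially the argument the paper also uses (it refers back to the proof of Lemma~\ref{lem:exceptional-tube}, where the same computation appears).

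For the $x$-divisibility in case~(2), your route differs from the paper's. You exploit the short exact sequences $0\to\tau^{-i}S\to\tau^{-i}S[\infty]\to\tau^{-(i+1)}S[\infty]\to 0$ to propagate the vanishing of $\Ext^1(-,T_+)$ from a single Pr\"ufer summand around the entire $\tau$-orbit of simples; this needs only one Pr\"ufer summand and the torsionfreeness of $T_+$. The paper instead argues locally: having already established $T_+\in\rperpe{\Ww}$ for every wing $\Ww$ attached to a Pr\"ufer summand, it treats the remaining simples (those $E$ with $E\notin\Ww$ but $\tau E\in\Ww$) by contradiction, using Serre duality to translate $\Ext^1(E,T_+)\neq 0$ into $\Hom(T_+,\tau E)\neq 0$ and then climbing up the ray via almost split sequences to reach $\Ext^1(S[\infty],T_+)\neq 0$. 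Your approach is more uniform and avoids the case distinction between simples inside and outside the wings; the paper's approach, on the other hand, makes visible exactly which simple ``closes the gap'' between two adjacent Pr\"ufer summands, which fits naturally with the wing analysis of the preceding lemma.
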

\begin{proof}
  The first part of the statement is shown as in the preceding proof.
  In case (2) it then remains to check that $T_+$ has no extensions
  with the simple objects in $\Uu_x$ which do not belong to the wings
  defined by the Pr\"ufer summands of $T$. Let $\Ww$ be such wing and
  $E$ such simple object, that is, $E\not\in\Ww$, but $\tau
  E\in\Ww$.
  Assume $0\neq \Ext^1(E,T_+)\simeq\D\Hom(T_+,\tau E)$. Since
  $\Hom(T_+,\tau\Ww)=0$, repeated application of the almost split
  property yields an indecomposable object $U$ on the ray starting in
  $S$ such that $\Hom(T_+,\tau U)\neq 0$. By Serre duality
  $\Ext^1(U,T_+)\neq 0$, and since $U$ embeds in $S[\infty]$, also
  $\Ext^1(S[\infty],T_+)\neq 0$, a contradiction.
\end{proof}
The branch sheaves and the Pr\"ufer sheaves occurring in the torsion
part of a tilting sheaf are interrelated.  In the situation of
Lemma~\ref{lem:exceptional-tube}~(2), we denote by $\Rr_x$ the set of
cardinality $s$ of all $j\in\{0,\dots,p(x)-1\}$ such that the Pr\"ufer
sheaf $\tau^j S[\infty]$ is a direct summand of $T$. Each such set
defines a unique collection
$$\Ww=\rperpe{\{\tau^j S[\infty]\mid j\in \Rr_x\}}\cap\Uu_x$$ of
pairwise non-adjacent wings in the exceptional tube $\Uu_x$, whereas
the branch $B$, viewed as collection of indecomposable sheaves, is
given as $$B=\Add(T)\cap\Uu_x.$$ In particular, this shows that a
tilting sheaf $T'$ with a different branch $B'\neq B$ in $\Uu_x$ will
have $\rperpe{{T'}}\neq\rperpe{T}$, that is, $T$ and $T'$ cannot be
equivalent.

Conversely, every branch sheaf in $\Uu_x$ -- which we will often
identify with the set of its indecomposable summands -- defines a
unique collection $\Ww$ of pairwise non-adjacent wings in $\Uu_x$, and
this defines uniquely the set $\Rr_x$; namely
\begin{equation}
    \label{eq:Def_R_x}
    \Rr_x=\{j=0,\dots,p(x)-1\mid \tau^{j+1}S\not\in\Ww\}.
\end{equation}
We now consider a pair $(B,V)$ given by a branch sheaf $B\in\Hh$ and a
subset $V\subseteq\XX$, and we associate to it a resolving class. For the
moment $V=\emptyset$ is permitted. In case $V\neq\emptyset$, the
corresponding tilting sheaf will have the properties required by
Theorem~\ref{thm:large-tilting-sheaves-in-general}.

\medskip

Let us fix some notation. The branch sheaf $B$ decomposes into
$B=\bigoplus_{x\in\XX}B_x$; of course $B_x\neq 0$ only if $x$ is one
of the finitely many exceptional points $x_1,\dots,x_t$. For every $x$
denote by $\Ww_x$ the collection of pairwise non-adjacent wings in
$\Uu_x$ defined by $B_x$, and for every $x\in V$ let $\Rr_x$ be the
associated non-empty subset of $\{0,\dots,p(x)-1\}$ defined
by~\eqref{eq:Def_R_x}.  We will write
$$B=B_{\mathfrak i}\oplus B_{\mathfrak e}$$ where $B_{\mathfrak e}$ is
supported in $\XX\setminus V$ and $B_{\mathfrak i}$ in $V$, and we
will say that $B_{\mathfrak e}$ is \emph{exterior} and
$B_{\mathfrak i}$ is \emph{interior} (with respect to $V$).

Recall that each $B_x$ is a direct sum of connected branches in
pairwise non-adjacent wings.  For a connected branch $C$ with
associated wing $\Ww_C$, we call the set
$$C^{>}:=
\begin{cases}
  \rperpo{C}\cap\Ww_C & \text{if}\ C\ \text{is interior},\\
  \rperpo{C}\cap\tau\Ww_C & \text{if}\ C\ \text{is exterior},
\end{cases}$$
the \emph{undercut} of $C$. The undercut $B^{>}$ of $B$ is the union
of the undercuts of all its connected branch components. For an
example, we refer to~\ref{examp:tube-11}.
\begin{lemma}\label{lemma:undercut}
  Let $V\subseteq\XX$ and $B=B_{\mathfrak i}\oplus B_{\mathfrak e}$ be
  a branch sheaf.
  \begin{enumerate}
  \item[(1)] With the notation above, the class
    \begin{equation}
      \label{eq:undercut-formula}
      \vSs=\add\Bigl(\,\vect\XX\cup\tau^- (B^{>})\cup\bigcup_{x\in
        V}\{\tau^j S_x[n]\mid j\in \Rr_x,\,n\in\NN\}\,\Bigr)
    \end{equation}
    is resolving. \smallskip
  \item[(2)] If $T$ is a tilting sheaf with
    $\rperpe{T}=\rperpe{\vSs}$, then
    $\vSs=\lperpe{(\rperpe{T})}\cap\Hh$, the torsionfree part $T_+$ is
    $V$-divisible, and the torsion part is given
    by
    $$T_0=B\oplus\bigoplus_{x\in V}\bigoplus_{j\in \Rr_x}\tau^j
    S_x[\infty].$$
  \end{enumerate}
\end{lemma}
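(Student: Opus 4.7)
The plan is to prove the two parts in order. For part (1), the class $\vSs$ contains $\vect\XX$ and hence a generating system of $\QHh$, and is closed under direct summands by the $\add$ in its definition. For closure under extensions and under predecessors in short exact sequences (conditions (i) and (iii) of a resolving class), I would exploit the decomposition $\Hh=\vect\XX\vee\coprod_{x}\Uu_x$ together with $\Hom(\Hh_0,\Hh_+)=0$ to split the verification tube-wise and in the vector-bundle part. Inside $\vect\XX$ both closures are standard: torsionfree coherent sheaves are closed under extensions in $\Hh$, and a coherent subsheaf of a vector bundle is again a vector bundle. Inside each exceptional tube $\Uu_x$ the finite-length part of $\vSs$ is $\add$ of $\tau^-(B^{>}_x)$ together with the rays $\{\tau^j S_x[n]\mid n\geq 1\}$ for $j\in\Rr_x$ (when $x\in V$); closure under extensions and subquotients in the uniserial $\vec{\Uu}_x$ is then a direct combinatorial check using the wing structure from~\ref{wings} and the explicit shape of the undercut.

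For part (2), apply Theorem~\ref{thm:torsion-splitting} to write $T=T_+\oplus T_0$. I would first identify the Pr\"ufer summands of $T$: for $x\in V$ and $j\in\Rr_x$ the whole ray $\{\tau^j S_x[n]\}_{n\geq 1}$ lies in $\vSs\subseteq\lperpe{(\rperpe{T})}$, so the criterion~\eqref{eq:Pruefer-criterion} established in the proof of Lemma~\ref{lem:exceptional-tube} forces $\tau^j S_x[\infty]$ to be a summand of $T$; conversely, at $x\notin V$ the class $\vSs$ contains no full ray, and at $x\in V$ with $j\notin\Rr_x$ the definition~\eqref{eq:Def_R_x} places $\tau^{j+1}S_x$ in one of the wings, breaking the ray. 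Next I would apply Lemma~\ref{lem:exceptional-tube}: at an interior $x\in V$ the Pr\"ufer summands above leave gaps of wings, and each gap forces a unique connected branch inside $T$, which by the construction of $\tau^- B^{>}$ equals $B_{\mathfrak i,x}$; at an exterior $x\notin V$ case (1) of the same lemma gives a branch sheaf in non-adjacent wings, pinned down to $B_{\mathfrak e,x}$ by the vanishing $\Ext^1(\tau^- B^{>}_x,\rperpe{T})=0$. The $V$-divisibility of $T_+$ then follows from Lemma~\ref{lem:T+-divisibility-lemma} applied at every $x\in V$ carrying a Pr\"ufer summand.

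Finally, for $\vSs=\lperpe{(\rperpe{T})}\cap\Hh$, the inclusion $\subseteq$ is immediate. For the reverse inclusion, any $F\in\lperpe{(\rperpe{T})}\cap\Hh$ decomposes via Theorem~\ref{thm:torsion-splitting} as $F=F_+\oplus F_0$; the torsionfree part $F_+$ is a vector bundle and hence in $\vSs$, while each $F_{0,x}$ is analyzed tube by tube using the explicit $T_{0,x}$ determined above, ruling out any indecomposable outside the listed undercuts and rays. I expect the main obstacle to be exactly this tube-by-tube combinatorial bookkeeping in the exceptional tubes, simultaneously matching the undercuts $\tau^- B^{>}_x$, the Pr\"ufer gaps indexed by $\Rr_x$, and the branch and Pr\"ufer summands of $T$; this is where the precise definitions of $B^{>}$ and $\Rr_x$ do the heavy lifting.
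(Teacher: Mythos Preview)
Your treatment of part~(1) matches the paper's.

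In part~(2), however, your order of operations creates a circularity. You want to determine the Pr\"ufer summands of $T$ first, via the criterion~\eqref{eq:Pruefer-criterion}: $\tau^j S_x[\infty]$ is a summand of $T$ iff the full ray $\{\tau^j S_x[n]\mid n\geq 1\}$ lies in $\lperpe{(\rperpe{T})}$. For producing summands this is fine, since the ray lies in $\vSs\subseteq\lperpe{(\rperpe{T})}$. But to \emph{rule out} Pr\"ufer summands at $x\notin V$ (or at $x\in V$ with $j\notin\Rr_x$) you argue that ``$\vSs$ contains no full ray''---yet the criterion is about $\lperpe{(\rperpe{T})}$, not about $\vSs$, and at this point you have only the inclusion $\vSs\subseteq\lperpe{(\rperpe{T})}\cap\Hh$, not equality. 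The same gap reappears when you try to pin down the coherent summands of $T_0$: Lemma~\ref{lem:exceptional-tube} tells you only that each gap-wing carries \emph{some} connected branch, not that it is $B_{\mathfrak i,x}$; your phrase ``which by the construction of $\tau^- B^{>}$ equals $B_{\mathfrak i,x}$'' is precisely the point that needs proof.

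The paper avoids this by reversing the order and proving $\vSs=\lperpe{(\rperpe{T})}\cap\Hh$ \emph{first}. By Serre duality an indecomposable $E\in\Hh$ lies in $\lperpe{(\rperpe{T})}$ iff $\tau E\in\rperpo{(\rperpe{\vSs})}=\rperpo{(\lperpo{\tau\vSs})}$; since $\tau\vSs$ is closed under subobjects and extensions by part~(1), one invokes~\cite[Lem.~1.3]{reiten:ringel:2006} inside $\Hh$ to conclude $\tau E\in\tau\vSs$, i.e.\ $E\in\vSs$. Once this equality is in hand, the Pr\"ufer summands follow immediately from~\eqref{eq:Pruefer-criterion}, the $V$-divisibility of $T_+$ from Lemma~\ref{lem:T+-divisibility-lemma}, and the coherent summands are identified cleanly via
\[
\Add(T)\cap\Hh=\rperpe{T}\cap\lperpe{(\rperpe{T})}\cap\Hh=\rperpe{\vSs}\cap\vSs,
\]
which the paper then computes directly to be $\add(B)$. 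Your tube-by-tube analysis could in principle be made to work, but it would have to replicate this computation anyway, and without the equality $\vSs=\lperpe{(\rperpe{T})}\cap\Hh$ established up front you have no handle on the unwanted summands.
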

\begin{proof}
  (1) The class $\vSs$ is clearly closed under subobjects.  A simple
  case by case analysis shows that $\vSs$ is also closed under
  extensions. For instance, if $0\ra A\ra E\ra C\ra 0$ is a short
  exact sequence with $A$ a vector bundle and $C\in\vSs$
  indecomposable of finite length, then $E=E_+\oplus E_0$, with $E_+$
  a vector bundle and $E_0$ of finite length; it follows that $E_0$ is
  isomorphic to a subobject of $C$, and thus $E_0\in\vSs$, and then
  $E\in\vSs$.  Compare also \cite[p.~36 from line -19]{angeleri:sanchez:2013}.  
 Since $\vSs$ contains the system of
  generators $\vect\XX$, we conclude that  it is  resolving.

  (2) 
  By Serre duality, an indecomposable coherent sheaf $E\in\Hh$ belongs
  to $\lperpe{(\rperpe{T})}$ if and only if
  $\tau
  E\in\rperpo{(\rperpe{T})}=\rperpo{(\rperpe{\vSs})}=\rperpo{(\lperpo{\tau\vSs})}$.
  We claim that this is further equivalent to $\tau E\in\tau\vSs$,
  that is, $E\in\vSs$.  Indeed, the claim is shown by arguing inside
  the abelian category $\Hh$ as
  in~\cite[Lem.~1.3]{reiten:ringel:2006}, keeping in mind that
  $\tau\vSs$ is closed under subobjects and extensions by part~(1).
  
  We thus have $\vSs=\lperpe{(\rperpe{T})}\cap\Hh$.  It follows
  from~\eqref{eq:Pruefer-criterion} that $T$ has precisely the
  Pr\"ufer summands $\tau^j S_x[\infty]$ with $x\in V$ and
  $j\in\Rr_x$. In particular, $T_+$ is $V$-divisible by
  Lemma~\ref{lem:T+-divisibility-lemma}.  Furthermore,
  \begin{equation}
    \label{eq:vSs-cap-rperp-vSs}
    \rperpe{\vSs}\cap\vSs=\rperpe{T}\cap\lperpe{(\rperpe{T})}\cap{\Hh}=\Add(T)\cap\Hh,
  \end{equation}
  and we now show that  this class further coincides with $\add(B)$.

  Let $\Ww$ be the union of non-adjacent wings associated to $B$, and
  let $B_1$ and $B_2$ be two indecomposable summands of $B$. Then
  $0=\Ext^1(B_1,B_2)=\D\Hom(B_2,\tau B_1)$. Thus
  $\tau B_1\in\rperpo{B}$.  If $B_1$ is either exterior, or interior
  with $\tau B_1\in\Ww$, then $\tau B_1\in B^{>}$, that is,
  $B_1 \in\tau^- (B^{>})\subseteq\vSs$. If, on the other hand, $B_1$
  is interior with $\tau B_1\not\in\Ww$, then $B_1\in\vSs$ by
  definition of $\Rr_x$. Moreover, we have
  $\Ext^1(\tau^-(B^{>}),B_1)=\D\Hom(B_1,B^{>})=0$, and then
  $\Ext^1 (\tau^j S_x[n],B_1)=\D\Hom(B_1,\tau^{j+1}S_x[n])=0$, for any
  $x\in V$ and $j\in\Rr_x$, shows that $B_1\in\rperpe{\vSs}$.
  
  Conversely, let $E\in\vSs\cap\rperpe{\vSs}$ be indecomposable.
  By~\eqref{eq:vSs-cap-rperp-vSs} we have that $E$ is a summand of
  $T$, in particular $E$ is exceptional and belongs to an exceptional
  tube. If $E$ is supported in $V$, then it is a summand of
  $B_{\mathfrak i}$ by Lemma~\ref{lem:exceptional-tube} and the fact
  that the connected parts of $B$ form tilting objects in the
  corresponding wings. If $E$ is not supported in $V$, then it belongs
  to $\tau^-(C^>)$ for a connected branch component $C$ of
  $B_{\mathfrak e}$. Since $\tau^-(C^>)=\lperpe{C}\cap\Ww_C$ where
  $\Ww_C$ is the wing associated to $C$, we infer again that $E$ is a
  summand of $B_{\mathfrak e}$.
  
  We  conclude that $T_0$ is given by
  $B\oplus\bigoplus_{x\in V}\bigoplus_{j\in \Rr_x}\tau^j S_x[\infty]$, as desired.
\end{proof}

We are now ready for the

\begin{proof}[Proof of Theorem~\ref{thm:large-tilting-sheaves-in-general}]
  By the preceding lemma there exists a (large) tilting sheaf with the
  claimed properties. 
  
  Let now $T=T_+\oplus T_0$ be any tilting sheaf with a non-coherent
  torsion part $T_0$. From Lemma~\ref{lem:exceptional-tube} we infer
  that $T_0$ is of the form
  $B\oplus\bigoplus_{x\in V}\bigoplus_{j\in \Rr_x}\tau^j S_x[\infty]$
  for some branch sheaf $B$, some non-empty subset $V\subseteq\XX$ and
  some sets $\Rr_x$.  It is sufficient to show that the class $\vSs$
  from~\eqref{eq:undercut-formula} satisfies
  $\vSs=\lperpe{(\rperpe{T})}\cap\Hh$, since this will imply
  $\rperpe{T}=\rperpe{\vSs}$, as desired.

  By Lemma~\ref{lem:T+-divisibility-lemma} the torsionfree part $T_+$
  of $T$ is $V$-divisible.  From Lemma~\ref{lem:V-divisibility-lemma}
  we infer $T_+\in\rperpe{(\vect\XX)}$. Since also
  $T_0 \in\rperpe{(\vect\XX)}$ by Serre duality, we conclude
  $\Ext^1(X,T)=0$ for any vector bundle $X$, hence
  $\vect\XX\subseteq\lperpe{(\rperpe{T})}$.


  Next, we show $\tau^-(B^{>})\subseteq\lperpe{(\rperpe{T})}$. If
  $E\in\tau^-({B_{\mathfrak i}}^{>})$, then
  $\Ext^1(E,B)=\D\Hom(B,\tau E)=0$ by definition of the
  undercut. Since $T_+$ and the Pr\"ufer sheaves are $V$-divisible, we
  get $\Ext^1(E,T)=0$ and $E\in \lperpe{(\rperpe{T})}$.  If
  $E\in\tau^-({B_{\mathfrak e}}^{>})$, then it belongs to
  $\tau^-(C^>)=\lperpe{C}\,\cap\,\Ww_C$ for a connected branch
  component $C$ of $B_{\mathfrak e}$ with associated wing $\Ww_C$. It
  follows $\Ext^1(E,B)=\D\Hom(B,\tau E)=0$, and $\Ext^1(E,T_+)=0$ by
  Lemma~\ref{lem:T+-divisibility-lemma}, so again
  $E\in \lperpe{(\rperpe{T})}$.

  Finally, if $E$ belongs to a ray $\{\tau^jS_x[n]\mid n\ge 1\}$ with $x\in V$ and $j\in
  \Rr_x$, then $E\in\lperpe{(\rperpe{T})}$ by~\eqref{eq:Pruefer-criterion}. 
  
  Altogether we have shown
  $\vSs\subseteq\lperpe{(\rperpe{T})}\cap\Hh$.
In order to prove the reverse inclusion, let $E\in\Hh$ be
  indecomposable with $E\in\lperpe{(\rperpe{T})}$.
  By definition of   $\vSs$, we can assume that $E$ is of
  finite length, and
  further, if 
  concentrated in a point $x\in V$,  that it  has the form
  $\tau^j S_x[n]$ with $j\not\in\Rr_x$. This means   $\tau^j S_x\in\tau^-\Ww$ 
  by~\eqref{eq:Def_R_x}, so  there is a  connected branch
  component $C$ of $B_{\mathfrak i}$  with associated wing $\Ww_C$
  such that $\tau^j S_x\in\tau^-\Ww_C$. Since $C$ is a summand of $T$,
  we have $E\in\lperpe{C}\cap\tau^-\Ww_C=
  \tau^-({C}^{>})\subseteq\vSs$.
  
  It remains to check the case when $E$ is concentrated in a point
  $x\not\in V$.  Notice that $\Hom(T,\tau E)\simeq\D\Ext^1(E,T)=0$
  implies $\Ext^1(T,\tau E)\neq 0$ by condition (TS2). But the latter
  amounts to $\Ext^1(B_{\mathfrak e},\tau E)\neq 0$, or equivalently,
  $\Hom(E,B_{\mathfrak e})\neq 0$. Let
  $0\neq f: E\to B_{\mathfrak e}$. If $E$ is simple, $f$ is a
  monomorphism, and $E\in\vSs$ because
  $B_{\mathfrak e}\in\tau^- ({B_{\mathfrak e}}^{>})\subseteq\vSs$ and
  $\vSs$ is closed under subjects.  If $E$ has length $\ell>1$, we
  consider the short exact sequence
  $0\to \Ker f\to E\to \text{Im}\,f\to 0$ where $\text{Im}\, f$
  belongs to $\vSs\subseteq \lperpe{(\rperpe{T})}$ and
  $\Ker f\in\lperpe{(\rperpe{T})}$. Proceeding by induction on $\ell$
  and using that $\vSs$ is closed under extensions, we conclude that
  $E\in\vSs$, which completes the proof.
\end{proof}
\begin{corollary} \label{cor:nctp} 
  Let $\QHh=\Qcoh\XX$ with $\XX$ a
  weighted noncommutative regular projective curve.  There is a
  bijection between the equivalence classes of tilting sheaves in
  $\QHh$ having a large torsion part, and the set of pairs $(B,V)$
  given by a branch sheaf $B\in\Hh$ and a subset
  $\emptyset\neq V\subseteq\XX$.  \qed
\end{corollary}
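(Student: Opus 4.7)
The plan is to read the bijection directly off Theorem~\ref{thm:large-tilting-sheaves-in-general}. Define the map
\[
\Phi\colon (B,V)\longmapsto [T_{(B,V)}],
\]
where $[T_{(B,V)}]$ is the equivalence class of the tilting sheaf constructed in part (1) of the theorem. The theorem asserts that such a tilting sheaf exists and is unique up to equivalence for every pair $(B,V)$ with $\emptyset\neq V\subseteq\XX$ and $B$ a branch sheaf; moreover, the resulting $T=T_+\oplus T_0$ has large (non-coherent) torsion part because of the Pr\"ufer summands indexed by $V$ and the $\Rr_x$. Hence $\Phi$ is well-defined and lands in the set of equivalence classes of tilting sheaves with large torsion part.

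Surjectivity of $\Phi$ is exactly the content of part (2) of Theorem~\ref{thm:large-tilting-sheaves-in-general}: every tilting sheaf $T$ with large torsion subsheaf is equivalent to one of the sheaves $T_{(B,V)}$.

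For injectivity, suppose $T_{(B,V)}$ and $T_{(B',V')}$ are equivalent, so that $\Add(T_{(B,V)})=\Add(T_{(B',V')})$. By Theorem~\ref{thm:torsion-splitting}, the torsion subsheaf of a tilting sheaf is a direct summand, and the $\Add$-class of $T$ determines the $\Add$-class of its torsion part $tT$, including the multiplicities of the indecomposable summands (they all have local endomorphism ring, so Lemma~\ref{lem:porperties-tilting}(3) applies). By the explicit shape \eqref{eq:large-torsion-part}, the Pr\"ufer summands of $tT$ are precisely $\{\tau^j S_x[\infty] : x\in V,\, j\in\Rr_x\}$, so they recover the subset $V\subseteq\XX$ (as the support of the Pr\"ufer part) together with the sets $\Rr_x$. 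The remaining indecomposable summands of $tT$ of finite length recover the branch sheaf $B$; indeed, this is the point already made in the paragraph following \eqref{eq:Def_R_x}, where it is observed that two tilting sheaves with different branches in some $\Uu_x$ have different $\Ext^1$-orthogonal classes. Therefore $V=V'$ and $B\simeq B'$, proving injectivity.

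The three items together give the claimed bijection, so no further work beyond invoking Theorem~\ref{thm:large-tilting-sheaves-in-general} and Theorem~\ref{thm:torsion-splitting} is needed.
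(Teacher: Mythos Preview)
Your proposal is correct and matches the paper's approach: the corollary is marked \qed\ there, so it is taken as an immediate consequence of Theorem~\ref{thm:large-tilting-sheaves-in-general}, and your elaboration of well-definedness, surjectivity, and injectivity is exactly the intended reading. One small imprecision: Lemma~\ref{lem:porperties-tilting}(3) applies only to \emph{coherent} summands, so it does not directly handle the Pr\"ufer sheaves; for those you should instead invoke the criterion~\eqref{eq:Pruefer-criterion}, which characterizes the Pr\"ufer summands of $T$ purely in terms of $\rperpe{T}$ and hence depends only on the equivalence class---with that adjustment your injectivity argument is complete.
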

\begin{notation}
  Let $\emptyset\neq V\subseteq\XX$ and $B=B_{\mathfrak i}\oplus
  B_{\mathfrak e}$ be a branch sheaf with interior and exterior part with respect
  to $V$ given by $B_{\mathfrak i}$ and $B_{\mathfrak e}$, respectively. The large tilting
  sheaf constructed above will be denoted by
  \begin{equation}
  \label{eq:def_full-T_V,B}
  T_{(B,V)}=T_{(B_{\mathfrak i},V)}\oplus B_{\mathfrak e}.
\end{equation}
\end{notation}
\medskip

\subsection*{A correspondence.} Next, we establish an analogue of
\cite[Thm.~2.2]{angeleri:herbera:trlifaj:2006} stating that the
resolving subclasses of $\Hh$ correspond bijectively to tilting classes
of finite type. As we will see below, in the domestic and in the
tubular cases every tilting class is of finite type.
\begin{theorem}\label{thm:class-correspondence}
  Let $\XX$ be a weighted noncommutative regular projective curve and
  $\QHh=\Qcoh\XX$. The assignments
  $\Phi\colon\vSs\mapsto\rperpe{\vSs}$ and
  $\Psi\colon\Bb\mapsto\lperpe{\Bb}\cap\Hh$ define mutually inverse
  bijections between
  \begin{itemize}
  \item   resolving classes $\vSs$ in $\Hh$, and
  \item   tilting classes $\Bb=\rperpe{T}$ with
    $T\in\QHh$ tilting of finite type.
  \end{itemize}
\end{theorem}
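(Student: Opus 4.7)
I would prove the theorem by verifying that both assignments are well-defined (i.e., land in the stated target classes) and that the two compositions are the identity.

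Well-definedness of $\Phi$ is immediate from Theorem~\ref{thm:tilting-from-resolving}: given a resolving class $\vSs\subseteq\Hh$, every $S\in\vSs$ automatically satisfies $\pd_{\QHh}(S)\le 1$ since $\Qcoh\XX$ is hereditary, so the theorem produces a tilting sheaf $T$ with $\rperpe{T}=\rperpe{\vSs}$, making $\Phi(\vSs)$ a tilting class of finite type. The composition $\Phi\circ\Psi$ is then easy: if $\Bb=\rperpe{\vSs_0}$ is of finite type with $\vSs_0\subseteq\Hh$, then $\vSs_0\subseteq\lperpe{\Bb}\cap\Hh=\Psi(\Bb)$, hence $\rperpe{\Psi(\Bb)}\subseteq\rperpe{\vSs_0}=\Bb$, while the reverse inclusion is tautological.

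The real work is in showing $\Psi$ is well-defined, i.e., that $\vSs=\lperpe{\Bb}\cap\Hh$ is a resolving class whenever $\Bb=\rperpe{T}$ is of finite type. The closure of $\vSs$ under extensions, direct summands, and kernels of epimorphisms in $\vSs$ follows from the properties discussed in~\ref{nr:pre-def-weakly-resolving}, where the kernel case invokes $\Ext^2_{\QHh}(-,-)=0$. The heart of the matter is that $\vSs$ generates $\QHh$. For this I would exploit that $T\in\lperpe{\Bb}$ and, since $\QHh$ is hereditary, $\lperpe{\Bb}$ is closed under subobjects, so every coherent subsheaf of $T$ already lies in $\vSs$. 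Combined with a special $\Bb$-preenvelope (available via~\cite{saorin:stovicek:2011} since $\Bb=\rperpe{\vSs_0}$ for some $\vSs_0\subseteq\Hh$ of finitely presented objects of projective dimension $\le 1$, which we may close under subobjects by hereditariness without changing $\rperpe{\vSs_0}$) of each line bundle $L'$, $0\to L'\to B\to C\to 0$ with $B\in\Bb\cap\lperpe{\Bb}=\Add(T)$ and $C\in\lperpe{\Bb}$ by Lemma~\ref{lem:porperties-tilting}(2), the subobject closure places $L'\hookrightarrow B$ and its sub-line-bundles into $\vSs$. Since line bundles together with their tubular shifts generate $\QHh$, we conclude that $\vSs$ is generating.

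For the remaining composition $\Psi\circ\Phi=\mathrm{id}$, let $\vSs$ be resolving and set $\Bb=\rperpe{\vSs}$. The inclusion $\vSs\subseteq\lperpe{\Bb}\cap\Hh$ is clear from the definitions. For the reverse inclusion, let $F\in\Hh$ lie in $\lperpe{\Bb}$. By~\cite{saorin:stovicek:2011}, the pair $(\lperpe{\Bb},\Bb)$ is a complete cotorsion pair whose left class consists of direct summands of transfinite $\vSs$-filtrations, so $F$ is a direct summand of some $\vSs$-filtered object $X$. A Hill-lemma style argument, using the noetherianness of $\Hh$ together with closure of $\vSs$ under extensions and summands, then forces $F$ to appear as a summand of a \emph{finitely} $\vSs$-filtered object, so $F\in\vSs$. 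The principal obstacle is precisely this reduction from transfinite to finite $\vSs$-filtrations, together with the generation step in the preceding paragraph; both require substantial input from~\cite{saorin:stovicek:2011} to compensate for the absence of a projective generator in $\Qcoh\XX$.
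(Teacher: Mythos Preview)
Your argument for well-definedness of $\Psi$ contains a genuine gap that cannot be repaired along the lines you suggest. You claim that in a special $\Bb$-preenvelope $0\to L'\to B\to C\to 0$ of a line bundle $L'$ one has $B\in\Bb\cap\lperpe{\Bb}=\Add(T)$. But a special $\Bb$-preenvelope only yields $B\in\Bb$ and $C\in\lperpe{\Bb}$; from the sequence, $B\in\lperpe{\Bb}$ holds \emph{if and only if} $L'\in\lperpe{\Bb}$, which is precisely what you are trying to establish. So the argument is circular. Worse, the conclusion you draw from it, namely that \emph{every} line bundle lies in $\vSs=\lperpe{\Bb}\cap\Hh$, is false: already for $\XX=\PP^1$ and the coherent tilting sheaf $T=\Oo\oplus\Oo(1)$ (which is of finite type), one has $\Oo\in\rperpe{T}$ but $\Ext^1(\Oo(2),\Oo)\neq 0$, so $\Oo(2)\notin\vSs$. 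In general the resolving class $\vSs$ attached to a tilting sheaf contains only \emph{some} line bundle, not all of them.

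The paper's route to generation is quite different and does not attempt to place an arbitrary line bundle in $\vSs$. Instead it argues by contradiction that $\vSs$ must contain a nonzero vector bundle, via a case analysis on the tilting sheaf $T$: if $T$ is torsionfree one derives that $T$ would be a projective generator; if $T$ has large torsion part one invokes Theorem~\ref{thm:large-tilting-sheaves-in-general} and $V$-divisibility; if $tT$ is a nonzero branch sheaf one reduces via the Perpendicular Lemma~\ref{lem:exterior-branch-reduction} to the torsionfree case over a curve with smaller weights. Once $\vSs$ is known to contain one line bundle $L'$, closure under subobjects and the existence of an ample pair $(L',\sigma)$ provide the generating family $\{\sigma^{-n}L'\mid n\ge 0\}\subseteq\vSs$. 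Your sketch for $\Psi\Phi=\mathrm{id}$ via $\vSs$-filtrations is a legitimate alternative to the paper's Serre duality argument (it comes down to a transfinite induction using that $\vSs$ is closed under subobjects and extensions), but the generation step needs the structural input above; the preenvelope shortcut does not work.
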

For the proof of the Theorem, we need the following observations.
\begin{remark}
  In the situation of Lemma~\ref{lem:exceptional-tube}~(2), the right
  perpendicular category $\rperp{\Ww}$ of a wing $\Ww$ rooted in
  $S[r-1]$ coincides with the right perpendicular category to its
  basis $S,\tau^-S,\dots,\tau^{-(r-2)}S$. If $B$ forms a (connected)
  branch in $\Ww$, then also $\rperp{B}=\rperp{\Ww}$, and when forming
  this perpendicular category, the $r$ rays starting in the simple
  objects $S,\tau^-S,\dots,\tau^{-(r-2)}S,\tau^{-(r-1)}S$ and the
  corresponding Pr\"ufer sheaves are turned into a single ray
  $\tau^{-(r-1)}S[rn]$, $n\geq 1$, and a single Pr\"ufer sheaf
  $S[\infty]$.
\end{remark}
\begin{lemma}[Perpendicular Lemma]\label{lem:exterior-branch-reduction}
  Let $B\in\Hh$ be a branch sheaf. Let $T\in\QHh$ be a sheaf such that
  $T\in\rperp{B}$.
    \begin{enumerate}
    \item[(1)] We have $\rperp{B}\simeq\Qcoh\XX'$, where $\XX'$ is a
      noncommutative regular projective curve $\XX'$ with reduced
      weights $1\leq p'_i\leq p_i$.\smallskip
    \item[(2)] $T\oplus B$ is a (large) tilting sheaf in $\QHh$ if and
      only if $T$ is a (large) tilting sheaf in $\QHh'=\Qcoh\XX'$.
    \end{enumerate}
\end{lemma}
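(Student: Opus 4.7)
The plan is to derive both parts by iterating Geigle--Lenzing's perpendicular calculus, starting from the preceding remark.

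For part~(1), that remark identifies $\rperp{B}$ with $\rperp{\Ww}$, where $\Ww=\Ww_1\cup\cdots\cup\Ww_s$ is the union of wings hosting the connected branch components of $B$, and further reduces $\rperp{\Ww_i}$ to the right perpendicular of the basis simples of $\Ww_i$. I then iterate the ``reduction of weights'' recipe from Section~\ref{sec:sheaves-and-modules} one basis simple at a time: each step, being an instance of Geigle--Lenzing's lemma, drops the rank of one exceptional tube by $1$ while leaving all other weights and the numbers $e_\tau(y)$ unchanged, and preserves the axioms (NC~1)--(NC~5). After $r_i=\ell(\Ww_i)$ steps at $x_i$, the weight descends from $p_i$ to $p'_i=p_i-r_i\geq 1$, yielding an equivalence $\rperp{B}\simeq\Qcoh\XX'$.

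For part~(2), I use that $\rperp{B}\hookrightarrow\QHh$ is a full exact subcategory, so that $\Hom$ and $\Ext^1$ between its objects agree whether computed in $\QHh$ or in $\QHh'$. The ``only if'' direction is then straightforward: if $T\oplus B$ is tilting in $\QHh$, then self-Ext-vanishing of $T$ transfers to $\QHh'$, and any $(T\oplus B)$-generating epimorphism $(T\oplus B)^{(I)}\twoheadrightarrow X$ onto $X\in\rperpe{T}\cap\QHh'$ restricts to a $T$-generating epimorphism because $\Hom(B,X)=0$; since $B$ is coherent, ``large'' transfers as well. For the converse, assuming $T$ is tilting in $\QHh'$, the vanishings $\Ext^1(B,B)=\Ext^1(B,T)=\Ext^1(T,T^{(I)})=0$ are immediate, and for generation in $\QHh$ I would use that $B$ is a tilting object of $\add\Ww$ to produce an $\add B$-approximation of any $X\in\rperpe{T\oplus B}$, reducing to the case $X\in\rperp{B}=\QHh'$ where generation by $T$ holds by hypothesis.

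The main obstacle is the Ext-vanishing $\Ext^1(T,B)=0$ in the converse direction, equivalently $\Hom(\tau^-B,T)=0$ by Serre duality. This is not formal from $T\in\rperp{B}$ alone: it requires combining the explicit reorganization of rays in the reduced tube recorded in the preceding remark with the structural classification of torsion parts of tilting sheaves in $\QHh'$ given by Theorem~\ref{thm:large-tilting-sheaves-in-general}, so that the possible socles in the reduced tube of summands of $T$ do not meet $\tau^-B$.
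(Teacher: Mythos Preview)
For part~(1) your approach matches the paper's: both defer to the preceding remark and iterated Geigle--Lenzing weight reduction.

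For part~(2), the paper takes a shorter route via the characterization by (TS1) and (TS2) rather than the definition $\Gen(T)=\rperpe{T}$. The verification of (TS2) is then immediate in each direction, using only that $\QHh'=\rperp{B}$ is a full exact subcategory: if $X\in\rperp{(T\oplus B)}$ in $\QHh$ then automatically $X\in\rperp{B}=\QHh'$ and $X\in\rperp{T}$ there, hence $X=0$; and conversely. No $\add(B)$-approximation of $X$ is needed.

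You have, however, correctly isolated the real difficulty: the vanishing $\Ext^1(T,B)=0$ in the ``if'' direction, needed in both approaches (in the paper it sits inside the claim that the (TS1) equivalence is ``clear''). But your proposed fix via Theorem~\ref{thm:large-tilting-sheaves-in-general} does not work. Take $B=S$ an exceptional simple in a tube of rank~$2$ at $x$, and let $T=T_{(0,\{x\})}$ be the tilting sheaf constructed inside $\QHh'=\rperp{S}$. The unique simple at $x$ in $\QHh'$ is $\tau^-S[2]$, with Pr\"ufer envelope $\tau^-S[\infty]$, a summand of $T$. Then $\Ext^1(T,S)=\D\Hom(\tau^-S,T)\neq 0$, since $\tau^-S$ is the socle of $\tau^-S[\infty]$. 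So $T\oplus S$ fails (TS1) in $\QHh$ even though $T$ is tilting in $\QHh'$: contrary to your sketch, the socle in the reduced tube \emph{does} meet $\tau^-B$. The ``if'' direction thus appears to need an additional hypothesis such as $\Hom(\tau^-B,T)=0$, which holds automatically when $B$ and $tT$ have disjoint supports --- precisely the setting in which the lemma is applied afterwards.
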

\begin{proof}
  (1) This follows from the preceding remark.

  (2) It is clear that $T\oplus B$ satisfies~(TS1) if and only if so
  does $T$. We assume that $T\oplus B$ satisfies~(TS2). Let
  $X\in\QHh'$ such that $\Hom(T,X)=0=\Ext^1(T,X)$. Since
  $\QHh'=\rperp{B}$ we get $\Hom(T\oplus B,X)=0=\Ext^1(T\oplus B,X)$,
  and hence $X=0$ follows, and $T$ satisfies~(TS2). Conversely, let
  $T$ satisfy~(TS2). Let $X\in\QHh$ with $\Hom(T\oplus
  B,X)=0=\Ext^1(T\oplus B,X)$. Then in particular
  $X\in\rperp{B}=\QHh'$, and also $\Hom(T,X)=0=\Ext^1(T,X)$. Then
  $X=0$, so that $T\oplus B$ satisfies~(TS2).
\end{proof}

\begin{proof}[Proof of Theorem~\ref{thm:class-correspondence}]
  $\Phi(\vSs)=\rperpe{\vSs}$ defines a map between the named sets by
  Theorem~\ref{thm:tilting-from-resolving}. By the discussion
  in~\ref{nr:pre-def-weakly-resolving} we see that
  $\vSs:=\Psi(\Bb)=\lperpe{\Bb}\cap\Hh$ satisfies conditions~(i), (ii)
  and~(iii) for resolving. Notice that $\vSs$ is even closed under
  subobjects since $\Qcoh\XX$ is hereditary. We show that $\vSs$ also
  generates $\QHh$. \medskip

  First we show that $\vSs$ contains a non-zero vector bundle. Let
  $\vSs'\subseteq\Hh$ with $\Bb=\rperpe{{\vSs'}}$. Then
  \begin{equation}
    \label{eq:S-prime-in-S}
    \vSs'\subseteq\lperpe{(\rperpe{{\vSs'}})}\cap\Hh=\vSs.
  \end{equation}
  We assume that $\vSs$ does not contain any non-zero vector bundle,
  which we will lead to a contradiction. Then
  $\vSs'\subseteq\Hh_0$. Let $T$ be tilting with
  $\Bb=\rperpe{T}$. Since a coherent $X$ lies in $\lperpe{\Bb}$ if and
  only if $\Ext^1(X,T)=0$, we get $\Hom(T,E)\neq 0$ for every non-zero
  vector bundle $E$. If $T$ is additionally torsionfree, then we infer
  $\Ext^1(T,F)=0$ for all finite length sheaves $F$. It follows from
  (TS2) that $T$ is a generator for $\QHh$, and then also
  projective. From Serre duality we conclude that there is no non-zero
  morphism from a vector bundle to $T$, which is impossible. If on the
  other hand, $T$ has a large torsion part, then by
  Lemma~\ref{lem:T+-divisibility-lemma} the torsionfree part $T_+$ is
  $x$-divisible for (at least) one point $x$. But $T$, and then also
  $T_+$, maps epimorphic to some line bundle $L'$, and $L'$ maps
  non-trivially to a simple sheaf $S_x$ concentrated in $x$, thus
  $\Hom(T_+,S_x)\neq 0$, contradicting the $x$-divisibility. The final
  case to consider is that the torsion part $T_0$ is a branch sheaf
  $B$. By Lemma~\ref{lem:exterior-branch-reduction} then $T_+$ is
  torsionfree tilting in $\rperp{B}=\Qcoh\XX'\subseteq\QHh$. Since
  $\vect\XX'=\vect\XX\cap\rperp{B}$ (the inclusion of the right
  perpendicular category is rank-preserving,
  by~\cite[Prop.~9.6]{geigle:lenzing:1991}), we infer that $T_+$ maps
  non-trivially to any non-zero vector bundle over $\XX'$, and we get
  a contradiction by the torsionfree case treated before. Thus in any
  case, $\vSs$ contains a non-zero vector bundle.\medskip

  Since $\vSs$ is closed under subobjects, it contains also a line
  bundle $L'$. By~\cite[Lem.~IV.4.1]{reiten:vandenbergh:2002},
  \cite[Rem.~3.8]{kussin:2014} there is a suitable product $\sigma$ of
  tubular shifts such that $(L',\sigma)$ forms an ample pair, and
  there is a monomorphism $\sigma^{-1}L'\ra L'$. We conclude that
  $\vSs$ contains the system of
  generators $\{\sigma^{-n}L' \mid n\geq 0\}$   for $\QHh$.\medskip

  We have thus shown that $\Phi$ and $\Psi$ define maps between the
  named sets. Now, from~\eqref{eq:S-prime-in-S} we infer
  $\Psi\Phi(\vSs)\supseteq\vSs$. The converse inclusion follows
  from~\cite[Lem.~1.3]{reiten:ringel:2006} as in the proof of
  Lemma~\ref{lemma:undercut}(2). Thus $\Psi\Phi(\vSs)=\vSs$. Moreover,
  $\Phi\Psi(\Bb)=\rperpe{(\lperpe{\Bb}\cap\Hh)}\supseteq\rperpe{(\lperpe{\Bb})}\supseteq\Bb$. Since
  $\Bb$ is of finite type, there is $\vSs'\subseteq \Hh$ such that
  $\Bb=\rperpe{{\vSs'}}$, and from~\eqref{eq:S-prime-in-S} we conclude
  $\vSs'\subseteq\Psi(\Bb)$, hence $\Phi\Psi(\Bb)=\Bb$. This completes
  the proof of the theorem.
\end{proof}
\begin{corollary}\label{cor:Qcoh-resolving}
  Let $\XX$ be a weighted noncommutative regular projective curve and
  $\QHh=\Qcoh\XX$. A generating system $\vSs\subseteq\Hh$ is resolving
  if and only if it is closed under extensions and subobjects. \qed
\end{corollary}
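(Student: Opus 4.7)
The plan is to handle the two directions separately, with the nontrivial one being a quick corollary of Theorem~\ref{thm:class-correspondence}. For the (easy) ``if'' direction, suppose $\vSs$ is a generating system closed under extensions and subobjects. The defining conditions (i) and (ii) of a resolving class are then immediate: every direct summand of an object of $\vSs$ is a subobject, and the kernel $S'$ in any short exact sequence $0\to S'\to S\to S''\to 0$ with $S\in\vSs$ is by definition a subobject of $S$; so both (ii) and (iii) follow from closure under subobjects, while (i) is assumed. Together with the generation hypothesis, this is exactly resolving.

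For the ``only if'' direction, assume $\vSs$ is resolving; only closure under subobjects has to be verified. By Theorem~\ref{thm:tilting-from-resolving} the class $\Bb:=\rperpe{\vSs}$ is a tilting class $\rperpe{T}$ of finite type, so Theorem~\ref{thm:class-correspondence} applies and gives
$$\vSs=\Psi\Phi(\vSs)=\lperpe{\Bb}\cap\Hh.$$
Now let $S\in\vSs$ and let $E\subseteq S$ be a subobject in $\Hh$. For every $X\in\Bb$, applying $\Hom(-,X)$ to the short exact sequence $0\to E\to S\to S/E\to 0$ and using that $\QHh=\Qcoh\XX$ is hereditary (so $\Ext^2_{\QHh}(-,-)=0$), we obtain the exact piece
$$\Ext^1_{\QHh}(S,X)\lra \Ext^1_{\QHh}(E,X)\lra \Ext^2_{\QHh}(S/E,X)=0.$$
The first term vanishes because $S\in\lperpe{\Bb}$, hence $\Ext^1_{\QHh}(E,X)=0$ for every $X\in\Bb$, i.e.\ $E\in\lperpe{\Bb}\cap\Hh=\vSs$. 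This proves closure under subobjects.

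The only real input beyond bookkeeping is Theorem~\ref{thm:class-correspondence} (to rewrite $\vSs$ as a left $\Ext^1$-perpendicular), together with hereditariness of $\Qcoh\XX$. No new obstacle arises, as the argument reduces to the standard fact that in a hereditary category a class of the form $\lperpe{\Bb}$ is automatically closed under subobjects.
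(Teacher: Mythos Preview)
Your proof is correct and follows exactly the route the paper intends: the corollary is placed immediately after Theorem~\ref{thm:class-correspondence} with a bare \qed, and your argument spells out precisely that deduction---use $\Psi\Phi(\vSs)=\vSs$ to rewrite $\vSs$ as $\lperpe{\Bb}\cap\Hh$, then observe that in the hereditary category $\Qcoh\XX$ any class of the form $\lperpe{\Bb}$ is closed under subobjects. The ``if'' direction is handled just as in the earlier remark preceding the definition of resolving.
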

We further have the following immediate consequence of
Theorem~\ref{thm:tilting-from-resolving}.
\begin{corollary}\label{cor:nonzerovb}
 Let $\XX$ be a weighted noncommutative regular projective curve and
  $\QHh=\Qcoh\XX$. If $\vSs'\subseteq\Hh$ is a set
  containing at least one non-zero vector bundle, then there is a
  tilting sheaf $T\in\QHh$ with $\rperpe{T}=\rperpe{{\vSs'}}$.
\end{corollary}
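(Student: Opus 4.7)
The plan is to produce a resolving class $\vSs$ with $\vSs'\subseteq\vSs\subseteq\Hh$ and $\rperpe{\vSs}=\rperpe{\vSs'}$, and then apply Theorem~\ref{thm:tilting-from-resolving}. Concretely, I would let $\vSs$ be the closure of $\vSs'$ in $\Hh$ under extensions, direct summands, and subobjects; because $\Hh$ is noetherian and abelian, this closure stays inside $\Hh$, and conditions (i), (ii), (iii) in the definition of resolving are automatic.

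Next I would verify that $\rperpe{\vSs}=\rperpe{\vSs'}$. Only the inclusion $\rperpe{\vSs'}\subseteq\rperpe{\vSs}$ needs work, and it amounts to showing that $\rperpe{\vSs'}$ is closed (in its \emph{first} argument) under subobjects, extensions, and direct summands. Extensions and summands are standard. For subobjects, given $S\hookrightarrow S'$ with $S'\in\vSs'$ and $X\in\rperpe{\vSs'}$, the long exact sequence obtained by applying $\Hom(-,X)$ to $0\to S\to S'\to S'/S\to 0$ yields $\Ext^1(S',X)\to\Ext^1(S,X)\to\Ext^2(S'/S,X)$, and the outer terms vanish since $\QHh$ is hereditary and $S'\in\vSs'$. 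This is the same observation that appears in~\ref{nr:pre-def-weakly-resolving}.

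The main point, and really the only non-formal step, is to show that $\vSs$ generates $\QHh$. This is where the hypothesis on $\vSs'$ containing a non-zero vector bundle $F$ enters. By the line bundle filtration property recalled in the section on line bundles, $F$ has a line bundle $L'$ as a subobject, so $L'\in\vSs$. Now I would reuse the ample-pair argument from the proof of Theorem~\ref{thm:class-correspondence}: by~\cite[Lem.~IV.4.1]{reiten:vandenbergh:2002} and~\cite[Rem.~3.8]{kussin:2014}, one can choose a product $\sigma$ of tubular shifts so that $(L',\sigma)$ is an ample pair and there is a monomorphism $\sigma^{-1}L'\hookrightarrow L'$. Iterating and using closure of $\vSs$ under subobjects, we find $\sigma^{-n}L'\in\vSs$ for all $n\geq 0$, and this family is a system of generators of $\QHh$.

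Finally, since $\QHh$ is hereditary, every object in $\vSs$ has projective dimension at most one, so Theorem~\ref{thm:tilting-from-resolving} produces a tilting sheaf $T$ with $\rperpe{T}=\rperpe{\vSs}=\rperpe{\vSs'}$. The only non-cosmetic obstacle is the generation step, but since the ample-pair input has already been used in the proof of Theorem~\ref{thm:class-correspondence}, there is nothing new to overcome here; the whole statement is essentially a packaging of that argument.
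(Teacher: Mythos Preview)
Your proof is correct and follows essentially the same strategy as the paper: construct a resolving class $\vSs$ with $\rperpe{\vSs}=\rperpe{\vSs'}$, use the non-zero vector bundle together with the ample-pair argument from the proof of Theorem~\ref{thm:class-correspondence} to show $\vSs$ generates $\QHh$, and then invoke Theorem~\ref{thm:tilting-from-resolving}. The only cosmetic difference is that the paper takes $\vSs=\lperpe{(\rperpe{\vSs'})}\cap\Hh$ (the largest candidate), which makes the equality $\rperpe{\vSs}=\rperpe{\vSs'}$ immediate but requires checking the closure properties, whereas you take the closure of $\vSs'$ from below, making the closure properties automatic but requiring the short $\Ext^2$-vanishing computation you give; either way the generation step is the heart of the matter and is handled identically.
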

\begin{proof}
  Let $\Bb=\rperpe{{\vSs'}}$. Then $\vSs:=\lperpe{\Bb}\cap\Hh$
  satisfies $\rperpe{\vSs}=\Bb$, it is closed under extensions and
  subobjects, and we see as in the proof of
  Theorem~\ref{thm:class-correspondence} that it contains
  a generating system. Thus $\vSs$ is resolving, and the claim follows
  from Theorem~\ref{thm:tilting-from-resolving}.
\end{proof}
\medskip

\subsection*{Genus zero}
\emph{For the rest of this section let $\XX$ be of genus zero and
$\QHh=\Qcoh\XX$.}
We refine the results above with the following notion.
\begin{definition}\label{resolving}
  Let $\vSs$ be a class of objects in $\Hh$. We call $\vSs$
  \emph{strongly resolving} if it is closed under extensions and
  subobjects, and if it contains a tilting bundle $T_{\cc}$.
\end{definition}
\begin{remark}\label{rem:filtration}
  Let $\vSs\subseteq\Hh$ be a strongly resolving class containing a
  tilting bundle $T_{\cc}$.  Then $\vSs$ is resolving (this is
  verified by using that
  $T_{\cc}(-nx)\subseteq T_{\cc}$ by~\eqref{eq:def-tubular-shift} for
  all $n\geq 0$ and all points $x\in\XX$, and that the system
  $(T_{\cc}(-nx), n\geq 0)$ is generating
  by~\cite[Prop.~6.2.1]{kussin:2009}).
 
  So we can apply Theorem \ref{thm:tilting-from-resolving} to obtain a
  tilting sheaf $T$ generating the class $\Bb=\rperpe{\vSs}$. More
  explicitly, any special $\Bb$-preenvelope
  \begin{equation}
    \label{eq:Tcc-preenvelope}
    0\ra T_{\cc}\ra T_0\ra T_1\ra 0
  \end{equation}
  of $T_{\cc}$ leads to a tilting sheaf of finite type
  $$T=T_0\oplus T_1$$ with
  $\rperpe{T}=\Bb$ {and} $T\in\Gen(T_{\cc}).$ 
   
   Indeed,
  the exact sequence
  $\Ext^1(T_1,X)\ra\Ext^1(T_0,X)\ra\Ext^1 (T_{\cc},X)\ra 0$ shows that
  $X\in\rperpe{T}$ implies
  $X\in\rperpe{{T_{\cc}}}=\Gen(T_{\cc})$, and  the claim
  follows replacing $G$ by $T_{\cc}$
  in the proof of Theorem~\ref{thm:tilting-from-resolving}.   

Notice that  the sheaves $T_0$ and
$T_1$ are $\vSs$-filtered in the sense
of~\cite[Def.~2.9]{saorin:stovicek:2011}, 
and the class $\lperpe{(\rperpe{T})}$ consists precisely of the direct
summands of the $\vSs$-filtered objects, see~\cite[Thm.~2.13 and
Cor.~2.15]{saorin:stovicek:2011}.
\end{remark}

\begin{example}\label{ex:filtration}
  (1) The system $\vSs=\vect\XX$ of all vector bundles is strongly
  resolving, and the Lukas tilting sheaf $\bL$ from
  Proposition~\ref{prop:large-tilting-torsionfree-in-general} with
  $\rperpe{\bL}=\rperpe{\vSs}$ is large, torsionfree and satisfies
  condition (TS3).

\medskip

(2) Let $T=T_{(B,V)}$ where $\emptyset\neq V\subseteq\XX$ and $B$ is a
branch sheaf. The class $\vSs=\lperpe{(\rperpe{T})}\cap\Hh$ is given
by~\eqref{eq:undercut-formula}, and it is strongly resolving as
$\vect\XX\subseteq\vSs$; we even have $T_{\can}\in\vSs$. By the
preceding discussion $\rperpe{T}=\rperpe{\vSs}$ and
$T\in\Gen(T_{\can})$. Sequence~\eqref{eq:Tcc-preenvelope} shows that
$T$ satisfies~(TS3). In fact, we will see in
Theorem~\ref{thm:axiom-TS3-for-T_VB} that $T$ even satisfies
condition~(TS3+).
\end{example}

\section{Tilting sheaves under perpendicular calculus}\label{calculus}
\emph{Throughout this section, $\QHh=\Qcoh\XX$ with $\XX$ a weighted
  noncommutative regular projective curve over a field $k$.} We use
perpendicular calculus (in particular
Lemma~\ref{lem:exterior-branch-reduction}) to reduce some
considerations to tilting sheaves $T_V=T_{(0,V)}$ with trivial branch
sheaf $B=0$. This will allow to obtain an explicit description of the
torsionfree part $T_+$ of any tilting sheaf $T_{(B,V)}$ and an
alternate method to determine the Pr\"ufer summands in the torsion
part.
\begin{remark}
  The Perpendicular Lemma~\ref{lem:exterior-branch-reduction} has several applications.\medskip

  (1) Let $B\in\Hh$ be a branch sheaf. Let $T\in\QHh$ be a sheaf such
  that $tT$ and $B$ have disjoint supports and $\Ext^1(B,T)=0$
  holds. Then $T\in\rperp{B}$. (This follows by applying $\Hom(B,-)$
  to the canonical exact sequence $0\ra tT\ra T\ra T/tT\ra 0$.) Thus
  we can use Lemma~\ref{lem:exterior-branch-reduction} to reduce our
  considerations to tilting sheaves with trivial exterior branch part
  $B_{\mathfrak e}$.\medskip
  
  (2) Let $\XX$ be a noncommutative regular projective curve of weight
  type $(p_1,\dots,p_t)$ (with $p_i\geq 2$), and assume that $\XX'$
  arises from $\XX$ by reduction of some weigths, so that $\XX'$ is of
  weight type $(p'_1,\dots,p'_t)$, with $1\leq p'_i\leq p_i$. Then the
  classification of (large) tilting sheaves in $\Qcoh\XX$ is at least
  as complicated as the classification in $\Qcoh\XX'$. Indeed, if $T'$
  is a (large) tilting sheaf in $\Qcoh\XX'$, then we find a branch
  sheaf $B\in\coh\XX$ such that $T=T'\oplus B$ is (large) tilting in
  $\Qcoh\XX$: namely, we have
  $\Qcoh\XX'\simeq\rperp{\vSs}\subseteq\Qcoh\XX$ for a finite set
  $\vSs$ of exceptional simple sheaves; we can then take any branch
  sheaf $B$ whose components lie in the wings whose bases belong to
  $\vSs$; then $\rperp{B}=\rperp{\vSs}$ and $T'\in\rperp{B}$. Clearly,
  if $T'_1$ and $T'_2$ are not equivalent, then $T'_1\oplus B$ and
  $T'_2\oplus B$ are also not equivalent.\medskip

  (3) In particular: if $\XX$ is a weighted projective line of wild
  type (in the sense of~\cite{geigle:lenzing:1987}), then $\Qcoh\XX$
  contains all large tilting sheaves coming from a suitable weighted
  projective line $\XX'$ of tubular type.
\end{remark}
\emph{Let us now assume} that $V\neq\emptyset$ and
$B_{\mathfrak e}=0$. Then all the branches of $B=B_{\mathfrak i}$ are
interrelated with Pr\"ufer summands of $T_{(B,V)}$ as described in
Lemma~\ref{lem:exceptional-tube}~(2). Let
$\QHh'=\rperp{(\tau^- B)}=\Qcoh\XX'$ and $i\colon\QHh'\ra\QHh$ the
inclusion. If we define, in analogy of Definition~\ref{def:tordiv},
the class $\vSs'_V$ and its direct limit closure
$\Tt'_V=\vec{\vSs'_V}$ in $\QHh'$, then it is easy to see that we have
$$\QHh'/\Tt'_V\simeq\rperp{{\vSs'_V}}=\rperp{(\tau^-B)}\cap\rperp{(i\vSs'_V)}
=\rperp{{\vSs_V}}\simeq\QHh/\Tt_V.$$
\begin{lemma}
  Let $T=T_{(B,V)}$ with $B_{\mathfrak e}=0$. Then
  \begin{equation}
    \label{eq:def-T_V}
    T_V:=T_{(0,V)}=T_+\oplus\bigoplus_{x\in
      V}\bigoplus_{j\in\Rr_x}\tau^jS_x[\infty]
  \end{equation}
  is a large tilting sheaf in $\QHh'$.
\end{lemma}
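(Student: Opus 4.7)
The plan is to verify the three tilting axioms for $T_V$ in the perpendicular subcategory $\QHh' = \rperp{(\tau^-B)}$, leveraging that $T = T_V \oplus B$ is already known to be a large tilting sheaf in $\QHh$ by Theorem~\ref{thm:large-tilting-sheaves-in-general}.

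First, I would check that $T_V$ actually belongs to $\QHh'$. The summand $\Hom(\tau^-B, T_+)$ vanishes because $\tau^-B$ is of finite length while $T_+$ is torsionfree, and $\Hom(\tau^-B, \tau^jS_x[\infty]) = 0$ follows because by~\eqref{eq:Def_R_x} the condition $j\in\Rr_x$ translates to $\tau^jS_x\notin\tau^-\Ww$, so the socle $\tau^jS_x$ of the Pr\"ufer is not a composition factor of $\tau^-B$. Using Serre duality I would then reduce $\Ext^1(\tau^-B, T_V) = 0$ to $\Hom(T_V, B) = 0$; here $\Hom(T_+, B) = 0$ follows by induction on the length of $B$'s components from the $V$-divisibility of $T_+$ (which via Serre duality yields $\Hom(T_+,S)=0$ for every simple $S$ in a $V$-tube), and $\Hom(\tau^jS_x[\infty], B) = 0$ follows from the fact that a Pr\"ufer sheaf admits no non-zero morphism to a finite-length sheaf in the same tube (the cyclic shift of socles in the quotients $\tau^jS_x[n]/\tau^jS_x[k]$ would force a single finite-length image to realise all $p(x)$ possible socles simultaneously).

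Axiom (TS1) is then essentially automatic: by Remark~\ref{rem:rperp}, the inclusion $\QHh'\hookrightarrow\QHh$ is exact and preserves $\Ext^1$, so $\Ext^1_{\QHh'}(T_V,T_V^{(I)}) = \Ext^1_{\QHh}(T_V,T_V^{(I)})$, and the right-hand side is a direct summand of $\Ext^1_{\QHh}(T,T^{(I)}) = 0$. For axiom (TS2), I would take $X\in\QHh'$ with $\Hom(T_V,X)=0=\Ext^1(T_V,X)$ and aim to apply (TS2) for $T$ in $\QHh$: the membership $X\in\rperp{(\tau^-B)}$ gives via Serre duality $\Hom(X,B) = 0 = \Ext^1(X,B)$, and it remains to deduce $\Hom(B,X)=0=\Ext^1(B,X)$. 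Each indecomposable summand of $B$ lies in a wing $\Ww\subseteq\Uu_x$ ($x\in V$) whose boundary carries Pr\"ufer summands $\tau^jS_x[\infty]$ ($j\in\Rr_x$) of $T_V$, and the vanishing conditions on these Pr\"ufers should propagate inward through the uniserial ray structure of $\Ww$, using injectivity of the Pr\"ufer and induction on the length of subquotients, to kill $\Hom(B,X)$ and $\Ext^1(B,X)$. Once these vanish, (TS2) for $T$ yields $X=0$. Largeness of $T_V$ follows from $V\neq\emptyset$ together with $\Rr_x\neq\emptyset$ for $x\in V$, which guarantees at least one non-coherent Pr\"ufer summand in the torsion part of $T_V$.

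The main obstacle will be the propagation argument in the (TS2) step, which transports the vanishing conditions on the Pr\"ufer summands of $T_V$ to the corresponding vanishing conditions on the branch $B$; this requires careful bookkeeping in the exceptional tube, exploiting the precise interplay between the branch summands of $B$ and the Pr\"ufer summands of $T_V$ encoded by the set $\Rr_x$ via~\eqref{eq:Def_R_x}.
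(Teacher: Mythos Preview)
Your plan is correct and is essentially what the paper's very terse proof relies on. The paper only verifies that $T_V\in\rperp{(\tau^-B)}$ and declares this ``sufficient''; implicitly this appeals to the Perpendicular Lemma~\ref{lem:exterior-branch-reduction}, but that lemma is stated for $\rperp{B}$ rather than $\rperp{(\tau^-B)}$, so the (TS2) step genuinely needs the extra argument you outline---deducing $\Hom(B,X)=0=\Ext^1(B,X)$ from $X\in\rperp{(\tau^-B)}$ together with the Pr\"ufer conditions. You have correctly located the only nontrivial point.

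Your sketch of the propagation is a bit soft; here is a clean way to carry it out. For a connected branch component $C$ of $B$ in a wing $\Ww$ rooted at $S[r-1]$, the adjacent Pr\"ufer summands of $T_V$ are $S[\infty]$ and $\tau^{-r}S[\infty]$. The exact sequence $0\to S[r]\to S[\infty]\to\tau^{-r}S[\infty]\to 0$ together with $X\in\rperp{S[\infty]}\cap\rperp{\tau^{-r}S[\infty]}$ gives $X\in\rperp{S[r]}$. Now both $S[r]\oplus C$ and $S[r]\oplus\tau^-C$ are connected branches in the wing $\widetilde\Ww$ rooted at $S[r]$ (via the subobject $S[r-1]\hookrightarrow S[r]$ and the quotient $S[r]\twoheadrightarrow\tau^-S[r-1]$ respectively), so by the remark preceding Lemma~\ref{lem:exterior-branch-reduction} they have the same right-perpendicular category $\rperp{\widetilde\Ww}$. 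Hence $X\in\rperp{(S[r]\oplus\tau^-C)}=\rperp{(S[r]\oplus C)}\subseteq\rperp{C}$, which is what you need. This replaces the vague ``induction on subquotients'' by a single structural observation about branches.
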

\begin{proof}
  It is sufficient to show that $T_{(0,V)}$ lies in the
  right-perpendicular category $\rperp{(\tau^- B)}$. By the definition
  of $\Rr_x$, and since the $\tau^j S_x[\infty]$ are injective, this is
  true for the direct sum of the Pr\"ufer summands. Since $T_+$ is
  $V$-divisible, this also holds for $T_+$.
\end{proof}
We conclude
\begin{corollary}
  $T_{(B,V)}=T_{(B_{\mathfrak i},V)}\oplus B_{\mathfrak e}$ and $T_{(0,V)}$ have the same
  torsionfree part. \qed
\end{corollary}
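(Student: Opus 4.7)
The statement is essentially a bookkeeping consequence of the notation introduced just above, together with the explicit description provided by the preceding lemma. The argument will have no real obstacle once the conventions are unpacked.

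First, I would unwind the general case using the notation~\eqref{eq:def_full-T_V,B}, namely $T_{(B,V)}=T_{(B_{\mathfrak i},V)}\oplus B_{\mathfrak e}$. Since $B_{\mathfrak e}$ is a branch sheaf, it is a finite direct sum of indecomposable sheaves of finite length, and in particular it is a torsion sheaf. Therefore the torsion part of $T_{(B,V)}$ is $(tT_{(B_{\mathfrak i},V)})\oplus B_{\mathfrak e}$, and the torsionfree part of $T_{(B,V)}$ coincides with the torsionfree part of $T_{(B_{\mathfrak i},V)}$. This reduces the statement to the case $B_{\mathfrak e}=0$, $B=B_{\mathfrak i}$.

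Next, I would invoke the immediately preceding lemma together with definition~\eqref{eq:def-T_V}: writing $T=T_{(B_{\mathfrak i},V)}=T_+\oplus T_0$, one has
$$T_{(0,V)}\;=\;T_+\oplus\bigoplus_{x\in V}\bigoplus_{j\in \Rr_x}\tau^j S_x[\infty],$$
where $T_+$ denotes precisely the torsionfree summand of $T_{(B_{\mathfrak i},V)}$ furnished by the splitting of Theorem~\ref{thm:torsion-splitting}. The Pr\"ufer sheaves $\tau^j S_x[\infty]$ are direct limits of finite length sheaves, hence lie in the torsion class $\Tt$; consequently the torsionfree part of $T_{(0,V)}$ is again $T_+$. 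Combining this with the first step gives the corollary.

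I expect no real obstacle; the only point requiring care is to make clear that $T_{(0,V)}$ in the statement refers to the tilting object produced by the preceding lemma inside $\QHh'=\rperp{(\tau^- B_{\mathfrak i})}$ (equivalently, its image under the inclusion $\QHh'\hookrightarrow\QHh$), so that its torsionfree part is literally the $T_+$ extracted from $T_{(B_{\mathfrak i},V)}$. Once that convention is stated, the proof reduces to the two observations above and can be written in a couple of lines.
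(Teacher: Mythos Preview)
Your proposal is correct and matches the paper's own reasoning: the paper gives no explicit proof (the corollary is marked \qed), and your two-step unpacking---that $B_{\mathfrak e}$ is torsion so the torsionfree parts of $T_{(B,V)}$ and $T_{(B_{\mathfrak i},V)}$ agree, and that the preceding lemma exhibits $T_{(0,V)}$ as $T_+$ plus Pr\"ufer sheaves---is precisely the intended argument. Your care in noting that $T_{(0,V)}$ here is the tilting sheaf in $\QHh'=\rperp{(\tau^- B_{\mathfrak i})}$ is appropriate and consistent with the paper's setup.
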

We will now deal with $T_V=T_{(0,V)}$. Its torsion part consists of
Pr\"ufer sheaves only. We consider $T_V$ as object in
$\QHh'=\Qcoh\XX'=\rperp{(\tau^-B)}$, and we exhibit the following
explicit construction. \medskip

Let $\Lambda'$ be a finite direct sum of indecomposable vector bundles
$F_j$ in $\QHh'=\Qcoh\XX'$ such that $\Lambda'$ maps onto each simple
sheaf in $\QHh'$. For instance,
\begin{itemize}
\item by~\cite[Prop.~1.1]{kussin:2014}, we can always find special
  line bundles $F_j$ with this property (by applying suitable tubular
  shifts to the structure sheaf $L$); or
\item in case $\XX$ is of genus zero, we can take alternatively
  $\Lambda'=T'_{\can}$, a canonical configuration in $\QHh'$. (See
  Remark~\ref{rem:canonical-subconfiguration}.)
\end{itemize}
We denote by $e(j,x)=e(j,x,\Lambda')$ the $\End(S_x)$-dimension of
$\Ext^1(\tau^j S_x,\Lambda')$, by $p'\!(x)$ the weight of $x$ in
$\XX'$, and consider the universal sequence in $\Hh'$
\begin{equation}
  \label{eq:x-universal-Lambda}
  0\ra\Lambda'\ra\Lambda'(x)\ra\bigoplus_{j=0}^{p'\!(x)-1}(\tau^j S_x)^{e(j,x)}\ra 0.
\end{equation}
Since the inclusion $S_x\ra S_x[\infty]$ yields a surjection
$\Ext^1(S_x[\infty],\Lambda')\ra\Ext^1(S_x,\Lambda')$, this induces a
short exact sequence in $\QHh'\subseteq\QHh$
\begin{equation}
  \label{eq:eta-x}
  \eta_x\colon 0\ra\Lambda'\ra
  \Lambda'_{x} \ra\bigoplus_{j=0}^{p'\!(x)-1}(\tau^j S_x[\infty])^{e(j,x)}\ra 0.
\end{equation}
(In~\eqref{eq:x-universal-Lambda} the $\tau^j S_x$ are the simple
sheaves in $\Hh'$ concentrated in $x$; since for the definition of
$\Lambda'_x$ we form the direct limit, we could also replace them by
the simple sheaves $\tau^j S_x$ in $\Hh$ (with $j\in\Rr_x$ and $\tau$
the Auslander-Reiten translation in $\Hh$).)  For $x\in V$ these short
exact sequences are spliced together via
\begin{equation}
  \label{eq:ext-identity}
  \Ext^1\bigl(\bigoplus_{y\in
  V}\tau^j S_y[\infty],\Lambda'\bigr)\simeq\prod_{y\in
  V}\Ext^1(\tau^j S_y[\infty],\Lambda'),
\end{equation}
which defines
\begin{equation}
  \label{eq:def-Lambda_V}
  \eta_V\colon 0\ra\Lambda'\ra
  \Lambda'_{V}\ra\bigoplus_{x\in V}\bigoplus_{j=0}^{p'\!(x)-1}(\tau^j
  S_x[\infty])^{e(j,x)}\ra 0.
\end{equation}
\begin{lemma}\label{lem:Lambda_V-tf-V-div}
  $\Lambda'_V$ is torsionfree and precisely $V$-divisible. 
\end{lemma}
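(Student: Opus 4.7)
The plan is to apply $\Hom(S,-)$ to the defining short exact sequence $\eta_V$ for each simple sheaf $S$ and to read off both properties from the resulting long exact sequence via a case analysis on the support of $S$.

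First I would exploit two simplifications. Since $\Lambda'$ is a finite sum of vector bundles, $\Hom(S,\Lambda')=0$ for every simple $S$. Since $\QHh$ is locally noetherian and each Pr\"ufer sheaf is injective (Proposition~\ref{prop:main-properties-torsion-sheaves}(1)), the direct sum $T_{\mathrm{tor}}=\bigoplus_{x\in V}\bigoplus_j(\tau^j S_x[\infty])^{e(j,x)}$ is again injective, so $\Ext^1(S,T_{\mathrm{tor}})=0$. The long exact sequence thus collapses to
\begin{equation*}
0\ra\Hom(S,\Lambda'_V)\ra\Hom(S,T_{\mathrm{tor}})\stackrel{\delta}{\lra}\Ext^1(S,\Lambda')\ra\Ext^1(S,\Lambda'_V)\ra 0,
\end{equation*}
and everything is controlled by the connecting map $\delta$.

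Next I would split into cases according to the support $y$ of $S=\tau^i S_y$. If $y\notin V$, then $\Hom(S,T_{\mathrm{tor}})=0$, since simples with distinct supports are Hom-orthogonal and within one tube a simple only maps into a Pr\"ufer via its socle. This forces $\Hom(S,\Lambda'_V)=0$ and $\Ext^1(S,\Lambda'_V)\simeq\Ext^1(S,\Lambda')$; the latter is nonzero because $\Lambda'$ surjects onto every simple in $\QHh'$, so by Serre duality $\Ext^1(S,\Lambda')\simeq\D\Hom(\Lambda',\tau S)\neq 0$, which delivers the ``precisely'' half of the statement. If $y\in V$, a direct calculation in $\Uu_y$ gives $\Hom(S,T_{\mathrm{tor}})\simeq\End(S)^{e(i,y)}$, which has the same $\End(S)$-dimension $e(i,y)$ as $\Ext^1(S,\Lambda')$; hence once $\delta$ is shown to be surjective it is automatically an isomorphism, and both $\Hom(S,\Lambda'_V)=0$ and $\Ext^1(S,\Lambda'_V)=0$ follow.

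The main obstacle is precisely this surjectivity of $\delta$ in the case $y\in V$, which I would extract from the universality built into $\eta_V$. By construction, the sequence~\eqref{eq:x-universal-Lambda} is a universal extension of $\bigoplus_j(\tau^j S_x)^{e(j,x)}$ by $\Lambda'$, so pulling it back along the $e(j,x)$ coordinate inclusions of $\tau^j S_x$ produces an $\End(\tau^j S_x)$-basis of $\Ext^1(\tau^j S_x,\Lambda')$. Since $\eta_x$ is chosen so that its pullback along the socle embedding $\tau^j S_x\hookrightarrow\tau^j S_x[\infty]$ recovers~\eqref{eq:x-universal-Lambda}, and the isomorphism~\eqref{eq:ext-identity} splices the $\eta_x$ into $\eta_V$, the $e(i,y)$ coordinate embeddings $\tau^i S_y\hookrightarrow T_{\mathrm{tor}}$ pull $\eta_V$ back to the same basis of $\Ext^1(\tau^i S_y,\Lambda')$. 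By the definition of the connecting map, this basis coincides with the image of $\delta$, establishing surjectivity. Tracing the pullbacks through the direct limit and the splicing is the only place where the construction of $\eta_V$ is really used; the rest is routine bookkeeping.
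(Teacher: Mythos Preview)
Your proposal is correct and follows essentially the same route as the paper. The paper's own proof is terse: it cites \cite[Prop.~5.2]{ringel:1979} for the torsionfree and $V$-divisible assertions, and then handles the ``precisely'' part exactly as you do, by applying $\Hom(S,-)$ to $\eta_V$ for a simple $S$ concentrated at $y\notin V$ to obtain $\Ext^1(S,\Lambda'_V)\simeq\Ext^1(S,\Lambda')\neq 0$. Your argument simply makes the Ringel-style computation explicit: the long exact sequence collapses as you say, and the surjectivity of the connecting map $\delta$ for $y\in V$ is precisely the content of the universal extension construction behind~\eqref{eq:x-universal-Lambda} and~\eqref{eq:eta-x}. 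One small point worth recording: your formula $\Hom(S,T_{\mathrm{tor}})\simeq\End(S)^{e(i,y)}$ is valid for \emph{all} simples $\tau^iS_y$ in $\QHh$, including those with $i\notin\Rr_y$, because in that case $\tau^{i+1}S_y\in\Ww$ and $\Lambda'\in\rperp{(\tau^-B)}=\rperp{(\tau^-\Ww)}$ forces $\Hom(\Lambda',\tau^{i+1}S_y)=0$, whence $e(i,y)=0$ and both sides vanish trivially.
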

\begin{proof}
  That $\Lambda'_V$ is torsionfree and $V$-divisible can be shown as
  in the proof of~\cite[Prop.~5.2]{ringel:1979}. Let $y\in\XX\setminus
  V$ and $S\in\Uu_y$ be simple. By applying $\Hom(S,-)$ to
  sequence~\eqref{eq:def-Lambda_V} we get
  $\Ext^1(S,\Lambda'_V)\simeq\Ext^1(S,\Lambda')\neq 0$. Thus
  $\Lambda'_V$ is precisely $V$-divisible.
\end{proof}
We now adopt the notation from Section 3 and interpret the sequence
$\eta_V$ in (\ref{eq:def-Lambda_V}) in terms of localization theory.
\begin{lemma}\label{lem:tilt-V-construct}
  Assume $V\neq\emptyset$ and $B_{\mathfrak e}=0$.  Let
  $\pi=\pi_V\colon\QHh\ra\QHh/\Tt_V$ be the canonical quotient
  functor.
  \begin{enumerate}
  \item In $\rperp{{\vSs_V}}\simeq\QHh/\Tt_V$ we have
    $\pi\Lambda'\simeq\pi(\Lambda'_V)$.\smallskip
  \item $\pi\Lambda'$ is a finitely presented projective generator in
    $\rperp{{\vSs_V}}\simeq\QHh/\Tt_V$.\smallskip
  \item The functor $X\mapsto\Hom_{\QHh/\Tt_V}(\pi\Lambda',X)$ yields
    an equivalence $$\QHh/\Tt_V\simeq\Mod(\End_{\QHh/\Tt_V}(\pi\Lambda')).$$
    In particular, $\rperp{{\vSs_V}}$ is locally noetherian. 
  \end{enumerate}
\end{lemma}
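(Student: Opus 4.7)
The overall strategy is to pass between $\QHh/\Tt_V$ and its realisation as the exact subcategory $\rperp{\vSs_V}$ of $\QHh$ via Remark~\ref{rem:rperp}, so that $\Hom$ and $\Ext^1$ computations in the quotient can be transported to $\QHh$ and evaluated by means of Serre duality. For part~(1), the cokernel in $\eta_V$ is a direct sum of Pr\"ufer sheaves supported at points of $V$, and each $\tau^j S_x[\infty]=\varinjlim_n\tau^j S_x[n]$ lies in the localizing subcategory $\Tt_V=\vec{\vSs_V}$. Applying the exact quotient functor $\pi$ to $\eta_V$ therefore annihilates this cokernel and turns the monomorphism $\Lambda'\hookrightarrow\Lambda'_V$ into an isomorphism.

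For part~(2), finite presentability is immediate: $\Lambda'$ is coherent in $\Hh$, and since $\Tt_V$ is of finite type (Remark~\ref{rem:rperp}), the exact quotient $\pi$ preserves finitely presented objects. Projectivity is the crux. Under the identification $\pi\Lambda'\simeq\pi\Lambda'_V\leftrightarrow\Lambda'_V\in\rperp{\vSs_V}$ (from part~(1) together with Lemma~\ref{lem:Lambda_V-tf-V-div}), combined with the fact that the inclusion $\rperp{\vSs_V}\hookrightarrow\QHh$ is exact and $\Ext^1$-preserving (Remark~\ref{rem:rperp}), projectivity of $\pi\Lambda'$ reduces to showing $\Ext^1_{\QHh}(\Lambda'_V,X)=0$ for every $X\in\rperp{\vSs_V}$. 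Applying $\Hom(-,X)$ to $\eta_V$, the $\Hom$ and $\Ext^1$ contributions of the Pr\"ufer summands both vanish because $X$ is $V$-torsionfree and $V$-divisible, giving $\Ext^1(\Lambda'_V,X)\simeq\Ext^1(\Lambda',X)\simeq\D\Hom(X,\tau\Lambda')$ by Serre duality. Since $\tau\Lambda'$ is a vector bundle and $V\neq\emptyset$, Lemma~\ref{lem:V-divisibility-lemma} forces $\Hom(X,\tau\Lambda')=0$. For the generator property, I would use that $\Lambda'$ maps onto every simple sheaf of $\QHh'$: given $0\neq X\in\rperp{\vSs_V}$, if $tX\neq 0$ then it is necessarily supported outside $V$ by $V$-torsionfreeness and admits a simple subsheaf $S_y$ with $y\notin V$, yielding a non-trivial composition $\Lambda'\twoheadrightarrow S_y\hookrightarrow X$; in the torsionfree case one exploits that tubular shifts at points of $V$ become isomorphic to the identity in $\QHh/\Tt_V$, so that the ampleness twist in the generating system $\{\sigma^{-n}\Lambda'\mid n\geq 0\}$ of $\QHh$ collapses onto $\pi\Lambda'$.

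Once part~(2) is established, part~(3) follows from the classical Gabriel--Mitchell theorem: in a Grothendieck category, a finitely presented projective generator $P$ induces an equivalence $\Hom(P,-)$ with $\Mod(\End P)$. Local noetherianness of $\rperp{\vSs_V}$ is a separate consequence of the general fact that the Gabriel localization of a locally noetherian Grothendieck category at a localizing subcategory generated by noetherian objects is again locally noetherian; here $\Tt_V=\vec{\vSs_V}$ is generated by the finite-length, hence noetherian, sheaves of $\vSs_V$. The main obstacle is the projectivity step in part~(2): since $\QHh$ has no non-zero projective objects, projectivity cannot be transported naively from $\QHh$, and it is only the combination of the $V$-divisibility of the objects of $\rperp{\vSs_V}$ with Lemma~\ref{lem:V-divisibility-lemma} that produces the required vanishing $\Hom(X,\tau\Lambda')=0$.
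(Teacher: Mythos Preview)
Your proposal is correct, and your projectivity argument in part~(2) is genuinely different from the paper's. The paper fixes $x\in V$, notes that $\pi\Lambda'\simeq\pi(\Lambda'(-nx))$ for all $n$ via the universal sequence~\eqref{eq:x-universal-Lambda}, and then invokes the ampleness property from~\cite[0.4.6]{kussin:2009}: for every coherent $A$ one has $\Ext^1(\Lambda'(-nx),A)=0$ once $n\gg 0$, so $\pi\Lambda'$ is projective against images of coherent objects, and closure under direct limits finishes. Your route instead identifies $\pi\Lambda'$ with $\Lambda'_V\in\rperp{\vSs_V}$ and computes $\Ext^1_{\QHh}(\Lambda'_V,X)$ directly from $\eta_V$: since $X\in\rperp{\Tt_V}$ the Pr\"ufer contribution vanishes, and then Serre duality together with Lemma~\ref{lem:V-divisibility-lemma} kills $\Ext^1(\Lambda',X)$. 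This is arguably cleaner, as it avoids the external ampleness input and uses only results already established in the paper.

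For the generator property your torsionfree case is essentially the paper's argument (both rest on tubular shifts at points of $V$ becoming isomorphisms in the quotient); you should just make explicit that the shift $\sigma$ is taken at a point $x\in V$, so that $\pi(\sigma^{-n}\Lambda')\simeq\pi\Lambda'$ actually holds. For local noetherianness the paper argues that $\End_{\QHh/\Tt_V}(\pi\Lambda')$ is noetherian because $\pi\Lambda'$ is; your general localization argument is equally valid and perhaps more transparent.
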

\begin{proof}
  (1) This is clear by the exact sequence~\eqref{eq:def-Lambda_V}.

  (2) Let $x\in V$. Then $\Lambda'$ and $\Lambda'(nx)$ become
  isomorphic in $\QHh/\Tt_V$ for all $n\in\mathbb{Z}$, which follows
  from~\eqref{eq:x-universal-Lambda}. We note that every short exact
  sequence in $\QHh/\Tt_V$ is isomorphic to the image of a short exact
  sequence in $\QHh$ under the quotient functor $\pi$. If $A\in\Hh$,
  then, by~\cite[0.4.6]{kussin:2009}, \cite{kussin:2014}, for
  sufficiently large $n>0$ we have $\Ext^1(\Lambda'(-nx),A)=0$, which
  shows that $\pi \Lambda'\simeq\pi(\Lambda'(-nx))$ is projective with
  respect to images of coherent objects. Since the class
  $\Ker\Ext^1(\pi\Lambda',-)$ is closed under direct limits, it
  follows that $\pi\Lambda'$ is projective. Since also, again
  by~\cite[0.4.6]{kussin:2009}, for sufficiently large $n>0$ we have
  $\Hom(\Lambda'(-nx),A)\neq 0$, we get
  $\Hom(\pi\Lambda',\pi A)\neq 0$ for every $A\in\Hh$, and it follows
  easily that $\pi\Lambda'$ is a generator in the quotient category.
  It is finitely presented because $\Hom(\Lambda',-)$ and hence
  $\Hom(\pi\Lambda',-)$ preserve direct limits (we refer to
  Remark~\ref{rem:rperp} and~\cite[Lem.~2.5]{krause:1997}).

  (3) This is a well-known result by Gabriel-Mitchell, we refer
  to~\cite[II.1]{bass:1968}. For the last statement, recall that
  $\Lambda'$ is noetherian, and so is $\End_{\QHh/\Tt_V}(\pi\Lambda')$.
\end{proof}
As an additional information on $\Lambda'_V$ we exhibit its minimal
injective resolution. We recall that the sheaf $\Kk$ of rational
functions is the injective envelope of the structure sheaf $L$.
\begin{proposition}
  Let $\emptyset\neq V\subseteq\XX$. There is a short exact sequence
  \begin{equation}
    \label{eq:ses-2}
    0\ra\Lambda'_V
    \ra\Lambda'_{\XX}\ra\bigoplus_{y\in\XX\setminus
      V} \bigoplus_{j=0}^{p'\!(y)-1}(\tau^j S_y[\infty])^{e(j,x)}\ra 0.
  \end{equation}
  This is the minimal injective resolution of $\Lambda'_V$. Moreover,
  $\Lambda'_{\XX}\simeq\Kk^n$ with $n=\rk(\Lambda')$.
\end{proposition}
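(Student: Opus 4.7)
The plan is to proceed in three steps: (i) build the short exact sequence \eqref{eq:ses-2} by relating $\eta_V$ to $\eta_\XX$; (ii) identify $\Lambda'_\XX$ with $\Kk^n$; (iii) verify minimality of the resulting injective resolution.

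For step (i), I would view the sequences $\eta_V$ and $\eta_\XX$ from \eqref{eq:def-Lambda_V} as Yoneda representatives of the ``identity'' Ext class under the splicing isomorphism generalizing \eqref{eq:ext-identity}. By naturality of this isomorphism, the inclusion $\bigoplus_{x\in V}\bigoplus_j(\tau^jS_x[\infty])^{e(j,x)}\hookrightarrow\bigoplus_{x\in\XX}\bigoplus_j(\tau^jS_x[\infty])^{e(j,x)}$ pulls the $\XX$-identity back to the $V$-identity in $\Ext^1(-,\Lambda')$. By functoriality of $\Ext^1$ in the first variable, this yields a morphism of short exact sequences $\eta_V\to\eta_\XX$ that is the identity on $\Lambda'$ and the natural inclusion on the right-hand terms. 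The Snake Lemma then forces the induced middle map $\Lambda'_V\to\Lambda'_\XX$ to be a monomorphism with cokernel $\bigoplus_{y\in\XX\setminus V}\bigoplus_j(\tau^jS_y[\infty])^{e(j,y)}$, giving \eqref{eq:ses-2}.

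For step (ii), Lemma~\ref{lem:Lambda_V-tf-V-div} applied to $V=\XX$ shows that $\Lambda'_\XX$ is torsionfree and $\XX$-divisible, hence injective by the characterization of injectives as divisibles. In the locally noetherian category $\QHh$ every injective decomposes into indecomposable injectives, and by Proposition~\ref{prop:main-properties-torsion-sheaves}(1) the only torsionfree indecomposable injective is $\Kk$. So $\Lambda'_\XX\simeq\Kk^m$ for some cardinal $m$. To pin down $m$, pass to the quotient category $\QHh/\vec{\Hh_0}\simeq\Mod(k(\Hh))$: since the cokernel $\Lambda'_\XX/\Lambda'$ is torsion, the induced map $\pi\Lambda'\to\pi\Lambda'_\XX$ is an isomorphism. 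Using $\pi\Kk\simeq k(\Hh)$ and $\pi\Lambda'\simeq k(\Hh)^n$ with $n=\rk(\Lambda')$, we conclude $m=n$.

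For step (iii), the cokernel in \eqref{eq:ses-2} is a direct sum of Pr\"ufer sheaves, hence injective by Proposition~\ref{prop:main-properties-torsion-sheaves}(1), so \eqref{eq:ses-2} is already an injective resolution; minimality reduces to showing that the inclusion $\Lambda'_V\hookrightarrow\Lambda'_\XX$ is essential. Let $0\neq U\subseteq\Lambda'_\XX\simeq\Kk^n$; as $\Lambda'_\XX$ is torsionfree, so is $U$, and by local noetherianity $U$ contains a non-zero coherent subsheaf, which is a vector bundle and thus contains a line bundle $L'$. If $L'\cap\Lambda'_V=0$, the composition $L'\hookrightarrow\Lambda'_\XX\twoheadrightarrow\Lambda'_\XX/\Lambda'_V$ would be a monomorphism; but its image is coherent and a subsheaf of the torsion sheaf $\Lambda'_\XX/\Lambda'_V$, hence of finite length, forcing the map to have non-zero kernel --- a contradiction. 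Thus $U\cap\Lambda'_V\neq 0$, and the resolution is minimal. The most delicate point is step (i): realising $\eta_V$ and $\eta_\XX$ as compatible Yoneda classes so that the middle map is canonically defined; once this is granted, the Snake Lemma, the injective classification, and the line-bundle argument proceed smoothly.
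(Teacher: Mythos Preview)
Your proof is correct and follows the same overall architecture as the paper's: realise $\eta_V$ as the pullback of $\eta_\XX$ along the inclusion of Pr\"ufer sums, apply the Snake Lemma, then identify $\Lambda'_\XX$ and check minimality. Step~(i) is identical in substance to the paper's argument.

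Where you diverge is in steps~(ii) and~(iii). For~(ii), the paper simply cites \cite[Thm.~4.1]{reiten:ringel:2006} to assert that $\eta_\XX$ is the minimal injective resolution of $\Lambda'$, whence $\Lambda'_\XX\simeq\Kk^n$ immediately. Your route --- applying Lemma~\ref{lem:Lambda_V-tf-V-div} at $V=\XX$, invoking the classification of indecomposable injectives, and computing the multiplicity by passing to $\QHh/\vec{\Hh_0}\simeq\Mod(k(\Hh))$ --- is more self-contained and avoids the external reference; it works cleanly because $\pi\Kk\simeq k(\Hh)$ (as $\Kk$ is the unique indecomposable in $\rperp{\vSs_\XX}$). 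For~(iii), the paper exploits the chain $\Lambda'\subseteq\Lambda'_V\subseteq\Lambda'_\XX$: since the outer inclusion is essential (from step~(ii)), so is the inner one $\Lambda'_V\hookrightarrow\Lambda'_\XX$, because any $U\subseteq\Lambda'_\XX$ with $U\cap\Lambda'_V=0$ has a fortiori $U\cap\Lambda'=0$. Your direct line-bundle argument is also valid but slightly longer; the paper's shortcut is available precisely because it already knows $\eta_\XX$ is the injective envelope of $\Lambda'$.
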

\begin{proof}
  Via the identity~\eqref{eq:ext-identity} we have
  $\eta_V=(\eta_y)_{y\in V}$ and $\eta_\XX=(\eta_x)_{x\in\XX}$. Thus
  inclusion $\iota\colon\displaystyle\bigoplus_{y\in
    V}\bigoplus_{e(y)}S_y[\infty]\ra\bigoplus_{x\in
    \XX}\bigoplus_{e(x)}S_x[\infty]$ induces a map on the
  $\Ext^1$-spaces, which on the products induces projection onto the
  components of $V$, and thus maps $\eta_\XX$ to $\eta_V$. Thus there
  is a pull-back diagram
  $$\xy\xymatrixcolsep{1pc}\xymatrixrowsep{0.9pc}\xymatrix{ \eta_V\colon
    0\ar @{->}[r] & \Lambda' \ar @{->}[r]\ar @{=}[d] & \Lambda'_V \ar
    @{->}[r]\ar @{->}[d] & \displaystyle\bigoplus_{y\in
      V}\bigoplus_{j=0}^{p'\!(y)-1}(\tau^j S_y[\infty])^{e(j,x)}\ar
    @{->}[r] \ar
    @{->}[d]^-{\iota} & 0\\
    \eta_\XX\colon 0\ar @{->}[r] & \Lambda' \ar @{->}[r] &
    \Lambda'_{\XX} \ar @{->}[r] & \displaystyle\bigoplus_{x\in
      \XX}\bigoplus_{j=0}^{p'\!(x)-1}(\tau^j S_x[\infty])^{e(j,x)}\ar
    @{->}[r] & 0}\endxy,$$ that is, $\eta_V =\eta_\XX\cdot\iota$. Now we get
  sequence~\eqref{eq:ses-2} with the snake lemma. The
  sequence~\eqref{eq:def-Lambda_V} is, for $V=\XX$, the minimal
  injective resolution of $\Lambda'$; this follows from the
  construction of $\Lambda'_{\XX}$ like
  in~\cite[Thm.~4.1]{reiten:ringel:2006}. Therefore
  $\Lambda'_{\XX}\simeq\Kk^n$ with $n=\rk(\Lambda')$. From the
  monomorphisms $\Lambda'\ra\Lambda'_V\ra\Lambda'_{\XX}$ it is then
  clear that the sequence~\eqref{eq:ses-2} is the minimal injective
  resolution of $\Lambda'_V$.
\end{proof}
Since the sequence~\eqref{eq:ses-2} lies in
$\rperp{{\vSs_V}}=\Mod(\End_{\QHh/\Tt_V}(\pi\Lambda'))$, it is also the
minimal injective resolution of the projective generator
$\pi\Lambda'_V$.\medskip

The main result about the torsionfree part interprets $T_+$ as a
projective generator in the localization of $\QHh$ (or $\QHh'$) at
$V$.
\begin{proposition}
  $\Add(T_+)=\Add(\Lambda'_V)$.
\end{proposition}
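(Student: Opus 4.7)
The plan is to realize both $T_+$ and $\Lambda'_V$ as projective generators of the localization $\rperp{\vSs_V}$ of $\QHh'$. By Lemma~\ref{lem:tilt-V-construct} this subcategory is equivalent to a module category $\Mod R$ with $R=\End_{\QHh'/\Tt_V}(\pi\Lambda'_V)$, and in any module category two projective generators always have the same additive closure (namely $\Add(R)$); from this the conclusion $\Add(T_+)=\Add(\Lambda'_V)$ follows at once.

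First I would record that both objects lie in $\rperp{\vSs_V}$. For $\Lambda'_V$ this is Lemma~\ref{lem:Lambda_V-tf-V-div}, while $T_+$ is torsionfree by construction and $V$-divisible by Lemma~\ref{lem:T+-divisibility-lemma}. Lemma~\ref{lem:tilt-V-construct} already identifies $\Lambda'_V$ as a finitely presented projective generator of $\rperp{\vSs_V}$, so only the corresponding statement for $T_+$ needs work.

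The key step is therefore to prove that $T_+$ is a projective generator of $\rperp{\vSs_V}$. Writing $T_V=T_+\oplus P$ with $P=\bigoplus_{x\in V,\,j\in\Rr_x}\tau^jS_x[\infty]$, I would first establish the chain
\[
\rperp{\vSs_V}\;=\;\Ff_V\cap\Dd_V\;\subseteq\;\Dd_V\;\subseteq\;\rperpe{T_V}\;=\;\Gen(T_V).
\]
The last equality is tilting; the second inclusion follows from Lemma~\ref{lemma:undercut} applied to $T_V=T_{(0,V)}$ (its branch part being trivial), which yields $\rperpe{T_V}=\rperpe{(\vect\XX')}\cap\Dd_V$, together with Lemma~\ref{lem:V-divisibility-lemma} applied inside $\QHh'$ ensuring $\Dd_V\subseteq\rperpe{(\vect\XX')}$. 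Given any $X\in\rperp{\vSs_V}$ the resulting identity $0=\Ext^1(T_V,X)=\Ext^1(T_+,X)\oplus\Ext^1(P,X)$ forces $\Ext^1(T_+,X)=0$, and by the exact embedding of Remark~\ref{rem:rperp} this establishes projectivity of $T_+$ inside $\rperp{\vSs_V}$. For the generator property I would pick, for $X\in\rperp{\vSs_V}\subseteq\Gen(T_V)$, an epimorphism $T_+^{(I)}\oplus P^{(I)}\twoheadrightarrow X$; since $P^{(I)}$ is torsion supported in $V$ while $X$ is $V$-torsionfree, its restriction to $P^{(I)}$ vanishes and a surjection $T_+^{(I)}\twoheadrightarrow X$ survives.

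The main obstacle is the inclusion $\rperp{\vSs_V}\subseteq\rperpe{T_V}$, but as indicated this unfolds cleanly from the undercut formula and from Lemma~\ref{lem:V-divisibility-lemma}. Once $T_+$ and $\Lambda'_V$ are both seen as projective generators of $\Mod R\simeq\rperp{\vSs_V}$, the equality $\Add(T_+)=\Add(R)=\Add(\Lambda'_V)$ inside $\rperp{\vSs_V}$ is standard, and it transports to $\QHh'$ (and to $\QHh$) because $\rperp{\vSs_V}$ is a full subcategory closed under arbitrary direct sums and under direct summands.
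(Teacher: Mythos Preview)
Your argument is correct and takes a genuinely different route from the paper. The paper proceeds by showing directly that $Q=\Lambda'_V\oplus T_0$ is a tilting object in~$\QHh'$: it verifies $\Ext^1(Q,Q^{(I)})=0$ using Lemma~\ref{lem:tilt-V-construct} and the sequence~\eqref{eq:ses-2}, and then checks $\rperpe{Q}\subseteq\Gen(Q)$ by taking the trace of $\Lambda'_V$ in a given $X\in\rperpe{Q}$, showing the cokernel $C$ is $V$-torsionfree and $V$-divisible, and killing it via the generator property of $\pi\Lambda'$ in Lemma~\ref{lem:tilt-V-construct}. Having established that $Q$ is tilting with the prescribed torsion part, the uniqueness in Theorem~\ref{thm:large-tilting-sheaves-in-general} yields $\Add(Q)=\Add(T_V)$, and splitting off the common torsion summand gives the result. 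You instead work on the Morita level: both $T_+$ and $\Lambda'_V$ are exhibited as projective generators of the localization $\rperp{\vSs_V}\simeq\Mod R$, whence they share the same additive closure. Your approach is more conceptual and makes explicit the interpretation of $T_+$ as a projective generator of the quotient category, which the paper only records afterwards.

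One minor imprecision: the identity $\rperpe{T_V}=\rperpe{(\vect\XX')}\cap\Dd_V$ that you attribute to Lemma~\ref{lemma:undercut} is not quite the content of that lemma; part~(2) there \emph{assumes} $\rperpe{T}=\rperpe{\vSs}$ rather than deriving it. The identity you need is a consequence of the proof of Theorem~\ref{thm:large-tilting-sheaves-in-general}, which shows that any tilting sheaf with the prescribed large torsion part has tilting class $\rperpe{\vSs}$; applied in $\QHh'$ with $B=0$ this gives exactly what you want. With this adjustment your chain $\rperp{\vSs_V}\subseteq\Dd_V\subseteq\rperpe{T_V}=\Gen(T_V)$ is justified, and the rest of your argument goes through.
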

\begin{proof}
  Invoking the uniqueness statement of
  Theorem~\ref{thm:large-tilting-sheaves-in-general} it is sufficient
  to show that $Q=Q_+\oplus Q_0$ with $Q_+=\Lambda'_V$ and
  $Q_0=T_0=\bigoplus_{x\in V}\bigoplus_{j=0}^{p'\!(x)-1}\tau^j
  S_x[\infty]$ is a tilting object in $\QHh'$. From
  Lemma~\ref{lem:tilt-V-construct} we deduce
  $\Ext^1(Q_+,{{Q_+}^{(I)}})=0$, and using the
  sequence~\eqref{eq:ses-2} we see that $\Ext^1(Q,Q^{(I)})=0$ for each
  set $I$. Let $X\in\QHh'$. We conclude that $X\in\Gen(Q)$ implies
  $X\in\rperpe{Q}$. We have to show that the converse also holds.  So,
  let now $X\in\rperpe{Q}$. In particular, $X\in\rperpe{{Q_0}}$. The
  embeddings $S_y\ra S_y[\infty]\ra Q_0$ give rise to epimorphisms
  $\Ext^1 (Q_0,X)\rightarrow\Ext^1 (S_y,X)$ for all $y\in V$, and
  hence $X$ is $V$-divisible. Consider the short exact sequences
  $0\ra K\ra {Q_+}^{(I)}\ra B\ra 0$ and $0\ra B\ra X\ra C\ra 0$, where
  $I=\Hom(Q_+,X)$, so that $B$ is the trace of $Q_+$ in $X$. It is
  sufficient to show that $C=0$. Since $X$ is $V$-divisible, the same
  holds for $C$. Moreover $\Hom(Q_+,C)=0$. We show, that $C$ is
  $V$-torsionfree. Assume, this is not the case. Then there is
  $y\in V$ such that $\Hom(S_y,C)\neq 0$. Since $C$ (and thus also
  $tC$ and $(tC)_y$) is $y$-divisible, we get
  $S_y[\infty]\subseteq (tC)_y\subseteq C$.  Since $S_y[\infty]$ is
  injective, there is a surjection
  $\Hom(Q_+,S_y[\infty])\ra\Hom(\Lambda',S_y[\infty])\neq 0$, and
  $\Hom(Q_+,C)\neq 0$ follows, a contradiction. Thus,
  $C\in\rperp{{\vSs_V}}$, and since $\Hom(Q_+,C)=0$, we get $C=0$ by
  Lemma~\ref{lem:tilt-V-construct}. This finishes the proof.
\end{proof}
The following is a reformulation of
Theorem~\ref{thm:large-tilting-sheaves-in-general}.
\begin{theorem}\label{thm:classif-tubular-torsion}
  Let $\XX$ be a weighted noncommutative regular projective curve.
  The tilting sheaves in $\QHh$ having a large torsion part are, up to
  equivalence, the sheaves of the form $$T_{(B,V)}=T_{V}\oplus B$$
  with a subset $\emptyset\neq V\subseteq\XX$, a branch sheaf
  $B=B_{\mathfrak i}\oplus B_{\mathfrak e}$ with interior and exterior
  part $B_{\mathfrak i}$ and $B_{\mathfrak e}$, respectively, and a
  tilting sheaf $T_{V}$ in the category
  $\Qcoh\XX'=\rperp{(B_{\mathfrak e}\oplus\tau^- B_{\mathfrak
      i})}\subseteq\QHh$,
  given as the direct sum of the middle term and the end term of the
  sequence~\eqref{eq:def-Lambda_V}. \qed
\end{theorem}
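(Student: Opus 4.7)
The plan is to assemble the theorem directly from the machinery that has been developed: the bijection from Corollary~\ref{cor:nctp} provides the parametrization by pairs $(B,V)$, while the Perpendicular Lemma and the explicit construction of $\Lambda'_V$ provide the concrete description of the tilting sheaf corresponding to $(B,V)$.

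First I would invoke Corollary~\ref{cor:nctp} to obtain that up to equivalence, every tilting sheaf with large torsion part is of the form $T_{(B,V)}$ for a uniquely determined pair consisting of a branch sheaf $B=B_{\mathfrak i}\oplus B_{\mathfrak e}$ and a non-empty subset $V\subseteq\XX$. So the content of the theorem is the identification of the torsionfree and Pr\"ufer summands with the middle and end terms of sequence~\eqref{eq:def-Lambda_V}.

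Next I would split off the exterior branch. By the very construction $T_{(B,V)}=T_{(B_{\mathfrak i},V)}\oplus B_{\mathfrak e}$ (see~\eqref{eq:def_full-T_V,B}), and Lemma~\ref{lem:exterior-branch-reduction} applied to $B_{\mathfrak e}$ tells us that $T_{(B_{\mathfrak i},V)}$ is a tilting sheaf in $\rperp{B_{\mathfrak e}}\simeq\Qcoh\XX''$. Then I would reduce the interior branch by perpendicular calculus: since the Pr\"ufer summands $\tau^j S_x[\infty]$ with $j\in\Rr_x$ are injective and, by definition of $\Rr_x$, right-orthogonal to $\tau^- B_{\mathfrak i}$, and since the torsionfree part $T_+$ is $V$-divisible (Lemma~\ref{lem:T+-divisibility-lemma}) and hence also lies in $\rperp{(\tau^- B_{\mathfrak i})}$, the complement $T_V=T_{(0,V)}$ of $B_{\mathfrak i}$ inside $T_{(B_{\mathfrak i},V)}$ lives in $\Qcoh\XX'=\rperp{(B_{\mathfrak e}\oplus\tau^- B_{\mathfrak i})}$. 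Applying Lemma~\ref{lem:exterior-branch-reduction} once more (to $B_{\mathfrak i}$ viewed inside $\rperp{B_{\mathfrak e}}$, after observing that $\rperp{B_{\mathfrak i}}=\rperp{(\tau^- B_{\mathfrak i})}$ in the relevant wings via the perpendicular remark preceding that lemma) yields that $T_V$ is a tilting sheaf in $\Qcoh\XX'$.

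Finally I would identify $T_V$ with the prescribed sequence. By construction, its torsion part is exactly $\bigoplus_{x\in V}\bigoplus_{j\in\Rr_x}\tau^j S_x[\infty]$, which (after noting that for the tilting sheaf $T_{(0,V)}$ in $\QHh'$ the indices $j$ range over the full set $\{0,\dots,p'\!(x)-1\}$) matches the end term of $\eta_V$. For the torsionfree part, the Proposition preceding the theorem shows $\Add(T_+)=\Add(\Lambda'_V)$, which is exactly the middle term of $\eta_V$. Combining the two, $\Add(T_V)=\Add(\Lambda'_V\oplus\bigoplus_{x\in V}\bigoplus_{j}\tau^j S_x[\infty])$, and $T_{(B,V)}=T_V\oplus B$, which is the claimed form. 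The main technical obstacle is justifying the double perpendicular reduction (making sure that after removing $B_{\mathfrak e}$ from the ambient category the remaining interior branch $B_{\mathfrak i}$ still behaves compatibly with the construction of $\Lambda'_V$ in the reduced curve $\XX'$); this is handled by the remark on perpendicular categories of wings and by Lemma~\ref{lem:Lambda_V-tf-V-div}, which ensures $\Lambda'_V$ is precisely $V$-divisible and torsionfree as required for the uniqueness statement in Theorem~\ref{thm:large-tilting-sheaves-in-general}.
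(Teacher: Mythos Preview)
Your overall strategy matches the paper's: Theorem~\ref{thm:classif-tubular-torsion} is presented there with a \qed as a reformulation of Theorem~\ref{thm:large-tilting-sheaves-in-general}, assembled from Corollary~\ref{cor:nctp}, the decomposition~\eqref{eq:def_full-T_V,B}, the unnamed Lemma placing $T_{(0,V)}$ inside $\QHh'$, and the Proposition establishing $\Add(T_+)=\Add(\Lambda'_V)$.

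There is, however, a genuine error in your second application of Lemma~\ref{lem:exterior-branch-reduction}. The equality $\rperp{B_{\mathfrak i}}=\rperp{(\tau^- B_{\mathfrak i})}$ is false: the remark preceding that lemma only gives $\rperp{B}=\rperp{\Ww}$ for a branch $B$ in a wing $\Ww$, and the wings $\Ww$ and $\tau^-\Ww$ are distinct. More to the point, $T_V\notin\rperp{B_{\mathfrak i}}$ in general. If $S[r-1]$ is the root of a connected component of $B_{\mathfrak i}$, then the Pr\"ufer sheaf $S[\infty]$ above that wing is a summand of $T_V$, and $\Hom(S[r-1],S[\infty])\neq 0$ because $S[r-1]\subseteq S[\infty]$; so the hypothesis $T\in\rperp{B}$ of Lemma~\ref{lem:exterior-branch-reduction} fails for $B=B_{\mathfrak i}$. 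The paper sidesteps this by working in $\QHh'=\rperp{(\tau^- B_{\mathfrak i})}$ (equivalently $\lperp{B_{\mathfrak i}}$, by Serre duality) rather than in $\rperp{B_{\mathfrak i}}$; this is precisely why the theorem statement reads $\rperp{(B_{\mathfrak e}\oplus\tau^- B_{\mathfrak i})}$ and not $\rperp{B}$. The tilting property of $T_V$ in $\QHh'$ is then obtained not via Lemma~\ref{lem:exterior-branch-reduction} but from the direct verification in the Proposition that $\Lambda'_V$ together with the Pr\"ufer part is tilting in $\QHh'$, combined with the uniqueness clause of Theorem~\ref{thm:large-tilting-sheaves-in-general} applied to $\XX'$. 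Your final paragraph already invokes this Proposition correctly, so the fix is simply to drop the second appeal to Lemma~\ref{lem:exterior-branch-reduction}.
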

\begin{corollary}\label{cor:general-homogeneous-torsion}
  Let $\XX$ be a (non-weighted) noncommutative regular projective
  curve. The tilting sheaves in $\QHh$ having a large torsion part
  are, up to equivalence, the sheaves $T_V$ with
  $\emptyset\neq V\subseteq\XX$. \qed
\end{corollary}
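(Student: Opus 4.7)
The plan is to derive this statement as a direct specialization of Theorem~\ref{thm:classif-tubular-torsion}. The key observation is that when $\XX$ is non-weighted, axiom (NC~6') forces $p(x)=1$ for every $x\in\XX$, so every tube $\Uu_x$ is homogeneous.

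I would first recall from~\ref{wings} that a coherent sheaf of finite length in $\Uu_x$ is exceptional if and only if its length is at most $p(x)-1$; consequently, exceptional sheaves of finite length exist only in tubes of rank strictly greater than $1$. Since there are no such tubes in the non-weighted case, $\Hh$ contains no exceptional sheaves of finite length, hence no wings in the sense of~\ref{wings}, and therefore the only possible branch sheaf is the zero sheaf $B=0$.

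Then I would apply Theorem~\ref{thm:classif-tubular-torsion}: any tilting sheaf $T$ in $\QHh$ with large torsion part is equivalent to $T_{(B,V)}$ for a pair $(B,V)$ with $B$ a branch sheaf and $\emptyset\neq V\subseteq\XX$. Forced by $B=0$, the interior and exterior parts both vanish, $B_{\mathfrak i}=B_{\mathfrak e}=0$, so that $\rperp{(B_{\mathfrak e}\oplus\tau^- B_{\mathfrak i})}=\QHh$ and $T_{(B,V)}=T_{(0,V)}=T_V$. Conversely, Corollary~\ref{cor:nctp} applied to pairs of the form $(0,V)$ ensures that distinct non-empty subsets $V\subseteq\XX$ yield pairwise non-equivalent tilting sheaves $T_V$, each of which has large torsion part (since its torsion part is the non-zero direct sum of Pr\"ufer sheaves indexed by $V$).

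There is essentially no obstacle: the corollary is a direct simplification of Theorem~\ref{thm:classif-tubular-torsion} once one notices that the homogeneity of all tubes collapses the entire branch parameter to zero. The only point deserving explicit verification is the vacuity of the notion of branch sheaf when all weights are trivial, which follows immediately from the length bound recalled in~\ref{wings}.
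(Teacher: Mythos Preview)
Your proposal is correct and matches the paper's approach: the corollary is marked with \qed\ in the paper, i.e.\ it is treated as an immediate specialization of Theorem~\ref{thm:classif-tubular-torsion}, and your argument spells out precisely why the non-weighted hypothesis collapses the branch parameter to $B=0$.
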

\begin{remark}[Genus zero]\label{rem:canonical-subconfiguration}
  Let $\XX$ be of genus zero and consider the tilting bundle
  $T_{(B,V)}$ in $\QHh=\Qcoh\XX$. Then we can choose $\Lambda'$ as the
  canonical configuration $T'_{\can}$ in the category
  $\QHh'=\Qcoh\XX'$ as above.

  We recall from~\cite[Sec.~5]{lenzing:delapena:1999} that a canonical
  configuration $T_{\can}=\Lambda$ in $\Qcoh\XX$, considered as full
  subcategory of $\Hh$, has the following form:
  \begin{equation}
  \label{eqn:can-conf}
  \xy\xymatrixcolsep{1.7pc}\xymatrixrowsep{0.8pc}\xymatrix{ & L_1(1) \ar
    @{->}[r] & L_1(2) 
    \ar @{->}[r] & \cdots\ar @{->}[r] & L_1(p_1-2) \ar @{->}[r]
    & L_1(p_1-1) \ar@<+0.5ex>[ddr] & \\
    & L_2(1) \ar @{->}[r] & L_2(2) \ar @{->}[r]&\cdots\ar @{->}[r] & L_2(p_2-2)
    \ar @{->}[r] & L_2(p_2-1) \ar @{->}[dr] & \\
    L \ar @{->}[uur] \ar @{->}[ur] \ar
    @{->}[dr]& \vdots & \vdots &
    & & \vdots & \overline{L}\\
    & L_t(1) \ar @{->}[r] & L_t(2) \ar @{->}[r] & \cdots\ar @{->}[r] &
    L_t(p_t-2) \ar @{->}[r] & L_t(p_t-1) \ar @{->}[ur] & }\endxy
\end{equation}
If now a branch sheaf $B=B_{\mathfrak i}\oplus B_{\mathfrak e}$ is
given, we can assume, by applying suitable tubular shifts (associated
to the exceptional points) to $\Lambda$, that we have
$\Hom(L,B_{\mathfrak i})=0=\Hom(\overline{L},B_{\mathfrak i})$ and
$\Hom(L,\tau B_{\mathfrak e})=0=\Hom(\overline{L},\tau B_{\mathfrak
  e})$.
Then those direct summands of $\Lambda$ lying in
$\rperp{(B_{\mathfrak e}\oplus\tau^- B_{\mathfrak i})}\simeq\Qcoh\XX'$
form a canonical configuration $\Lambda'=T'_{\can}$ in $\Qcoh\XX'$,
containing $L$ and $\overline{L}$; it arises from $\Lambda$ by
removing some ``non-adjacent segments''
$L_i(j),\,L_i(j+1),\dots,L_i(j+r-2)$ from the inner parts of the arms.
(Compare also~\cite[Thm.~3.1]{lenzing:meltzer:1996}.)
\end{remark}

\section{The domestic case}
\emph{In this section let $\XX$ be a noncommutative curve of genus
  zero.} Assume that $\XX$ is of domestic type, that is, the orbifold
Euler characteristic $\chi'_{orb}(\XX)$ is positive. This means, for
the degree of the \emph{dualizing sheaf} $\tau L$ we have
$$\delta(\vom):=\deg(\tau L)=-\frac{2\ovp
  s^2}{\kappa\varepsilon}\cdot\chi'_{orb}(\XX)<0.$$
Here, $\ovp$ is the least common multiple of the weights
$p_1,\dots,p_t$, moreover $\kappa=\dim_k\End(L)$ and $s=s(\Hh)$. For
every indecomposable vector bundle $E$ one has the following slope
formula
$$\mu(\tau E)=\mu(E)+\delta(\vom).$$

We recall the main features of the domestic case:
\begin{itemize}
  \item[(D1)] All indecomposable vector bundles are stable and exceptional.
  \item[(D2)] If $E$ and $F$ are indecomposable vector bundles, then
    $\Hom(E,F)=0$ if $\mu(E)>\mu(F)$.
  \item[(D3)] If $E$ is an indecomposable vector bundle then
    $\mu(\tau E)<\mu(E)$.
  \item[(D4)] The collection $\Ff$ of indecomposable vector bundles
    $F$ such that $0\leq\mu(F)<-\delta(\vom)$ forms a slice in the
    sense of~\cite[4.2]{ringel:1984}, and
    $T_{\her}:=\bigoplus_{F\in\Ff}F$ is a tilting bundle having a tame
    hereditary algebra as endomorphism ring. We refer
    to~\cite[Prop.~6.5]{lenzing:reiten:2006} (the result there is in a
    more general context).
  \item[(D5)] There are only finitely many Auslander-Reiten orbits of
    vector bundles. (From~(D3) it follows that $\Ff$ contains
    precisely one indecomposable from each Auslander-Reiten orbit, the
    finiteness follows from~(D4).)
\end{itemize}
\begin{lemma}\label{lem:nomaps}
  Assume that $\XX$ is domestic, and that $T\in\QHh$ is a large
  tilting object which is torsionfree. Then there is no non-zero
  morphism from $T$ to a vector bundle.
\end{lemma}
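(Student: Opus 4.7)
The plan is to argue by contradiction: suppose there is a non-zero morphism $f\colon T\to V$ with $V\in\vect\XX$ indecomposable, so by (D1) stable and exceptional. The image of $f$ is a coherent subsheaf of $V$, hence a vector bundle; replacing $V$ by an indecomposable summand of $\Image(f)$ and composing with the corresponding projection, I may assume $f$ is surjective, so $V\in\Gen(T)=\rperpe{T}$, and $V$ has a local endomorphism ring.

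I then apply the dichotomy from Lemma~\ref{lem:porperties-tilting}: since $\Add(T)=\rperpe{T}\cap\lperpe{(\rperpe{T})}$, either $V\in\lperpe{(\rperpe{T})}$, in which case part~(3) of the lemma makes $V$ a direct summand of $T$; or there exists $X\in\rperpe{T}$ with $\Ext^{1}(V,X)\neq 0$, whereupon Serre duality yields $\Hom(X,\tau V)\neq 0$, and since $X\in\Gen(T)$ is a quotient of some $T^{(I)}$ this forces $\Hom(T,\tau V)\neq 0$. By (D3) we have $\mu(\tau V)=\mu(V)+\delta(\vom)<\mu(V)$, so I repeat the reduction step with $\tau V$ in place of $V$. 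Iterating this dichotomy produces a sequence $V=V_{0},V_{1},V_{2},\ldots$ of indecomposable vector bundles, each a quotient of $T$ lying in $\Gen(T)$, with $V_{n+1}$ a subsheaf of $\tau V_{n}$ (chosen as an indecomposable summand of the image of $T\to\tau V_n$), so that $\mu(V_{n})\to-\infty$ --- unless the iteration halts with some $V_{m}$ that is a direct summand of $T$.

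The main obstacle, and the technical heart of the argument, is to derive the final contradiction from each terminal state. By property (D5) there are only finitely many Auslander--Reiten orbits of indecomposable vector bundles, so in the non-terminating case I may pass to a subsequence with $V_{n}\cong\tau^{a_{n}}E$ for a fixed indecomposable $E$ and $a_{n}\to\infty$; the condition $V_{n}\in\rperpe{T}$ then translates via Serre duality into $\Hom(\tau^{a_{n}-1}E,T)=0$ for all $n$. The contradiction is to be extracted by combining this persistent vanishing of morphisms into $T$ along a single AR orbit with the slice structure from (D4): the tilting bundle $T_{\her}$, whose endomorphism ring is tame hereditary, controls all indecomposable vector bundles, and I expect this to force any torsionfree tilting sheaf $T$ into the Lukas tilting class $\rperpe{(\vect\XX)}=\rperpe{\bL}$, which directly contradicts the initial hypothesis $\Hom(T,V)\neq 0$. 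In the terminating case, where $T$ acquires an indecomposable vector bundle summand, one must likewise show this incompatible with $T$ being large and torsionfree; here one exploits that pairwise $\Ext^{1}$-vanishing among indecomposable vector bundle summands of $T$, together with (D3), restricts each AR orbit to contribute at most one summand, so by (D5) only finitely many, which conflicts with the ``non-coherent'' nature of a large $T$ once the iteration is fully exploited.
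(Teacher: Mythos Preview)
Your dichotomy/iteration setup is reasonable, but both terminal cases contain genuine gaps.

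\textbf{The terminating case (vector bundle summand of $T$).} Your argument here is incorrect. Knowing that each Auslander--Reiten orbit contributes at most one indecomposable vector bundle summand, hence only finitely many isomorphism types, does \emph{not} contradict $T$ being large: a large torsionfree tilting sheaf could perfectly well have a single coherent summand sitting inside an otherwise non-coherent object. The paper handles this case by a different and nontrivial route. It first shows that once $\Hom(T,\vect\XX)$ is bounded below in slope, one can choose a tilting bundle $T_{\her}$ with tame hereditary endomorphism ring $H$ such that $\Ext^1(T_{\her},T)=0$; this identifies $T$ with a tilting $H$-module. The vector bundle summand $F$ you found then becomes an indecomposable \emph{preprojective} $H$-module, and one invokes \cite[Thm.~2.7]{angeleri:sanchez:2013}: a tilting module over a tame hereditary algebra with a preprojective summand is equivalent to a finite-dimensional tilting module. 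That is the contradiction to largeness, and there is no obvious way to avoid this external input.

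\textbf{The non-terminating case (slopes $\to -\infty$).} Here your argument is incomplete rather than wrong. From your subsequence you obtain $\Hom(\tau^{a_n-1}E,T)=0$ for a single orbit and a sparse set of shifts $a_n\to\infty$. This is far weaker than what is needed, and your ``I expect this to force $T$ into the Lukas class'' is precisely the missing content. The paper's mechanism is different: it shows (via the Riemann--Roch formula and the behaviour of \emph{special} line bundles, Lemmas~\ref{lem:first}--\ref{lem:fourth}) that if $T$ maps nontrivially to line bundles of arbitrarily small slope, then $T$ maps onto \emph{every} line bundle, and hence, using line bundle filtrations, generates every vector bundle. Then Serre duality gives $\Hom(E,T)=0$ for all vector bundles $E$, which is impossible for a nonzero torsionfree sheaf. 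Your iteration only produces \emph{some} vector bundles in $\Gen(T)$, not all of them, and the passage from ``some'' to ``all'' is exactly where the special-line-bundle and Riemann--Roch machinery is used; it is not a formality.

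In short, your opening reduction and the iterated dichotomy are fine and lead to the same two-case split as the paper, but in each case you are missing the substantive input: the tame hereditary classification result in the bounded case, and the special-line-bundle/Riemann--Roch slope arguments in the unbounded case.
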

\begin{proof}
  (Sketch.) We assume that there is a vector bundle $E$ with
  $\Hom(T,E)\neq 0$. Our aim is to get a contradiction. The category
  $\vect\XX$ of vector bundles consists of finitely many
  Auslander-Reiten orbits. For the slope of an indecomposable vector
  bundle $E$ we have $\mu(\tau E)<\mu(E)<\mu(\tau^- E)$. We will see
  that if there are line bundles of arbitrarily small slope to which
  $T$ maps non-trivially, then $T$ generates all line bundles; using
  that each vector bundle has a line bundle filtration, it follows
  that $T$ generates $\vect\XX$; by Serre duality it follows that no
  vector bundle maps non-trivially to $T$, which is impossible. Thus
  there is a bound $n\in\ZZ$ such that $T$ is not mapping
  non-trivially to any line bundle, nor (using line bundle filtrations
  again) to any vector bundle of slope $<n$. Then it follows
  from~\cite[Prop.~6.5]{lenzing:reiten:2006} that there is a tilting
  bundle $T_{\her}$ whose endomorphism ring is a tame hereditary
  algebra $H$ such that $\Ext^1(T_{\her},T)=0$, and thus (by
  Proposition~\ref{prop:tilting-sheaf-complex-module}) $T$ can be
  identified with an $H$-module. The details of all these arguments,
  which we just sketched only, will be postponed to the end of this
  section.
 
  We proceed with the proof of the lemma. By the above, there exists
  an indecomposable vector bundle $F$ such that $\Hom(T,F)\neq 0$ and
  $T$ does not map non-trivially to any predecessor of $F$ (since they
  have smaller slopes by stability). Then every non-zero morphism
  $T\ra F$ must be a split epimorphism, by the almost split property.
  Thus, $T$ is a tilting $H$-module having a finite dimensional
  indecomposable preprojective module $P$ (corresponding to $F$) as a
  direct summand, and then $T$ is equivalent to a finite dimensional
  tilting module $T'$,
  compare~\cite[Thm.~2.7]{angeleri:sanchez:2013}. In other words,
  $\Add(T)=\Add(T')$ in $\Mod H$, and then also in $\QHh$, where $T'$
  is a coherent tilting sheaf. Since $T$ is large, this gives the
  desired contradiction and proves the lemma.
\end{proof}
\begin{proposition}\label{prop:tilting-sheaves-finite-type}
  Let $\XX$ be a domestic curve and $T\in\QHh$ a large tilting
  sheaf. Then $T\in\Gen(T_{\cc})$ for every tilting bundle
  $T_{\cc}$. In particular, $T$ is of finite type.
\end{proposition}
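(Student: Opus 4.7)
The plan is to establish the stronger statement $T\in\rperpe{(\vect\XX)}$; since $T_{\cc}\in\vect\XX$ this gives $T\in\rperpe{T_{\cc}}=\Gen(T_{\cc})$, and the finite-type conclusion then follows directly from Proposition~\ref{prop:tilting-sheaf-complex-module}, as $T$ will lie in $\Tt_{\cc}$.

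I would start by writing $T=T_+\oplus T_0$ using Theorem~\ref{thm:torsion-splitting}. For every vector bundle $E$, Serre duality yields $\Ext^1(E,T_0)\simeq\D\Hom(T_0,\tau E)=0$ since $T_0$ is torsion and $\tau E$ torsionfree, so the work reduces to showing $T_+\in\rperpe{(\vect\XX)}$. According to Lemma~\ref{lem:exceptional-tube}, the torsion part $T_0$ either contains a Pr\"ufer summand or is a branch sheaf (possibly zero). In the first case, Corollary~\ref{cor:nctp} (together with Theorem~\ref{thm:large-tilting-sheaves-in-general}) shows that $T$ is equivalent to $T_{(B,V)}$ for some branch sheaf $B$ and some $\emptyset\neq V\subseteq\XX$; Lemma~\ref{lem:T+-divisibility-lemma} makes $T_+$ into a $V$-divisible sheaf, and Lemma~\ref{lem:V-divisibility-lemma} then delivers $T_+\in\Dd_V\subseteq\rperpe{\bL}=\rperpe{(\vect\XX)}$, finishing this case.

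The remaining case is $T_0=B$ coherent (possibly zero). Largeness of $T$ forces $T_+$ to be large, for otherwise $T$ would be equivalent to the coherent sheaf $T_+\oplus B$. The Perpendicular Lemma~\ref{lem:exterior-branch-reduction} then exhibits $T_+$ as a large torsionfree tilting sheaf in $\QHh'=\rperp{B}=\Qcoh\XX'$, where $\XX'$ has only reduced weights; by the reduction-of-weights discussion $\chi'_{orb}(\XX')\geq\chi'_{orb}(\XX)>0$, so $\XX'$ is again a domestic curve of genus zero. Applying Lemma~\ref{lem:nomaps} inside $\QHh'$ gives $\Hom_{\QHh'}(T_+,F)=0$ for every $F\in\vect\XX'$, equivalently, via Serre duality in $\QHh'$, $T_+\in\rperpe{(\vect\XX')}$ computed inside $\QHh'$.

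The main obstacle -- and the step I expect to require the most care -- is to transfer this vanishing from $\QHh'$ back to $\QHh$. The plan is to exploit Geigle--Lenzing's perpendicular calculus for the exceptional object $B$ (recall $\Ext^1(B,B)=0$): for any vector bundle $E\in\vect\XX$ form the universal extension
\[
  0\ra E\ra E^{\natural}\ra B^m\ra 0
\]
with $E^{\natural}\in\rperp{B}$, the delicate point being to verify that $E^{\natural}$ remains torsionfree, so that $E^{\natural}\in\vect\XX'=\vect\XX\cap\rperp{B}$ by~\cite[Prop.~9.6]{geigle:lenzing:1991}. Granted this, $T_+\in\rperp{B}$ kills $\Hom(B^m,T_+)=0=\Ext^1(B^m,T_+)$, the long exact sequence obtained by applying $\Hom(-,T_+)$ collapses to $\Ext^1_{\QHh}(E,T_+)\simeq\Ext^1_{\QHh}(E^{\natural},T_+)$, and this vanishes by the $\QHh'$-computation together with the fact that the inclusion $\rperp{B}\hookrightarrow\QHh$ preserves $\Ext^1$. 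Combining both cases yields $T\in\rperpe{(\vect\XX)}\subseteq\rperpe{T_{\cc}}$, and Proposition~\ref{prop:tilting-sheaf-complex-module} then delivers finite type.
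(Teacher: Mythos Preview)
Your overall strategy is sound and in fact more careful than the paper's own proof, which after handling the large-torsion case simply writes ``Therefore we can assume that $T$ is torsionfree'' and invokes Lemma~\ref{lem:nomaps}, glossing over the intermediate case $T_0=B\neq 0$ coherent that you address explicitly via the Perpendicular Lemma.

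The real gap lies in your transfer step. You correctly flag the torsionfreeness of $E^{\natural}$ as ``the delicate point'', but then write ``Granted this'' and proceed without proof. This is not a formality: if $E^{\natural}$ had a nonzero torsion summand $C'$ in $\Qcoh\XX'$, then $\Ext^1_{\QHh'}(C',T_+)=\D\Hom_{\QHh'}(T_+,\tau'C')$ need not vanish, since $T_+$ (being equivalent to the Lukas sheaf in $\XX'$) generates every finite-length object there, and so the isomorphism $\Ext^1(E,T_+)\simeq\Ext^1(E^{\natural},T_+)$ would not yield zero. The torsionfreeness claim \emph{is} true --- since $B$ is a genuine branch, $\rperp{B}=\rperp{\Ww}$ for the associated union of wings $\Ww$; any torsion part of $E^{\natural}$ embeds into the cokernel, which lies in $\add(\Ww)$, so its socle would consist of basis simples of $\Ww$, contradicting $t(E^{\natural})\in\rperp{\Ww}$ --- but this argument has to be supplied.

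A much shorter route avoids universal extensions entirely. To show $\Hom_{\QHh}(T_+,F)=0$ for every $F\in\vect\XX$, suppose not and pass to the image to get an epimorphism $T_+\twoheadrightarrow F$ onto a nonzero vector bundle. Since $T_+\in\rperp{B}$ and $\QHh$ is hereditary, $\Ext^1(B,F)$ is a quotient of $\Ext^1(B,T_+)=0$; and $\Hom(B,F)=0$ as $B$ is torsion. Hence $F\in\rperp{B}\cap\vect\XX=\vect\XX'$, contradicting Lemma~\ref{lem:nomaps} applied to $T_+$ in $\Qcoh\XX'$.
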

\begin{proof}
  For $T=T_{(B,V)}$ this was already shown in
  Remark~\ref{rem:filtration}. Therefore we can assume that $T$ is
  torsionfree. By the preceding lemma we have
  $\Ext^1(T_{\cc},T)=\D\Hom(T,\tau T_{\cc})=0$, that is,
  $T\in\Gen(T_{\cc})$. The last statement then follows from
  Proposition~\ref{prop:tilting-sheaf-complex-module}.
\end{proof}
\begin{proposition}\label{prop:domestic-torsionfree-large-tilting}
  Assume that $\XX$ is domestic, and that $T\in\QHh$ is a large
  tilting sheaf which is torsionfree. Then $T$ is equivalent to the
  Lukas tilting sheaf $\bL$. 
\end{proposition}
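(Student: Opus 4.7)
The strategy is to identify the tilting classes $\rperpe{T}$ and $\rperpe{\bL}$ via the bijection of Theorem~\ref{thm:class-correspondence}. By Proposition~\ref{prop:tilting-sheaves-finite-type} the tilting sheaf $T$ is of finite type, and $\bL$ is of finite type by its construction with $\rperpe{\bL}=\rperpe{(\vect\XX)}$. Hence the plan is to prove that the resolving class $\vSs:=\lperpe{(\rperpe{T})}\cap\Hh$ associated to $T$ equals $\vect\XX$; the bijection will then force $\rperpe{T}=\rperpe{\bL}$, which is exactly equivalence of $T$ and $\bL$.

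To see $\vect\XX\subseteq\vSs$, I would fix a vector bundle $E$ and any $Y\in\rperpe{T}=\Gen(T)$. Serre duality gives $\Ext^1(E,Y)\simeq\D\Hom(Y,\tau E)$, and since $Y$ is a quotient of some $T^{(I)}$, $\Hom(Y,\tau E)$ embeds into $\Hom(T,\tau E)^{I}$. The Auslander--Reiten translate $\tau E$ is again a vector bundle (as $\tau$ is an autoequivalence of $\Hh$ preserving $\Hh_0$, and therefore also $\vect\XX$), so Lemma~\ref{lem:nomaps} yields $\Hom(T,\tau E)=0$. Consequently $\Ext^1(E,Y)=0$ for every $Y\in\rperpe{T}$, i.e.\ $E\in\vSs$.

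For the reverse inclusion $\vSs\subseteq\vect\XX$, let $F\in\vSs$ and decompose $F=F_+\oplus F_0$ with $F_+\in\vect\XX$ and $F_0\in\Hh_0$; the task reduces to showing $F_0=0$. Since $T$ is torsionfree, Serre duality gives $\Ext^1(T,\tau S)\simeq\D\Hom(S,T)=0$ for every simple sheaf $S$, hence $\tau S\in\rperpe{T}$. If $F_0$ were non-zero it would admit a simple subsheaf $S$, so that $\tau S$ embeds into $\tau F_0$; Serre duality then produces $\Ext^1(F_0,\tau S)\simeq\D\Hom(\tau S,\tau F_0)\neq 0$, contradicting $F\in\vSs$ combined with $\tau S\in\rperpe{T}$. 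Hence $F_0=0$, which completes the proof of $\vSs=\vect\XX$ and thereby the theorem. The only real difficulty has already been isolated in Lemma~\ref{lem:nomaps}: once that lemma is in hand, everything else is a direct application of Serre duality and the resolving class/tilting class correspondence.
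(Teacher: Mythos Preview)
Your proof is correct and follows essentially the same approach as the paper: both arguments show that the resolving class $\vSs=\lperpe{(\rperpe{T})}\cap\Hh$ equals $\vect\XX$ by combining Lemma~\ref{lem:nomaps} (for the inclusion $\vect\XX\subseteq\vSs$) with torsionfreeness of $T$ and Serre duality (for the reverse inclusion), and then invoke finite type and Theorem~\ref{thm:class-correspondence}. Your write-up simply unpacks the details more explicitly than the paper's terse version.
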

\begin{proof}
  Since $T$ is torsionfree, $\rperpe{T}$ contains the class of torsion
  sheaves $\vec{\vSs}_\XX$ by Serre duality. Then
  $\lperpe{(\rperpe{T})}\cap\coh\XX\subseteq\vect\XX$, and by
  Lemma~\ref{lem:nomaps} we even have equality.  Now
  Proposition~\ref{prop:tilting-sheaves-finite-type} yields
  $\Gen(T)=\Gen(\bL)$, compare
  also Theorem~\ref{thm:class-correspondence}.
\end{proof}
The main result of this section summarizes the discussions above:
\begin{theorem}\label{thm:full-classif-domestic}
  Let $\XX$ be a domestic curve. 
  \begin{enumerate}
  \item The large tilting sheaves in $\QHh$  are, up to equivalence, the sheaves of the form
  $$T_{(B,V)}=T_{(B_{\mathfrak i},V)}\oplus B_{\mathfrak e}$$
   with a subset
    $V\subseteq\XX$, a branch sheaf $B=B_{\mathfrak i}\oplus B_{\mathfrak e}$  with
    interior and exterior part $B_{\mathfrak i}$ and $B_{\mathfrak e}$, respectively, and a  tilting sheaf
    $T_{(B_{\mathfrak i},V)}$   in the category
    $\rperp{{B_{\mathfrak e}}}=\Qcoh\XX'$; here $T_{(B_{\mathfrak i},V)}$ with
    $V\neq\emptyset$ is given by
    Theorems~\ref{thm:large-tilting-sheaves-in-general} and~\ref{thm:classif-tubular-torsion}, and
    $T_{(B_{\mathfrak i},\emptyset)}=T_{(0,\emptyset)}=\bL'$ is the
    Lukas tilting sheaf in $\rperp{{B_{\mathfrak e}}}$.\smallskip
  \item There is a bijection between the set of equivalence classes of
    large tilting sheaves in $\QHh$ and the set of  pairs $(B,V)$
     given by  a branch sheaf $B\in\Hh$ and a subset
    $V\subseteq\XX$. Moreover, every
    large tilting object is uniquely determined (up to equivalence) by
    its torsion part. \qed
  \end{enumerate}
\end{theorem}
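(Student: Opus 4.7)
The plan is to reduce the classification to the two cases already established: Theorem~\ref{thm:classif-tubular-torsion} handles tilting sheaves with a large torsion part, while Proposition~\ref{prop:domestic-torsionfree-large-tilting} handles the torsionfree ones. I would begin by writing $T = T_+ \oplus T_0$ with $T_0 = tT$, using the splitting of Theorem~\ref{thm:torsion-splitting}, and then distinguish the two cases according to whether $T_0$ is coherent.

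If $T_0$ is large, then Theorem~\ref{thm:classif-tubular-torsion} applies directly and exhibits $T$ as $T_{(B,V)}$ for some pair with $V \neq \emptyset$. If $T_0$ is coherent, I would first argue that $T_0$ is in fact a branch sheaf $B$: since $\Ext^1(T,T) = 0$, its indecomposable summands are exceptional sheaves of finite length with no self-extensions, and the analysis of Lemma~\ref{lem:exceptional-tube}(1) forces them to be arranged in connected branches in pairwise non-adjacent wings. The Perpendicular Lemma~\ref{lem:exterior-branch-reduction} then identifies $T_+$ with a tilting sheaf in $\rperp{B} \simeq \Qcoh\XX'$. Because $T$ is large while $B$ is coherent, $T_+$ is non-coherent, hence large in $\Qcoh\XX'$; and since it is torsionfree in $\QHh$ it remains torsionfree in $\Qcoh\XX'$. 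The curve $\XX'$ is obtained from $\XX$ by weight reduction, so $\chi'_{orb}(\XX') \geq \chi'_{orb}(\XX) > 0$ and $\XX'$ is again domestic. Applying Proposition~\ref{prop:domestic-torsionfree-large-tilting} inside $\Qcoh\XX'$ yields $T_+ \sim \bL'$, the Lukas tilting sheaf in $\rperp{B}$, and presents $T$ as $T_{(B, \emptyset)} = \bL' \oplus B$.

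For part~(2), surjectivity of $(B,V) \mapsto T_{(B,V)}$ onto equivalence classes is exactly what~(1) supplies. The recipe above recovers the pair from $tT$: the subset $V$ is the support of the Pr\"ufer summands of $tT$, the interior branch $B_{\mathfrak i}$ is the direct sum of the finite-length summands of $tT$ concentrated in $V$, and $B_{\mathfrak e}$ is the remainder; in particular, the case $V = \emptyset$ is distinguished by the absence of Pr\"ufer summands. This shows both that the torsion part determines the equivalence class and that distinct pairs produce inequivalent tilting sheaves. The main obstacle lies in the torsionfree case handled by Proposition~\ref{prop:domestic-torsionfree-large-tilting}, which itself depends on Lemma~\ref{lem:nomaps}; once those ingredients are in place, the present theorem follows by a clean case distinction together with perpendicular calculus.
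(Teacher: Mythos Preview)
Your proposal is correct and follows essentially the same approach as the paper. The paper presents Theorem~\ref{thm:full-classif-domestic} with a \qed as a summary of the preceding results (Theorem~\ref{thm:classif-tubular-torsion} for large torsion part, Proposition~\ref{prop:domestic-torsionfree-large-tilting} combined with the Perpendicular Lemma~\ref{lem:exterior-branch-reduction} for the case of coherent torsion part), and your case distinction makes this explicit in exactly the intended way.
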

As a special case we get:
\begin{corollary}
  Let $\XX$ be a domestic (non-weighted) noncommutative curve of genus zero.
  The large tilting sheaves in $\QHh$ are, up to equivalence, the
  sheaves of the form $T_V$ with $\emptyset\not=V\subseteq\XX$ defined
  in~\eqref{eq:def-T_V}, and the Lukas tilting sheaf $\bL$. \qed
\end{corollary}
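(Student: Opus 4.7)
The plan is to deduce this immediately from Theorem~\ref{thm:full-classif-domestic} by observing that in the non-weighted case all branch sheaves are trivial. First I would recall that ``non-weighted'' means $p(x)=1$ for every $x\in\XX$, so that every tube $\Uu_x$ is homogeneous. In a homogeneous tube the only indecomposable sheaves of finite length $E$ satisfy $\Ext^1(E,E)\neq 0$, because a simple $S\in\Uu_x$ has $\Ext^1(S,S)\neq 0$ by condition (NC~6'), and then by uniseriality every indecomposable $S[n]$ has non-trivial self-extensions. In particular there are no exceptional indecomposables of finite length, hence no wings with $r\geq 1$ in the sense of~\ref{wings}, and consequently no non-zero branch sheaves: the only branch sheaf is $B=0$.

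Next I would invoke Theorem~\ref{thm:full-classif-domestic}(2), which asserts that the large tilting sheaves in $\QHh$ are in bijection with pairs $(B,V)$, where $B$ is a branch sheaf and $V\subseteq\XX$. By the observation above, the only admissible pairs in the non-weighted setting are of the form $(0,V)$, so $B_{\mathfrak e}=0=B_{\mathfrak i}$ and the associated tilting sheaf reduces to $T_{(0,V)}$. For $V\neq\emptyset$ this is precisely the sheaf $T_V$ constructed via~\eqref{eq:def-T_V} in the full ambient category $\rperp{0}=\QHh$. For $V=\emptyset$, Theorem~\ref{thm:full-classif-domestic}(1) identifies $T_{(0,\emptyset)}$ with the Lukas tilting sheaf in $\rperp{{B_{\mathfrak e}}}=\QHh$, which is $\bL$ by Proposition~\ref{prop:large-tilting-torsionfree-in-general}.

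There is essentially no obstacle here, since all the heavy lifting has been done in Theorem~\ref{thm:full-classif-domestic}; the corollary is just the specialization to the homogeneous (non-weighted) situation. The only point to be careful about is to verify that the non-weighted axiom (NC~6') really forces the absence of exceptional indecomposable sheaves of finite length, which is immediate from Theorem~\ref{thm:stability}(1) combined with the fact that an exceptional sheaf in $\Uu_x$ would require $p(x)>1$. Once this is noted, the classification collapses from pairs $(B,V)$ to just subsets $V\subseteq\XX$, giving exactly the list claimed.
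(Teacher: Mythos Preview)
Your proposal is correct and matches the paper's intent: the corollary is marked with \qed and is meant to follow immediately from Theorem~\ref{thm:full-classif-domestic} by specializing to the non-weighted case, where the absence of exceptional tubes forces $B=0$ and collapses the pairs $(B,V)$ to subsets $V\subseteq\XX$. Your justification that no non-zero branch sheaves exist (via (NC~6') and the requirement that exceptional finite-length sheaves need $p(x)>1$) is exactly the observation underlying this specialization.
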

For completeness, we record the corresponding classification of
resolving classes (compare Theorem~\ref{thm:class-correspondence} and
Lemma~\ref{lemma:undercut}).
\begin{corollary}\label{resdom}
  Let $\XX$ be a domestic curve. The complete list of the resolving
  classes $\vSs\subseteq\Hh$ containing $\vect\XX$ is given by
  $$\add\,(\,\vect\XX\cup\tau^- (B^{>})\cup\bigcup_{x\in
    V}\{\tau^j S_x[n]\mid j\in \Rr_x,\,n\in\NN\}\,)$$
  with $V\subseteq\XX$ and $B$ a branch sheaf. \qed
\end{corollary}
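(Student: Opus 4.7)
The plan is to derive the corollary from the bijection of Theorem~\ref{thm:class-correspondence}, combined with the classification of large tilting sheaves in Theorem~\ref{thm:full-classif-domestic} and the explicit description of resolving classes supplied by Lemma~\ref{lemma:undercut}. Under that bijection, resolving classes $\vSs\subseteq\Hh$ containing $\vect\XX$ correspond to tilting classes $\rperpe T$ of finite type satisfying $\Ext^1(\vect\XX,X)=0$ for every $X\in\rperpe T$; by Serre duality and $\tau$-invariance of $\vect\XX$, this is equivalent to $\Hom(T,\vect\XX)=0$. So the task reduces to matching these tilting sheaves to the pairs $(B,V)$.

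For the forward direction I would invoke Lemma~\ref{lemma:undercut}(1) to see that each class in the formula is resolving, and note that it contains $\vect\XX$ by construction. Applying Theorem~\ref{thm:tilting-from-resolving} to it and then Lemma~\ref{lemma:undercut}(2) yields a tilting sheaf whose torsion and torsionfree parts match $T_{(B,V)}$ by Theorem~\ref{thm:large-tilting-sheaves-in-general}. Since Theorem~\ref{thm:full-classif-domestic}(2) guarantees that distinct pairs $(B,V)$ give inequivalent tilting sheaves, the bijection of Theorem~\ref{thm:class-correspondence} ensures that distinct pairs yield distinct resolving classes.

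The converse is where the real work lies. Given a resolving class $\vSs$ containing $\vect\XX$, Theorem~\ref{thm:tilting-from-resolving} produces a tilting sheaf $T$ of finite type with $\rperpe T=\rperpe\vSs$, and Theorem~\ref{thm:class-correspondence} recovers $\vSs=\lperpe{(\rperpe T)}\cap\Hh$. I would then argue that $T$ must be large. Suppose instead that $T$ is coherent, and decompose $T=T_+\oplus T_0$ with $T_+$ a vector bundle and $T_0$ of finite length. If $T_+=0$, the sheaf of rational functions $\Kk$ is torsionfree and injective, so $\Hom(T,\Kk)=0=\Ext^1(T,\Kk)$ while $\Kk\neq 0$, contradicting (TS2). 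If $T_+\neq 0$, then $T_+$ being a nonzero summand of $T$ forces $\Hom(T,T_+)\neq 0$, while $T_+\in\vect\XX$ together with $\Hom(T,\vect\XX)=0$ yields a contradiction.

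Once $T$ is known to be large, Theorem~\ref{thm:full-classif-domestic}(1) produces a pair $(B,V)$ with $T$ equivalent to $T_{(B,V)}$, and Lemma~\ref{lemma:undercut} combined with the bijection of Theorem~\ref{thm:class-correspondence} identifies $\vSs$ with the class in the claimed formula. The main obstacle is really just the largeness step: the use of $\Kk$ as a universal test object to rule out purely torsion tilting sheaves, together with the Serre duality reformulation $\Ext^1(\vect\XX,T)=0\Leftrightarrow\Hom(T,\vect\XX)=0$, is what makes the coherent case collapse.
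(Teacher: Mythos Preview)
Your approach is correct and is essentially the one the paper intends: the corollary is marked with $\qed$ and the surrounding text points to Theorem~\ref{thm:class-correspondence} and Lemma~\ref{lemma:undercut}, which together with Theorem~\ref{thm:full-classif-domestic} give exactly the argument you outline. Your explicit verification that the tilting sheaf associated to a resolving class containing $\vect\XX$ must be large (via $\Hom(T,\vect\XX)=0$ and the test object $\Kk$) is a detail the paper leaves implicit, and it is the right way to close that gap.

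One minor point: when you invoke Theorem~\ref{thm:large-tilting-sheaves-in-general} to match the tilting sheaf with $T_{(B,V)}$, note that this theorem only treats the case $V\neq\emptyset$ (large torsion part). For $V=\emptyset$ you should appeal directly to Theorem~\ref{thm:full-classif-domestic} and Lemma~\ref{lemma:undercut}(2), which together show that the formula class with $V=\emptyset$ yields a tilting sheaf with torsion part exactly $B$, hence equivalent to $T_{(B,\emptyset)}$ by the uniqueness in Theorem~\ref{thm:full-classif-domestic}(2). You already cite these results, so this is only a matter of making the case distinction explicit.
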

\subsection*{Slope arguments}\label{sec:slope-domestic}  
In this subsection we complement the arguments given in the proof of
Lemma~\ref{lem:nomaps} by more details. Most of them are
well-established for weighted projective lines, see for
example~\cite[Thm.~2.7]{lenzing:delapena:1997}. Here we see that in
general we have to be careful with the so-called special line bundles.
 
 \medskip
  
Let $\XX$ be an \emph{arbitrary} noncommutative curve of genus zero. Recall that a line
bundle $L'$ is called \emph{special} if for every exceptional point
$x_i$ there is \emph{precisely one} simple sheaf $S_i$ concentrated in
$x_i$ with $\Hom(L',S_i)\neq 0$. Every autoequivalence of $\Hh$ induces
an autoequivalence of $\Hh_0$ and thus of $\Hh/\Hh_0$, and is
therefore rank-preserving. Hence, if $L'$ is a special line bundle,
then so is $\sigma L'$. 
\begin{numb} {\bf Numerical invariants.}
\label{nr:numerical-invariants}
Each noncommutative curve of genus zero has a so-called underlying
tame bimodule, which is either of dimension type $(2,2)$ or
$(1,4)$. In the first case we have $\eps=1$, in the second $\eps=2$.
We recall that the structure sheaf $L$ has the property that for every
point $x\in\XX$ there is precisley one simple $S_x\in\Uu_x$ with
$\Hom(L,S_x)\neq 0$, and $\End(L)$ is a skew field. One then defines
$\kappa=[\End(L):k]$ and for every point $x$
  $$f(x)=\frac{1}{\eps}[\Hom(L,S_x):\End(L)],\
  e(x)=[\Hom(L,S_x):\End(S_x)].$$
   For an exceptional point $x_i$ one writes $f_i=f(x_i)$ and
  $e_i=e(x_i)$. We have $$\deg(S_x)=\frac{\ovp}{p(x)}f(x).$$
If $k$ is algebraically closed, then all the numbers $\eps$,
  $\kappa$, $e(x)$, $f(x)$ are equal to $1$. We refer
  to~\cite{lenzing:delapena:1999}, \cite{lenzing:1998}
  and~\cite{kussin:2009} for details.
\end{numb}
\begin{numb}\label{nr:canonical-sequences}
  The weight type is still arbitrary. Let $L$ be the structure sheaf,
  which is of degree $0$ and hence of slope $0$.  Let $S_1,\dots,S_t$
  be the simple exceptional sheaves such that $\Hom(L,S_i)\neq 0$.
  The exceptional vector bundles $L_i(j)$ are
  defined~\cite[Sec.~5]{lenzing:delapena:1999} as the middle terms of
  the $\add(L)$-couniversal sequences
\begin{equation}
  \label{eq:sesA0}
  0\ra L^{\eps f_i}\ra L_i(j)\ra\tau^- S_i[j]\ra 0,
\end{equation}
for $i=1,\dots,t$ and $j=1,\dots,p_i-1$.  Similarly, $\overline{L}$ is
defined as the middle term of the $\add(L)$-couniversal sequence
\begin{equation}
  \label{eq:sesA01}
  0\ra L^{\eps}\ra\overline{L} \ra S\ra 0,
\end{equation}
where $S$ is a simple sheaf concentrated in a point $x_0$ with
$p(x_0)=1$ and $f(x_0)=1$.  The vector bundle $\overline{L}$ is
exceptional, has rank $\eps\in\{1,2\}$ and slope
$\ovp/\varepsilon$. From~\eqref{eq:sesA01} we deduce that
$\overline{L}$, like $L$, satisfies
\begin{equation}
  \label{eq:ovl-special}
  \Hom(\overline{L},\tau^j S_i)\neq 0\quad\text{if and only if}\quad
  j\equiv0\mod p_i.
\end{equation}
The collection of all vector bundles
$L,\,\overline{L}$ and the $L_i(j)$ yields the canonical
configuration~\eqref{eqn:can-conf}, which we denote by
$T_{\can}$.\medskip

By~\cite[5.4 and 5.5]{lenzing:delapena:1999} there are short exact
sequences
\begin{gather}
\label{eq:sesA1}
 0\ra L^{\eps f_i}\ra L_i(1)\ra\tau^- S_i\ra 0\\
 \label{eq:sesA2}
 0\ra L_i(j-1)\ra L_i(j)\ra\tau^{-j} S_i\ra 0\\
 \label{eq:sesA3}
 0\ra L^{\eps}\ra\overline{L}\ra S\ra 0\\
\label{eq:sesA4}
0\ra L_i(j)\ra\overline{L}^{f_i}\ra\tau^{-j}S_i[p_i-j]\ra 0.  
\end{gather}
Concerning the cokernels, the only fact which will be of interest for
the following arguments is that they are of finite length, and hence
do not map to vector bundles.
\end{numb} 
For Geigle-Lenzing weighted projective
lines~\cite{geigle:lenzing:1987} it is well-known
(see~\cite[2.1]{lenzing:delapena:1997}) that for each degree $\vx$ we
have
$$\Hom(\Oo(\vx),\Oo(\vom+\vc))\neq 0\ \text{if}\
\Hom(\Oo,\Oo(\vx))=0.$$
Since $\Hom(\Oo(\vx),\Oo(\vom+\vc))=\D\Ext^1(\Oo(\vc),\Oo(\vx))$, when
we write $L=\Oo$ and $\overline{L}$ replaces $\Oo(\vc)$, the following
statement is the generalization of this to noncommutative curves of
genus zero (of arbitrary weight type).
\begin{lemma}\label{lem:first}
  Let $\XX$ be a noncommutative curve of genus zero and $X$ be an
  indecomposable vector bundle. Then $\Hom(L,X)\neq 0$ or
  $\Ext^1(\overline{L},X)\neq 0$ holds.
\end{lemma}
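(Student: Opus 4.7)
\medskip

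\noindent\textbf{Proof plan for Lemma~\ref{lem:first}.}
The strategy is to argue by contradiction/dichotomy using that the canonical configuration $T_{\can}=L\oplus\overline{L}\oplus\bigoplus_{i,j}L_i(j)$ is a tilting bundle, so that (TS2) applies to the non-zero vector bundle $X$. Assume $\Hom(L,X)=0$; the plan is to deduce $\Ext^1(\overline{L},X)\neq 0$.

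\medskip

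\noindent\emph{Step 1: Vanishing of $\Hom(T_{\can},X)$.} I would first show that $\Hom(L,X)=0$ forces $\Hom(L_i(j),X)=0$ and $\Hom(\overline{L},X)=0$ for all $i,j$. For the $L_i(j)$, apply $\Hom(-,X)$ to the defining sequence~\eqref{eq:sesA0}
\[ 0\ra L^{\eps f_i}\ra L_i(j)\ra \tau^- S_i[j]\ra 0. \]
Since $X$ is a vector bundle, hence torsionfree, $\Hom(\tau^- S_i[j],X)=0$, while $\Hom(L^{\eps f_i},X)=0$ by assumption, so $\Hom(L_i(j),X)=0$. The same argument applied to~\eqref{eq:sesA3} gives $\Hom(\overline{L},X)=0$. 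Therefore $\Hom(T_{\can},X)=0$.

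\medskip

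\noindent\emph{Step 2: Nonvanishing of $\Ext^1$.} Since $T_{\can}$ is a (coherent) tilting sheaf and $X\neq 0$, condition (TS2) yields $\Ext^1(T_{\can},X)\neq 0$. Hence at least one of
\[ \Ext^1(L,X),\quad \Ext^1(\overline{L},X),\quad \Ext^1(L_i(j),X) \]
is non-zero.

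\medskip

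\noindent\emph{Step 3: Reducing to $\overline{L}$.} It remains to show that each of these options forces $\Ext^1(\overline{L},X)\neq 0$. Using that $\QHh$ is hereditary (so $\Ext^2$ vanishes), apply $\Ext^1(-,X)$ to~\eqref{eq:sesA4}
\[ 0\ra L_i(j)\ra \overline{L}^{f_i}\ra \tau^{-j}S_i[p_i-j]\ra 0 \]
to obtain an exact sequence ending with
\[ \Ext^1(\overline{L},X)^{f_i}\ra \Ext^1(L_i(j),X)\ra 0, \]
so $\Ext^1(L_i(j),X)\neq 0$ implies $\Ext^1(\overline{L},X)\neq 0$. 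Similarly, applying $\Ext^1(-,X)$ to~\eqref{eq:sesA3} gives a surjection $\Ext^1(\overline{L},X)\twoheadrightarrow \Ext^1(L,X)^{\eps}$, so $\Ext^1(L,X)\neq 0$ forces $\Ext^1(\overline{L},X)\neq 0$ as well. In every case we conclude $\Ext^1(\overline{L},X)\neq 0$, which completes the proof.

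\medskip

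The main subtlety is Step~3, where one needs to funnel the possible non-vanishing $\Ext^1$'s coming from the various summands of $T_{\can}$ through $\overline{L}$; the crucial ingredient there is heredity of $\Qcoh\XX$ combined with the specific form of the canonical configuration sequences~\eqref{eq:sesA3} and~\eqref{eq:sesA4}, which both have finite-length cokernels and $\overline{L}$-powers in the middle or left slot.
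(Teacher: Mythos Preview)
Your proof is correct and follows essentially the same approach as the paper: both arguments hinge on the tilting property of $T_{\can}$ together with the canonical configuration sequences~\eqref{eq:sesA0}, \eqref{eq:sesA3}, \eqref{eq:sesA4}. The only organisational difference is that the paper assumes both $\Hom(L,X)=0$ and $\Ext^1(\overline{L},X)=0$ at once and then shows $X\in\rperp{T_{\can}}$, whereas you assume only $\Hom(L,X)=0$, obtain $\Hom(T_{\can},X)=0$, and then funnel the resulting non-vanishing $\Ext^1$ back to $\overline{L}$; also, in Step~1 you use~\eqref{eq:sesA0} directly, while the paper reaches the same vanishing via~\eqref{eq:sesA1} and~\eqref{eq:sesA2} inductively.
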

In the domestic case this is a special case
of~\cite[Prop.~4.1]{lenzing:reiten:2006}. 
\begin{proof}
  Assume that $\Hom(L,X)=0=\Ext^1(\overline{L},X)$. We now apply
  $\Hom(-,X)$ to several of the exact sequences above.
  Sequence~\eqref{eq:sesA3} gives
  \begin{gather*}
    0\ra\Hom(S,X)\ra\Hom(\overline{L},X)\ra\Hom(L^{\eps},X)\ra\\
     \ra\Ext^1(S,X)\ra\Ext^1(\overline{L},X)\ra\Ext^1(L^{\eps},X)\ra 0
  \end{gather*}
  and from the assumptions we conclude that all terms are
  zero. Applying then $\Hom(-,X)$ to \eqref{eq:sesA4} shows
  $\Ext^1(L_i(j),X)=0$. Similarly, \eqref{eq:sesA1} and
  \eqref{eq:sesA2} inductively yield $\Hom(L_i(j),X)=0$. Altogether
  this gives that $\Hom(T_{\can},X)=0=\Ext^1(T_{\can},X)$, and since
  $T_{\can}$ is a tilting object we get $X=0$, a contradiction.
\end{proof}
\begin{lemma}\label{lem:second}
  Let $\XX$ be domestic. Let $L$ be a special line bundle. Let $F$ be
  an indecomposable vector bundle of slope
  $\mu(F)-\mu(L)>\ovp/\eps+\delta(\vom)$. Then $\Hom(L,F)\neq 0$.
\end{lemma}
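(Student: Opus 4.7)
The plan is to apply Lemma~\ref{lem:first} to the given special line bundle $L$, obtaining the dichotomy $\Hom(L,F)\neq 0$ or $\Ext^{1}(\overline{L},F)\neq 0$, and then to use the slope hypothesis together with stability in the domestic case to rule out the second alternative.

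First I would verify that Lemma~\ref{lem:first} applies to the given $L$, which is \emph{a} special line bundle, not necessarily the distinguished structure sheaf. By Remark~\ref{rem:canonical-subconfiguration} (or directly by applying suitable tubular shifts to the structure sheaf, which preserve specialness as remarked at the beginning of the subsection), we may include $L$ in a canonical configuration $T_{\can}$. In particular we obtain an associated exceptional bundle $\overline{L}$ of rank $\eps$ fitting into a short exact sequence $0\to L^{\eps}\to\overline{L}\to S\to 0$ with $S$ simple, concentrated in a point $x_{0}$ of weight one and $f(x_{0})=1$. Using $\deg(S_{x})=\frac{\ovp}{p(x)}f(x)$ we get $\deg(S)=\ovp$, hence $\mu(\overline{L})=\mu(L)+\ovp/\eps$. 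The proof of Lemma~\ref{lem:first} (applying $\Hom(-,F)$ to the sequences~\eqref{eq:sesA0}--\eqref{eq:sesA4} associated with this configuration) then yields $\Hom(L,F)\neq 0$ or $\Ext^{1}(\overline{L},F)\neq 0$.

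Now assume for contradiction that $\Hom(L,F)=0$. Then $\Ext^{1}(\overline{L},F)\neq 0$, which by Serre duality is equivalent to $\Hom(F,\tau\overline{L})\neq 0$. Both $F$ and $\tau\overline{L}$ are indecomposable vector bundles, so by property~(D1) they are stable. Using $\mu(\tau\overline{L})=\mu(\overline{L})+\delta(\vom)=\mu(L)+\ovp/\eps+\delta(\vom)$, the hypothesis on $F$ rewrites as $\mu(F)>\mu(\tau\overline{L})$. Stability together with property~(D2) then forces $\Hom(F,\tau\overline{L})=0$, a contradiction. Hence $\Hom(L,F)\neq 0$.

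The only delicate point is the first step: ensuring that $\overline{L}$ and the corresponding version of Lemma~\ref{lem:first} are available for an arbitrary special $L$, not merely for the distinguished structure sheaf fixed in~\ref{nr:numerical-invariants}. I expect this to be routine, since the construction of the canonical configuration in~\eqref{eqn:can-conf} only uses that $L$ is special (via the universal sequences~\eqref{eq:sesA0} and~\eqref{eq:sesA01}), and the slope computation $\mu(\overline{L})-\mu(L)=\ovp/\eps$ depends solely on the numerical invariants $\eps$ and $\ovp$, which are attached to $\XX$ itself rather than to any specific choice of special line bundle.
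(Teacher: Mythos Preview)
Your proof is correct and follows essentially the same approach as the paper's: build a canonical configuration based at the given special line bundle $L$ to obtain $\overline{L}$ with $\mu(\overline{L})-\mu(L)=\ovp/\eps$, apply Lemma~\ref{lem:first}, and use Serre duality plus stability to rule out $\Ext^{1}(\overline{L},F)\neq 0$. The paper handles your ``delicate point'' with the same remark, namely that any special line bundle can serve as the base of a canonical configuration.
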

\begin{proof}
  For every special line bundle $L$ we can form a canonical
  configuration like in~\eqref{eqn:can-conf},
  see~\cite{lenzing:delapena:1999}. Then $L$ does not necessarily have
  degree zero, but still $\mu(\overline{L})-\mu(L)=\ovp/\eps$.

  We assume $\Hom(L,F)=0$. Then by Lemma~\ref{lem:first}
  $\Ext^1(\overline{L},F)\neq 0$, and by Serre duality,
  $\Hom(F,\tau\overline{L})\neq 0$. But by assumption
  $$\mu(F)>\ovp/\eps+\delta(\vom)+\mu(L)=\mu(\overline{L})+\delta(\vom)=
  \mu(\tau\overline{L}),$$
  which contradicts the stability of indecomposable vector bundles in
  the domestic case (Theorem~\ref{thm:stability}).
\end{proof}
\begin{remark}
  In the domestic case every indecomposable vector bundle is
  exceptional. In particular this is true for every line bundle. But
  there are domestic cases where not every line bundle is special. Take
  for example the domestic symbol $
  \begin{pmatrix}
    2\\
    2
  \end{pmatrix}$.
  In other words, there is one exceptional point $x$ with $p(x)=2$, $f(x)=1$
  and $e(x)=2$. Let $L$ be a special line bundle which maps onto the
  simple $S_x$. The kernel then is a line bundle $L'$. One computes
  that $[L']$ is a $1$-root in $\Knull(\XX)$ and that
  $\Hom(L',S_x)\neq 0$ and $\Hom(L',\tau S_x)\neq 0$. Hence $L'$ is
  not special.
\end{remark}
\begin{lemma}\label{lem:third}
  Let $\XX$ be domestic. Let $T$ be a torsionfree tilting
  sheaf. Assume that for every $n\in\ZZ$ there is a \emph{special} line
  bundle $L_n$ with $\mu(L_n)<n$ such that $\Hom(T,L_n)\neq 0$. If
  $L'$ is an arbitrary line bundle, then also $\Hom(T,L')\neq 0$.
\end{lemma}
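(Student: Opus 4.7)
The plan is to reduce the assertion for an arbitrary line bundle $L'$ to the hypothesis on special line bundles by means of Lemma~\ref{lem:second}, exploiting the basic fact (recalled in the preamble of the section on the sheaf of rational functions) that every non-zero morphism from a line bundle to a vector bundle is a monomorphism.

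Fix an arbitrary line bundle $L'$. First I would choose an integer
$$n<\mu(L')-\ovp/\eps-\delta(\vom).$$
By hypothesis there exists a special line bundle $L_n$ with $\mu(L_n)<n$ and $\Hom(T,L_n)\neq 0$. Then
$$\mu(L')-\mu(L_n)>\mu(L')-n>\ovp/\eps+\delta(\vom),$$
so, since $L_n$ is special and $L'$ is an indecomposable vector bundle, Lemma~\ref{lem:second} applies and yields a non-zero morphism $\iota\colon L_n\to L'$.

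Because $L_n$ is a line bundle and $L'$ a vector bundle, the map $\iota$ is a monomorphism. Therefore post-composition with $\iota$ induces an injection $\Hom(T,L_n)\hookrightarrow\Hom(T,L')$, and any non-zero $f\in\Hom(T,L_n)$ gives a non-zero morphism $\iota\circ f\colon T\to L'$. This proves $\Hom(T,L')\neq 0$.

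The argument is essentially a one-step chase; there is no real obstacle beyond making sure the slope inequality from Lemma~\ref{lem:second} is invoked with a \emph{special} line bundle (which is precisely what the hypothesis provides), and beyond observing that monomorphisms of line bundles preserve non-vanishing of Hom's out of $T$.
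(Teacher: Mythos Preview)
Your proof is correct and follows essentially the same approach as the paper: choose $n$ so that the slope inequality of Lemma~\ref{lem:second} applies to the special line bundle $L_n$ and the given $L'$, obtain a monomorphism $L_n\hookrightarrow L'$, and compose with a non-zero map $T\to L_n$. The only difference is that you spell out the injectivity of $\Hom(T,-)$ under the monomorphism more explicitly than the paper does.
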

\begin{proof}
  Let $L'$ be a line bundle. Choose $n\in\ZZ$ such that
  $n<\mu(L')-\ovp/\eps-\delta(\vom)$. Then
  $\mu(L')-\mu(L_n)>\ovp/\eps+\delta(\vom)$, and by
  Lemma~\ref{lem:second}, since $L_n$ is special, we have
  $\Hom(L_n,L')\neq 0$. Hence there is a monomorphism $L_n\ra
  L'$. Since $\Hom(T,L_n)\neq 0$ we get $\Hom(T,L')\neq 0$ as well.
\end{proof}
In order to remove the word ``special'' from the preceding lemma, we
use the Riemann-Roch formula (see~\cite{kussin:2014})\begin{equation}
  \label{eq:RR}
  \frac{1}{\ovp\kappa}\DLF{X}{Y}=-\frac{\eps}{2}\delta(\vom)\cdot\rk(X)\cdot
  \rk(Y)+\frac{\eps}{\ovp} 
\begin{vmatrix}
  \rk(X) & \rk(Y)\\
  \deg(X) & \deg(Y)
\end{vmatrix}
\end{equation}
where   $$\DLF{X}{Y}=\sum_{j=0}^{\ovp-1}\LF{X}{\tau^{-j}Y}$$ is the
average Euler
form.
In particular, if $L'$ and $L$ are line bundles with
$\mu(L)=\deg(L)\geq\deg(L')=\mu(L')$ then (by $\delta(\vom)<0$) we
have $\DLF{L'}{L}>0$. Since, by stability,
$\Ext^1(L',\tau^{-j}L)=\D\Hom(\tau^{-j-1}L,L')=0$, and thus
$\LF{L'}{\tau^{-j}L}=\dim_k \Hom(L',\tau^{-j}L)$, we obtain
$\Hom(L',\tau^{-j}L)\neq 0$ for some $j\in\{0,\dots,\ovp-1\}$. It
follows that there is a monomorphism $L'\ra\tau^{-j}L$, where
$\mu(\tau^{-j}L)=\mu(L)-j\cdot\delta(\vom)\leq\mu(L)-(\ovp-1)\cdot\delta(\vom)$. 

If $L$ is a special line bundle, then also $\tau^{n}L$ is special for
every integer $n$, and has slope
$\mu(\tau^{n}L)=\mu(L)+n\cdot\delta(\vom)$. So, if $L'$ is a given
line bundle, then there is a \emph{special} line bundle $L$ of slope
$\mu(L)$ in the interval $[\mu(L'),\mu(L')-\delta(\vom)[$. With the
preceding paragraph we obtain $j\in\{0,\dots,\ovp-1\}$ and a
monomorphism $L'\ra\tau^{-j}(L)$, for which
$\mu(\tau^{-j}L)\leq\mu(L)-(\ovp-1)\cdot\delta(\vom)<\mu(L')-\ovp\cdot\delta(\vom)$
holds. To summarize:
\begin{lemma} Let $\XX$ be domestic.
  For every line bundle $L'$ there is a \emph{special} line bundle $L$
  with a monomorphism $L'\ra L$, so that the distance of slopes
  $$0\leq\mu(L)-\mu(L')<-\ovp\cdot\delta(\vom)$$
  is bounded by a constant. \qed
\end{lemma}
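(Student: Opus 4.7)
The plan is to combine three ingredients already developed in the preceding paragraphs: the existence of a special line bundle (which we can translate under powers of $\tau$), the Riemann--Roch formula, and stability of indecomposable vector bundles in the domestic case.

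First I would fix any special line bundle, say the structure sheaf $L_0=L$. Since $\tau$ is rank-preserving and sends special line bundles to special line bundles, each $\tau^n L_0$ is again a special line bundle, and its slope is $\mu(\tau^n L_0)=\mu(L_0)+n\cdot\delta(\vom)$. Because $\delta(\vom)<0$, the values $\{\mu(\tau^n L_0)\mid n\in\ZZ\}$ form an arithmetic progression of step size $|\delta(\vom)|$. So I can choose $n$ so that
$$\mu(\tau^n L_0)\in [\mu(L'),\,\mu(L')-\delta(\vom)).$$
Call this special line bundle $L_1$.

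Next I apply the Riemann--Roch formula~\eqref{eq:RR} to the pair $(L',L_1)$. Since $\rk(L')=\rk(L_1)=1$, the determinant reduces to $\deg(L_1)-\deg(L')=\mu(L_1)-\mu(L')\ge 0$, while the other summand $-\tfrac{\eps}{2}\delta(\vom)$ is strictly positive. Therefore $\DLF{L'}{L_1}>0$. By stability (Theorem~\ref{thm:stability}(1)), for each $j\in\{0,\dots,\ovp-1\}$ we have
$$\Ext^1(L',\tau^{-j}L_1)=\D\Hom(\tau^{-j-1}L_1,L')=0,$$
because $\mu(\tau^{-j-1}L_1)>\mu(L_1)\ge\mu(L')$. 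Hence $\LF{L'}{\tau^{-j}L_1}=\dim_k\Hom(L',\tau^{-j}L_1)$, and the positivity of $\DLF{L'}{L_1}=\sum_j\LF{L'}{\tau^{-j}L_1}$ forces $\Hom(L',\tau^{-j}L_1)\ne 0$ for some such $j$.

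Finally, any nonzero morphism from a line bundle to a vector bundle is a monomorphism, so I obtain an embedding $L'\hookrightarrow \tau^{-j}L_1$. Setting $L:=\tau^{-j}L_1$, the sheaf $L$ is still a special line bundle, and its slope satisfies
$$\mu(L')\le\mu(L_1)\le\mu(L)=\mu(L_1)-j\cdot\delta(\vom)<\mu(L')-\delta(\vom)-(\ovp-1)\delta(\vom)=\mu(L')-\ovp\cdot\delta(\vom),$$
giving the bound $0\le\mu(L)-\mu(L')<-\ovp\cdot\delta(\vom)$. There is no real obstacle here since the preceding paragraphs have already assembled every ingredient; the only care needed is in choosing the translate $\tau^n L_0$ inside a half-open interval of the correct length so that the final slope estimate matches the claimed bound.
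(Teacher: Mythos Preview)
Your proof is correct and follows essentially the same route as the paper: the argument given in the two paragraphs immediately preceding the lemma (which is why the lemma carries a \qed) proceeds exactly by choosing a $\tau$-translate of a special line bundle with slope in $[\mu(L'),\,\mu(L')-\delta(\vom))$, applying Riemann--Roch together with stability to find some $j\in\{0,\dots,\ovp-1\}$ with $\Hom(L',\tau^{-j}L_1)\ne 0$, and then reading off the slope bound. Your write-up matches this step for step.
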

We then have: if $L'$ is such that $\Hom(T,L')\neq 0$, then, since
there is a monomorphism $L'\ra L$, also $\Hom(T,L)\neq 0$. As a
consequence we get: if we find line bundles $L'$ of arbitrarily small
slope with $\Hom(T,L')\neq 0$, then we also find \emph{special} line
bundles $L$ of arbitrarily small slope with $\Hom(T,L)\neq
0$. Therefore we now have the stronger version of
Lemma~\ref{lem:third}.
\begin{lemma}\label{lem:fourth}
  Let $\XX$ be domestic. Let $T$ be a torsionfree tilting
  sheaf. Assume that for every $n\in\ZZ$ there is a line bundle $L_n$
  with $\mu(L_n)<n$ such that $\Hom(T,L_n)\neq 0$. If $L'$ is an
  arbitrary line bundle, then also $\Hom(T,L')\neq 0$. \qed
\end{lemma}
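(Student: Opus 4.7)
The plan is simply to upgrade the hypothesis of Lemma~\ref{lem:fourth} to fit the hypothesis of Lemma~\ref{lem:third}, by replacing the given line bundles $L_n$ with nearby \emph{special} line bundles whose slopes are still unbounded below.

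Concretely, given a line bundle $L_n$ with $\mu(L_n)<n$ and $\Hom(T,L_n)\neq 0$, I would apply the unnumbered lemma immediately preceding the statement to obtain a special line bundle $\widetilde{L}_n$ together with a monomorphism $L_n\hookrightarrow\widetilde{L}_n$, satisfying the uniform slope bound
\[
0\leq\mu(\widetilde{L}_n)-\mu(L_n)<-\ovp\cdot\delta(\vom).
\]
Composing a nonzero map $T\to L_n$ with this monomorphism produces a nonzero map $T\to\widetilde{L}_n$, hence $\Hom(T,\widetilde{L}_n)\neq 0$. Setting $C:=-\ovp\cdot\delta(\vom)$, we get $\mu(\widetilde{L}_n)<n+C$; since this holds for every $n\in\ZZ$, for an arbitrary $m\in\ZZ$ we may take $n=m-C$ and obtain a special line bundle $\widetilde{L}_n$ of slope less than $m$ to which $T$ maps non-trivially.

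Thus the hypothesis of Lemma~\ref{lem:third} is satisfied with the family $\{\widetilde{L}_n\}_{n\in\ZZ}$ in place of $\{L_n\}_{n\in\ZZ}$, and Lemma~\ref{lem:third} yields $\Hom(T,L')\neq 0$ for every line bundle $L'$, which is the claim. There is no real obstacle here: the preparatory lemma provides precisely the controlled embedding into a special line bundle needed, and the uniform boundedness of the slope shift ensures that arbitrarily small slopes are preserved under the passage from $L_n$ to $\widetilde{L}_n$.
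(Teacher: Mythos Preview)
Your argument is correct and is precisely the one the paper gives (in the paragraph preceding the statement): embed each $L_n$ into a special line bundle via the unnumbered lemma, use the uniform slope bound to see that the resulting special line bundles still have arbitrarily small slope, and then invoke Lemma~\ref{lem:third}. The only cosmetic point is that $C=-\ovp\cdot\delta(\vom)$ need not be an integer, so instead of ``take $n=m-C$'' you should take any integer $n\le m-C$; this does not affect the argument.
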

\begin{corollary}\label{cor:fourth}
  Let $\XX$ be domestic. Let $T$ be a torsionfree tilting sheaf. Then
  there is $n_0\in\ZZ$ such that $\Hom(T,L')=0$ for every line bundle
  $L'$ with $\mu(L')<n_0$.
\end{corollary}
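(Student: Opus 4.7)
The plan is to argue by contradiction via Lemma~\ref{lem:fourth}. Suppose no such $n_0$ exists; then for every $n\in\ZZ$ there is a line bundle $L_n$ with $\mu(L_n)<n$ and $\Hom(T,L_n)\neq 0$. Lemma~\ref{lem:fourth} then yields $\Hom(T,L')\neq 0$ for every line bundle $L'$. The goal is to derive a contradiction by showing that under this assumption $T$ must generate the whole of $\Hh$, which via Serre duality is incompatible with $T$ being a non-zero object in the locally noetherian category $\QHh$.

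First, since $T$ is torsionfree, Serre duality $\Ext^1(T,S)=\D\Hom(\tau^{-}S,T)$ vanishes for every simple sheaf $S$ (as $\tau^{-}S$ is of finite length). Since $\rperpe{T}=\Gen(T)$ is closed under extensions, every coherent sheaf of finite length lies in $\Gen(T)$.

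The key step is to show that every line bundle $L'$ lies in $\Gen(T)$. Pick a non-zero morphism $f\colon T\to L'$. Its image $U$ is a non-zero subsheaf of the line bundle $L'$, hence is itself a line bundle, and as a quotient of $T$ it belongs to $\Gen(T)$. The cokernel $L'/U$ is a coherent sheaf of rank zero, hence of finite length, so it too lies in $\Gen(T)$ by the previous step. Extension-closure of $\Gen(T)$ then yields $L'\in\Gen(T)$. Applying this to every term in a line bundle filtration (\cite[Prop.~1.6]{lenzing:reiten:2006}) of an arbitrary vector bundle, we obtain $\vect\XX\subseteq\Gen(T)$, and combined with the previous paragraph, $\Hh\subseteq\Gen(T)=\rperpe{T}$. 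Thus $\Ext^1(T,E)=0$ for every coherent sheaf $E$.

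Finally, Serre duality $\Ext^1(T,E)=\D\Hom(\tau^{-}E,T)$ now yields $\Hom(\tau^{-}E,T)=0$ for every $E\in\Hh$. Since $\tau^{-}$ is an autoequivalence of $\Hh$, this is equivalent to $\Hom(F,T)=0$ for every coherent $F$. But $\QHh$ is locally noetherian and $T\neq 0$, so $T$ contains a non-zero coherent subsheaf $F$, giving $\Hom(F,T)\neq 0$, a contradiction. The main technical point is the generation step for line bundles, which crucially exploits both the torsionfree hypothesis (used to place finite length sheaves in $\Gen(T)$) and the fact that any quotient of $T$ automatically lies in $\Gen(T)$; once these are in place, the final contradiction flows routinely from Serre duality and the local noetherianity of $\QHh$.
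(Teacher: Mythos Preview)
Your proof is correct and follows essentially the same route as the paper's: contradict via Lemma~\ref{lem:fourth}, show every line bundle (hence every vector bundle via line bundle filtrations) lies in $\Gen(T)$, and finish by Serre duality plus local noetherianity. The paper abbreviates the middle step by pointing to the analogous argument in Lemma~\ref{lem:large-tilting-leftbounded}, whereas you spell it out explicitly (using the image of a single non-zero map rather than the full trace, which is an inessential variation).
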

\begin{proof}
  Otherwise we would have $\Hom(T,L')\neq 0$ for all line bundles $L'$
  by the preceding lemma. As in the proof of
  Lemma~\ref{lem:large-tilting-leftbounded}, we see that for a line
  bundle $L'$ the condition $\Hom(T,L')\neq 0$ amounts to
  $L'\in\Gen(T)$, and since every vector bundle has a line bundle
  filtration, we infer that all vector bundles lie in
  $\Gen(T)=\rperpe{T}$. By Serre duality we get that \emph{no} vector
  bundle (even no coherent sheaf) maps to the torsionfree sheaf $T$,
  which is a contradiction, since $\QHh$ is locally noetherian.
\end{proof}
\begin{lemma}
  Let $\XX$ be domestic. Let $T$ be a torsionfree tilting sheaf. Then
  there is $m\in\ZZ$ such that $\Hom(T,E)=0$ for every indecomposable
  vector bundle $E$ with $\mu(E)<m$.
\end{lemma}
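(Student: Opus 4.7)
The plan is to lift Corollary~\ref{cor:fourth}, which handles only line bundles, to all indecomposable vector bundles via line bundle filtrations. The key ingredient will be a uniform constant $C \geq 0$ such that every indecomposable vector bundle $E$ admits a line bundle filtration whose factors have slopes within $C$ of $\mu(E)$. Once this is established, setting $m := n_0 - C$ makes the conclusion routine: if $\mu(E) < m$, every factor $L'$ of the chosen filtration satisfies $\mu(L') < \mu(E) + C < n_0$, so $\Hom(T,L') = 0$ by Corollary~\ref{cor:fourth}. Induction on the length of the filtration, using left-exactness of $\Hom(T,-)$ applied to the short exact sequences $0 \to E_{i-1} \to E_i \to L_i \to 0$, then yields $\Hom(T,E) = 0$.

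To produce the uniform constant $C$, I will invoke property (D5): the category $\vect\XX$ has only finitely many Auslander-Reiten orbits, so we may fix a finite slice $\Ff = \{F_1,\dots,F_s\}$ as in (D4). By~\cite[Prop.~1.6]{lenzing:reiten:2006}, each $F_i$ admits a line bundle filtration with factors $L_1^{(i)},\dots,L_{r_i}^{(i)}$, and we set $C := \max_{i,k}|\mu(L_k^{(i)}) - \mu(F_i)|$, which is a finite number since we take a maximum over a finite index set.

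For an arbitrary indecomposable vector bundle $E$, we may write $E = \tau^n F_i$ for some $F_i \in \Ff$ and some $n \in \ZZ$. Since $\tau$ is an auto-equivalence of $\Hh$ preserving rank, applying $\tau^n$ to the fixed filtration of $F_i$ yields a line bundle filtration of $E$ whose factors are the line bundles $\tau^n L_k^{(i)}$. The slope formula $\mu(\tau E') = \mu(E') + \delta(\vom)$ recalled at the beginning of this section, iterated $n$ times, shifts all the relevant slopes uniformly by $n\delta(\vom)$, so the deviations $\mu(\tau^n L_k^{(i)}) - \mu(\tau^n F_i) = \mu(L_k^{(i)}) - \mu(F_i)$ are preserved and hence bounded by $C$, as required.

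The main obstacle is conceptual rather than technical: one must ensure that the slope-deviation constant $C$ is uniform over the whole category $\vect\XX$, not merely over some finite subfamily. This is exactly where the domestic hypothesis enters, through (D5) and the Auslander-Reiten equivalence $\tau$, which together reduce an a~priori infinite problem to a finite one on the slice $\Ff$.
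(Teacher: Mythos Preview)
Your proof is correct and follows essentially the same strategy as the paper: reduce to the finite slice $\Ff$ from (D4)--(D5), fix line bundle filtrations there, and use that $\tau$ shifts all slopes by the same constant $\delta(\vom)$ to transport these filtrations to arbitrary indecomposable vector bundles, then invoke Corollary~\ref{cor:fourth}. The paper tracks the maximum slope $\alpha$ of the filtration factors directly, while you package the same data as a uniform deviation constant $C$; the arguments are otherwise identical. One cosmetic point: since the lemma asks for $m\in\ZZ$ and your $C$ need not be an integer, you should take $m=\lfloor n_0-C\rfloor$ rather than $m=n_0-C$.
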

\begin{proof}
  Let $\Ff$ be the set of indecomposable vector bundles $F$ with
  $0\leq\mu(F)<-\delta(\vom)$. This is a finite set by~(D5), and every
  indecomposable vector bundle is of the form $\tau^n F$ for some
  $F\in\Ff$ and some $n\in\ZZ$. For every $F\in\Ff$ we fix a line
  bundle filtration, which altogether form a finite collection $\Ll$
  of line bundles. We denote by $\alpha=\alpha(\Ff)$ the maximum of
  slopes of the objects in $\Ll$. Then
  $\alpha(\tau^m\Ff)=\alpha+m\delta(\vom)$. With the bound $n_0$ from
  Corollary~\ref{cor:fourth}, for all $m\in\ZZ$ such that
  $\alpha+m\delta(\vom)<n_0$, we get $\Hom(T,\tau^m\Ll)=0$, and thus
  $\Hom(T,\tau^m\Ff)=0$. It follows that $\Hom(T,E)=0$ for every
  indecomposable vector bundle $E$ with $\mu(E)<m\delta(\vom)$.
\end{proof}
As already mentioned, the fundamental domain $\Ff$ from the preceding
proof is known to form a tilting bundle $T_{\her}$ whose endomorphism
ring is a tame hereditary algebra $H$.
\begin{proposition}\label{prop:not-large}
  Let $\XX$ be domestic, and let $T$ be a torsionfree tilting
  sheaf. For $n\gg 0$ the tilting bundle $T'_{\her}=\tau^n (T_{\her})$
  has the additional property that $\Ext^1(T'_{\her},T)=0$. In other words,
  $T$ is a module over the tame hereditary algebra
  $H=\End(T'_{\her})$.
\end{proposition}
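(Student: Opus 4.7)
The plan is to reduce the $\Ext^1$-vanishing to the lemma just proved via Serre duality, and then to interpret $T$ as an $H$-module through Proposition~\ref{prop:tilting-sheaf-complex-module}. First I would use that $\tau$ is an autoequivalence: for any $n$, $T'_{\her}=\tau^n(T_{\her})$ is again a tilting bundle in $\Hh$ and $\End(T'_{\her})\simeq\End(T_{\her})=H$. Applying Serre duality in $\QHh$ gives
\[
\Ext^1(T'_{\her},T)=\D\Hom(T,\tau T'_{\her})=\D\Hom(T,\tau^{n+1}T_{\her}).
\]
Since $T_{\her}=\bigoplus_{F\in\Ff}F$ with $\Ff$ finite, it is enough to make $\Hom(T,\tau^{n+1}F)$ vanish for every $F\in\Ff$.

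Next I would use stability together with the formula $\mu(\tau E)=\mu(E)+\delta(\vom)$, which by iteration yields $\mu(\tau^{n+1}F)=\mu(F)+(n+1)\delta(\vom)$. Because $\Ff$ is finite, there is a uniform upper bound $M$ on the slopes $\mu(F)$, $F\in\Ff$. Since $\delta(\vom)<0$ in the domestic case, I can pick $n_0$ so large that for all $n\geq n_0$,
\[
M+(n+1)\delta(\vom)<m,
\]
where $m$ is the bound supplied by the preceding lemma (saying $\Hom(T,E)=0$ for every indecomposable vector bundle $E$ with $\mu(E)<m$). For such $n$, every $\tau^{n+1}F$ is an indecomposable vector bundle of slope strictly smaller than $m$, so $\Hom(T,\tau^{n+1}F)=0$. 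Summing over $F\in\Ff$ yields $\Hom(T,\tau T'_{\her})=0$, hence $\Ext^1(T'_{\her},T)=0$ by the displayed Serre duality identity.

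Finally, $\Ext^1(T'_{\her},T)=0$ means $T\in\rperpe{{T'_{\her}}}=\Gen(T'_{\her})=\Tt_{\cc}$ for the tilting bundle $T_{\cc}=T'_{\her}$. By Proposition~\ref{prop:tilting-sheaf-complex-module} applied to $T_{\cc}=T'_{\her}$, every tilting sheaf in $\Tt_{\cc}$ is, via the derived equivalence $\RHom_{\Dd}(T'_{\her},-)$, a tilting module of projective dimension at most one over $H=\End(T'_{\her})$. This identifies $T$ with an $H$-module, as claimed.

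The only step that requires any care is the choice of $n$: one must combine three pieces of information -- the fact that $\Ff$ parametrises all Auslander-Reiten orbits of vector bundles by finitely many representatives of bounded slope, the rigid shift $\mu(\tau E)=\mu(E)+\delta(\vom)$ with $\delta(\vom)<0$, and the bound $m$ coming from the previous lemma (which itself is the culmination of Lemmas~\ref{lem:nomaps}--\ref{lem:fourth} and Corollary~\ref{cor:fourth}). Once these are in place, the proof is a direct Serre-duality computation followed by invocation of Proposition~\ref{prop:tilting-sheaf-complex-module}; no further subtlety is involved.
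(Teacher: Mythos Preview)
Your proof is correct and follows exactly the paper's approach: use the preceding lemma to get $\Hom(T,\tau^n T_{\her})=0$ for $n\gg 0$, then apply Serre duality. The paper's own proof is a two-sentence sketch of precisely this argument; you have simply spelled out the slope bookkeeping and the invocation of Proposition~\ref{prop:tilting-sheaf-complex-module} in detail.
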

\begin{proof}
  By the above $\Hom(T,\tau^n (T_{\her}))=0$ for $n\gg 0$. Now apply
  Serre duality.
\end{proof}

\section{The tubular case}

\emph{Throughout this section let $\XX$ be a tubular noncommutative
  curve of genus zero and $\QHh=\Qcoh\XX$.}\medskip

We denote by $\ovp$ be the least common multiple of the weights
$p_1,\dots,p_t$. We recall that in the tubular case every
indecomposable coherent sheaf $E$ is semi-stable, having a
well-defined slope $\mu(E)=\widehat{\QQ}=\QQ\cup\{\infty\}$. Moreover,
for every $\alpha\in\widehat{\QQ}$ there is an indecomposable
$E\in\Hh$ with $\mu(E)=\alpha$; all these coherent sheaves form the
tubular family $\bt_\alpha$. In particular, $\bt_{\infty}$ consists of
the finite length sheaves. We can thus
write $$\Hh=\bigvee_{\alpha\in\widehat{\QQ}}\bt_{\alpha}.$$

We will need the following important application of the
Riemann-Roch formula from \cite{lenzing:delapena:1999}.
\begin{lemma}\label{lem:Hom-X-tauY}
  If $X$, $Y\in\mathcal{H}$ are indecomposable with $\mu (X)<\mu (Y)$,
  then there exists $j$ with $0\leq j\leq\ovp-1$ such that $\Hom
  (X,\tau^j Y)\neq 0$. \qed
\end{lemma}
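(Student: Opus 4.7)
The plan is to deduce $\Hom(X,\tau^{j}Y) \neq 0$ for some admissible $j$ from positivity of the average Euler form $\DLF{X}{Y}$, which in turn follows from the Riemann--Roch formula~\eqref{eq:RR}. In the tubular case $\chi'_{orb}(\XX) = 0$ forces $\delta(\vom) = 0$, so the first summand of \eqref{eq:RR} vanishes and we obtain
\[
\DLF{X}{Y} \;=\; \kappa\,\eps\,\bigl(\rk(X)\deg(Y) - \rk(Y)\deg(X)\bigr).
\]
Showing that this quantity is strictly positive under $\mu(X) < \mu(Y)$ is a case analysis on the ranks. If both $X$ and $Y$ are vector bundles ($\rk(X),\rk(Y)>0$), then the right-hand side equals $\kappa\eps\rk(X)\rk(Y)(\mu(Y)-\mu(X)) > 0$. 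If $X$ is a vector bundle and $Y$ of finite length, then $\rk(Y)=0$ and $\deg(Y) > 0$, so again the expression is positive. The case $\rk(X) = 0$ is excluded, since then $\mu(X) = \infty$, contradicting $\mu(X) < \mu(Y)$.

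Expanding $\DLF{X}{Y} = \sum_{j=0}^{\ovp-1} \LF{X}{\tau^{-j}Y}$, positivity of the total forces $\LF{X}{\tau^{-j}Y} > 0$ for at least one $j \in \{0,\ldots,\ovp-1\}$, and in particular $\Hom(X, \tau^{-j}Y) \neq 0$ for such a $j$.

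The final step is to convert the negative power $\tau^{-j}$ into a nonnegative one in the admissible range. By Theorem~\ref{thm:stability}, in the tubular case every indecomposable coherent sheaf lies in a stable tube, and the rank of this tube divides $\ovp$; hence $\tau^{\ovp}Y \cong Y$. Thus $\tau^{-j}Y \cong \tau^{\ovp-j}Y$, and setting $j'$ equal to $\ovp - j$ (or $0$ if $j=0$) produces the desired index with $\Hom(X,\tau^{j'}Y) \neq 0$ and $0 \le j' \le \ovp - 1$. I expect the main subtlety to be this periodicity step: one has to verify that $\ovp$ serves as a common $\tau$-period for every indecomposable $Y$, not just for those of finite length, and this relies on the fact that each tubular family $\bt_\alpha$ is parametrized by a tubular curve $\XX_\alpha$ with the same weight type as $\XX$.
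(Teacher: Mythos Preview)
Your approach via the Riemann--Roch formula is exactly what the paper has in mind: the lemma is stated without proof and attributed to \cite{lenzing:delapena:1999}, where precisely this computation is carried out. The case analysis on ranks and the positivity of $\DLF{X}{Y}$ are correct.

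The one point that needs tightening is the periodicity step. Your justification---that each $\XX_\alpha$ has ``the same weight type as $\XX$''---is not something the paper establishes; Theorem~\ref{thm:stability} and the discussion of interval categories only assert that $\XX_\alpha$ is derived-equivalent to $\XX$, and it is explicitly remarked (after Lemma~\ref{lem:reiten-ringel}) that $\XX_\alpha$ need not be isomorphic to $\XX$ over general fields. Fortunately you do not need the object-level isomorphism $\tau^{\ovp}Y\simeq Y$ at all: it suffices that $\tau^{\ovp}$ act as the identity on $\Knull(\Hh)$, so that $\LF{X}{\tau^{-j}Y}=\LF{X}{\tau^{\ovp-j}Y}$ and hence $\DLF{X}{Y}=\sum_{j=0}^{\ovp-1}\LF{X}{\tau^{j}Y}$, from which the conclusion follows immediately. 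This $K$-theoretic periodicity is a direct consequence of the $\tau$-invariance of rank and degree: by the Riemann--Roch formula~\eqref{eq:RR} (with $\delta(\vom)=0$) one has $\DLF{X}{\tau Y}=\DLF{X}{Y}$, and subtracting the two sums yields $\LF{X}{\tau^{\ovp}Y}=\LF{X}{Y}$ for all $X$, whence $[\tau^{\ovp}Y]=[Y]$ by non-degeneracy of the Euler form. Alternatively one may invoke the fractional Calabi--Yau property $\tau^{\ovp}\simeq\id$ of tubular categories, which does give object-level periodicity, but this is stronger than what is needed here.
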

For general information on the tubular case we refer
to~\cite{lenzing:meltzer:1992}, \cite{lenzing:1997},
\cite[Ch.~13]{reiten:ringel:2006} and~\cite[Ch.~8]{kussin:2009}.
\subsection*{Quasicoherent sheaves having a real slope}
For $w\in\widehat{\RR}=\RR\cup\{\infty\}$ we
define
$$\bp_w=\bigcup_{\alpha<w}\bt_\alpha\quad\quad\bq_w=\bigcup_{w<\beta}\bt_\beta,$$
where $\alpha,\,\beta\in\widehat{\QQ}$. Accordingly,
$\Hh=\bp_w \vee\bt_w\vee\bq_w$ if $w$ is rational, and
$\Hh=\bp_w\vee\bq_w$ if $w$ is irrational. Moreover, let
$$\Cc_w=\rperpo{{\bq_w}}=\lperpe{\bq_w}\quad\quad\Bb_w=\lperpo{\bp_w}=
\rperpe{{\bp_w}}$$
and $$\Mm(w)=\Bb_w\cap\Cc_w.$$ The sheaves in $\Mm(w)$ are said to
have \emph{slope} $w$. Clearly, for coherent sheaves this definition
of slope is equivalent to the former one, and for irrational $w$ there
are only non-coherent sheaves in $\Mm(w)$.

For $v\leq w\leq\infty$ we have $\Cc_v\subseteq\Cc_w$ and
$\Bb_v\supseteq\Bb_w$. Moreover, $$\bigcap_{w\in\widehat{\RR}}\Cc_w=0
\quad\text{and}\quad\bigcup_{w\in\widehat{\RR}}\Cc_w=\Cc_\infty=\QHh,$$
and
$$\bigcap_{w\in\widehat{\RR}}\Bb_w=\Bb_\infty=\lperpo{\vect\XX}
\quad\text{and}\quad\Hh\cap\bigcup_{w\in\widehat{\RR}}\Bb_w=\Hh.$$ We
note that for example $\bigoplus_{\alpha\in\widehat{\QQ}}S_\alpha$
with $S_\alpha\in\bt_\alpha$ quasisimple is not in
$\bigcup_{w\in\widehat{\RR}}\Bb_w$. Let $X\in\QHh$ be a non-zero
object. Let $v=\sup\{r\in\widehat{\RR}\mid
X\in\Bb_r\}\in\widehat{\RR}\cup\{-\infty\}$ and
$w=\inf\{r\in\widehat{\RR}\mid X\in\Cc_r\}\in\widehat{\RR}$. Since
$X\neq 0$ we have $v\leq w$.

In the special case, when $w=\infty$, a sheaf $X\in\QHh$ has slope
$\infty$ if and only if
$X\in\lperpo{\vect\XX}=\rperpe{(\vect\XX)}$. (This, as a definition,
makes also sense for other representation types; \emph{in the domestic
  case}, we have seen that \emph{every large tilting sheaf has slope}
$\infty$.)
\subsection*{Interval categories}
The following technique is very useful in the tubular setting. Let
$\alpha\in\widehat{\QQ}$. Denote by $\Hh\spitz{\alpha}$ the full
subcategory of $\bDerived{\Hh}$ defined
by 
$$\bigvee_{\beta>\alpha}\bt_{\beta}[-1]\vee
\bigvee_{\gamma\leq\alpha}\bt_{\gamma}.$$
By~\cite[Prop.~8.1.6]{kussin:2009} we have
$\Hh\spitz{\alpha}=\coh\XX_\alpha$ for some tubular curve
$\XX_\alpha$. (If $k$ is algebraically closed, then $\XX_\alpha$ is
isomorphic to $\XX$; but this is not true in general.) The rank
function on $\Hh\spitz{\alpha}$ defines a linear form
$\rk_\alpha\colon\Knull(\Hh)\ra\ZZ$. A sequence
$\eta\colon0\ra E'\stackrel{u}\lra E\stackrel{v}\lra E''\ra 0$ with
objects $E',E,E''$ in $\Hh\cap\Hh\spitz{\alpha}$ is exact in $\Hh$ if
and only if it is exact in $\Hh\spitz{\alpha}$; indeed, both
conditions are equivalent to
$E'\stackrel{u}\lra E\stackrel{v}\lra E''\stackrel{\eta}\lra E'[1]$
being a triangle in $\bDerived{\Hh}$.
\begin{lemma}[Reiten-Ringel]\label{lem:reiten-ringel}
  For every $w\in\widehat{\RR}$ the pair $(\Gen(\bq_w),\Cc_w)$ is a
  torsion pair, which is split in case $w\in\widehat{\QQ}$.
\end{lemma}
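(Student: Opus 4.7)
The plan is to verify the torsion pair axioms in two parts: establishing the pair using Grothendieck-category generalities, then handling the splitting in the rational case using the tubular family $\bt_\alpha$.

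For the pair, observe first that $\Cc_w=\rperpo{{\bq_w}}$ is closed under subobjects, extensions, and products by formal considerations, and also under direct limits because each $Q\in\bq_w$ is coherent, hence finitely presented, so $\Hom(Q,\varinjlim X_i)=\varinjlim\Hom(Q,X_i)$. Thus $\Cc_w$ is a torsion-free class in $\QHh$ and yields a torsion pair $(\lperpo{\Cc_w},\Cc_w)$. The inclusion $\Gen(\bq_w)\subseteq\lperpo{\Cc_w}$ is immediate; for the reverse, given $X\in\lperpo{\Cc_w}$ let $tX$ be the trace of $\bq_w$ in $X$, so $tX\in\Gen(\bq_w)$, and it is enough to show $X/tX\in\Cc_w$, since then $X/tX\in\lperpo{\Cc_w}\cap\Cc_w=0$ gives $X=tX$. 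Suppose for contradiction that $g\colon Q\to X/tX$ is nonzero with $Q\in\bt_\beta\subseteq\bq_w$. Since in the tubular case the HN filtration splits and any coherent quotient of $Q$ has all HN factors of slope $\geq\beta>w$, the image $g(Q)$ lies in $\add(\bq_w)$. Pulling back to $Y\subseteq X$ yields an extension $0\to tX\to Y\to g(Q)\to 0$; the goal is to show $Y\in\Gen(\bq_w)$, contradicting the maximality of $tX$. Choosing an epi $\bigoplus_{i\in I}Q_i\twoheadrightarrow tX$ with $Q_i\in\bq_w$ and pulling back once more produces an extension $0\to\bigoplus_iQ_i\to Y'\to g(Q)\to 0$ with $Y'\twoheadrightarrow Y$. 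Since $g(Q)$ is noetherian, $\Ext^1(g(Q),-)$ commutes with coproducts, so the extension class is supported on a finite $F\subseteq I$; a standard pushout identification writes $Y'\simeq E_F\oplus\bigoplus_{i\notin F}Q_i$ with $E_F$ a finite coherent extension lying in $\add(\bq_w)$, so $Y'\in\Add(\bq_w)$ and consequently $Y\in\Gen(\bq_w)$.

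For the splitting when $w=\alpha\in\widehat{\QQ}$, the decisive feature is that $\bt_\alpha\neq\emptyset$ and that stability in the tubular case forces many Ext-groups to vanish. At the coherent level the HN filtration splits: by Serre duality and the $\tau$-invariance of slope, $\Ext^1(X_{\leq\alpha},X_{>\alpha})=\D\Hom(X_{>\alpha},\tau X_{\leq\alpha})=0$, since indecomposable summands of $X_{>\alpha}$ have slope $>\alpha\geq\mu(\tau X_{\leq\alpha})$, yielding $X=X_{>\alpha}\oplus X_{\leq\alpha}$ with $X_{>\alpha}\in\add(\bq_\alpha)$ and $X_{\leq\alpha}\in\Cc_\alpha$. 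To conclude $\Ext^1(X/tX,tX)=0$ in $\QHh$, I would first treat coherent $Z\in\Cc_\alpha$ and arbitrary $Y\in\Gen(\bq_\alpha)$: Serre duality, applicable because the first argument is coherent, gives $\Ext^1(Z,Y)=\D\Hom(Y,\tau Z)$, and since $\tau Z\in\tau\Cc_\alpha=\Cc_\alpha$ while $\Hom(\Gen(\bq_\alpha),\Cc_\alpha)=0$ by the torsion pair, this vanishes.

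The main obstacle will be lifting this vanishing from coherent $Z$ to the quasicoherent quotient $X/tX$. Writing $X/tX=\varinjlim Z_j$ over its coherent subobjects (which lie in $\Cc_\alpha$ by subobject-closure), the long exact sequence associated to the directed-system presentation expresses $\Ext^1(X/tX,tX)$ as the first right derived functor of the inverse limit of the system $\Hom(Z_j,tX)$, and one needs this to vanish via a Mittag--Leffler argument. The hypothesis $w\in\widehat{\QQ}$ is precisely what makes this work: the tubular family $\bt_\alpha$ provides enough rigidity in the interplay of $tX\in\Gen(\bq_\alpha)$ and $X/tX\in\Cc_\alpha$ to guarantee the required Mittag--Leffler condition, whereas in the irrational case this rigidity is missing and the torsion pair generically fails to split.
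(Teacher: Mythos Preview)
Your argument for the torsion pair is essentially correct and close in spirit to the paper's: you start from the torsionfree side (showing $\Cc_w$ is closed under subobjects, extensions, products, and direct limits), whereas the paper starts from the torsion side (showing $\Gen(\bq_w)$ is extension-closed, following \cite[Lem.~1.4]{reiten:ringel:2006}); the identification $\Gen(\bq_w)=\lperpo{\Cc_w}$ is then the same in both approaches.

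The splitting argument, however, has a genuine gap. You correctly establish $\Ext^1(Z,tX)=0$ for \emph{coherent} $Z\in\Cc_\alpha$ via Serre duality and $\tau$-invariance of slope, but your proposed passage to arbitrary $Z=X/tX\in\Cc_\alpha$ via a $\varprojlim^1$/Mittag--Leffler argument is neither carried out nor justified. The groups $\Hom(Z_j,tX)$ need not vanish (a coherent sheaf of slope $\leq\alpha$ can certainly map nontrivially into an object of $\Gen(\bq_\alpha)$), the directed system of coherent subobjects of $X/tX$ is generally not countable (so the simple telescope/$\varprojlim^1$ sequence is not available), and you offer no concrete mechanism by which rationality of $\alpha$ would force Mittag--Leffler. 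The appeal to ``rigidity of $\bt_\alpha$'' is an assertion, not an argument.

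The paper avoids this obstacle by a different route, the maximal-complement technique of Reiten--Ringel. Given $0\to X\to Y\to Z\to 0$ with $X\in\Gen(\bq_\alpha)$ and $Z\in\Cc_\alpha$, one considers the subobjects $U\subseteq Y$ with $U\cap X=0$ and $Y/(X+U)\in\Cc_\alpha$; a maximal such $U$ exists, and one shows $Y=X\oplus U$. If $X+U\subsetneq Y$, local noetherianity produces $Y'$ with $X+U\subsetneq Y'\subseteq Y$ and $Y'/(X+U)$ finitely presented; one then reduces to the coherent case and derives a contradiction using an analogue of condition~(F) from \cite[Prop.~1.5]{reiten:ringel:2006}. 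The place where rationality of $\alpha$ actually enters is precise: it guarantees the interval category $\Hh\spitz{\alpha}$ and hence the integer-valued additive function $\delta=-\rk_\alpha$, which is strictly positive on indecomposables of $\bq_\alpha$ and non-positive on $\Cc_\alpha\cap\Hh$; this separating function is what makes condition~(F) and the contradiction work. For irrational $w$ no such integer-valued function exists, and that---rather than any Mittag--Leffler phenomenon---is the reason the splitting argument is restricted to $w\in\widehat{\QQ}$.
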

\begin{proof}
  As in~\cite[Lem.~1.4]{reiten:ringel:2006} one shows that
  $\Gen(\bq_w)$ is extension-closed; the same proof works in the
  locally noetherian category $\QHh$, replacing ``finite length'' by
  ``finitely presented''. Then
  $\Gen(\bq_w)=\lperpo{(\rperpo{{\bq_w}})}=\lperpo{\Cc_w}$ follows
  like in~\cite[Lem.~1.3]{reiten:ringel:2006}, and thus
  $(\Gen(\bq_w),\Cc_w)$ is a torsion pair. For the splitting property
  in case $w=\alpha\in\widehat{\QQ}$ we have to show that every short
  exact sequence $\eta\colon0\ra X\ra Y\ra Z\ra 0$ with
  $X\in\Gen(\bq_\alpha)$ and $Z\in\Cc_\alpha$ splits. We may assume
  that $X$ is a subobject of $Y$ and $Z=Y/X$. If $Z$ is finitely
  presented, this follows from Serre duality. For general
  $Z\in\Cc_\alpha$, we consider the set of subobjects $U$ of $Y$ such
  that $U\cap X=0$ and $Y/(X+U)\in\Cc_\alpha$. Like
  in~\cite[Prop.~1.5(b)]{reiten:ringel:2006} one has a maximal such
  $U$, and as in~\cite[Prop.~1.5(a)]{reiten:ringel:2006} one shows
  $Y=X\oplus U$, so that $\eta$ splits. (If one assumes that the
  inclusion $X+U\subsetneq Y$ is proper, then $\QHh$ being locally
  noetherian allows to find $Y'$ with $X+U\subsetneq Y'\subseteq Y$ with
  $Y'/(X+U)$ finitely presented. Then we proceed like
  in~\cite{reiten:ringel:2006}. We remark that an analogue of
  condition~(F) therein can be proved along the same lines by
  exploiting the fact that an indecomposable $E\in\Hh$ belongs to
  $\bq_\alpha$ if and only if $\delta(E)>0$, where
  $\delta=-\rk_{\alpha}$.)
\end{proof}
Let $\alpha\in\widehat{\QQ}$. By $\QHh\spitz{\alpha}$ we denote the
direct limit closure of $\Hh\spitz{\alpha}$ in $\bDerived{\QHh}$. We
have $\QHh\spitz{\alpha}=\Qcoh\XX_\alpha$. If $X\in\QHh$ has a
rational slope $\alpha$, then clearly $X\in\QHh\cap\QHh\spitz{\alpha}$
where the intersection is formed in
$\bDerived{\QHh}=\bDerived{\QHh\spitz{\alpha}}$; in
$\QHh\spitz{\alpha}$ then $T$ has slope $\infty$. Clearly,
$\Cc_{\alpha}=\QHh\spitz{\alpha}\cap\QHh$.
\begin{lemma}\label{lem:tilting-corr-tubular}
  Let $\alpha\in\widehat{\QQ}$. For an object $T$ in $\QHh$ lying in
  $\Cc_{\alpha}$, the following conditions are equivalent:
  \begin{enumerate}
  \item[(i)] $T$ is a tilting sheaf in $\QHh$;
  \item[(ii)] $T$ is a tilting complex in $\bDerived{\QHh}$;
  \item[(iii)] $T$ is a tilting sheaf in $\QHh\spitz{\alpha}$.
  \end{enumerate}
\end{lemma}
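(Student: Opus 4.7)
The strategy is to imitate the proof of Proposition~\ref{prop:tilting-sheaf-complex-module}, exploiting that $T\in\Cc_\alpha=\QHh\cap\QHh\spitz{\alpha}$ sits simultaneously in the two hereditary Grothendieck categories $\QHh$ and $\QHh\spitz{\alpha}=\Qcoh\XX_\alpha$, whose bounded derived categories both coincide with $\bDerived{\QHh}$. By~\eqref{eq:def-derived-cat} applied to each of these hereditary categories, every object of $\bDerived{\QHh}$ decomposes as a direct sum of shifts of objects of either abelian heart. The tilting conditions in the two categories are then related by the condition on $T$ viewed as a complex concentrated in degree zero, exactly as in Proposition~\ref{prop:tilting-sheaf-complex-module}.

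The implications (ii)$\Rightarrow$(i) and (ii)$\Rightarrow$(iii) are immediate. Taking $n=1$ in (TC1) yields (TS1) in each of the two abelian categories. For (TS2), if $X$ belongs to $\QHh$ (respectively to $\QHh\spitz{\alpha}$) and satisfies $\Hom(T,X)=0=\Ext^1(T,X)$ in that category, then hereditariness ensures $\Hom_{\Der}(T,X[n])=0$ for all $n\in\ZZ$ (the cases $n<0$ and $n\geq 2$ being automatic, the cases $n=0,1$ being the assumption); (TC2) then forces $X=0$.

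For (i)$\Rightarrow$(ii), hereditariness of $\QHh$ immediately converts (TS1) into (TC1). To verify (TC2), write an arbitrary $X\in\bDerived{\QHh}$ as $X=\bigoplus_{i=-s}^{s}X_i[i]$ with $X_i\in\QHh$; the vanishing $\Hom_{\Der}(T,X[n])=0$ for all $n\in\ZZ$ specialises, for each $i$, at $n=-i$ and $n=-i+1$ to $\Hom_{\QHh}(T,X_i)=0=\Ext^1_{\QHh}(T,X_i)$, so (TS2) forces $X_i=0$ and hence $X=0$. The implication (iii)$\Rightarrow$(ii) is identical, now decomposing $X\in\bDerived{\QHh\spitz{\alpha}}$ as $\bigoplus Y_i[i]$ with $Y_i\in\QHh\spitz{\alpha}$ and applying (TS2) of $T$ in the category $\QHh\spitz{\alpha}$.

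The only delicate bookkeeping point is that the same derived object $X$ admits both decompositions, one in terms of $\QHh$-pieces and one in terms of $\QHh\spitz{\alpha}$-pieces. This is automatic from the identification $\bDerived{\QHh}=\bDerived{\QHh\spitz{\alpha}}$ together with~\eqref{eq:def-derived-cat} applied to each of the two hereditary Grothendieck categories. I therefore foresee no substantive obstacle; the argument is a transparent variant of the one already carried out for $T\in\Tt_{\cc}$ in Proposition~\ref{prop:tilting-sheaf-complex-module}, with $\QHh\spitz{\alpha}$ playing the role previously played by $\Mod\Sigma$.
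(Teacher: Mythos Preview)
Your proposal is correct and is precisely what the paper intends: its entire proof reads ``This is shown like in Proposition~\ref{prop:tilting-sheaf-complex-module},'' and your argument is exactly that adaptation. The one minor simplification you exploit---decomposing $X$ directly into $\QHh\spitz{\alpha}$-pieces for (iii)$\Rightarrow$(ii), rather than into $\QHh$-pieces and then arguing they lie in $\Cc_\alpha$---is available here (unlike in Proposition~\ref{prop:tilting-sheaf-complex-module}) because $\QHh\spitz{\alpha}=\Qcoh\XX_\alpha$ is itself hereditary, so the repetitive decomposition~\eqref{eq:def-derived-cat} applies to it directly.
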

\begin{proof}
  This is shown like in
  Proposition~\ref{prop:tilting-sheaf-complex-module}.
\end{proof}
\begin{remark}
  There is an interesting class of locally coherent categories which
  are derived-equivalent to $\QHh$: If $w$ is irrational, then we
  define
  $\Hh\spitz{w}=\bigvee_{\beta>w}\bt_{\beta}[-1]\vee
  \bigvee_{\gamma<w}\bt_{\gamma}$ and $\QHh\spitz{w}$ similarly as
  above. It is easy to see that $\Hh\spitz{w}$ is hereditary and does
  not contain any simple object. Accordingly, $\QHh\spitz{w}$ is a
  Grothendieck category (we refer
  to~\cite[Sec.~2.4+2.5]{angeleri:kussin:2015}) which is locally
coherent but \emph{not} locally noetherian. Moreover, $\QHh\spitz{w}$
is derived-equivalent to $\QHh$ and contains a finitely presented
tilting object $T_{\can}$ whose endomorphism ring is a canonical
algebra. It is not difficult to show that there are only countably
many irrational $w'$ such that the category $\QHh\spitz{w'}$ (resp.\
$\Hh\spitz{w'}$) is equivalent to $\QHh\spitz{w}$ (resp.\
$\Hh\spitz{w}$). It would be of interest to get a better understanding
of the ``geometric meaning'' of these categories.
\end{remark}
\subsection*{Indecomposable quasicoherent sheaves}
The following statement reflects the importance of the concept of
slope in the tubular case, also for quasicoherent sheaves.
\begin{theorem}[Reiten-Ringel]\label{thm:reiten-ringel}
  \begin{enumerate}
  \item[(1)] $\Hom(\Mm(w'),\Mm(w))=0$ for $w<w'$.
  \item[(2)] Every indecomposable sheaf has a well-defined slope
    $w\in\widehat{\RR}$.
  \end{enumerate}
\end{theorem}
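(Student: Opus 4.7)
The plan is to prove the sharper intersection vanishing
$$\Bb_{w'}\cap\Cc_w=0\quad\text{for }w<w'\text{ in }\widehat{\RR},$$
deduce~(1) from it, and then use the splitting torsion pair of Lemma~\ref{lem:reiten-ringel} together with the density of $\QQ$ in $\RR$ to prove~(2).

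For the vanishing, I would take a non-zero $Z\in\Bb_{w'}\cap\Cc_w$ and apply the monotonicity $\Bb_r\supseteq\Bb_{r'}$ and $\Cc_r\subseteq\Cc_{r'}$ for $r\le r'$ recorded in the excerpt: this yields $\sup\{r\in\widehat\RR:Z\in\Bb_r\}\ge w'$ and $\inf\{r\in\widehat\RR:Z\in\Cc_r\}\le w$. Combined with the inequality between these two quantities for any non-zero sheaf stated just before the theorem, this forces $w'\le w$, contradicting $w<w'$. For~(1) itself, given $f\colon X\to Y$ with $X\in\Mm(w')\subseteq\Bb_{w'}$ and $Y\in\Mm(w)\subseteq\Cc_w$, the image $\Image(f)$ is a quotient of $X$, hence lies in $\Bb_{w'}$ (the class $\rperpe{\bp_{w'}}$ is closed under epimorphic images since $\QHh$ is hereditary), and is a subobject of $Y$, hence lies in $\Cc_w=\rperpo{\bq_w}$. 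The vanishing just proved then forces $\Image(f)=0$, so $f=0$.

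For~(2), I would fix an indecomposable non-zero $X\in\QHh$ and apply the splitting torsion pair $(\Gen(\bq_\alpha),\Cc_\alpha)$ for each rational~$\alpha$. By indecomposability and $\Gen(\bq_\alpha)\cap\Cc_\alpha=0$, $X$ lies in exactly one of the two classes. Setting
$$B=\{\alpha\in\QQ\mid X\in\Gen(\bq_\alpha)\},\qquad A=\{\alpha\in\QQ\mid X\in\Cc_\alpha\},$$
one checks that $B$ is downward-closed, $A$ is upward-closed, $A\cap B=\emptyset$ and $A\cup B=\QQ$. Density of $\QQ$ in $\RR$ then produces a unique value $w=w_X\in\widehat{\RR}$ separating $B$ from $A$. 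Using the inclusion $\Gen(\bq_\alpha)\subseteq\Bb_\alpha$---which follows from $\Ext^1(\bt_\gamma,\bt_\beta)=\D\Hom(\bt_\beta,\bt_\gamma)=0$ for $\gamma<\alpha<\beta$ in the tubular case together with closure of $\Bb_\alpha=\rperpe{\bp_\alpha}$ under direct sums, extensions and epimorphic images---I conclude that $X\in\Bb_\alpha$ for every rational $\alpha<w$, and therefore $X\in\bigcap_{\alpha\in\QQ,\,\alpha<w}\Bb_\alpha=\Bb_w$ via the identity $\bp_w=\bigcup_{\alpha\in\QQ,\,\alpha<w}\bp_\alpha$. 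Analogously $X\in\Cc_w$, so $X\in\Mm(w)$. Uniqueness of the slope is then immediate from~(1) applied to $\mathrm{id}_X\ne 0$.

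The main obstacle will be the boundary cases: if $A$ has no finite rational element then $w=\infty$ and one must verify $X\in\Bb_\infty\cap\Cc_\infty=\Mm(\infty)$ directly, while $B=\emptyset$ is excluded by $X\ne 0$ (it would give $X\in\bigcap_{\alpha\in\QQ}\Cc_\alpha=0$). One must also check that the threshold value~$w$, when itself rational, is correctly classified (it lies in exactly one of $A$ and $B$), and that the identity $\bigcap_{\alpha<w}\Bb_\alpha=\Bb_w$ and its $\Cc$-analogue hold for all $w\in\widehat{\RR}$; these are routine from $\bp_w=\bigcup_{\alpha\in\QQ,\,\alpha<w}\bp_\alpha$ and the dual formula for $\bq_w$.
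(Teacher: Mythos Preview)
Your proof is correct and follows essentially the same approach as the paper: both arguments for (2) hinge on the split torsion pair $(\Gen(\bq_\alpha),\Cc_\alpha)$ of Lemma~\ref{lem:reiten-ringel} together with the inclusion $\Gen(\bq_\alpha)\subseteq\Bb_\alpha$, the only difference being that the paper defines $w=\inf\{\alpha\in\widehat{\QQ}:X\in\Cc_\alpha\}$ directly and argues $X\in\Bb_w$ by contradiction, while you package the same dichotomy as a Dedekind cut. For (1) the paper merely cites \cite{reiten:ringel:2006}, so your explicit image argument via $\Bb_{w'}\cap\Cc_w=0$ is a welcome elaboration rather than a departure.
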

\begin{proof}
  (1) This follows like in~\cite[Thm.~13.1]{reiten:ringel:2006}.

  (2) We transfer the original proof for modules over a tubular
  algebra in~\cite[Thm.~13.1]{reiten:ringel:2006} to $\Qcoh\XX$; we
  need a slight modification. Let $X\in\QHh$ be indecomposable. Then
  $0\neq
  X\in\bigcup_{w\in\widehat{\RR}}\Cc_w\setminus\bigcap_{w\in\widehat{\RR}}\Cc_w$.
  Let $w\in\widehat{\RR}$ be the infimum of all
  $\alpha\in\widehat{\QQ}$ such that $X\in\Cc_\alpha$. Since
  $\bq_w=\bigcup_{w<\alpha}\bq_\alpha$, we have $\Hom(\bq_w,X)=0$,
  that is, $X\in\Cc_w$.

  We now show that $X\in\Bb_w=\lperpo{\bp_w}$. We observe
  that $$\Bb_w=\bigcap_{\alpha<w}\lperpo{\bt_\alpha}$$ and
  $\Gen(\bq_\alpha)\subseteq\lperpo{\bt_\alpha}$. Hence, if
  $X\not\in\Bb_w$, then there is a rational $\beta<w$ with
  $X\not\in\Gen(\bq_{\beta})$. But $(\Gen(\bq_{\beta}),\Cc_{\beta})$
  is a split torsion pair, and since $X$ is indecomposable, we get
  $X\in\Cc_{\beta}$. Since $\beta<w$ this gives a contradiction to the
  choice of $w$.
\end{proof}
\begin{remark}
  If $T$ is a \emph{noetherian} tilting object in $\QHh$ (that is,
  $T\in\Hh$), then $T$ does \emph{not} have any slope. In fact, if
  $T=T_1\oplus\ldots\oplus T_n$ with pairwise nonisomorphic
  indecomposable $T_i$, then $n$ coincides with the rank of the
  Grothendieck group $\Knull(\Hh)$. If $T$ would have a slope
  $\alpha$, then each summand $T_i$ would be exceptional of slope
  $\alpha$, hence lying in one of the finitely many exceptional tubes
  of slope $\alpha$. If such a tube has rank $p>1$, then there are at
  most $p-1$ indecomposable summands of $T$ lying in this tube. If
  $p_1,\dots,p_t$ are the weights of $\XX$, then
  $n=|\Knull(\Hh)|=\sum_{i=1}^t (p_i-1)+2>\sum_{i=1}^t (p_i-1)\geq n$,
  giving a contradiction.
\end{remark}
\begin{proposition}\label{prop:exist-Lw}
  Let $w\in\widehat{\RR}$.
  There is a large tilting sheaf $\bL_w$ of slope $w$.
\end{proposition}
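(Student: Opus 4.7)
The plan is to apply Theorem~\ref{thm:tilting-from-resolving} to the resolving class
$$\vSs=\bp_w\cap\Hh=\bigcup_{\alpha\in\widehat{\QQ},\,\alpha<w}\bt_\alpha,$$
thereby extending the Lukas construction (the case $w=\infty$, Proposition~\ref{prop:large-tilting-torsionfree-in-general}) to every $w\in\widehat{\RR}$.

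First I would verify that $\vSs$ is resolving in $\Hh$. Closure under subobjects, direct summands and extensions follows from semistability in the tubular case: if $A,B\in\bp_w$ are coherent and $E_\ell$ is an indecomposable summand of a subobject or of an extension of $A$ and $B$, then the induced nonzero map from $E_\ell$ into $A$ or $B$ forces $\mu(E_\ell)$ to be bounded by the maximum of the (finitely many) slopes of the indecomposable summands of $A$ and $B$, and this maximum is strictly less than $w$. For generation, I would follow the argument in the proof of Theorem~\ref{thm:class-correspondence}: any line bundle $L'$ together with a suitable product $\sigma$ of tubular shifts gives a generating family $\{\sigma^{-n}L'\mid n\geq 0\}$ of $\QHh$, and since the slopes of $\sigma^{-n}L'$ tend to $-\infty$, a cofinal subfamily lies in $\vSs$ and still generates. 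Theorem~\ref{thm:tilting-from-resolving} then yields a tilting sheaf $T=T_0\oplus T_1$ with $\rperpe{T}=\rperpe{\vSs}$, where by~\cite[Cor.~2.15]{saorin:stovicek:2011} the sheaves $T_0$ and $T_1$ are $\vSs$-filtered (compare Remark~\ref{rem:filtration}).

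The core step is to show that $T$ has slope $w$, i.e.\ $T\in\Mm(w)=\Bb_w\cap\Cc_w$. For $T\in\Bb_w$, note that in the tubular case $\tau$ preserves every family $\bt_\alpha$, so $\tau\bp_w=\bp_w$; hence by Serre duality $\rperpe{\bp_w}=\lperpo{\tau\bp_w}=\lperpo{\bp_w}=\Bb_w$, and $T\in\rperpe{T}=\Bb_w$. For $T\in\Cc_w=\rperpo{\bq_w}$, I would argue by transfinite induction along the $\vSs$-filtrations of $T_0$ and $T_1$ that every $\vSs$-filtered sheaf lies in $\Cc_w$: the successor step rests on the semistability vanishing $\Hom(\bq_w,\bp_w)=0$ together with the long exact sequence, and the limit step uses that every $Y\in\bq_w$ is coherent, so $\Hom(Y,-)$ commutes with directed colimits. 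Since $\Cc_w$ is closed under direct summands, it follows that $T\in\Cc_w$.

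Finally, $T$ is large because, by the remark following Theorem~\ref{thm:reiten-ringel}, no noetherian tilting object in $\QHh$ can have all its indecomposable summands of one fixed slope; if $T$ were equivalent to a coherent tilting sheaf $T'$, then $\Add(T')=\Add(T)\subseteq\Mm(w)$ would contradict this. The hardest part of the plan is the inclusion $T\in\Cc_w$: Theorem~\ref{thm:tilting-from-resolving} directly controls only $\Ext^1$-vanishing, so one has to exploit the $\vSs$-filtered structure of $T_0$ and $T_1$ produced by the Saorin-Stovicek construction in order to extract the $\Hom$-vanishing that defines $\Cc_w$.
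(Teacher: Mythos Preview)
Your approach is essentially the paper's: apply Theorem~\ref{thm:tilting-from-resolving} to $\vSs=\add(\bp_w)$, then verify $T\in\Bb_w\cap\Cc_w$ and that $T$ is large via the remark preceding the proposition. The one place where you diverge is the step you single out as hardest, namely $T\in\Cc_w$, which you handle by transfinite induction along the $\vSs$-filtration of $T_0,T_1$. The paper bypasses this entirely with a one-line argument: since $T$ is tilting, $T\in\lperpe{(\rperpe{T})}=\lperpe{\Bb_w}$; and since $\bq_w\subseteq\Bb_w$ (semistability gives $\Hom(\bq_w,\bp_w)=0$), we get $\lperpe{\Bb_w}\subseteq\lperpe{\bq_w}=\Cc_w$. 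Your filtration argument is correct, but the tilting property already delivers $\Ext^1(T,\Bb_w)=0$, hence $\Ext^1(T,\bq_w)=0$, without any appeal to the internal structure of $T_0$ and $T_1$.
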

\begin{proof}
  Applying Theorem~\ref{thm:tilting-from-resolving} to the strongly resolving
  subcategory $\add(\bp_w)$, we get a tilting sheaf $T$ with
  $\Gen(T)=\rperpe{\vSs}=\rperpe{{\bp_w}}=\Bb_w$. Moreover, by the
  tilting property clearly $T\in\lperpe{\Bb_w}$, which is a subclass
  of $\Cc_w$. By the preceding remark, $T$ is large.
\end{proof}
Let $T\in\QHh$ be a tilting object of rational slope $\alpha$. Then in
$\QHh\spitz{\alpha}$ the splitting property of
Theorem~\ref{thm:torsion-splitting} holds, that is, the canonical
exact sequence $0\ra t_{\alpha}(T)\ra T\ra T/t_{\alpha}(T)\ra 0$ in
$\QHh\spitz{\alpha}$ splits, where $t_{\alpha}(T)$ denotes the torsion
subsheaf of $T$ in $\QHh\spitz{\alpha}$.
\begin{definition}\label{def:torsionfree-tilting}
  Let $T$ be a tilting sheaf of slope $w$. We call $T$ a
  \emph{torsionfree} tilting sheaf, if either $w$ is irrational, or if
  $w=\alpha\in\widehat{\QQ}$ and $t_\alpha(T)=0$. 
\end{definition}
\begin{lemma}\label{lem:L_w-torsionfree}
  For every $w\in\widehat{\RR}$ the tilting sheaf $\bL_w$ is
  torsionfree.
\end{lemma}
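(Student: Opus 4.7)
If $w$ is irrational, there is nothing to prove, since then any tilting sheaf of slope $w$ is torsionfree by Definition~\ref{def:torsionfree-tilting}. So I would concentrate on the case $w=\alpha\in\widehat{\QQ}$. My plan is to transfer the problem to the interval category $\QHh\spitz{\alpha}=\Qcoh\XX_\alpha$, in which $\bL_w$ becomes a tilting sheaf and $t_\alpha(\bL_w)$ is just its ordinary torsion subsheaf, and then to imitate the argument of Proposition~\ref{prop:large-tilting-torsionfree-in-general}.

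First I would verify that $\bL_w$ actually lies in $\QHh\spitz{\alpha}$: since $\bL_w\in\Mm(\alpha)\subseteq\Cc_\alpha$, any coherent subobject of $\bL_w$ has slope $\le\alpha$ and hence lies in $\bigvee_{\gamma\le\alpha}\bt_\gamma\subseteq\Hh\spitz{\alpha}$; writing $\bL_w$ as the direct limit of its coherent subobjects then places $\bL_w$ inside $\QHh\spitz{\alpha}$. Lemma~\ref{lem:tilting-corr-tubular} now promotes $\bL_w$ to a tilting sheaf in $\QHh\spitz{\alpha}=\Qcoh\XX_\alpha$. Assuming for contradiction that $t_\alpha(\bL_w)\neq 0$, I would apply Theorem~\ref{thm:torsion-splitting} inside $\QHh\spitz{\alpha}$, which is legitimate because $\Ext^1_{\QHh\spitz{\alpha}}(\bL_w,\bL_w)=0$ by (TS1). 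This exhibits $t_\alpha(\bL_w)$ as a direct summand of $\bL_w$ which is a direct sum of indecomposable finite-length sheaves and Pr\"ufer sheaves of $\XX_\alpha$. In particular, I obtain a simple object $S$ of $\XX_\alpha$ with $\Hom(S,\bL_w)\neq 0$, and under the identification $\Hh\spitz{\alpha}=\coh\XX_\alpha$ this $S$ is a quasisimple of the tubular family $\bt_\alpha$ inside $\QHh$.

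The contradiction then comes out exactly as in the proof of Proposition~\ref{prop:large-tilting-torsionfree-in-general}. Since $\Hom(\bt_\alpha,\bp_\alpha)=0$ by Theorem~\ref{thm:reiten-ringel}(1), Serre duality yields $\Ext^1(F,\tau S)=\D\Hom(S,F)=0$ for every $F\in\bp_w$, so $\tau S\in\rperpe{\bp_w}=\Bb_w=\Gen(\bL_w)=\rperpe{\bL_w}$; on the other hand $\Ext^1(\bL_w,\tau S)=\D\Hom(S,\bL_w)\neq 0$, contradicting $\tau S\in\rperpe{\bL_w}$. The main thing to be careful about is the passage between $\QHh$ and $\QHh\spitz{\alpha}$: one must check that the simples of $\XX_\alpha$ are precisely the quasisimples of $\bt_\alpha$ (a standard feature of the interval category construction) and that all Hom and Ext computations involved take place in the common ambient category $\bDerived{\QHh}$, so that working in either framework gives the same answer. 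Once this identification is in place, the argument above is essentially immediate.
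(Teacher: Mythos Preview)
Your proposal is correct and follows essentially the same route as the paper: reduce to the rational case, pass to the interval category $\QHh\spitz{\alpha}$, and then run the torsionfreeness argument of Proposition~\ref{prop:large-tilting-torsionfree-in-general}. The paper simply says ``switching to $\QHh\spitz{\alpha}$ we may assume $w=\infty$ and apply Proposition~\ref{prop:large-tilting-torsionfree-in-general}'', whereas you have unpacked this reduction and reproduced the contradiction argument explicitly; the content is the same.
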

\begin{proof}
  For irrational $w$ there is nothing to show. Switching to the
  category $\QHh\spitz{\alpha}$ if $w=\alpha$ is rational, we can
  assume without loss of generality that $w=\infty$. Then the claim
  follows from
  Proposition~\ref{prop:large-tilting-torsionfree-in-general}.
\end{proof}
\begin{lemma}\label{lem:torsion-implies-slope}
  Let $\alpha\in\widehat{\QQ}$. Let $T\in\QHh$ be a large tilting
  sheaf with $T\in\Cc_\alpha$ and $t_\alpha(T)\neq 0$. Then $T$ has
  slope $\alpha$.
\end{lemma}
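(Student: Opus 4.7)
My plan is to transport the problem to the interval category $\QHh\spitz{\alpha}=\Qcoh\XX_\alpha$, which is itself a tubular curve. By Lemma~\ref{lem:tilting-corr-tubular}, $T$ is a large tilting sheaf there, and $t_\alpha(T)$ is by construction its torsion subsheaf in $\Qcoh\XX_\alpha$. Using the explicit description of $\Hh\spitz{\alpha}$ together with the fact that $\Hom_{\Qcoh\XX_\alpha}(T,X[-1])=0$ for $X\in\QHh$, one sees that for $T\in\Cc_\alpha$ the missing condition $T\in\Bb_\alpha$ (for slope $\alpha$ in $\QHh$) is equivalent to $T\in\rperpe{\vect\XX_\alpha}$ in $\Qcoh\XX_\alpha$, i.e.\ to $T$ having slope $\infty$ in the tubular $\Qcoh\XX_\alpha$. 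I would then apply Theorem~\ref{thm:torsion-splitting} in $\Qcoh\XX_\alpha$ to write $T=t_\alpha(T)\oplus T_+$; the torsion summand is a direct sum of finite length indecomposables and Pr\"ufer sheaves of $\Qcoh\XX_\alpha$, and automatically lies in $\rperpe{\vect\XX_\alpha}$ by Serre duality. What remains is to show $T_+\in\rperpe{\vect\XX_\alpha}$.

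If $t_\alpha(T)$ contains a Pr\"ufer summand at some point $x\in\XX_\alpha$, then Lemma~\ref{lem:T+-divisibility-lemma} (applied in $\Qcoh\XX_\alpha$) shows that $T_+$ is $x$-divisible, and Lemma~\ref{lem:V-divisibility-lemma} gives at once $T_+\in\rperpe{\vect\XX_\alpha}$.

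The main obstacle is the remaining case, in which $t_\alpha(T)=B$ is a nonzero coherent branch sheaf in $\Qcoh\XX_\alpha$ with no Pr\"ufer summands. Here I would invoke Lemma~\ref{lem:exterior-branch-reduction}: $T_+$ is a torsionfree tilting sheaf in $\rperp{B}\simeq\Qcoh\XX'_\alpha$, and since a nonzero branch reduction from a tubular curve strictly increases $\chi'_{orb}$, the curve $\XX'_\alpha$ is domestic. Largeness of $T$ forces largeness of $T_+$ in $\Qcoh\XX'_\alpha$ (otherwise, since $\Add_{\rperp{B}}(T_+)$ coincides with $\Add_{\Qcoh\XX_\alpha}(T_+)$, the sheaf $T$ itself would be equivalent to a coherent tilting sheaf in $\Qcoh\XX_\alpha$), so Proposition~\ref{prop:domestic-torsionfree-large-tilting} tells us that $T_+$ is equivalent to the Lukas tilting sheaf of the domestic $\Qcoh\XX'_\alpha$.

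To conclude $T_+\in\rperpe{\vect\XX_\alpha}$, equivalently (via Serre duality in the tubular $\Qcoh\XX_\alpha$) $\Hom_{\Qcoh\XX_\alpha}(T_+,\vect\XX_\alpha)=0$, I would argue by contradiction: a nonzero morphism $\phi\colon T_+\to F$ with $F\in\vect\XX_\alpha$ has image $F_0\subseteq F$, a nonzero coherent vector bundle in $\vect\XX_\alpha$ which, being an epimorphic image of $T_+$ (and hence of $T$), lies in $\Gen(T)=\rperpe{T}$. Then $\Ext^1(B,F_0)=0$ from the tilting property of $T$, together with $\Hom(B,F_0)=0$ (torsion to torsionfree), places $F_0$ in $\rperp{B}\cap\vect\XX_\alpha=\vect\XX'_\alpha$; Lemma~\ref{lem:nomaps} applied to $T_+$ in the domestic $\Qcoh\XX'_\alpha$ then forces $\Hom(T_+,F_0)=0$, contradicting the nonzero factorization $T_+\twoheadrightarrow F_0\hookrightarrow F$ of $\phi$.
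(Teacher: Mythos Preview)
Your proof is correct and follows the same overall strategy as the paper: pass to the interval category $\QHh\spitz{\alpha}$, split $T=t_\alpha(T)\oplus T_+$, dispatch the Pr\"ufer case via $x$-divisibility and Lemma~\ref{lem:V-divisibility-lemma}, and in the coherent-branch case reduce to a large torsionfree tilting sheaf over the domestic curve $\XX'_\alpha=\rperp{B}$.

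The one genuine difference is in the final contradiction for the coherent-branch case. The paper assumes $\Hom(\bL',\bt_\beta)\neq 0$, passes to a line bundle $L$ (via line bundle filtrations), and then uses tubular shifts at a homogeneous point $x_0$ to move an arbitrary $E\in\vect\XX'_\alpha$ to large slope $E(nx_0)\in\vect\XX'_\alpha$ so that $L$ (and hence $\bL'$) maps to it, contradicting $\bL'\in\lperpo{\vect\XX'_\alpha}$. Your argument is more direct: you take the image $F_0\subseteq F$ of a nonzero $\phi\colon T_+\to F$, observe that $F_0$ is a coherent torsionfree sheaf (as a subobject of the noetherian $F$) lying in $\Gen(T)\subseteq\rperpe{B}$ and trivially in $\rperpo{B}$, hence $F_0\in\vect\XX_\alpha\cap\rperp{B}=\vect\XX'_\alpha$ by the rank-preserving property of the inclusion; then Lemma~\ref{lem:nomaps} gives $\Hom(T_+,F_0)=0$ directly. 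This avoids the tubular-shift manoeuvre entirely and is a cleaner way to land the contradiction inside $\Qcoh\XX'_\alpha$. Both routes ultimately rest on Lemma~\ref{lem:nomaps}, but yours invokes it once rather than reproving a piece of it by hand.
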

\begin{proof}
  Switching to the category $\QHh\spitz{\alpha}=\Qcoh\XX_{\alpha}$, we
  can assume without loss of generality that $\alpha=\infty$. (This
  will just simplify the notation.) If $tT$ contains a non-coherent
  summand, then with Theorem~\ref{thm:classif-tubular-torsion} we get
  that $T$ has slope $\infty$, since $T\in\Bb_{\infty}$ follows
  from~\ref{lem:V-divisibility-lemma}. If, on the other hand, $tT$
  only consists of coherent summands (necessarily only finitely many
  indecomposables) then $T/tT$ is a torsionfree tilting sheaf in
  $$\Qcoh\XX'=\rperp{tT}\subseteq\QHh,$$
  where $\XX'$ is a curve with reduced weights, thus of domestic
  type. By~\cite[Prop.~9.6]{geigle:lenzing:1991} the induced inclusion
  $\coh\XX'\subseteq\Hh$ is rank-preserving. The torsionfree sheaf
  $T/tT$ is equivalent to the Lukas tilting sheaf $\bL'\in\Qcoh\XX'$
  by Proposition~\ref{prop:domestic-torsionfree-large-tilting}.  We
  show that $\bL'$, as object in $\QHh$, has slope $\infty$. We assume
  this is not the case. Then there is $\beta<\infty$ with
  $\Hom(\bL',\bt_\beta)\neq 0$. Since in $\QHh$ every vector bundle has
  a line bundle filtration, it follows that there is a line bundle $L$
  with $\Hom(\bL',L)\neq 0$. Since non-zero subobjects of line bundles
  are line bundles, we can assume without loss of generality that
  there is an epimorphism $f\colon\bL'\ra L$. Let $E$ be an
  indecomposable vector bundle over $\XX'$, considered as object in
  $\QHh$. Let $x_0\in\XX$ be a homogeneous point. The support of $tT$
  is disjoint from $x_0$, and thus the associated tubular shift
  automorphism $\sigma_0$ fixes $tT$. Then $E(nx_0)\in\vect\XX'$ for
  all $n>0$: indeed, by definition of the tubular shift there is an
  exact sequence $0\ra E\ra E(nx_0)\ra C\ra 0$ in $\QHh$ with $C$
  lying in the tube $\Uu_{x_0}$; then $E,\,C\in\rperp{tT}$ implies
  $E(nx_0)\in\rperp{tT}$, having the same rank as
  $E$. By~\cite[(S15)]{lenzing:delapena:1999}, for $n\gg 0$ we have
  $\Hom(L,E(nx_0))\neq 0$. We get $\Hom(\bL',E(nx_0))\neq 0$, which
  also holds in the full subcategory $\Qcoh\XX'$, and gives a
  contradiction since in $\Qcoh\XX'$ one has
  $\bL'\in\lperpo{\vect\XX'}$. Thus $\bL'$ has slope $\infty$, and so
  has $T$, which is equivalent to $\bL'\oplus tT$.
\end{proof}
In the tubular case, the tilting bundle $T_{cc}$ can be chosen such
that the indecomposable summands have arbitrarily small slopes. This
will imply that tilting sheaves have finite type. The following
statement is crucial.
\begin{lemma}\label{lem:large-tilting-leftbounded}
  For any large tilting sheaf $T\in\QHh$ there is
  $\alpha\in\widehat{\QQ}$ with $T\in\Bb_{\alpha}$. 
\end{lemma}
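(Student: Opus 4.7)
The plan is to reduce to the torsionfree case via the torsion splitting $T=T_+\oplus T_0$ from Theorem~\ref{thm:torsion-splitting}, and then tackle that case using the resolving class machinery from Section~4.

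If $T_0$ contains a Pr\"ufer summand, Theorem~\ref{thm:large-tilting-sheaves-in-general} forces $T_+$ to be $V$-divisible for some non-empty $V\subseteq\XX$. Lemma~\ref{lem:V-divisibility-lemma} places $T_+$ in $\rperpe{\vect\XX}$; together with $\Hom(T_0,\vect\XX)=0$ (because $T_0$ is torsion) and Serre duality (noting that $\tau$ stabilises $\vect\XX$ in the tubular case, since it preserves slopes), this yields $\Hom(T,\vect\XX)=0$. Since $\bp_\alpha\subseteq\vect\XX$ for any finite $\alpha\in\widehat{\QQ}$, we conclude $T\in\Bb_\alpha$ for every rational $\alpha$.

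If $T_0$ is a nonzero coherent branch sheaf $B$ without Pr\"ufer part, the Perpendicular Lemma~\ref{lem:exterior-branch-reduction} realises $T_+$ as a tilting sheaf in the rank-preserving subcategory $\rperp{B}\simeq\Qcoh\XX'$, which the formula~\eqref{eq:orbifold-euler-char} shows to be domestic. The domestic analogue of the lemma, which follows from Proposition~\ref{prop:tilting-sheaves-finite-type} combined with the slope arguments of Section~6 (in particular Corollary~\ref{cor:fourth} and Proposition~\ref{prop:not-large}), provides a bound $\alpha'$ in $\Qcoh\XX'$; comparing slopes across the embedding $\rperp{B}\hookrightarrow\QHh$ and observing that the finite-length summand $B$ trivially satisfies $\Hom(B,\bp_\alpha)=0$ for any finite $\alpha$, we transfer this to the desired $\alpha$ with $T\in\Bb_\alpha$.

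The hard case is $T$ torsionfree. Consider the resolving class $\vSs=\lperpe{(\rperpe{T})}\cap\Hh$: by the proof of Theorem~\ref{thm:class-correspondence} it is closed under subobjects and extensions, generates $\QHh$, and contains a line bundle $L'$ together with $\{\sigma^{-n}L'\mid n\geq 0\}$ for a suitable product of tubular shifts $\sigma$. In the tubular case each $\sigma_x$ strictly increases the slope of every vector bundle, so $\mu(\sigma^{-n}L')\to-\infty$. The aim is to show $\bp_\alpha\subseteq\vSs$ for some rational $\alpha$, whence $T\in\rperpe{\bp_\alpha}=\Bb_\alpha$. The decisive obstacle is that $\vSs$ need not be closed under the Auslander-Reiten translation, so small-slope line bundles lying outside the $\sigma$-orbit of $L'$ require a separate argument. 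The plan is to invoke the Riemann-Roch formula~\eqref{eq:RR} (with $\delta(\vom)=0$ in the tubular case, together with Lemma~\ref{lem:Hom-X-tauY}) to embed any small-slope line bundle into some $\tau^{-j}\sigma^{-n}L'$ via the fact that nonzero morphisms between line bundles on a noncommutative curve of genus zero are monomorphisms, then use closure of $\vSs$ under subobjects together with the line bundle filtrations of vector bundles (\cite[Prop.~1.6]{lenzing:reiten:2006}) and closure under extensions to propagate the containment from line bundles to the whole of $\bp_\alpha$. The technical bookkeeping, namely producing the correct slope threshold and controlling the $\tau$-shifts appearing along the way, is the crux of the argument.
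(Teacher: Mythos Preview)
Your reductions in the torsion cases are essentially how the paper proceeds (both are subsumed by Lemma~\ref{lem:torsion-implies-slope}), so the issue lies entirely in the torsionfree case.

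There the proposed argument has a genuine gap, not merely technical bookkeeping. You want to conclude $\bp_\alpha\subseteq\vSs$ from the fact that $\vSs$ contains the orbit $\{\sigma^{-n}L'\}$. Your plan is to embed an arbitrary small-slope line bundle $L''$ into some $\tau^{-j}\sigma^{-n}L'$ via Lemma~\ref{lem:Hom-X-tauY}, and then invoke closure of $\vSs$ under subobjects. But closure under subobjects only helps if the \emph{target} $\tau^{-j}\sigma^{-n}L'$ lies in $\vSs$, and you have no control over this: $\vSs$ is not $\tau$-stable, and in the tubular case the average Euler form $\DLF{L''}{\sigma^{-n}L'}>0$ does not force the individual term $\Hom(L'',\sigma^{-n}L')$ to be nonzero. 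Iterating the embedding only pushes the $\tau$-shift around without eliminating it. In fact, since for coherent $X$ one has $X\in\vSs\Leftrightarrow\Ext^1(X,T)=0\Leftrightarrow\Hom(T,\tau X)=0$, the inclusion $\bp_\alpha\subseteq\vSs$ is \emph{equivalent} to $T\in\Bb_\alpha$; so there is no shortcut through $\vSs$ that avoids the original problem.

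The paper's proof takes a completely different route. It argues by contradiction: assuming $T\notin\Bb_\alpha$ for every $\alpha$, it first shows (via a trace-of-$T$ argument on line bundles) that $\vSs\cap\bt_\alpha\neq\emptyset$ for arbitrarily small $\alpha$, and that any indecomposable in $\vSs$ must be exceptional. Then, working inside an exceptional tube at such a slope $\alpha$ and using the almost split structure, it produces an indecomposable coherent summand of $T$ of slope $\leq\alpha$. Repeating this for a strictly decreasing sequence of slopes yields an exceptional sequence in $\Hh$ of unbounded length, contradicting the finite rank of $\Knull(\Hh)$. This Grothendieck-group bound is the missing idea in your approach.
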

\begin{proof}
  If $T$ has a non-trivial torsion part, then $T$ has slope $\infty$
  by Lemma~\ref{lem:torsion-implies-slope}. Thus we will assume in the
  following that $T$ is torsionfree.

  Let $\Bb=\Gen(T)$ and $\vSs=\lperpe{\Bb}\cap\Hh$. Suppose there is
  no $\alpha$ with $T\in\Bb_{\alpha}$. We will lead this to a
  contradiction.  From the assumption it follows that there are
  infinitely many and arbitrarily small $\alpha$ with
  $\Hom(T,\bt_{\alpha})\neq 0$: otherwise there would be some $\alpha$
  with $\bp_{\alpha}\subseteq\vSs$, and
  then $$T\in\Bb\subseteq\rperpe{(\lperpe{\Bb})}\subseteq\rperpe{\vSs}\subseteq
  \rperpe{{\bp_{\alpha}}}=\Bb_{\alpha}.$$ We show that there is no
  $\alpha$ such that $\vSs\cap\bp_{\alpha}=\emptyset$. Indeed, if
  there were such $\alpha$, then $\Hom(T,X)\neq 0$ for all
  $X\in\bp_{\alpha}$. So, for any line bundle $L$ in $\bp_{\alpha}$,
  the trace $L'$ of $T$ in $L$ would be a non-zero line bundle
  again. Applying $\Ext^1(T,-)$ to the short exact sequence $0\ra
  L'\ra L\ra C\ra 0$ would give $\Ext^1(T,L)=0$, as $T$ is torsionfree
  and $C$ has finite length. Then, given a point $x\in\XX$ and an
  integer $n\geq 1$, we would infer $\Ext^1(T,L(nx))=0$ from the exact
  sequence $0\ra L\ra L(nx)\ra F\ra 0$ with $F$ of finite length. Now,
  since any line bundle $L$ in $\Hh$ satisfies $L(-nx)\in\bp_{\alpha}$
  for $n\gg 0$, we would conclude that $\Hom(L,T)=\D\Ext^1(T,\tau
  L)=0$ holds for all line bundles, and using line bundle filtrations,
  even for all vector bundles. But this is clearly impossible, since
  $T\neq 0$, as torsionfree object, is a direct limit of vector
  bundles.

  Thus $\vSs\cap\bt_{\alpha}\neq\emptyset$ for infinitely many and
  arbitrarily small $\alpha$. Let $X\in\vSs\cap\bt_{\alpha}$ be
  indecomposable, and let $\beta<\alpha$ with $\Hom(T,\bt_{\beta})\neq
  0$. Considering images, there is an indecomposable $B\in\Hh$ with
  $B\in\Gen(T)$ and slope $\mu(B)<\alpha$. By
  Lemma~\ref{lem:Hom-X-tauY} we have $\Hom(B,\tau^j X)\neq 0$ for some
  integer $j$. If we assume that $X$ is not exceptional, we can even
  show $\Hom(B,\tau X)\neq 0$. Indeed, this is clear if $X$ lies in a
  homogeneous tube, which means $\tau X=X$, while for $X$ lying in an
  exceptional tube of rank $p>1$ we know from
  Lemma~\ref{lem:Hom-X-tauY} that $B$ maps non-trivially into a
  quasisimple object of the tube, and by the almost split property it
  follows inductively that $B$ maps non-trivially into each object
  from the tube which has quasilength $\geq p$, so in particular into
  $\tau X$. Now we get $\Ext^1(X,T)=\D\Hom(T,\tau X)\neq 0$, which is
  a contradiction to $X\in\vSs\subseteq\lperpe{\Bb}$.  So we conclude
  that every indecomposable $X\in\vSs$ is exceptional.
  
  We now fix an indecomposable, exceptional $X\in\vSs\cap\bt_{\alpha}$
  lying in a tube of rank $p>1$, and we show that there is an
  indecomposable $Y$ in the same tube and of the same quasi-length
  such that $\Hom(T,Y)\neq 0$.  To this end, we start with an
  arbitrary object $Z$ of quasi-length $p$ in the tube. Since
  $\tau^{-}Z$ is not exceptional, and thus not in $\vSs$, we have
  $\Hom(T,Z)\neq 0$. Then, considering the almost split sequences, we
  get inductively that $T$ maps non-trivially to an object of
  quasi-length $\ell$ for any $\ell<p$: given $0\neq f\in\Hom(T,Z)$
  where $Z$ is indecomposable of quasilength $\ell$, with
  $2\leq\ell\leq p$, there is an irreducible monomorphism $\iota$
  ending in $Z$ and an irreducible epimorphism $\pi$ starting in $Z$,
  and either $\pi f\neq 0$, or $f$ factors through $\iota$; in both
  cases $T$ maps non-trivially to an object in the tube of quasilength
  $\ell-1$.

  Now, since $\Hom(T,\tau X)=0$, we can assume $\Hom(T,Y)\neq 0$ and
  $\Hom(T,\tau Y)=0$. We conclude $\Ext^1(Y,T)=\D\Hom(T,\tau Y)=0$,
  thus $Y\in\vSs$. Let $B$ be an indecomposable summand of the trace
  of $T$ in $Y$. Since $B\subseteq Y$, we conclude $\Ext^1(B,T)=0$,
  thus $B\in\vSs$. Thus $B\in\Bb\cap\lperpe{\Bb}$, and by
  Lemma~\ref{lem:porperties-tilting}, $B$ is a direct summand of $T$,
  of slope $\mu(B)\leq\alpha$. 

  Repeating the argument for slope smaller than $\mu(B)$ we get an
  infinite sequence of indecomposable coherent sheaves
  $B_1,\,B_2,\,B_3,\dots$ lying in $\add(T)$, and with slopes
  $\mu(B_1)>\mu(B_2)>\mu(B_3)>\dots$. We conclude $\Ext^1(B_i,B_j)=0$
  for all $i,\,j$ and $\Hom(B_i,B_j)=0$ for all $i<j$. So, for all
  $n$, the sequence $(B_n,\dots,B_2,B_1)$ is exceptional in
  $\Hh$. This gives our desired contradiction, since the length of
  exceptional sequences in $\Hh$ is bounded by the rank of the
  Grothendieck group $\Knull(\Hh)$.
\end{proof}
\begin{proposition}\label{prop:tilting-sheaves-finite-type-tub}
  Every tilting sheaf $T\in\QHh$ is of finite type. 
\end{proposition}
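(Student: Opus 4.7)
The plan is to reduce to Proposition~\ref{prop:tilting-sheaf-complex-module}, which already guarantees finite type for any tilting sheaf lying in $\Tt_{\cc}$ for some tilting bundle $T_{\cc}$. If $T$ is coherent, it is trivially of finite type (take $\vSs=\add(T)\cap\Hh$), so one may assume $T$ is large.

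First I would apply Lemma~\ref{lem:large-tilting-leftbounded} to obtain $\alpha\in\widehat{\QQ}$ with $T\in\Bb_\alpha=\rperpe{{\bp_\alpha}}$. The task then reduces to exhibiting a tilting bundle $T_{\cc}$ whose indecomposable summands all have slope strictly less than $\alpha$: any such $T_{\cc}$ lies in $\bp_\alpha$, whence
\[
T\in\Bb_\alpha=\rperpe{{\bp_\alpha}}\subseteq\rperpe{{T_{\cc}}}=\Gen(T_{\cc})=\Tt_{\cc},
\]
and Proposition~\ref{prop:tilting-sheaf-complex-module} applies.

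To produce such a $T_{\cc}$, I would start from a tilting bundle $T_{\can}$ in $\coh\XX$ (which exists in the tubular case) and iterate an inverse tubular shift $\sigma_x^{-1}$ at any chosen point $x\in\XX$. Since $\sigma_x$ is an autoequivalence of $\coh\XX$, each $\sigma_x^{-n}(T_{\can})$ is again a tilting bundle. The defining sequence~\eqref{eq:def-tubular-shift} shows that $\sigma_x^{-1}$ preserves the rank of any vector bundle $E$ while decreasing its degree by the positive amount $\deg(E_x)$, so the slopes of the indecomposable summands of $\sigma_x^{-n}(T_{\can})$ tend to $-\infty$ as $n\to\infty$. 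Choosing $n$ large enough that every summand has slope $<\alpha$ and setting $T_{\cc}:=\sigma_x^{-n}(T_{\can})$ completes the argument.

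The substantive work has already been absorbed into Lemma~\ref{lem:large-tilting-leftbounded}; once the slope bound on $T$ is in hand, the remaining step is a short formal manipulation with tubular shifts, exploiting that in the tubular case the Auslander--Reiten translation preserves slopes while tubular shifts do not.
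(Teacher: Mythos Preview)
Your argument is correct and follows the same route as the paper: invoke Lemma~\ref{lem:large-tilting-leftbounded} to place $T$ in some $\Bb_\alpha$, pick a tilting bundle $T_{\cc}\in\bp_\alpha$, and conclude via Proposition~\ref{prop:tilting-sheaf-complex-module}. The paper simply writes ``choosing a tilting bundle $T_{\cc}\in\bp_\alpha$'' without further justification, whereas you spell out the construction via iterated inverse tubular shifts (cf.\ also Corollary~\ref{TS3tub}, where the same device appears); you also make explicit the trivial coherent case, which the paper glosses over since Lemma~\ref{lem:large-tilting-leftbounded} is stated only for large $T$.
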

\begin{proof}
  By Lemma~\ref{lem:large-tilting-leftbounded} there is
  $\alpha\in\widehat{\QQ}$ with
  $T\in\Bb_{\alpha}=\rperpe{{\bp_{\alpha}}}$. Then
  $\Ext^1(\bp_{\alpha},T)=0$, and choosing a tilting bundle
  $T_{\cc}\in\bp_{\alpha}$, we get $\Ext^1(T_{\cc},T)=0$. Now we can
  apply Proposition~\ref{prop:tilting-sheaf-complex-module}.
\end{proof}
The proof above also shows that $\vSs=\lperpe{(\rperpe{T})}\cap\Hh$ is
a strongly resolving subcategory of $\Hh$ such that
$\Gen(T)=\rperpe{\vSs}$. Now let us start conversely with a strongly resolving
subcategory.
\begin{lemma}\label{lem:TinC}
  Let $\alpha\in\widehat{\QQ}$ and $\vSs\subseteq\Cc_\alpha\cap\Hh$ be
  strongly resolving.
  \begin{enumerate}
  \item There is a tilting sheaf
  $T\in\QHh$ with $T\in\Cc_\alpha$ and $\rperpe{T}=\rperpe{\vSs}$. Moreover:
  \item If $\vSs\cap\bt_\alpha\neq\emptyset$, then
    $t_\alpha(T)\neq 0$.
  \item If $\vSs\cap\bt_\alpha=\emptyset$, then
    $t_\alpha(T)=0$.
  \end{enumerate}
\end{lemma}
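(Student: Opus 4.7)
My plan is to construct $T$ via Theorem~\ref{thm:tilting-from-resolving} and then analyse its torsion part in the interval category $\QHh\spitz{\alpha}=\Qcoh\XX_\alpha$ using Serre duality, the tubular feature $\delta(\vom)=0$, and the perpendicular calculus already developed.

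For (1), I would observe that a strongly resolving class is in particular resolving, so Theorem~\ref{thm:tilting-from-resolving} gives a tilting sheaf $T$ with $\rperpe{T}=\rperpe{\vSs}$. The explicit construction recorded in Remark~\ref{rem:filtration} realises $T=T_0\oplus T_1$ with $T_0,T_1$ both $\vSs$-filtered. Since $\Cc_\alpha=\lperpe{\bq_\alpha}$ with $\bq_\alpha\subseteq\Hh$ coherent, $\Cc_\alpha$ is closed under extensions, arbitrary direct sums, filtered colimits and direct summands; combined with $\vSs\subseteq\Cc_\alpha$ this shows that $T_0,T_1\in\Cc_\alpha$, hence $T\in\Cc_\alpha$.

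For (3) I would argue by contradiction. Assume $\vSs\cap\bt_\alpha=\emptyset$ but $t_\alpha(T)\neq 0$. Passing to $\QHh\spitz{\alpha}$, where $T$ remains a tilting sheaf by Lemma~\ref{lem:tilting-corr-tubular}, Theorem~\ref{thm:torsion-splitting} splits off $t_\alpha(T)$ as a direct summand of $T$ and decomposes it into finite-length summands and Pr\"ufer summands relative to the tubes of $\bt_\alpha$. Either case produces a coherent object of $\bt_\alpha$ lying in $\lperpe{(\rperpe{T})}$: directly in the finite-length case, or by taking the quasisimple socle of a Pr\"ufer summand and invoking closure of $\lperpe{(\rperpe{T})}$ under subobjects in the hereditary category $\QHh$. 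Since $\vSs=\lperpe{(\rperpe{T})}\cap\Hh$ by Theorem~\ref{thm:class-correspondence}, this object lies in $\vSs\cap\bt_\alpha$, contradicting the assumption.

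For (2), the heart of the matter, I would take a quasisimple $S\in\vSs\cap\bt_\alpha$ (which exists by closure of $\vSs$ under subobjects applied to the socle in a tube of $\bt_\alpha$) and suppose $t_\alpha(T)=0$ to derive a contradiction. The crucial tubular feature $\delta(\vom)=0$ implies that $\tau$ preserves slopes of coherent sheaves and hence preserves the hereditary torsion class $\Tt_\alpha=\vec{\bt_\alpha}$ together with its torsionfree complement; in particular $t_\alpha(\tau T)=0$. Therefore any map $S\to\tau T$ has image in $\Tt_\alpha\cap\{\text{subobjects of }\tau T\}=0$, giving $\Hom(S,\tau T)=0$. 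Serre duality in $\QHh$ then yields $\Ext^1(T,S)=0$, i.e.\ $S\in\rperpe{T}$, which combined with $S\in\vSs\subseteq\lperpe{(\rperpe{T})}$ and Lemma~\ref{lem:porperties-tilting}(2)–(3) (applicable because $S$ has local endomorphism ring) forces $S$ to be a direct summand of $T$, contradicting $t_\alpha(T)=0$. The main obstacle will be precisely this vanishing of $\Hom(S,\tau T)$: it is the one step where the tubular condition $\delta(\vom)=0$ is genuinely used, since without it $\tau T$ would not inherit the torsionfreeness of $T$ in $\XX_\alpha$ and the Serre-duality input to the tilting formalism would collapse.
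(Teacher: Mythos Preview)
Your argument is correct. Parts~(1) and~(2) follow the paper's approach closely; in~(2) the paper applies Serre duality in the form $\Ext^1(T,S)=\D\Hom(\tau^-S,T)$, which avoids your detour through $\tau T$: since $\tau$ restricts to an autoequivalence of each tube, one has $\tau^-S\in\bt_\alpha$ immediately, and $t_\alpha(T)=0$ yields $\Hom(\tau^-S,T)=0$ without extending $\tau$ to $\QHh$ or checking $t_\alpha(\tau T)=0$. The content is the same; the paper's phrasing is just leaner.

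Your proof of~(3), however, takes a genuinely different route. The paper argues via two earlier results: assuming $t_\alpha(T)\neq 0$, Lemma~\ref{lem:torsion-implies-slope} forces $T$ to have slope~$\alpha$, whence $\Gen(T)\subseteq\Bb_\alpha$; since $\vSs\subseteq\add(\bp_\alpha)$ gives $\Bb_\alpha\subseteq\rperpe{\vSs}=\Gen(T)$, one obtains $\Gen(T)=\Bb_\alpha$, and Lemma~\ref{lem:L_w-torsionfree} then says $T$ is torsionfree, a contradiction. Your argument is more self-contained: you extract a coherent object of $\bt_\alpha$ from $t_\alpha(T)$ (a finite-length summand, or the socle of a Pr\"ufer summand, using that $\lperpe{(\rperpe{T})}$ is closed under subobjects in the hereditary category $\QHh$) and place it in $\lperpe{(\rperpe{T})}\cap\Hh=\vSs$ by Theorem~\ref{thm:class-correspondence}, contradicting $\vSs\cap\bt_\alpha=\emptyset$ directly. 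This bypasses Lemma~\ref{lem:torsion-implies-slope}, which itself invokes the domestic classification inside a perpendicular category, so your route is more elementary. The paper's route, on the other hand, embeds the lemma into the slope framework that governs the whole tubular classification, so the result there appears as a natural consequence of that machinery rather than as a standalone computation.
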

\begin{proof}
  (1) By Theorem~\ref{thm:tilting-from-resolving} there is a tilting
  sheaf $T\in\QHh$ with $\rperpe{\vSs}=\rperpe{T}$. Moreover, there is
  an exact sequence~\eqref{eq:Tcc-preenvelope} with $T=T_0\oplus T_1$,
  and by Remark~\ref{rem:filtration} the summands $T_0$ and $T_1$ are
  $\vSs$-filtered. Since $\Cc_\alpha=\lperpe{\bq_{\alpha}}$ is closed
  under filtered direct limits (which follows
  from~\cite[Prop.~2.12]{saorin:stovicek:2011}), we get
  $T_0,\,T_1\in\Cc_\alpha$, thus $T$ is in $\Cc_\alpha$.

  (2) Assume that $t_\alpha(T)=0$. Let $S\in\vSs\cap\bt_\alpha$ be
  indecomposable. Then $\Ext^1(T,S)=\D\Hom(\tau^- S,T)=0$, that is,
  $S\in\rperpe{T}$. For every $X\in\rperpe{T}=\rperpe{\vSs}$ we have
  $\Ext^1(S,X)=0$, and thus $S\in\lperpe{(\rperpe{T})}$. Since, by
  Lemma~\ref{lem:porperties-tilting},
  $\rperpe{T}\cap\lperpe{(\rperpe{T})}=\Add(T)$ we get $S\in\Add(T)$,
  and then $S$ is a summand of $t_\alpha(T)$, contradiction. Thus
  $t_\alpha(T)\neq 0$.

  (3) Assume that $t_\alpha(T)\neq 0$. By
  Lemma~\ref{lem:torsion-implies-slope} then $T$ has slope $\alpha$,
  so $\Gen(T)\subseteq\Bb_\alpha$, and we even have equality since
  $\vSs\subseteq\bp_{\alpha}$. So $T$ is torsionfree by
  Lemma~\ref{lem:L_w-torsionfree}, contradiction.
\end{proof}
The main result of this section is the following.
\begin{theorem}\label{thm:every-large-ts-slope}
  Every large tilting sheaf $T$ has a slope $w\in\widehat{\mathbb{R}}$.
\end{theorem}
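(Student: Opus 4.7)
The plan is to identify the slope of $T$ as $w=\sup\{r\in\widehat{\RR}\mid T\in\Bb_r\}$ and to verify $T\in\Mm(w)$.

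First, if $tT\neq 0$, I invoke Lemma~\ref{lem:torsion-implies-slope} with $\alpha=\infty$: trivially $T\in\Cc_\infty=\QHh$, and $t_\infty(T)=tT\neq 0$, so $T$ has slope $\infty$. I henceforth assume $T$ is torsionfree. By Lemma~\ref{lem:large-tilting-leftbounded}, there is $\alpha\in\widehat{\QQ}$ with $T\in\Bb_\alpha$, so the set $\{r\in\widehat{\RR}\mid T\in\Bb_r\}$ is nonempty, and (by the monotonicity $\Bb_v\supseteq\Bb_w$ for $v\le w$) downward closed with supremum $w\ge\alpha$. Together with $\bp_w=\bigcup_{r<w}\bp_r$, this gives $T\in\bigcap_{r<w}\Bb_r=\Bb_w$. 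If $w=\infty$, then $T\in\Bb_\infty\cap\Cc_\infty=\Mm(\infty)$ and we are done.

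Now suppose $w<\infty$. The remaining task is to prove $T\in\Cc_w$. Since $\Cc_w=\bigcap_{\alpha>w,\,\alpha\in\widehat{\QQ}}\Cc_\alpha$, it suffices to prove $T\in\Cc_\alpha$ for every rational $\alpha>w$ (and, in the rational case, for $\alpha=w$ as well). For such an $\alpha$, Lemma~\ref{lem:reiten-ringel} gives a splitting of the torsion pair $(\Gen(\bq_\alpha),\Cc_\alpha)$, so $T=T_1^\alpha\oplus T_2^\alpha$ with $T_1^\alpha\in\Gen(\bq_\alpha)$ and $T_2^\alpha\in\Cc_\alpha$. Thus $T\in\Cc_\alpha$ if and only if $T_1^\alpha=0$, since the split property yields $\Gen(\bq_\alpha)\cap\Cc_\alpha=0$.

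The main obstacle is therefore to show $T_1^\alpha=0$ for every rational $\alpha\geq w$, and my strategy is to dualize the proof of Lemma~\ref{lem:large-tilting-leftbounded}. If $T_1^\alpha\neq 0$, its membership in $\Gen(\bq_\alpha)$ together with torsionfreeness provides an indecomposable vector bundle $X\in\bq_\alpha$ of some slope $\gamma>\alpha$ with $\Hom(X,T_1^\alpha)\neq 0$. The trace construction combined with the tilting identity $\rperpe{T}\cap\lperpe{(\rperpe{T})}=\Add(T)$ of Lemma~\ref{lem:porperties-tilting}(2) and Serre duality should extract an indecomposable summand $B\in\add(T)$ of slope $\geq\gamma$. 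Iterating on the resulting smaller summand produces a strictly increasing sequence of slopes $\alpha<\mu(B_1)<\mu(B_2)<\cdots$, giving an exceptional sequence $(B_1,\ldots,B_n)$ of arbitrary length in $\coh\XX$, contradicting the bound $\rk\Knull(\coh\XX)$. The technical heart is verifying $B\in\Add(T)$: from $B\hookrightarrow T$ one immediately gets $\Ext^1(B,T)=0$, but the dual condition $\Ext^1(T,B)=\D\Hom(\tau^-B,T)=0$ requires a careful selection of $B$ within its exceptional tube of slope $\gamma$, exploiting the finiteness of such tubes and the property $T\in\Bb_w$.
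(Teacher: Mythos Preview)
Your setup is sound through the point where you establish $T\in\Bb_w$: the torsion case via Lemma~\ref{lem:torsion-implies-slope} is fine, and for torsionfree $T$ the supremum $w=\sup\{r\mid T\in\Bb_r\}$ is well-defined by Lemma~\ref{lem:large-tilting-leftbounded}, with $T\in\bigcap_{r<w}\Bb_r=\Bb_w$ following from $\bp_w=\bigcup_{r<w}\bp_r$.

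The gap is in the passage to $T\in\Cc_w$. Your plan is to dualise Lemma~\ref{lem:large-tilting-leftbounded}, but the logical shape does not dualise. In that lemma the standing hypothesis is the negation of an existential statement (``there is no $\alpha$ with $T\in\Bb_\alpha$''), which yields $\Hom(T,\bt_\alpha)\neq 0$ for \emph{arbitrarily small} $\alpha$; this unbounded supply of slopes is precisely what drives the iteration producing summands $B_1,B_2,\dots$ of strictly decreasing slope. In your situation you fix one rational $\alpha>w$ and assume $T_1^\alpha\neq 0$. This gives $\Hom(X,T)\neq 0$ for some indecomposable $X$ of a \emph{single} slope $\gamma>\alpha$. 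Even granting that you can extract a coherent summand $B_1\in\add(T)$ of slope $\gamma_1\ge\gamma$, nothing forces $T\notin\Cc_{\gamma_1}$; it is entirely possible that $\Hom(\bq_{\gamma_1},T)=0$, so the iteration halts after one step and no arbitrarily long exceptional sequence arises. The phrase ``iterating on the resulting smaller summand'' does not repair this: the complement of $B_1$ in $T$ need not be tilting, and you have no control over $T_1^{\gamma_1}$.

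There is a second, independent difficulty in the extraction step itself. To obtain $B\in\Add(T)$ you need both $\Ext^1(B,T)=0$ (which indeed follows from $B\hookrightarrow T$) and $\Ext^1(T,B)=0$. The latter requires finding $Y\in\bt_\gamma$ with $\Hom(Y,T)\neq 0$ and $\Hom(\tau^-Y,T)=0$. If $X$ lies in a homogeneous tube then $\tau^-X\simeq X$ and this is impossible. In Lemma~\ref{lem:large-tilting-leftbounded} the non-exceptional case is ruled out using an auxiliary object $B\in\Gen(T)$ of \emph{smaller} slope together with Riemann--Roch; the analogous input here would be some $C$ of slope $>\gamma$ with $\Hom(T,C)\neq 0$ or similar, and the property $T\in\Bb_w$ you invoke (which only says $\Hom(T,\bp_w)=0$) does not supply this.

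The paper proceeds quite differently. It takes $w=\inf\{r\mid T\in\Cc_r\}$, so that $T\in\Cc_w$ is immediate, and then proves $T\in\Bb_w$ by exploiting finite type (Proposition~\ref{prop:tilting-sheaves-finite-type-tub}): writing $\vSs=\lperpe{(\rperpe{T})}\cap\Hh$ and $\vSs_\alpha=\vSs\cap\Cc_\alpha$, one finds rationals $\alpha_i\nearrow w$ with $\vSs\cap\bt_{\alpha_i}\neq\emptyset$ and builds auxiliary tilting sheaves $T_i$ of slope $\alpha_i$ via Lemma~\ref{lem:TinC}; then $\rperpe{T}\subseteq\rperpe{{\vSs_{\alpha_i}}}=\Gen(T_i)\subseteq\Bb_{\alpha_i}$ for all $i$ gives $T\in\bigcap_i\Bb_{\alpha_i}=\Bb_w$. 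The use of finite type and the truncated resolving classes $\vSs_\alpha$ has no counterpart in your sketch.
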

\begin{proof}
  Let $\Bb=\Gen(T)=\rperpe{T}$ and $\vSs=\lperpe{\Bb}\cap\Hh$. Define
  $w=\inf\{r\in\widehat{\RR}\mid T\in\Cc_r\}\in\widehat{\RR}$. This is
  well-defined. We show that $T$ has slope $w$. By properties of the
  infimum we have $T\in\Cc_w$, but $T\not\in\Cc_v$ for all $v<w$. We
  have to show that $T\in\Bb_w$. By the preceding lemma $T$ is of
  finite type, in other words, $\rperpe{T}=\rperpe{\vSs}$. For every
  rational number $\alpha<w$ let $$\vSs_\alpha=\vSs\cap\Cc_\alpha.$$
  Since $\vSs$ is strongly resolving by
  Lemma~\ref{lem:large-tilting-leftbounded}, the same holds for
  $\vSs_{\alpha}$. Since $T\not\in\Cc_\alpha$, the set of all rational
  numbers $\alpha<w$ with $\vSs\cap\bt_\alpha\neq\emptyset$ is not
  bounded by a smaller number than $w$; this follows from
  Lemma~\ref{lem:TinC} and since $T$ is determined by $\vSs$. Thus
  there is a sequence of rational
  numbers $$\alpha_1<\alpha_2<\alpha_3<\dots <w$$ with
  $\lim_{i\to\infty}\alpha_i=w$ and
  \begin{equation}
    \label{eq:non-empty}
    \vSs\cap\bt_{\alpha_i}\neq\emptyset.
  \end{equation}
  By Lemma~\ref{lem:TinC} there is a tilting object $T_i$ with
  $\rperpe{{T_i}}=\rperpe{{\vSs_{\alpha_i}}}$ and
  $T_i\in\Cc_{\alpha_i}$ and with $t_{\alpha_i}(T_i)\neq 0$.  Now, by
  Lemma~\ref{lem:torsion-implies-slope} the tilting object $T_i$ has
  slope $\alpha_i$. Then we get
  $\Gen(T_i)\subseteq\Gen(\bL_{\alpha_i})=\Bb_{\alpha_i}$ (the largest
  tilting class of slope $\alpha_i$). Since
  $\vSs_{\alpha_i}\subseteq\vSs$, we
  get $$\Bb_{\alpha_i}\supseteq\rperpe{{\vSs_{\alpha_i}}}\supseteq\rperpe{\vSs}\ni
  T$$ for all $i$, and thus $T\in\bigcap_{i\geq
    1}\Bb_{\alpha_i}=\Bb_w$.
\end{proof}
\begin{theorem}\label{thm:tubular-full-classification}
  Let $\XX$ be a noncommutative curve of genus zero of tubular type.
  \begin{enumerate}
  \item The sheaves $\bL_w$ with $w\in\widehat{\RR}$ are, up to
    equivalence, the unique torsionfree large tilting
    sheaves (in the sense of
  Definition~\ref{def:torsionfree-tilting}).\smallskip
\item The equivalence classes of large non-torsionfree tilting sheaves
  are in bijective correspondence with triples $(\alpha,B,V)$, where
  $\alpha\in\widehat{\QQ}$, $V\subseteq\XX_\alpha$ and
  $B\in\add\bt_\alpha$ is a branch sheaf, and
  $(B,V)\neq (0,\emptyset)$.
  \end{enumerate}
\end{theorem}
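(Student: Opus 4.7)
The plan is to reduce both parts to the classification of large tilting sheaves of slope $\infty$ in a tubular category, via the interval-category equivalence of Lemma~\ref{lem:tilting-corr-tubular}. Given a large tilting sheaf $T$ of rational slope $\alpha$ in $\QHh = \Qcoh\XX$, passing to $\QHh\spitz{\alpha} = \Qcoh\XX_\alpha$ identifies $T$ with a large tilting sheaf of slope $\infty$ in the tubular category $\Qcoh\XX_\alpha$, and under this equivalence $t_\alpha(T)$ corresponds to the genuine torsion subsheaf in $\Qcoh\XX_\alpha$. Large tilting sheaves of irrational slope are automatically torsionfree and need no such reduction.

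For part (1), by Proposition~\ref{prop:tilting-sheaves-finite-type-tub} and Theorem~\ref{thm:class-correspondence}, $T$ is of finite type and determined by $\vSs := \lperpe{(\rperpe{T})}\cap\Hh$, so it suffices to show $\vSs = \bp_w$, which forces $\Gen(T) = \Bb_w = \Gen(\bL_w)$. The inclusion $\bp_w \subseteq \vSs$ is straightforward: since $\Gen(T) \subseteq \Bb_w = \lperpo{\bp_w}$ (the class $\Bb_w$ is closed under quotients and direct sums), Serre duality yields $\Ext^1(X, Y) = \D\Hom(Y, \tau X) = 0$ for any $X\in\bp_w$ and $Y\in\Gen(T)$. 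The reverse inclusion is the main obstacle. The key observation is that $T\in\Cc_w = \lperpe{\bq_w}$ forces $\bq_w \subseteq \rperpe{T} = \Gen(T)$; hence if $X\in\Hh$ is indecomposable with $\mu(X) = \beta > w$, then $\tau X\in\bq_w \subseteq \Gen(T)$, and $X\in\vSs$ would imply $0 = \Ext^1(X,\tau X) = \D\Hom(\tau X, \tau X)$, a contradiction. For irrational $w$ this handles all coherent sheaves of slope above $w$, including the finite length ones in $\bt_\infty$. In the slope-$\infty$ case within a tubular category one has $\bq_\infty = \emptyset$, so the torsionfree hypothesis is instead invoked: Serre duality yields $\vec{\vSs_\XX} \subseteq \rperpe{T} = \Gen(T)$, providing the Ext-contradiction when $X$ is finite length.

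For part (2), if $T$ is large and not torsionfree, then by Lemma~\ref{lem:torsion-implies-slope} and Theorem~\ref{thm:reiten-ringel} it has a unique rational slope $\alpha\in\widehat{\QQ}$. Passing to $\Qcoh\XX_\alpha$, $T$ becomes a large tilting sheaf of slope $\infty$ with nontrivial torsion part. If the torsion part in $\Qcoh\XX_\alpha$ is non-coherent, Corollary~\ref{cor:nctp} gives a unique decomposition $T \sim T_{(B,V)}$ with $\emptyset\neq V\subseteq\XX_\alpha$ and $B\in\add\bt_\alpha$ a branch sheaf. Otherwise the torsion part is a nonzero coherent branch sheaf $B$, and the Perpendicular Lemma~\ref{lem:exterior-branch-reduction} identifies the torsionfree part $T_+$ with a torsionfree tilting sheaf in $\rperp{B} \simeq \Qcoh\XX'_\alpha$, where the reduced-weight curve $\XX'_\alpha$ is domestic since $\chi'_{orb}(\XX'_\alpha) > \chi'_{orb}(\XX_\alpha) = 0$. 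Largeness of $T$ forces $T_+$ to be large, hence equivalent by Proposition~\ref{prop:domestic-torsionfree-large-tilting} to the Lukas tilting sheaf in $\Qcoh\XX'_\alpha$, giving $T \sim T_{(B, \emptyset)}$ in $\Qcoh\XX_\alpha$. The triple $(\alpha, B, V)$ is thus uniquely determined by $T$, and conversely each triple with $(B,V)\neq (0,\emptyset)$ produces such a tilting sheaf, establishing the bijection.
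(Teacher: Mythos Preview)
Your proposal is correct and follows essentially the same strategy as the paper: for part~(1) you identify the resolving class $\vSs_T=\lperpe{(\rperpe{T})}\cap\Hh$ with $\add(\bp_w)$ and invoke finite type, and for part~(2) you pass to $\QHh\spitz{\alpha}$ and split into the non-coherent torsion case (Corollary~\ref{cor:nctp}) and the coherent-branch case reduced via the Perpendicular Lemma to a domestic curve. The paper's proof of~(1) is terser --- it argues directly in $\QHh$ and summarizes the reverse inclusion by ``$T$ generates every sheaf of finite length'' --- whereas you make the $\Ext^1(X,\tau X)=\D\Hom(\tau X,\tau X)\neq 0$ contradiction explicit and organize the rational case through the interval category; this is a presentational rather than a mathematical difference. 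One small remark: in part~(2) the reference to Lemma~\ref{lem:torsion-implies-slope} is superfluous --- Theorem~\ref{thm:every-large-ts-slope} together with Definition~\ref{def:torsionfree-tilting} already forces a non-torsionfree large tilting sheaf to have rational slope.
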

\begin{proof}
  (1) Let $T$ be a torsionfree tilting sheaf of slope $w$. Then
  $\rperpe{T}\subseteq\Bb_w=\rperpe{{\bL_w}}$. Hence we have
  $\lperpe{(\rperpe{{\bL_w}})}\cap\Hh=\add(\bp_w)
  =\lperpe{(\rperpe{T})}\cap\Hh$; the last equality follows, since $T$
  generates every sheaf of finite length. Now
  $\rperpe{T}=\rperpe{{\bL_w}}$ follows from
  Proposition~\ref{prop:tilting-sheaves-finite-type-tub}.

  (2) Every large non-torsionfree tilting sheaf $T$ has a slope
  $\alpha\in\widehat{\QQ}$. By Lemma~\ref{lem:tilting-corr-tubular},
  $T$ is a large tilting sheaf in $\QHh\spitz{\alpha}$, having a
  non-zero torsion part $t_{\alpha}(T)$. We now apply
  Theorems~\ref{thm:classif-tubular-torsion}
  and~\ref{thm:full-classif-domestic} to the category
  $\QHh\spitz{\alpha}$. The non-torsionfree tilting sheaves of slope
  $\alpha$ are given by
  \begin{itemize}
  \item $T_{(B,V)}$ with $\emptyset\neq V\subseteq\XX$ (here
    $t_{\alpha}(T)$ is non-coherent);
  \item $\bL'\oplus B$, with $0\neq B\in\add\bt_{\alpha}$ a branch sheaf
    and $\bL'\in\rperp{B}=\Qcoh\XX'_{\alpha}$ the Lukas tilting sheaf
    over the domestic curve $\XX'_{\alpha}$ (here $t_{\alpha}(T)=B$ is
    coherent).
  \end{itemize}
This finishes the proof.
\end{proof}
We say that a resolving class $\vSs\subseteq\Hh$ \emph{has slope} $w$
if $\bp_w\subseteq\vSs$ and $\vSs$ does not contain any indecomposable
of slope $\beta>w$.
\begin{corollary}\label{restub}
  For a tubular curve $\XX$, the complete list of the resolving
  classes $\vSs$ in $\Hh=\coh\XX$ having a slope is given by
  \begin{itemize}
  \item $\add\bp_w$ with $w\in\widehat{\RR}$; and \smallskip
  \item
    $\add\,(\,\bp_{\alpha}\cup\tau^- (B^{>})\cup\bigcup_{x\in
      V}\{\tau^j S_x[n]\mid j\in \Rr_x,\,n\in\NN\}\,)$
    with $\alpha\in\widehat{\QQ}$, $V\subseteq\XX_{\alpha}$,
    $B\in\add\bt_\alpha$ a branch sheaf, and $(B,V)\neq
    (0,\emptyset)$. 
  \end{itemize}
\end{corollary}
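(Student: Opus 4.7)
The proof combines the bijection of Theorem~\ref{thm:class-correspondence} with the classification of large tilting sheaves in Theorem~\ref{thm:tubular-full-classification}. By Proposition~\ref{prop:tilting-sheaves-finite-type-tub}, every tilting sheaf in the tubular case is of finite type, so Theorem~\ref{thm:class-correspondence} produces a bijection $\vSs\leftrightarrow T$ between resolving classes in $\Hh$ and equivalence classes of tilting sheaves in $\QHh$, via $\vSs=\lperpe{(\rperpe{T})}\cap\Hh$. The plan is to identify the resolving classes having a slope with those arising from large tilting sheaves, and then to compute the explicit shape of $\vSs$ from the known structure of each such $T$.

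First I would show that a resolving class $\vSs$ has slope $w$ if and only if the corresponding tilting sheaf $T$ is large and has slope $w\in\widehat{\mathbb R}$. Noetherian tilting sheaves carry no slope by the remark preceding Proposition~\ref{prop:exist-Lw}, and their resolving classes contain indecomposables of various slopes, so they are excluded. For large $T$ with slope $w'$ (existing by Theorem~\ref{thm:every-large-ts-slope}), the inclusion $\bp_w\subseteq\vSs$ translates, since $T\in\rperpe{T}$, to $T\in\rperpe{{\bp_w}}=\Bb_w$, forcing $w'\ge w$; the absence of indecomposables of slope $>w$ in $\vSs$ rules out $w'>w$, since otherwise $\bp_{w'}\subseteq\vSs$ would contain indecomposable sheaves of every rational slope strictly between $w$ and $w'$. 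Conversely, when $T$ is large of slope $w$, the inclusion $\bp_w\subseteq\vSs$ is immediate from $T\in\Bb_w$, and the second requirement on $\vSs$ is read off from the explicit formulas computed below.

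I would then enumerate the large tilting sheaves using Theorem~\ref{thm:tubular-full-classification} and compute the corresponding $\vSs$ in each case. For the Lukas-type sheaf $\bL_w$ with $w\in\widehat{\mathbb R}$, the construction in Proposition~\ref{prop:exist-Lw} takes the strongly resolving class $\add\bp_w$ as its starting input, so the bijection of Theorem~\ref{thm:class-correspondence} produces directly $\vSs=\add\bp_w$, giving the first bullet. For a large tilting sheaf $T$ of rational slope $\alpha$ attached to a pair $(B,V)\ne(0,\emptyset)$ via Theorem~\ref{thm:tubular-full-classification}(2), Lemma~\ref{lem:tilting-corr-tubular} identifies $T$ with a large tilting sheaf of slope $\infty$ inside $\Qcoh\XX_\alpha=\QHh\spitz{\alpha}$, and Lemma~\ref{lemma:undercut} (or equivalently the explicit formulas in Theorem~\ref{thm:full-classif-domestic} and Theorem~\ref{thm:classif-tubular-torsion}) applied inside $\Qcoh\XX_\alpha$ yields the resolving class
\[
\tilde\vSs=\add\bigl(\vect\XX_\alpha\cup\tau^-(B^>)\cup\bigcup_{x\in V}\{\tau^j S_x[n]\mid j\in\Rr_x,\ n\in\NN\}\bigr).
\]

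The main obstacle is the final intersection step. To recover $\vSs$ one must intersect $\tilde\vSs\subseteq\Hh\spitz{\alpha}$ with $\Hh$ inside the common derived category $\bDerived{\Qcoh\XX}=\bDerived{\Qcoh\XX_\alpha}$. This relies on the identifications $\Hh\cap\Hh\spitz{\alpha}=\bp_\alpha\vee\bt_\alpha$ and $\vect\XX_\alpha=\bigvee_{\gamma<\alpha}\bt_\gamma\vee\bigvee_{\beta>\alpha}\bt_\beta[-1]$, which together give $\vect\XX_\alpha\cap\Hh=\bp_\alpha$, while the remaining summands of $\tilde\vSs$ already lie in $\bt_\alpha\subseteq\Hh$. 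The subtle check is that the two heart structures on the derived category yield the same class of coherent sheaves in the intersection $\bp_\alpha\vee\bt_\alpha$, which follows from Lemma~\ref{lem:tilting-corr-tubular} and the interpretation of $\Ext^1$ through the common derived category. Collecting the two cases produces the asserted complete list.
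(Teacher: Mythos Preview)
Your approach is essentially the paper's: use the bijection of Theorem~\ref{thm:class-correspondence} together with the classification of large tilting sheaves in Theorem~\ref{thm:tubular-full-classification}, and read off the resolving class attached to each large $T$. Your explicit translation from the interval category $\Hh\spitz{\alpha}$ back to $\Hh$ via $\vect\XX_\alpha\cap\Hh=\bp_\alpha$ is exactly what the paper uses implicitly when it says ``the list contains precisely the resolving classes corresponding to the large tilting sheaves''.

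There is, however, one step where your argument does not go through as written. Your exclusion of coherent tilting sheaves reads: ``their resolving classes contain indecomposables of various slopes, so they are excluded''. That is not a valid deduction, since every class in the asserted list also contains indecomposables of many slopes (all of $\bp_w$). What you need is that the resolving class of a coherent $T$ fails the defining conditions of having a slope. The paper's argument is short and clean: for $\vSs$ of rational slope $\alpha$ one has
\[
\Add(T)\cap\Hh=\vSs\cap\rperpe{\vSs}\subseteq\add(\bp_\alpha\cup\bt_\alpha)\cap\rperpe{{\bp_\alpha}}\subseteq\add\bt_\alpha,
\]
so every coherent summand of $T$ lies in the single family $\bt_\alpha$, and by the remark preceding Proposition~\ref{prop:exist-Lw} this forces $T$ to be non-coherent. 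Your line can also be repaired directly: a coherent tilting object $T$ has indecomposable summands with at least two distinct slopes $\alpha_1<\alpha_n$; the summand of slope $\alpha_n$ lies in $\vSs$, so any putative slope $w$ satisfies $w\ge\alpha_n$, but then the summand of slope $\alpha_1$ lies in $\bp_w$ and maps to itself, so $\Hom(T,\bp_w)\neq 0$, i.e.\ $T\notin\Bb_w$, contradicting $\bp_w\subseteq\vSs$.
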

\begin{proof}
  By Theorem~\ref{thm:tubular-full-classification}, the list contains
  precisely the resolving classes corresponding to the large tilting
  sheaves under the bijection of
  Theorem~\ref{thm:class-correspondence}, and they all have a
  slope. Conversely, let $\vSs$ be resolving having a slope $w$ and
  $T$ a tilting sheaf such that $\rperpe{T}=\rperpe{\vSs}$. If $w$ is
  irrational, then $\vSs=\add\bp_w$. If, on the other hand,
  $w=\alpha\in\widehat{\QQ}$, then
  $\Add(T)\cap\Hh=\vSs\cap \rperpe{\vSs}\subseteq\add
  (\bp_{\alpha}\cup
  \bt_{\alpha})\cap\rperpe{{\bp_{\alpha}}}\subseteq\add\bt_\alpha$,
  that is, all coherent summands of $T$ belong to the same tubular
  family, and therefore $T$ cannot be coherent.
\end{proof}
\begin{corollary}[Property~(TS3)]\label{TS3tub}
  Let $T_{\can}$ be the canonical tilting bundle. Let $T\in\QHh$ be a
  large tilting sheaf. Then for any homogeneous point $x_0$ and
  $n\gg 0$ there is a short exact sequence
  $$0\ra T_{\can}(-nx_0)\ra T_0\ra T_1\ra 0$$ with
  $\add(T_0\oplus T_1)=\add(T)$.
\end{corollary}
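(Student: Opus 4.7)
The plan is to apply Remark~\ref{rem:filtration} with the resolving class $\vSs=\lperpe{(\rperpe{T})}\cap\Hh$ and the tilting bundle $T_{\cc}=T_{\can}(-nx_0)$, for $n$ chosen large enough to place $T_{\can}(-nx_0)$ in $\vSs$. Since $T$ is of finite type by Proposition~\ref{prop:tilting-sheaves-finite-type-tub}, Theorem~\ref{thm:class-correspondence} gives $\rperpe{T}=\rperpe{\vSs}$, so a special $\rperpe{T}$-preenvelope of $T_{\can}(-nx_0)$ produces both the desired short exact sequence $0\to T_{\can}(-nx_0)\to T_0\to T_1\to 0$ and a tilting sheaf $T':=T_0\oplus T_1$ satisfying $\rperpe{T'}=\rperpe{T}$, hence equivalent to $T$.

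The main step is showing $T_{\can}(-nx_0)\in\vSs$ for $n\gg 0$. By Theorem~\ref{thm:every-large-ts-slope}, $T$ admits a slope $w\in\widehat{\RR}$, so $T\in\Bb_w=\rperpe{{\bp_w}}$. Given $F\in\bp_w$, Serre duality gives $\Hom(T,\tau F)=\D\Ext^1(F,T)=0$; then for any $Y\in\Gen(T)$, writing $Y$ as a quotient of some $T^{(I)}$ yields an injection $\Hom(Y,\tau F)\hookrightarrow\Hom(T^{(I)},\tau F)=0$, and Serre duality again gives $\Ext^1(F,Y)=0$. Hence $\bp_w\subseteq\lperpe{(\rperpe{T})}\cap\Hh=\vSs$. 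It therefore suffices to check that each of the finitely many indecomposable summands of $T_{\can}(-nx_0)$ has slope strictly less than $w$ once $n$ is large.

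This slope estimate is the technical heart of the argument, and the step I expect to be most delicate. Since $x_0$ is homogeneous and $L$ is special, the unique simple $S_{x_0}\in\Uu_{x_0}$ satisfies $\Hom(L,S_{x_0})\neq 0$, so $e(x_0,L)>0$ and the universal sequence~\eqref{eq:def-tubular-shift} forces $\deg(L(-nx_0))\to -\infty$ while $\rk L$ is preserved. The canonical sequences~\eqref{eq:sesA0}--\eqref{eq:sesA4} then exhibit every other summand of $T_{\can}$ as an extension of copies of $L$ by sheaves of finite length concentrated at exceptional points (which, being distinct from the homogeneous $x_0$, are fixed by $\sigma_{x_0}^{-n}$), whence the slopes of all summands of $T_{\can}(-nx_0)$ likewise tend to $-\infty$ and eventually drop below $w$.

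With $T_{\can}(-nx_0)\in\vSs$ secured, $\vSs$ is strongly resolving in the sense of Definition~\ref{resolving}, and Remark~\ref{rem:filtration} supplies the sequence together with the equivalence $T\sim T'$. Via the Krull-Schmidt description of large tilting sheaves afforded by Theorems~\ref{thm:torsion-splitting}, \ref{thm:large-tilting-sheaves-in-general} and~\ref{thm:tubular-full-classification} (the indecomposable summands being a coherent branch sheaf, a set of Pr\"ufer sheaves, and an essentially unique torsionfree part, each with local endomorphism ring) the identity $\Add(T)=\Add(T')$ upgrades to the required $\add(T_0\oplus T_1)=\add(T)$.
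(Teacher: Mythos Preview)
Your argument follows the same line as the paper's: invoke Theorem~\ref{thm:every-large-ts-slope} to get a slope $w$ for $T$, observe that $\add\bp_w\subseteq\vSs=\lperpe{(\rperpe{T})}\cap\Hh$, choose $n$ so that every indecomposable summand of $T_{\can}(-nx_0)$ has slope $<w$, and apply Remark~\ref{rem:filtration}. The paper's proof is a single sentence recording only the slope choice; you have correctly supplied the surrounding details (the Serre-duality argument for $\bp_w\subseteq\vSs$ and the degree computation via the canonical sequences).

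One point deserves comment. Remark~\ref{rem:filtration} yields $\Add(T_0\oplus T_1)=\Add(T)$, which is precisely condition~(TS3), not the stronger $\add$ equality written in the corollary. Your final paragraph attempts to close this gap by a Krull--Schmidt argument, but that step is not quite justified: for the torsionfree part (say $\bL_w$ with $w$ irrational) no indecomposable decomposition with local endomorphism rings is established in the paper, and in general $\Add(M)=\Add(M')$ does not force $\add(M)=\add(M')$. The paper's own proof does not address this either; given that the corollary is titled ``Property~(TS3)'' and (TS3) is stated with $\Add$, the $\add$ in the displayed conclusion is almost certainly a slip for $\Add$. With that reading, your proof is complete and matches the paper's.
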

\begin{proof}
  If $T$ has slope $w$, choose $n\gg 0$ such that all indecomposable
  summands of $T_{\can}(-nx_0)$ have slope smaller than $w$.
\end{proof}

\section{The elliptic case}\label{sec:elliptic}   
The tubular case, where all indecomposable coherent sheaves lie in tubes,
is very similar (but weighted) to Atiyah's classification of
indecomposable vector bundles over elliptic
curves~\cite{atiyah:1957}. There are even more affinities between
elliptic and tubular curves (we refer
to~\cite{chen:chen:zhenqiang:2014}). After the treatment of the
tubular case, it is thus  natural to investigate  tilting sheaves
over elliptic curves. Since these curves are non-weighted, that
is, do not have exceptional tubes, there is \emph{no} coherent tilting
sheaf. But there are  large tilting sheaves.\medskip

Let $(\Hh,L)$ be a non-weighted, noncommutative regular projective
curve over the field $k$ with structure sheaf $L$. Assume that $\Hh$ (or $\XX$) is 
elliptic, that is,  its genus  $g(\XX)=1$, or equivalently,
$\chi'(\XX)=0$.  Examples are the
``classical'' (commutative) elliptic curves over an algebraically
closed field, and real elliptic
curves like the Klein bottle and the M\"obius band
(\cite{kussin:2014}). 

With the degree
$\deg(F)=\frac{1}{\kappa\eps}\LF{L}{F}$ for $F\in\Hh$, we have a
Riemann-Roch formula~\cite{kussin:2014}, and for the slope $\mu$ we obtain:
if $X,\,Y\in\Hh$ are indecomposable, then
\begin{equation}
  \label{eq:RR-Homs}
   \mu(X)<\mu(Y)\quad\Rightarrow\quad\Hom(X,Y)\neq 0.
\end{equation}
We recall the following result  from
Theorem~\ref{thm:stability}, similar to Atiyah's
classification~\cite{atiyah:1957}. 
\begin{proposition}[\cite{kussin:2014}]\label{prop:elliptic}
  Let $\Hh=\coh\XX$ be a noncommutative elliptic curve. Then the
  following holds:
  \begin{enumerate}
  \item[(1)] Every indecomposable object $E$ in $\Hh$ is semistable and
    satisfies $\Ext^1(E,E)\neq 0$, and $\tau E\simeq E$.\smallskip
  \item[(2)] For every $\alpha\in\widehat{\QQ}$ the subcategory of
    semistable objects of slope $\alpha$ is non-trivial and forms a
    family of homogeneous tubes, again parametrized by a
    noncommutative elliptic curve $\XX_{\alpha}$, which is
    derived-equivalent to $\XX$. \qed
  \end{enumerate}
\end{proposition}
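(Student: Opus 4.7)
The plan is as follows. The elliptic hypothesis means $\chi'(\XX)=0$ and $\ovp=1$, so the Riemann-Roch formula~\eqref{eq:RR} simplifies to
\[
\LF{X}{Y}=\eps\kappa\bigl(\rk(X)\deg(Y)-\rk(Y)\deg(X)\bigr),
\]
and in particular $\delta(\vom)=\deg(\tau L)=0$, whence $\tau$ preserves rank and degree, hence slope. This already supplies~\eqref{eq:RR-Homs} and yields $\LF{E}{E}=0$ for every $E\in\Hh$. To prove~(1), let $E$ be indecomposable and suppose its Harder-Narasimhan filtration $0=E_0\subsetneq E_1\subsetneq\cdots\subsetneq E_n=E$ has $n\geq 2$. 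Because $\tau$ preserves slopes, every HN factor of $\tau(E/E_1)$ has slope strictly smaller than $\mu(E_1)$, and a standard factoring argument using $\Hom(\Cc_\beta,\Cc_\gamma)=0$ for $\beta>\gamma$ (where $\Cc_\gamma$ denotes $\gamma$-semistables) gives $\Hom(E_1,\tau(E/E_1))=0$. Serre duality then yields $\Ext^1(E/E_1,E_1)=0$, forcing a splitting that contradicts indecomposability. Hence $E$ is semistable. Combining $\LF{E}{E}=0$ with $\mathrm{id}_E\neq 0$ gives $\Ext^1(E,E)\neq 0$.

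For~(2) I would introduce, in analogy with the tubular case, the interval category
\[
\Hh\spitz{\alpha}\;=\;\bigvee_{\beta>\alpha}\Cc_\beta[-1]\,\vee\,\bigvee_{\gamma\leq\alpha}\Cc_\gamma \;\subseteq\;\bDerived{\Hh},\qquad\alpha\in\widehat{\QQ}.
\]
Using $\Hom(\Cc_\beta,\Cc_\gamma)=0$ for $\beta>\gamma$ together with hereditariness, one verifies that $\Hh\spitz{\alpha}$ is the heart of a bounded $t$-structure on $\bDerived{\Hh}$, hence a hereditary abelian category derived-equivalent to $\Hh$ and satisfying (NC~1)--(NC~4); axiom (NC~5) is inherited because the semistable sheaves of slope $\alpha$ occupy infinitely many isomorphism classes. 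The Euler form on $\Knull(\Hh\spitz{\alpha})$ is obtained from that on $\Knull(\Hh)$ by a unimodular substitution exchanging the roles of rank and degree, so $\chi'(\Hh\spitz{\alpha})=0$. The simple objects of $\Hh\spitz{\alpha}$ are precisely the quasi-simples of the tubes making up $\Cc_\alpha$, and once these are shown to satisfy $\tau\tilde S\simeq\tilde S$ one obtains $\Hh\spitz{\alpha}=\coh\XX_\alpha$ for a noncommutative elliptic curve $\XX_\alpha$ derived-equivalent to $\XX$. Under this identification $\Cc_\alpha$ becomes the category of finite length sheaves in $\coh\XX_\alpha$, which by (NC~6') is a coproduct of homogeneous tubes; non-emptiness for every rational $\alpha$ follows from (NC~4) applied to $\coh\XX_\alpha$, or equivalently by producing bundles of prescribed rational slope in $\Hh$ via iterated tubular shifts of the structure sheaf $L$. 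Because $\tau$ acts trivially on isomorphism classes of a homogeneous tube, every indecomposable $E\in\Cc_\alpha$ satisfies $\tau E\simeq E$, which together with the first paragraph completes the proof of~(1).

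The main obstacle is showing that the quasi-simples $\tilde S$ of the tubes constituting $\Cc_\alpha$ are $\tau$-fixed, since this is the key input for identifying $\Hh\spitz{\alpha}$ as a non-weighted curve and thus for closing the loop above. I would argue this directly and uniformly: since $\tilde S$ and $\tau\tilde S$ are both semistable of slope $\alpha$, the computation $\LF{\tilde S}{\tilde S}=0$ and Serre duality give $\Hom(\tilde S,\tau\tilde S)\neq 0$, and the quasi-simplicity of $\tilde S$ inside the abelian subcategory $\Cc_\alpha$ (no proper non-zero subobject or quotient of the same slope) forces any such non-zero map to be an isomorphism; the semistability forbids the kernel and cokernel from being non-zero objects of slope $\alpha$. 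This is the step in which the elliptic hypothesis $\delta(\vom)=0$ is used most essentially, and it must be arranged carefully (for instance by proving it first for $\alpha=\infty$ using axiom (NC~6') for $\Hh$ itself, and then propagating to finite $\alpha$ through the derived equivalence) to avoid circularity with the very construction of $\XX_\alpha$.
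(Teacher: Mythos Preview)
The paper does not prove this proposition: it is cited from \cite{kussin:2014} and presented as a specialization of Theorem~\ref{thm:stability}(2), which is itself quoted without proof. So there is no in-paper argument to compare against.

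Your outline is the standard route and is essentially correct. The Harder--Narasimhan argument together with $\LF{E}{E}=0$ and Serre duality cleanly gives semistability and $\Ext^1(E,E)\neq 0$ in~(1), and the interval category $\Hh\spitz{\alpha}$ is exactly how~(2) is handled in the cited references. Your argument that each simple object $\tilde S$ of $\Cc_\alpha$ satisfies $\tau\tilde S\simeq\tilde S$ is correct and not circular: it uses only that $\Cc_\alpha$ is an abelian length subcategory of $\Hh$ (a general fact about semistables of fixed slope) and that $\tau$ preserves slope, so a nonzero map between two of its simple objects is an isomorphism. You should present it in that order rather than speaking of ``the tubes making up $\Cc_\alpha$'' before the tube structure has been established.

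One point needs repair: the nonemptiness of $\Cc_\alpha$ for arbitrary rational $\alpha$. Iterated tubular shifts of $L$ are rank-preserving and hence yield only line bundles, so only integer slopes; and (NC~4) for $\coh\XX_\alpha$ asserts the existence of an \emph{infinite}-length object, which says nothing about the finite-length part $\Cc_\alpha$. The correct mechanism is the bootstrapping you sketch in your last paragraph: once $\Hh\spitz{\beta}$ is identified with $\coh\XX_\beta$ for one $\beta$ (starting with $\beta=\infty$, where (NC~6') for $\Hh$ itself applies), the resulting derived autoequivalence moves slopes by a fractional linear transformation, and iterating these autoequivalences produces a transitive action on $\widehat{\QQ}$. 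This transitivity --- not tubular shifts alone --- is what gives $\Cc_\alpha\neq 0$ for all $\alpha\in\widehat{\QQ}$, and it simultaneously supplies (NC~5) for each $\XX_\alpha$.
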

 Like in the tubular case, we consider the
interval categories $\QHh\spitz{\alpha}$ for every rational $\alpha$.
For each $\QHh\spitz{\alpha}$,
Corollary~\ref{cor:general-homogeneous-torsion} yields tilting sheaves
$T_{\alpha,V}$ of slope $\alpha$ with non-zero torsion part supported
in $\emptyset\neq V\subseteq\XX_{\alpha}$.
\begin{theorem}\label{thm:elliptic-tilting}
  Let $\QHh=\Qcoh\XX$ be the category of quasicoherent sheaves over a
  noncommutative elliptic curve.
  \begin{enumerate}
  \item[(1)] Every tilting sheaf in $\QHh$ has a slope
  $w\in\widehat{\RR}$.\smallskip
  \item[(2)] For every $w\in\widehat{\RR}$ there is  a tilting
  sheaf $\bL_w$ with $\rperpe{{\bL_w}}=\Bb_w$ which is torsionfree (in the sense of
  Definition~\ref{def:torsionfree-tilting}).\smallskip
  \item[(3)] For every $\alpha\in\widehat{\QQ}$ and every non-empty
    $V\subseteq\XX_{\alpha}$ there is, up to equivalence, precisely
    one tilting sheaf $T$ of slope $\alpha$ with $t_{\alpha}(T)$
    supported in $V$, namely $T=T_{\alpha,V}$.\smallskip
  \item[(4)] Every tilting sheaf of finite type is  equivalent
    to one listed in (2) or (3).\smallskip
  \item[(5)] The  resolving subclasses of $\Hh$ are given precisely by 
      $\add\bp_w$ with $w\in\widehat{\RR}$, and
    $\add\bigl(\,\bp_{\alpha}\cup\bigcup_{x\in V}\bt_{\alpha,x}\,\bigr)$ with
    $\alpha\in\widehat{\QQ}$ and
    $\emptyset\neq V\subseteq\XX_{\alpha}$.
  \end{enumerate}
\end{theorem}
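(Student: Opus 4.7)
The plan is to transfer the strategy of the tubular case (Section~6) to the elliptic setting, exploiting two simplifications: every tube is homogeneous, so $\tau E \simeq E$ for every indecomposable $E$ by Proposition~\ref{prop:elliptic}(1), and no exceptional sheaves exist, so branch sheaves never appear. The new obstacle is the absence of a tilting bundle in $\Hh$, which prevents reducing to a module category via Proposition~\ref{prop:tilting-sheaf-complex-module}; this is precisely why in (4) the classification is restricted to tilting sheaves of finite type.

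For (2) I would apply Theorem~\ref{thm:tilting-from-resolving} to the strongly resolving class $\add \bp_w$, producing $\bL_w$ with $\rperpe{\bL_w} = \Bb_w$, and then argue torsionfreeness by passing to $\QHh\spitz{\alpha}$ in the rational case $w=\alpha$ and invoking Proposition~\ref{prop:large-tilting-torsionfree-in-general}, exactly as in Lemma~\ref{lem:L_w-torsionfree}. For (3) I would work inside $\QHh\spitz{\alpha} = \Qcoh\XX_\alpha$ --- elliptic again by Proposition~\ref{prop:elliptic}(2) --- and apply Corollary~\ref{cor:general-homogeneous-torsion}: since $\XX_\alpha$ is non-weighted, no branch sheaves contribute, and the unique large tilting sheaf of slope $\infty$ with torsion supported in $V$ is $T_V$, which corresponds to $T_{\alpha, V}$ of slope $\alpha$ in $\QHh$ via Lemma~\ref{lem:tilting-corr-tubular}. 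Uniqueness is Theorem~\ref{thm:large-tilting-sheaves-in-general}(2) applied inside $\QHh\spitz{\alpha}$.

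The heart of the proof is (1), for which I would adapt Lemma~\ref{lem:large-tilting-leftbounded} together with Theorem~\ref{thm:every-large-ts-slope}. Set $\Bb = \Gen T$ and $\vSs = \lperpe{\Bb}\cap\Hh$, and suppose for contradiction that $T \notin \Bb_\alpha$ for any rational $\alpha$. As in the tubular proof, this forces $\vSs \cap \bt_\alpha \neq \emptyset$ for arbitrarily small $\alpha$ (via the trace and line-bundle filtration argument, which carries over verbatim using tubular shifts), so we obtain indecomposables $X \in \vSs \cap \bt_\alpha$ and $B \in \Gen T \cap \Hh$ with $\mu(B) < \mu(X) = \alpha$. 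Heredity of $\QHh$ together with $B \in \Bb$ yields $\Ext^1(X, B) = 0$; but $\tau X \simeq X$, Serre duality, and~\eqref{eq:RR-Homs} give $\Ext^1(X, B) = \D\Hom(B, X) \neq 0$, a direct contradiction that cleanly replaces the exceptional-sequence bound used in the tubular case. Setting $w = \inf\{r \in \widehat{\RR} \mid T \in \Cc_r\}$ and using the $T_{\alpha_i, V_i}$ of~(3) as witnesses for slopes $\alpha_i \nearrow w$, as in the proof of Theorem~\ref{thm:every-large-ts-slope}, we conclude $T \in \Bb_w$, so $T$ has slope $w$.

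Parts (4) and (5) follow formally from what is established. A finite-type torsionfree tilting sheaf of slope $w$ must satisfy $\lperpe{(\rperpe{T})}\cap\Hh = \add \bp_w$, hence is equivalent to $\bL_w$ by Theorem~\ref{thm:class-correspondence}; a finite-type tilting sheaf of slope $\alpha$ with $t_\alpha(T) \neq 0$ is identified with some $T_{\alpha, V}$ inside $\QHh\spitz{\alpha}$ via Corollary~\ref{cor:general-homogeneous-torsion}. Finally, (5) is the translation of (4) under the bijection of Theorem~\ref{thm:class-correspondence}: since no exceptional tubes exist, formula~\eqref{eq:undercut-formula} has vanishing undercut and reduces to $\add(\bp_\alpha \cup \bigcup_{x \in V} \bt_{\alpha, x})$ with $\Rr_x = \{0\}$, yielding exactly the two families listed. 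The principal technical obstacle is the Serre-duality step in paragraph three, where the elliptic identity $\tau X \simeq X$ must carry the entire weight previously borne by exceptional-sequence combinatorics.
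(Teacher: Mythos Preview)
Your treatment of (2), (3), (4), and (5) is essentially correct and matches the paper. The gap is in (1), specifically in the passage from the Lemma~\ref{lem:large-tilting-leftbounded} analogue to the Theorem~\ref{thm:every-large-ts-slope} analogue.

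Your adaptation of Lemma~\ref{lem:large-tilting-leftbounded} is valid and is indeed a clean simplification: the contradiction $\Ext^1(X,B)=0$ versus $\Ext^1(X,B)=\D\Hom(B,X)\neq 0$ via $\tau X\simeq X$ and~\eqref{eq:RR-Homs} works, and yields $T\in\Bb_{\alpha_0}$ for \emph{some} $\alpha_0$. But this is strictly weaker than $T\in\Bb_w$ for the specific $w=\inf\{r:T\in\Cc_r\}$. The proof of Theorem~\ref{thm:every-large-ts-slope} bridges this gap by producing a sequence $\alpha_i\nearrow w$ with $\vSs\cap\bt_{\alpha_i}\neq\emptyset$, and the existence of that sequence is argued via Lemma~\ref{lem:TinC} \emph{together with} the identity $\rperpe{T}=\rperpe{\vSs}$, i.e.\ finite type (``since $T$ is determined by $\vSs$''). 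In the tubular case finite type comes from Proposition~\ref{prop:tilting-sheaves-finite-type-tub}, which needs a tilting bundle. You correctly observe there is none in the elliptic case---this is why (4) is restricted to finite type---but the same obstruction blocks your route to (1). In fact your own contradiction, read contrapositively, shows that $\mu(X)\le\mu(B)$ for every indecomposable $X\in\vSs$ and every indecomposable coherent $B\in\Gen T$; so if $v=\sup\{\alpha:T\in\Bb_\alpha\}<w$, the coherent images of $T$ in slopes just above $v$ force $\vSs\subseteq\Cc_v\cap\Hh$, and the required witnesses $\alpha_i\nearrow w$ with $\vSs\cap\bt_{\alpha_i}\neq\emptyset$ simply do not exist.

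The paper bypasses $\vSs$ entirely for (1). It argues directly: if $T\notin\Bb_w$, there is a rational $\alpha<w$ with an epimorphism from $T$ onto an indecomposable of slope $\alpha$ (a line bundle in the rational case, or an object in a homogeneous tube in general). Using~\eqref{eq:RR-Homs} and a rank induction via line bundle filtrations, $T$ then generates every indecomposable coherent sheaf of slope $>\alpha$; Serre duality combined with $\tau E\simeq E$ converts this into $\Hom(E,T)=0$ for all such $E$, whence $T\in\Cc_\alpha$, contradicting the minimality of $w$. This ``generate everything above and reflect back'' manoeuvre is what replaces the finite-type input, and it is the step your proposal is missing.
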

\begin{proof}
  (1), (2), (3) We show that every tilting sheaf $T\in\QHh$ has a
  slope $w\in\widehat{\RR}$. To this end, let
  $w=\inf\{r\in\widehat{\RR}\mid T\in\Cc_r\}\in\widehat{\RR}$. We
  assume first that $w$ is rational; then without loss of generality
  $w=\infty$. If $tT\neq 0$, then $T$ is by
  Corollary~\ref{cor:general-homogeneous-torsion} of the form $T_V$
  with $\emptyset\neq V\subseteq\XX$ (in particular, we also have
  uniqueness in this case). Let now $T$ be torsionfree. Then
  $\vect\XX\subseteq\lperpe{T}$. Indeed, otherwise one finds a line
  bundle $L'$, say of slope $\alpha<\infty$, such that $T$ maps onto
  $L'$. By~\eqref{eq:RR-Homs}, $L'$ maps non-trivially to \emph{each}
  vector bundle of slope $>\alpha$. Since, by torsionfreeness, all
  simple sheaves lie in $\Gen(T)$, it follows  that all line
  bundles of slope $>\alpha$ lie in $\Gen(T)$. Let $E$ be an
  indecomposable vector bundle of slope $>\alpha$. Then $L'$ is a
  subsheaf of $E$, and we find a line bundle $L''$ with
  $L'\subseteq L''\subseteq E$ such that $E'=E/L''$ is torsionfree,
  thus a line bundle. Since $\rk(E')=\rk(E)-1$ and
  $\mu(E')\geq\mu(E)>\alpha$ we see by induction that every
  indecomposable vector bundle of slope $>\alpha$ lies in
  $\Gen(T)=\rperpe{T}$. By Serre duality $\Hom(\bq_{\beta},T)=0$ for
  all rational $\beta$ with $\alpha<\beta<\infty$. But then
  $T\in\Cc_{\alpha}$, which gives a contradiction to the choice of $w$
  ($=\infty$). It follows that $T$ has slope $\infty$, moreover
  $\vSs:=\lperpe{(\rperpe{T})}\cap\Hh=\vect\XX$.

  Let now $w$ be irrational. We have to show
  $T\in\rperpe{{\bp_w}}$. Otherwise, there is a rational $\alpha<w$
  such that $\Hom(T,\bt_{\alpha})\neq 0$. Considering images, we can
  assume with loss of generality that there is an epimorphism in this
  set. Then it is easy to see that there is $x\in\XX_{\alpha}$ such
  that $T$ generates a tube $\bt_{\alpha,x}$. Then it follows like
  in~\cite[Rem.~13.3]{reiten:ringel:2006}, that $T$ generates all
  coherent objects $E$ of all rational slopes $\beta$ with
  $\alpha<\beta\leq\infty$. But this means, by Serre duality, that for
  all those $E$ we have $\Hom(E,T)=0$, and thus
  $T\in\Cc_{\alpha}$. This is a contradiction to the choice of $w$. We
  conclude $T\in\rperpe{{\bp_w}}=\Bb_w$, and $T$ has slope $w$. (We
  remark that this argument for irrational $w$ also applies to the
  torsionfree case when $w$ is rational.)

  Finally, for every $w\in\widehat{\RR}$ there is a torsionfree
  tilting sheaf $\bL_w$. Indeed, $\vSs=\add\bp_w$ generates $\QHh$ and
  is thus resolving. The claim now follows from
  Theorem~\ref{thm:tilting-from-resolving}.\medskip

  (4) Let $T$ be tilting of finite type, $\rperpe{T}=\rperpe{\vSs}$
  for some $\vSs\subseteq\Hh$ which we choose  as
  $\vSs=\lperpe{(\rperpe{T})}\cap\Hh$. By~(1), $T$ has a slope $w$. If
  $T$ has a non-trivial torsion part, then $T$ is equivalent to a tilting sheaf in~(3) by
  Corollary~\ref{cor:general-homogeneous-torsion}. So we assume
  that $T$ is torsionfree. Since a coherent object $X$ is in $\vSs$ if
  and only if $\Ext^1(X,T)=0$, we have $\bp_w\subseteq\vSs$: indeed,
    $\Ext^1(\bt_{\alpha},T)=\D\Hom(T,\bt_{\alpha})=0$ for all rational $\alpha<w$ 
by slope
  reasons. 
  Furthermore, if $X\in\bq_w$, then $\Ext^1(T,\tau X)=0$
  as $T\in\Cc_w=\lperpe{{\bq_w}}$,
  so $\Ext^1(X,T)\simeq\D\Hom(T,\tau X)\neq 0$, and $X\not\in\vSs$.
 Finally, in case $w\in\widehat{\QQ}$, it follows as in Lemma~\ref{lem:TinC} that
   $\vSs\subseteq\Cc_w\cap\Hh$ satisfies $\vSs\cap\bt_w=\emptyset$.
   We thus conclude
  $\vSs=\add\bp_w$, and  $T$ is equivalent to the tilting sheaf $\bL_w$ from~(2).\medskip

  (5) Using~(4), the claim follows from
  Theorem~\ref{thm:class-correspondence} and
  Lemma~\ref{lemma:undercut}.
\end{proof}

\section{Combinatorial descriptions and an example}\label{sec:TS3}
\emph{Let $\XX$ be a noncommutative curve of genus zero, of arbitrary
  weight type.} In this section we further investigate the large
tilting sheaves $T_{(B,V)}$ with $V\neq\emptyset$. We already know
that they are of finite type and satisfy condition (TS3). We give an
explicit construction for the sequence in (TS3), and we verify the
stronger property~(TS3+).

We denote by $\Lambda$ a canonical tilting bundle $T_{\can}$, as in
Remark~\ref{rem:canonical-subconfiguration}. By copresenting each
indecomposable summand of $T_{\can}$ by summands of $T_{(B,V)}$ we
will prove the following.
\begin{theorem}\label{thm:axiom-TS3-for-T_VB}
  Let $\XX$ be of genus zero and $T=T_{(B,V)}$ as
  in~\eqref{eq:def_full-T_V,B}. The canonical configuration
  $T_{\can}=\Lambda$ has an $\add(T)$-copresentation as follows:
  \begin{equation}
    \label{eq:TS3-copresentation}
    0\ra T_{\can}\ra T'_0 \oplus B_0\ra T'_1\oplus B_1\ra 0
  \end{equation}
  with $T'_0\in\add(\Lambda'_{V})$ torsionfree,
  $T'_1\in\add(\bigoplus_{x\in V}\bigoplus_{j\in \Rr_x}\tau^j
  S_x[\infty])$ and $B_0,\,B_1\in\add(B)$ such that $\Hom(B_1,B_0)=0$;
  moreover, in $T'_1$ all Pr\"ufer summands $\tau^j S_x[\infty]$ of $T$
  occur and $\add(B_0\oplus B_1)=\add(B)$.
\end{theorem}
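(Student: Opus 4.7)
The strategy is to decompose $\Lambda=T_{\can}$ according to Remark~\ref{rem:canonical-subconfiguration} and construct the copresentation by combining two ingredients: the defining sequence of $\Lambda'_V$, which handles the ``unbranched'' part, and a collection of sequences (one for each connected branch component of $B$), which handle the ``branched'' summands by exploiting the branch structure in the relevant wings.

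After applying suitable tubular shifts, one may assume $\Lambda=\Lambda'\oplus N$, where $\Lambda'=T'_{\can}$ is the canonical configuration of $\QHh'=\rperp{(B_{\mathfrak e}\oplus\tau^{-}B_{\mathfrak i})}$ and $N=\bigoplus_C N_C$ decomposes over the connected branch components $C$ of $B$, each $N_C$ being the direct sum of the $L_i(j)$'s of $\Lambda$ associated with the wing $\Ww_C$. For $\Lambda'$ I would use the defining sequence~\eqref{eq:def-Lambda_V}, which already has the required form, its middle lying in $\add(\Lambda'_V)$ and its cokernel being the direct sum of all Pr\"ufer summands of $T_{(B,V)}$ with appropriate multiplicities. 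For each $N_C$ I would construct separately an exact sequence
$$0\to N_C\to X_C\oplus B_0^C\to B_1^C\to 0$$
with $X_C\in\add(\Lambda'_V)$, $B_0^C,B_1^C\in\add(C)$, $\add(B_0^C\oplus B_1^C)=\add(C)$, and $\Hom(B_1^C,B_0^C)=0$. The construction combines the universal sequences~\eqref{eq:sesA1}--\eqref{eq:sesA4} of the canonical configuration with the tilting structure of $C$ in $\add\Ww_C\cong\mod k\vec{\AAA}_{r_C}$; appropriate pushouts along the branch morphisms convert the ``finite-length'' cokernels arising from the universal sequences into pure branch pieces, and ordering the summands of $C$ by their position within the wing yields the desired Hom-vanishing, with the interior and exterior cases handled separately.

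Taking the direct sum of the $\Lambda'$-sequence with all the $N_C$-sequences produces the desired copresentation~\eqref{eq:TS3-copresentation}, with $T'_0$ collecting all $\add(\Lambda'_V)$-contributions, $T'_1$ the Pr\"ufer summands arising from the $\Lambda'$-sequence, and $B_0,B_1$ the direct sums of the respective pieces over the $N_C$'s. All Pr\"ufer summands of $T$ appear in $T'_1$ by construction, and $\add(B_0\oplus B_1)=\add(B)$ by the wing-wise identities. The global Hom-vanishing $\Hom(B_1,B_0)=0$ follows from the wing-wise conditions and the fact that the connected branch components of $B$ are supported in pairwise non-adjacent wings, so that no morphisms exist between their indecomposable summands.

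The main technical obstacle is the construction of each sequence $0\to N_C\to X_C\oplus B_0^C\to B_1^C\to 0$: one must simultaneously ensure that every indecomposable summand of $C$ appears in $B_0^C\oplus B_1^C$ and that $\Hom(B_1^C,B_0^C)$ vanishes. This requires a careful combinatorial analysis inside the wing $\Ww_C$ together with the explicit form of the universal sequences~\eqref{eq:sesA1}--\eqref{eq:sesA4}, and leverages the description of the connected branch $C$ as a tilting representation of a linearly oriented $\vec{\AAA}_{r_C}$-quiver.
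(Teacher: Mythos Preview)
Your overall architecture matches the paper's: decompose $\Lambda=\Lambda'\oplus N$ with $N=\bigoplus_C N_C$, handle $\Lambda'$ via~\eqref{eq:def-Lambda_V}, and treat each connected branch component $C$ separately. The gap is in the shape you claim for the $N_C$-sequence. In
\[
0\to N_C\to X_C\oplus B_0^C\to B_1^C\to 0
\]
the objects $N_C,B_0^C,B_1^C$ are all coherent, so the middle term and hence $X_C$ is coherent. But a non-zero coherent object of $\add(\Lambda'_V)$ would be a $V$-divisible vector bundle (Lemma~\ref{lem:Lambda_V-tf-V-div}), and there is no such thing: by Riemann--Roch~\eqref{eq:RR} one has $\DLF{E}{S_x}>0$ for any non-zero vector bundle $E$ and any $x$, while $\Ext^1(E,\tau^{-j}S_x)=0$ for all $j$, so $\Hom(E,\tau^{-j}S_x)\neq 0$ for some $j$ and $E$ is not $x$-divisible. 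Thus $X_C=0$, forcing the vector bundle $N_C$ to embed into the finite-length sheaf $B_0^C$, which is absurd. In other words, Pr\"ufer summands \emph{must} occur in the cokernel of the $N_C$-part as well; they cannot all be pushed into the $\Lambda'$-sequence as you suggest.

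The paper does not copresent $N_C$ as a whole but rather each indecomposable summand $L(i)$ of $N_C$ individually, always obtaining sequences
\[
0\to L(i)\to \Gg\oplus B_0\to P\oplus B_1\to 0
\]
with $\Gg\in\add(\Lambda'_V)$, $B_0,B_1\in\add(B)$, and $P$ a non-trivial sum of copies of the Pr\"ufer sheaf above the wing $\Ww_C$. The construction is an induction down the branch: one starts from the copresentation (furnished by~\eqref{eq:def-Lambda_V}) of the summand of $\Lambda'$ adjacent to the segment, then two pushout lemmas (Lemmas~\ref{lem:branch-type-1} and~\ref{lem:branch-type-2}) transport the copresentation from one $L(i)$ to the $L(i')$ corresponding to a lower neighbour in the branch, each step adding exactly one branch summand to $B_1$ or to $B_0$ according to whether the branch edge is an epimorphism or a monomorphism. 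The Hom-vanishing $\Hom(B_1,B_0)=0$ is verified step by step via Lemma~\ref{lem:subbranch}, and a short level/colevel argument shows that every $L(i)$ in the segment is reached exactly once.
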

As a first preparation we have the following simple fact.
\begin{lemma}\label{lem:subbranch}
  Let $B$ be a connected branch and $B'$ a proper subbranch of $B$,
  rooted in $Z\in B$. Then one of the following two cases holds.
  \begin{enumerate}
  \item[(1)] There is an epimorphism $X\ra Z$ with
    $X\in B\setminus B'$, and then there is no non-zero morphism from
    $B'$ to $B\setminus B'$.
  \item[(2)] There is a monomorphism $Z\ra Y$ with
    $Y\in B\setminus B'$, and then there is no non-zero morphism from
    $B\setminus B'$ to $B'$.
  \end{enumerate}
\end{lemma}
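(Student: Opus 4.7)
The plan is to work inside the wing $\Ww$ via the equivalence $\add\Ww\simeq\mod k\vec{\AAA}_r$, where $r$ is the length of the root $R$ of $\Ww$. Identify the indecomposables of $\Ww$ with intervals $[a,b]$, $1\leq a\leq b\leq r$, where $[a,b]$ has socle $\tau^{-(a-1)}S$ and length $b-a+1$ (with $S,\tau^- S,\dots,\tau^{-(r-1)}S$ the basis of $\Ww$). Then $R=[1,r]$, $Z=[a,b]$, and $B'$ consists of those $[a',b']\in B$ with $a\leq a'\leq b'\leq b$. The non-zero subobjects of $[a,b]$ are the $[a,b']$ with $a\leq b'\leq b$, and the non-zero quotients are the $[a',b]$ with $a\leq a'\leq b$, so a monomorphism $Z\to Y$ with $Y\neq Z$ exists iff $Y=[a,d]$ with $d>b$, and an epimorphism $X\to Z$ with $X\neq Z$ iff $X=[c,b]$ with $c<a$. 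A direct computation yields $\Hom([a,b],[c,d])\neq 0$ iff $a\leq c\leq b\leq d$, and $\Ext^1([c,d],[a,b])\neq 0$ iff $c>1$, $a<c\leq b+1$, and $d>b$.

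Since $Z\neq R$, write $Z=[a,b]$ with $(a,b)\neq(1,r)$. In the two boundary sub-cases, the root $R$ alone does the job: if $a=1$ and $b<r$, then $R=[a,r]$ yields the monomorphism $Z\to R$ (case~(2)); if $a>1$ and $b=r$, then $R=[1,b]$ yields the epimorphism $R\to Z$ (case~(1)). The Hom-vanishing is immediate: any $W_2=[\alpha_2,\beta_2]\in B\setminus B'$ satisfies $\beta_2>b$ in the first sub-case or $\alpha_2<a$ in the second, and the Hom-formula then rules out non-zero morphisms in the relevant direction between $B'$ and $B\setminus B'$.

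The substantive case is $1<a\leq b<r$. Set $N=\{[c,d]\in B\setminus B'\mid c\leq a,\,d\geq b\}$, a non-empty set containing $R$, and pick $W=[c_0,d_0]\in N$ minimal under interval inclusion. The crux is to show $c_0=a$ or $d_0=b$: the first yields $Y=W=[a,d_0]$ for case~(2), the second yields $X=W=[c_0,b]$ for case~(1). Suppose instead that $c_0<a$ and $d_0>b$. Note that $[a,d_0]\notin B$, since otherwise $[a,d_0]\in N$ with $[a,d_0]\subsetneq[c_0,d_0]$ would contradict minimality of $W$. The key technical step is to check that $[a,d_0]$ is Ext-orthogonal to every $W'=[\alpha,\beta]\in B$: on the one hand, $\Ext^1(W',[a,d_0])\neq 0$ would force $\alpha>a$ and $\beta>d_0$, directly violating $\Ext^1(W',W)=0$; on the other, $\Ext^1([a,d_0],W')\neq 0$ would force $\alpha<a$ and $\beta<d_0$, and combined with $\Ext^1(W,W')=0=\Ext^1(Z,W')$ this pins down $\alpha\in[c_0,a-1]$ and $\beta\in[b,d_0-1]$, placing $W'\in N$ with $[\alpha,\beta]\subsetneq[c_0,d_0]$ and again contradicting minimality. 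Hence $B\cup\{[a,d_0]\}$ would be a pairwise Ext-orthogonal family of $r+1$ non-isomorphic indecomposables in $\add\Ww$, impossible since the Grothendieck group has rank $r$.

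Finally, the Hom-vanishing in each case is a short numerical consequence of the same formulas. In case~(1) with $W=[c_0,b]$, a non-zero morphism $W_1=[\alpha_1,\beta_1]\to W_2=[\alpha_2,\beta_2]$ with $W_1\in B'$ and $W_2\in B\setminus B'$ forces $\alpha_2\geq a$ and $\beta_2>b$, and then the Ext-formula gives $\Ext^1(W_2,W)\neq 0$, contradicting $W,W_2\in B$; case~(2) is entirely symmetric, using $Y=[a,d_0]$ and $\Ext^1(Y,W_2)\neq 0$. I expect the main obstacle to be the minimality/extension argument ruling out $c_0<a$ and $d_0>b$ simultaneously; everything else reduces to bookkeeping with the combinatorial formulas above.
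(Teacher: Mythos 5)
Your proof is correct, and it takes a genuinely different route from the paper. The paper's proof is much shorter: it simply asserts that the existence of an epimorphism $X\to Z$ or a monomorphism $Z\to Y$ with $X,Y\in B\setminus B'$ is ``clear'' from $B'$ being a proper subbranch, and then establishes the Hom-vanishing in a few lines by working directly in the tube. Concretely, in case~(1), if $\Hom(U,V)\neq 0$ with $U\in B'$ and $V\in B\setminus B'$, the paper observes that $V$ lies on a ray starting in the basis of the wing $\Ww'$ rooted in $Z$ but escapes $\Ww'$; since $X\twoheadrightarrow Z$ with $X$ extending $Z$ ``downward'' along the coray, this forces $\Hom(X,\tau V)\neq 0$, and Serre duality gives $\Ext^1(V,X)\neq 0$, contradicting $\Ext^1(B,B)=0$. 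Your proof, in contrast, works entirely inside $\add\Ww\simeq\mod k\vec{\AAA}_r$ with explicit interval combinatorics: you derive the Hom- and Ext-formulas, handle the two boundary cases via the root $R=[1,r]$, and in the substantive case $1<a\le b<r$ you prove the dichotomy (rather than asserting it) by a minimality argument on the set $N$ together with the rank bound on pairwise Ext-orthogonal families in $K_0$ of $\mod k\vec{\AAA}_r$; the Hom-vanishing is then bookkeeping with the same formulas. Your approach is more elementary and self-contained — it actually establishes the existence of the epi or mono, which the paper takes for granted — at the cost of being longer and more computational; the paper's approach is quicker and more conceptually tied to the tube structure and Serre duality, at the cost of leaving the existence claim to the reader.
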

\begin{proof}
  Since $B'$ is proper, it is clear that there is either an
  epimorphism $X\ra Z$ or a monomorphism $Z\ra Y$ with $X$ or $Y$ in
  $B\setminus B'$, respectively. Let $\Ww'$ be the wing rooted in
  $Z$. Since $B'$ forms a tilting object in $\Ww'$, it is clear, that
  $\Ww'$ is disjoint with $B\setminus B'$. Let $U\in B'$ and
  $V\in B\setminus B'$. Assume the first case, and $\Hom(U,V)\neq 0$.
  Then $V$ lies on a ray starting in the basis of $\Ww'$, but not in
  $\Ww'$. We then get $\Hom(X,\tau V)\neq 0$. By Serre duality we get
  $\Ext^1(V,X)\neq 0$, which gives a contradiction because of
  $\Ext^1(B,B)=0$. The second case follows similarly.
\end{proof}
\begin{numb}
  Let $T=T_{(B,V)}$ be a given large tilting sheaf with
  $V\neq\emptyset$. For the moment we assume, for notational
  simplicity, that $B$ is an \emph{inner} branch sheaf. Let us explain
  the strategy we are going to pursue for the proof of the
  theorem.\medskip

\noindent\underline{Step~1}: \emph{Initial step.} We start with the canonical
configuration $\Lambda=T_{\can}$ in $\QHh=\Qcoh\XX$, which consists of
arms between $L$ and $\overline{L}$, compare~\eqref{eqn:can-conf}. By
applying suitable tubular shifts to $\Lambda$, we can assume without
loss of generality that $\Hom(L,B)=0=\Hom(\overline{L},B)$. We then
form $\QHh'=\Qcoh\XX'=\rperp{(\tau^-B)}$. Then the subconfiguration
$\Lambda'$ of indecomposable summands of $\Lambda$ lying in
$\rperp{(\tau^- B)}$ forms a canonical configuration
$\Lambda'=T'_{\can}$ in $\QHh'$, containing $L$ and
$\overline{L}$, compare Remark~\ref{rem:canonical-subconfiguration}. 
For $\Lambda'$ we have the copresentation
\begin{equation}
  \label{eq:copres-Lambda-prime}
  0\ra\Lambda'\ra
  \Lambda'_{V}\ra\bigoplus_{x\in V}\bigoplus_{j=0}^{p'\!(x)-1}(\tau^j
  S_x[\infty])^{e(j,x)}\ra 0.
\end{equation}
from~\eqref{eq:def-Lambda_V}, which is already of the desired form
with respect to the theorem we want to prove; by construction, it
gives an $\add(T)$-copresentation of each indecomposable summand of
$\Lambda'$. It remains to compute suitable copresentations for each
indecomposable summand of $\Lambda$ not in $\Lambda'$, and then to
take the direct sum of all of these sequences
with~\eqref{eq:copres-Lambda-prime}. This will be done inductively
working in each connected branch component, starting with the root of
that component. Let us consider one such component lying in a wing
$\Ww$ rooted in, say, $S[r-1]$ with $2\leq r\leq p$, concentrated in a
point $x\in V$.
We will call $S[\infty]$ the Pr\"ufer sheaf \emph{above} $\Ww$. \medskip

\noindent\underline{Step~2}: \emph{Induction start with root.}  Note that
$S[r]\in\rperp{(\tau^- B)}\simeq\Qcoh\XX'$ becomes simple. The basis
of $\Ww$ is given by the simple sheaves
$S,\tau^- S,\dots,\tau^{-(r-2)}S$. This segment of simples corresponds
to a segment of direct summands of $\Lambda'$ lying in the inner of
one arm. We denote this segment by $L(1),\dots,L(r-1)$, so that there
are epimorphisms
\begin{equation}
  \label{eq:L(i)-onto}
  L(i)\twoheadrightarrow\tau^{-i+1}S\quad\quad i=1,\dots,r-1.  
\end{equation}
(We will do this for every branch component, and then we will need, of
course, a shift of indices. In the notation of~\eqref{eqn:can-conf}
the segment $L(1),\dots,L(r-1)$ is $L_i(j),\dots,L_i(j+r-2)$ for some
arm-index $i$ and some $j$.) With this ``calibration'' the
sequence~\eqref{eq:sesA0} becomes
\begin{equation}
  \label{eq:w-root-sequence}
  0\ra L(0)\ra
  L(r-1)\ra S[r-1]\ra 0 
\end{equation}
where $L(0)$ is a predecessor of $L(1)$, either still in the inner of
the same arm, or $L(0)=L^{\varepsilon f(x)}$; in any case
$L(0)\in\add(\Lambda')$. This means that for $L(0)$ we already have a
copresentation. Using Lemma~\ref{lem:branch-type-2} below, we will get a
copresentation for $L(r-1)$, which will be compatible with 
the statement of our theorem. 

We will then proceed in a similar way with the other
members of the connected branch $B$, going down the branch
inductively, as described in the next step.\medskip

\noindent\underline{Step~3}: \emph{Induction step.} We introduce
further notation. We define $$W_{ij}=S[i]/S[i-j]\in\Ww$$ for
$i=1,\dots,r-1;\,j=1,\dots,i$, where $S[0]=0$. We call $W_{ij}$
\emph{wing objects}, and the pair of indices $(i,j)$ \emph{wing
  pairs}. The length of $W_{ij}$ is $j$; we say that $i$ is the
\emph{level} and $i-j$ the \emph{colevel} of $W_{ij}$. So $W_{ij}$ is
uniquely determined by its level and colevel, which fix the ray and
coray $W_{ij}$ belongs to. Applying the construction of an
$\add(L)$-couniversal extension to the short exact sequences
$0\ra W_{jj}\ra W_{ii}\ra W_{i,i-j}\ra 0$, and recalling that we have
$\Hom(L,\Ww)=0$, we deduce
from~\cite[Prop.~5.1]{lenzing:delapena:1999} that there are short
exact sequences
\begin{equation}
  \label{eq:Wij-sequences}
  0\ra L(j)\ra
  L(i)\ra W_{i,i-j}\ra 0 
\end{equation}
for $1\leq j<i$. We assume now that $W_{i,i-j}$ be part of $B$. The
(direct) neighbours of smaller length in the same component of the
branch might be
$$\xy\xymatrixcolsep{1pc}\xymatrixrowsep{1pc}\xymatrix{ & W_{i,i-j}
  \ar
  @{->}[rdd] & \\
  W_{i-\ell,i-j-\ell}\ar @{->}[ur] & & \\
  & & W_{i,i-j-s}}\endxy$$
where $W_{i-\ell,i-j-\ell}\ra W_{i,i-j}$ denotes a composition of
$\ell$ irreducible monomorphisms and $W_{i,i-j}\ra W_{i,i-j-s}$ a
composition of $s$ irreducible epimorphisms. In this situation we
compute an $\add(T)$-copresentation of $L(i-\ell)$ and $L(j+s)$,
respectively, if $\add(T)$-copresentations of $L(j)$ or $L(i)$,
respectively, are already known. In other words: having already
exploited $W_{i,i-j}$ for computing a suitable copresentation of an
indecomposable summand of $\Lambda$, we will then use its lower
neighbours for computing copresentations for further summands. The two
different kinds of neighbours are reflected by the following two
lemmas. Roughly speaking, the first lemma (treating the epimorphism
case) adds the branch summand $W_{i,i-j-s}$ to the end term, the
second (treating the monomorphism case) the branch summand
$W_{i-\ell,i-j-\ell}$ to the middle term in the copresentation of
$\Lambda$.
\end{numb}
\begin{lemma}\label{lem:branch-type-1}
  Let $(i,i-j)$ and $(i,i-j-s)$ be wing pairs and assume that
  $W_{i,i-j}$ and $W_{i,i-j-s}$ are summands of $B$. Assume there is an exact
  sequence $$0\ra L(i)\ra\Gg\oplus B_0\ra
  P\oplus B_1\ra 0$$ such that
  \begin{enumerate}
  \item[(i)] $B_0,\,B_1\in\add (B)$ are disjoint with the subbranch rooted in
    $W_{i,i-j-s}$;
      \item[(ii)] $\Hom(B_1,B_0)=0$;
  \item[(iii)] $\Gg$ is torsionfree and $x$-divisible;
  \item[(iv)] $P$ is a direct sum of copies of the
    Pr\"ufer sheaf $S[\infty]$ above the wing $\Ww$.
  \end{enumerate}
  Then there is an exact sequence $$0\ra L(j+s)\ra \Gg\oplus B_0\ra
  P\oplus W_{i,i-j-s}\oplus B_1\ra 0$$ with $\Hom(W_{i,i-j-s},B_0)=0$.
\end{lemma}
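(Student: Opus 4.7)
My plan is to splice the given copresentation of $L(i)$ with the canonical sequence from \eqref{eq:Wij-sequences} relating $L(j+s)$ and $L(i)$. Taking $j' = j+s$ in \eqref{eq:Wij-sequences} yields a short exact sequence $0 \to L(j+s) \to L(i) \to W_{i,i-j-s} \to 0$. Viewing $L(j+s) \subseteq L(i) \subseteq \mathcal{G} \oplus B_0$, the quotient $Q := (\mathcal{G} \oplus B_0)/L(j+s)$ fits into an exact sequence $0 \to L(j+s) \to \mathcal{G} \oplus B_0 \to Q \to 0$ together with $0 \to W_{i,i-j-s} \to Q \to P \oplus B_1 \to 0$. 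It then suffices to show that this second sequence splits and that $\Hom(W_{i,i-j-s}, B_0) = 0$.

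For the splitting I verify $\Ext^1(P \oplus B_1, W_{i,i-j-s}) = 0$. Since $B$ is a branch sheaf, $\Ext^1(B,B) = 0$ and hence $\Ext^1(B_1, W_{i,i-j-s}) = 0$. For the Pr\"ufer summand, Serre duality gives $\Ext^1(S[\infty], W_{i,i-j-s}) \simeq \D\Hom(W_{i,i-j-s}, \tau S[\infty])$. Every nonzero subobject of $\tau S[\infty]$ has socle $\tau S$; but $W_{i,i-j-s}$ is a subquotient of $S[r-1]$ whose composition factors lie in $\{S, \tau^- S, \dots, \tau^{-(r-2)}S\}$, and since $r \leq p$ none of these simples equals $\tau S$. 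Consequently any morphism $W_{i,i-j-s} \to \tau S[\infty]$ must vanish, which gives $\Ext^1(P, W_{i,i-j-s}) = 0$.

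For $\Hom(W_{i,i-j-s}, B_0) = 0$, decompose $B = B^{(\mathcal{W})} \oplus B^{(\mathrm{other})}$ where $B^{(\mathcal{W})}$ is the connected branch in the wing $\mathcal{W}$ containing $W_{i,i-j-s}$, and split $B_0$ accordingly. Pairwise non-adjacency of the wings (and separation of distinct tubes) immediately yields $\Hom(W_{i,i-j-s}, B_0^{(\mathrm{other})}) = 0$. For the part inside $\mathcal{W}$, let $B'$ be the subbranch of $B^{(\mathcal{W})}$ rooted at $W_{i,i-j-s}$. The hypothesis summand $W_{i,i-j}$ has length $i-j > i-j-s$ and so cannot be a subquotient of $W_{i,i-j-s}$, whence $W_{i,i-j} \in B^{(\mathcal{W})} \setminus B'$; moreover the canonical quotient map $S[i]/S[j] \twoheadrightarrow S[i]/S[j+s]$ exhibits a surjection $W_{i,i-j} \twoheadrightarrow W_{i,i-j-s}$. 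This places us in case~(1) of Lemma~\ref{lem:subbranch}, yielding $\Hom(B', B^{(\mathcal{W})} \setminus B') = 0$ and in particular $\Hom(W_{i,i-j-s}, B_0^{(\mathcal{W})}) = 0$.

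The main obstacle is the $\Ext^1$-vanishing with the Pr\"ufer summand: it rests on the combinatorial observation that $\tau S$ lies outside the segment of composition factors of $W_{i,i-j-s}$. Conceptually, this is what forces the new branch summand to appear at the end term (rather than the middle term) of the enlarged copresentation, thereby preserving the $\Hom$-vanishing property needed for (TS3+).
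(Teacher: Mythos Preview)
Your proof is correct and follows essentially the same approach as the paper: both splice the sequence $0\to L(j+s)\to L(i)\to W_{i,i-j-s}\to 0$ into the given copresentation via a $3\times 3$ diagram, then show the resulting extension of $W_{i,i-j-s}$ by $P\oplus B_1$ splits, and finally invoke Lemma~\ref{lem:subbranch} for the Hom-vanishing. Your treatment is slightly more explicit in two places --- you spell out why $\tau S$ cannot be a composition factor of $W_{i,i-j-s}$ (the paper just writes $\Ext^1(P,W_{i,i-j-s})=\D\Hom(\tau^-W_{i,i-j-s},P)=0$), and you identify the epimorphism $W_{i,i-j}\twoheadrightarrow W_{i,i-j-s}$ that places you in case~(1) of Lemma~\ref{lem:subbranch} --- but the underlying argument is the same.
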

\begin{proof}
  The sequence $$0\ra L(j+s)\ra L(i)\ra W_{i,i-j-s}\ra 0$$ together with
  the given sequence yields the exact commutative diagram
  $$\xy\xymatrixcolsep{1pc}\xymatrixrowsep{1pc}\xymatrix{ & & 0\ar
    @{->}[d] & 0\ar @{->}[d] & \\
    0\ar @{->}[r] & L(j+s)\ar @{=}[d] \ar @{->}[r] & L(i)\ar @{->}[d]
    \ar
    @{->}[r] & W_{i,i-j-s} \ar @{->}[d] \ar @{->}[r] & 0\\
    0\ar @{->}[r] & L(j+s)\ar @{->}[r] & \Gg\oplus B_0\ar @{->}[d] \ar
    @{->}[r] & C \ar @{->}[d] \ar @{->}[r] & 0\\
    & & P\oplus B_1\ar @{=}[r]\ar @{->}[d] & P\oplus B_1\ar @{->}[d]
    & \\
    & & 0 & 0 & }\endxy$$
  The right column splits, since $W_{i,i-j-s}$ and $B_1$ as summands
  of the branch $B$ are Ext-orthogonal, and since
  $\Ext^1(P,W_{i,i-j-s})=\D\Hom(\tau^- W_{i,i-j-s},P)=0$. Because
  of~(i) we get $\Hom(W_{i,i-j-s},B_0)=0$ from
  Lemma~\ref{lem:subbranch}.
\end{proof}
\begin{lemma}\label{lem:branch-type-2}
  Let $(i,i-j)$ and
  $(i-\ell,i-j-\ell)$ be wing pairs such that $W_{i,i-j}$ and
  $W_{i-\ell,i-j-\ell}$ are summands of $B$ (the case $\ell=0$ is
  permitted). Assume there is an exact sequence
  $$0\ra L(j) \ra \Gg\oplus B_0\ra P\oplus B_1 \ra 0$$ such that
  $B_0,\,B_1\in\add (B)$ are disjoint from the subbranch rooted in
  $W_{i-\ell,i-j-\ell}$, $\Hom(B_1,B_0)=0$,   $\Gg$ is
  torsionfree and $x$-divisible, and $P$ is a direct sum of copies  of the Pr\"ufer
  sheaf above the wing $\Ww$. Then there is an exact sequence
  $$0\ra L(i-\ell)\ra\Gg\oplus B_0\oplus W_{i-\ell,i-j-\ell}\ra
  P\oplus B_1\ra 0,$$ and $\Hom(B_1,W_{i-\ell,i-j-\ell})=0$.
\end{lemma}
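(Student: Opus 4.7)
The plan is to glue the given $\add(T)$-copresentation of $L(j)$ with the short exact sequence
\[
0\ra L(j)\ra L(i-\ell)\ra W_{i-\ell,i-j-\ell}\ra 0
\]
furnished by~\eqref{eq:Wij-sequences} specialized to the indices $(i-\ell,\,j)$ (noting $(i-\ell)-j=i-j-\ell$). Concretely, I would form the pushout of this sequence along the map $L(j)\ra\Gg\oplus B_0$, producing a commutative $3\times 3$ diagram with exact rows and columns whose middle column reads
\[
0\ra L(i-\ell)\ra X\ra P\oplus B_1\ra 0,
\]
where $X$ denotes the pushout. The bulk of the proof then consists in identifying $X$ with $\Gg\oplus B_0\oplus W_{i-\ell,i-j-\ell}$, which is exactly the statement that the top row of the $3\times 3$, namely the pushout sequence
\[
0\ra\Gg\oplus B_0\ra X\ra W_{i-\ell,i-j-\ell}\ra 0,
\]
splits.

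For this splitting I would verify the two Ext-vanishings
$\Ext^1(W_{i-\ell,i-j-\ell},\Gg)=0$ and $\Ext^1(W_{i-\ell,i-j-\ell},B_0)=0$.
The first follows from the $x$-divisibility of $\Gg$: since $W_{i-\ell,i-j-\ell}$ lies in $\Uu_x$, it admits a finite filtration by simples in $\Uu_x$, and by hypothesis each simple $S\in\Uu_x$ satisfies $\Ext^1(S,\Gg)=0$, so one proceeds by induction on length. The second holds because $W_{i-\ell,i-j-\ell}$ is a summand of $B$ by assumption, $B_0\in\add(B)$, and $\Ext^1(B,B)=0$.

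The remaining assertion $\Hom(B_1,W_{i-\ell,i-j-\ell})=0$ is the monomorphism-case counterpart of the $\Hom$-vanishing recorded at the end of the proof of Lemma~\ref{lem:branch-type-1}. Here I would invoke Lemma~\ref{lem:subbranch} for the subbranch $B'$ of $B$ rooted at $Z=W_{i-\ell,i-j-\ell}$: when $\ell\geq 1$, the composition of the $\ell$ irreducible monomorphisms yields a monomorphism $Z\ra Y:=W_{i,i-j}$ with $Y\in B\setminus B'$, placing us in case~(2) of Lemma~\ref{lem:subbranch}. Since $B_1\subseteq B\setminus B'$ by the disjointness hypothesis, this gives $\Hom(B_1,Z)=0$. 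The boundary case $\ell=0$ (where $Z$ is itself the root of its branch component) is handled by using that $B_1$ then has no summand in the wing of $Z$, so its summands sit in non-adjacent wings of $\Uu_x$ or in other tubes, whence no non-zero morphism into $Z$ exists.

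The main technical point, and the step I expect to be most delicate, is the splitting of the pushout row; once the $x$-divisibility of $\Gg$ and the branch-orthogonality of $B$ are used correctly, the rest is a routine pushout/$3\times 3$ manipulation. The only extra care is needed for the index bookkeeping in~\eqref{eq:Wij-sequences} to ensure $1\leq j<i-\ell$ in the inductive situation in which the lemma is applied, together with the separate handling of the $\ell=0$ boundary in the final $\Hom$-vanishing.
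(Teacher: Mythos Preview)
Your proposal is correct and follows essentially the same approach as the paper: form the pushout of the given copresentation of $L(j)$ along the inclusion $L(j)\hookrightarrow L(i-\ell)$ from~\eqref{eq:Wij-sequences}, then split the resulting extension $0\ra\Gg\oplus B_0\ra X\ra W_{i-\ell,i-j-\ell}\ra 0$ using $x$-divisibility of $\Gg$ and $\Ext^1(B,B)=0$, and finally read off the Hom-vanishing from Lemma~\ref{lem:subbranch}. Your explicit handling of the boundary case $\ell=0$ (where the subbranch may fail to be proper) is a welcome bit of extra care that the paper leaves implicit.
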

\begin{proof}
  There is the push-out diagram
  $$\xy\xymatrixcolsep{1pc}\xymatrixrowsep{1pc}\xymatrix{ & 0\ar
    @{->}[d] & 0\ar @{->}[d] & &\\
    0\ar @{->}[r] & L(j) \ar @{->}[r] \ar @{->}[d] & \Gg\oplus B_0\ar
    @{->}[r] \ar @{->}[d] & P\oplus B_1 \ar
    @{->}[r] \ar @{=}[d] & 0\\
    0\ar @{->}[r] & L(i-\ell)\ar @{->}[r] \ar @{->}[d] & E \ar
    @{->}[r] \ar @{->}[d] & P\oplus B_1\ar
    @{->}[r] & 0\\
    & W_{i-\ell,i-j-\ell}\ar @{=}[r] \ar @{->}[d] &
    W_{i-\ell,i-j-\ell}\ar @{->}[d] & & \\
    & 0 & 0 & & }\endxy$$
  Now, since $\Gg$ is $x$-divisible and $W_{i-\ell,i-j-\ell}$ and
  $B_{0}$ as summands of the branch $B$ are Ext-orthogonal, the middle
  column splits. Moreover, $\Hom(B_1,W_{i-\ell,i-j-\ell})=0$ follows
  again from Lemma~\ref{lem:subbranch}.
\end{proof}
\begin{numb}\label{nr:exterior-treatment}
  Let now $B$ be an \emph{exterior} branch part, the inner branch
  parts already treated. We proceed similarly to the inner case. We
  briefly explain the differences. By applying suitable tubular shifts
  to $\Lambda$, we can assume without loss of generality that
  $\Hom(L,\tau B)=0=\Hom(\overline{L},\tau B)$. We then form
  $\QHh'=\Qcoh\XX'=\rperp{B}$. Then the subconfiguration $\Lambda'$ of
  indecomposable summands of $\Lambda$ lying in $\rperp{B}$ forms a
  canonical configuration $\Lambda'=T'_{\can}$ in $\QHh'$, containing
  $L$ and $\overline{L}$. Note that
  $\tau S[r]\in\rperp{B}\simeq\Qcoh\XX'$ becomes simple. The basis of
  a wing $\Ww$ corresponding to a connected component of $B$ is given
  by the simple sheaves (concentrated in $x$)
  $S,\tau^- S,\dots,\tau^{-(r-2)}S$. This segment of simples
  corresponds to a segment of direct summands of $\Lambda'$ lying in
  the inner of one arm. We denote this segment by $L(1),\dots,L(r-1)$,
  so that there are epimorphisms
\begin{equation}
  \label{eq:L(i)-onto-2}
  L(i)\twoheadrightarrow\tau^{-i+2}S\quad\quad i=1,\dots,r-1.  
\end{equation}
This yields a short exact sequence
\begin{equation}
  \label{eq:w-root-sequence-2}
  0\ra L(1)\ra
  L(r)\ra S[r-1]\ra 0 
\end{equation}
where $L(r)$ is either in the inner of the same arm, or
$L(r)=\overline{L}^{f(x)}$. (We refer to the diagram
in~\cite[p.~536]{lenzing:delapena:1999}.) Thus the desired
copresentation of $L(r)$ is already given. Then, for $L(1)$ and for
the induction step we have modified versions of
Lemma~\ref{lem:branch-type-1} and~\ref{lem:branch-type-2}, just taking
into account the different notation~\eqref{eq:L(i)-onto-2}.
\end{numb}
\begin{proof}[Proof of Theorem~\ref{thm:axiom-TS3-for-T_VB}]
  Let $\Lambda$ be a given canonical configuration, considered as full
  subcategory of $\Hh$. As usual we write
  $B=B_{\mathfrak{i}}\oplus B_{\mathfrak{e}}$ with respect to $V$. We
  can assume that the canonical configuration $\Lambda'$ in
  $\rperp{(\tau^- B_{\mathfrak{i}}\oplus
    B_{\mathfrak{e}})}\simeq\Qcoh\XX'$
  is a subconfiguration of $\Lambda$, containing $L$ and
  $\overline{L}$.  Recall that $B$ decomposes into
  $B=\bigoplus_{i=1}^t B_{x_i}$ over the exceptional points
  $x_1,\ldots,x_t$, and each $B_{x_i}$ (in case it is nonzero) is a
  direct sum of finitely many connected branches in non-adjacent
  wings.  Then
  $$\Lambda=\Lambda'\oplus\bigoplus_{i=1}^t \bigoplus_{\ell} L_i(\ell)$$
  for suitable $\ell$, forming finitely many non-adjacent segments in
  $\{1,\dots,p_i-1\}$, corresponding to the connected branches as
  described above.
  
  Step~1  yields the $\add(T)$-copresentation
\begin{equation}
  \label{eq:initial-copresentation}
  0\ra\Lambda'\ra T'_0\ra T'_1\ra 0
\end{equation}
of $\Lambda'$, given by~\eqref{eq:copres-Lambda-prime}. We then have
to compute suitable copresentations for the $L_i(\ell)$. By
forming the direct sum we will get the desired copresentation for
$\Lambda$. This can be done separately by performing Step~2 and Step~3
for every connected branch (using Lemma~\ref{lem:branch-type-1}
and~\ref{lem:branch-type-2} and keeping in mind the modifications
in~\ref{nr:exterior-treatment}). \medskip

We still have to show that in this way we obtain
$\add(T)$-copresentations of \emph{all} indecomposable summands of
$\Lambda$. It is enough to do this for every single wing $\Ww$
involved, say $\Ww$ is rooted in $S[r-1]$, and the corresponding
summands of $\Lambda$ are given by $L(1),\dots,L(r-1)$. (So this
notation applies to the inner case, the exterior is treated
similarly.) The kernel of the epimorphism
$L(r-1)\ra S[r-1]=W_{r-1,r-1}$ is (a power of) an indecomposable
summand of $\Lambda'$, and from Lemma~\ref{lem:branch-type-2} (case
$\ell=0$) we get an $\add(T)$-copresentation of $L(r-1)$.  Let
$W_{ij}$ be a summand of $B$, different from the root $S[r-1]$. Then
$W_{ij}$ has a unique upper neighbour $Z$ in $B$. There are two cases:
\begin{enumerate}
\item[(a)] There is an epimorphism $Z\ra W_{ij}$. Then
  Lemma~\ref{lem:branch-type-1} gives a copresentation of $L(i-j)$
  where $i-j$ is the colevel of $W_{ij}$.
\item[(b)] There is a monomorphism $W_{ij}\ra Z$. Then
Lemma~\ref{lem:branch-type-2} gives a copresentation of $L(i)$ where
$i$ is the level of $W_{ij}$. 
\end{enumerate}
So either the level or the colevel determines the index of the summand
of $\Lambda$ we can treat with the help of $W_{ij}$. In both cases the
obtained index lies between $1$ and $r-2$. Assume now that there are
two different summands $W_{ij}$ and $W_{k\ell}$ of $B$, which are also
different from the root of $\Ww$, and which yield the \emph{same}
index under the procedure above. We consider the upper neighbours of
$U$ of $W_{ij}$ and $V$ of $W_{k\ell}$. If there are epimorphisms
$U\ra W_{ij}$ and $V\ra W_{k\ell}$, then we conclude that the colevels
of $W_{ij}$ and $W_{k\ell}$ coincide; similarly if there are
monomorphisms $W_{ij}\ra U$ and $W_{k\ell}\ra V$, then the levels of
both coincide. In the mixed case, when there is a monomorphism
$W_{ij}\ra U$ and an epimorphism $V\ra W_{k\ell}$, then the level of
$W_{ij}$ is the colevel of $W_{k\ell}$. In all these cases it is easy
to see that there are non-zero extensions between one of these objects
and the other or the upper neighbour of the other, which gives a
contradiction. Indeed, if $W_{ij}$ and $W_{k\ell}$ have the same
colevel, they belong to the same ray and $i\neq k$, say $i<k$. Then
$\Ext^1(W_{kl},U)=\D\Hom(U,\tau W_{kl})\neq 0$.  The level case is
similar.  In the mixed case, let $c$ be the level of $W_{ij}=W_{cj}$
and the colevel of $W_{k\ell}=W_{k,k-c}$. Then $W_{ij}$ lies on the
coray ending in $W_{c,1}$ and $\tau W_{k\ell}=W_{k-1,\ell}$ lies on
the ray starting in $W_{c,1}$, so
$\Ext^1(W_{kl},W_{ij})=\D\Hom(W_{ij},\tau W_{k\ell})\neq 0$.  \medskip

It follows that the $r-1$ summands of the branch $B$ yield
copresentations for $r-1$ distinct indecomposable summands of
$\Lambda$, which are then necessarily given by $L(1),\dots,L(r-1)$.
\end{proof}
We now illustrate the procedure, which can be done for each involved
exceptional tube separately. In the following example we have two
wings in the same tube to consider. (Note that compared with
Lemmas~\ref{lem:branch-type-1} and~\ref{lem:branch-type-2} by a matter
of notation there are unavoidable shifts of indices.)
\begin{example}\label{examp:tube-11}
  In the following we will use the numerical invariants
  from~\ref{nr:numerical-invariants} and the short exact sequences
  from~\ref{nr:canonical-sequences}, which are the building blocks of
  the canonical configuration~\eqref{eqn:can-conf}. Let $\Lambda$ be a
  canonical algebra of weight type given by the sequence $(11)$, the
  only exceptional point given by $x$, let $V=\{x\}$ and $e=e(x)$,
  $f=f(x)$, $d=ef$ and $\varepsilon\in\{1,2\}$ be the numerical type
  of $\XX$.  Then $\Lambda$ is realized as canonical configuration
  $$L\ra L(1)\ra L(2)\ra L(3)\ra L(4)\ra\dots\ra L(9)\ra
  L(10)\ra\overline{L}$$
  in $\Hh$.  Let
  $$B=S[4]\oplus\tau^{-2}S[2]\oplus\tau^{-2}S\oplus S\oplus S'[3]
  \oplus S'[2]\oplus\tau^{-}S'$$
  be a branch, where we assume that $S$ is simple with
  $\Hom(L,\tau^2 S)\neq 0$ and $S'=\tau^{-6}S$. Then
  $\Hom(L(i+2),\tau^{-i}S)\neq 0$ for $i=-1,\,0,\dots,8$. There are
  two connected components of $B$, lying in the wings rooted in $S[4]$
  and $S'[3]$, respectively. The situation is illustrated in
  Figure~\ref{figure:tube-11}, where the indecomposable summands of
  the branch $B$ are denoted by $\bullet$, the roots of the two wings
  by $\widehat{\bullet}$. The two vertical lines indicate the
  identification by the $\tau$-period. We also exhibit the undercuts
  by $\bar{\circ}$, and the four Pr\"ufer sheaves belonging to
  $T_{(B,V)}$ by the symbol $\infty$ over the corresponding ray. We
  have
  $$\Lambda'=L\oplus L(1)\oplus L(6)\oplus
  L(7)\oplus\overline{L}\in\rperp{(\tau^-B)}.$$
  There are the universal exact sequences in
  $\rperp{(\tau^-B)}=\Qcoh\XX'$ (where the only weight of $\XX'$ is
  given by $p'=5$)
  \begin{gather}
    \label{eq:ses-1-5}
    0\ra L\ra\Gg\ra\tau S[\infty]^e\ra 0\\
    0\ra\overline{L}\ra\Gg^{\varepsilon}\ra\tau S[\infty]^{\varepsilon
      e}\ra 0\\ 0\ra L(i+2)\ra\Gg_i^{\varepsilon
      f}\ra\tau^{-(i+1)}S[\infty]^{\varepsilon d}\ra 0\quad\text{for}\
    i=-1,\,4,\,5.
  \end{gather}
  with torsionfree, indecomposable $\Gg$, $\Gg_i$; note that
  $\Gg,\,\Gg_i \in\rperp{(\tau^-B)}$, and thus these objects are
  $x$-divisible. Their direct sum gives the short exact sequence
  $$0\ra\Lambda'\ra\Lambda'_{V}\ra\tau
  S[\infty]^{(1+\varepsilon)e}\oplus S[\infty]^{\varepsilon
    d}\oplus\tau^{-5}S[\infty]^{\varepsilon d}
  \oplus S'[\infty]^{\varepsilon d}\ra 0$$ where
  $\Lambda'_{V}=\Gg^{1+\varepsilon}\oplus\Gg_{-1}^{\varepsilon
    f}\oplus\Gg_4^{\varepsilon f}\oplus\Gg_5^{\varepsilon f}$. This
  was Step~1.\medskip

  We now treat the first branch. This corresponds to the segment
  $L(2)$, $L(3)$, $L(4)$, $L(5)$ of $\Lambda$.  Step~2: Applying
  Lemma~\ref{lem:branch-type-2} (to the sequence
  $0\ra L(1)\ra L(5)\ra S[4]\ra 0$) gives the exact sequence
  \begin{equation}
    \label{eq:ses-6}
    0\ra L(5)\ra\Gg_{-1}^{\varepsilon f}\oplus S[4]\ra S[\infty]^{\eps d}\ra 0.
  \end{equation}
  Step~3: Applying Lemma~\ref{lem:branch-type-1} again yields
  \begin{equation}
    \label{eq:ses-7}
    0\ra L(3)\ra\Gg_{-1}^{\eps f}\oplus
  S[4]\ra\tau^{-2}S[2]\oplus S[\infty]^{\eps d}\ra 0.
  \end{equation}
  Now applying Lemma~\ref{lem:branch-type-2} two times yields
  \begin{equation}
    \label{eq:ses-8}
    0\ra
  L(4)\ra\Gg_{-1}^{\eps f}\oplus
  S[4]\oplus\tau^{-2}S\ra\tau^{-2}S[2]\oplus S[\infty]^{\eps d}\ra
  0
  \end{equation}
  and
  \begin{equation}
    \label{eq:ses-9}
    0\ra L(2)\ra\Gg_{-1}^{\eps f}\oplus
  S\ra S[\infty]^{\eps d}\ra 0.
  \end{equation}
  The second branch corresponds to the segment $L(8)$, $L(9)$, $L(10)$
  of $\Lambda$. Step~2, and then Step~3, which is applying
  Lemma~\ref{lem:branch-type-2} two times and then
  Lemma~\ref{lem:branch-type-1}, yields the exact sequences
  \begin{equation}
    \label{eq:ses-10}
    0\ra
  L(10)\ra\Gg_{5}^{\varepsilon f}\oplus S'[3]\ra S'[\infty]^{\eps
    d}\ra 0,
  \end{equation}
 then
 \begin{equation}
   \label{eq:ses-11}
   0\ra L(9)\ra\Gg_{5}^{\varepsilon f}\oplus
  S'[3]\oplus S'[2]\ra S'[\infty]^{\eps d}\ra 0
 \end{equation}
 and
  finally
  \begin{equation}
    \label{eq:ses-12}
    0\ra L(8)\ra\Gg_{5}^{\varepsilon f}\oplus S'[3]\oplus
  S'[2]\ra\tau^- S'\oplus S'[\infty]^{\eps d}\ra 0
  \end{equation}
  Forming the direct sum of all 12 short exact
  sequences~\eqref{eq:ses-1-5}--\eqref{eq:ses-12} we get the
  $\add(T)$-copresentation of $\Lambda$ as in
  Theorem~\ref{thm:axiom-TS3-for-T_VB}.
\end{example}

\begin{figure}[h]
$$
\def\c{\circ} \def\b{\bullet} \xymatrix@-1.16pc@!R=6pt@!C=6pt{ & & & &
  \infty & & \infty & & & & & & & & & & \infty & & \infty & &
  &  & \\
  \vdots &\c\ar[rd]& &\c\ar[rd]& &\c\ar[rd]& &\c\ar[rd]& &\c\ar[rd]&
  &\c\ar[rd]& &\c\ar[rd]& &\c\ar[rd]& &\c\ar[rd]&
  &\c\ar[rd]& &\c\ar[rd]& \vdots \\
  \c\ar[ru]\ar[rd] & &\c\ar[rd] & &\c\ar[rd] & &\c\ar[ru]\ar[rd] &
  &\c\ar[ru]\ar[rd]& &\c\ar[ru]\ar[rd] & &\c\ar[ru]\ar[rd]&
  &\c\ar[rd]& &\c\ar[rd]&
  &\c\ar[ru]\ar[rd]& &\c\ar[ru]\ar[rd]& &\c\\
  & \c\ar[rd]& &\widehat{\b}\ar[rrruuu]\ar[rd]& &\c\ar[ru]\ar[rd]&
  &\c\ar[ru]\ar[rd]& & \c\ar[ru]\ar[rd]& &\c\ar[ru]\ar[rd]&
  &\c\ar[rd]& &\c\ar[rd]& &\c\ar[ru]\ar[rd]&
  &\c\ar[ru]\ar[rd]& &\c\ar[ru]\ar[rd]& \\
  \c\ar[rrrruuuu]\ar[rd]& &\c\ar[ru]\ar[rd]& &\c\ar[ru]\ar[rd]&
  &\c\ar[ru]\ar[rd]& &\c\ar[ru]\ar[rd]& &\c\ar[ru]\ar[rd] &
  &\c\ar[rd]& &\widehat{\b}\ar[rrrruuuu]\ar[rd]& &\c\ar[ru]\ar[rd]&
  &\c\ar[ru]\ar[rd]& &\c\ar[ru]\ar[rd]& & \c\\
  &\c\ar[ru]\ar[rd]& &\bar{\c}\ar[ru]\ar[rd]& &\b\ar[ru]\ar[rd]&
  &\c\ar[ru]\ar[rd]& &\c\ar[ru]\ar[rd]& &\c\ar[rd]& &\b\ar[ru]\ar[rd]&
  &\c\ar[ru]\ar[rd]& &\c\ar[ru]\ar[rd]&
  &\c\ar[ru]\ar[rd]& &\c\ar[rd]& \\
  \b\ar[ru] \ar @{-}[uuuuu]& &\bar{\c}\ar[ru] & &\b\ar[ru] &
  &\c\ar[ru] & &\c\ar[ru] & &\c\ar[rrrrrruuuuuu] & & \bar{\c}\ar[ru]
  & & \b\ar[ru] & & \c\ar[ru] & & \c\ar[ru] & & \c\ar @{-}[rruu] & &
  \b \ar @{-}[uuuuu]
}
$$
\caption{The branches of Example~\ref{examp:tube-11}}\label{figure:tube-11}
\end{figure}
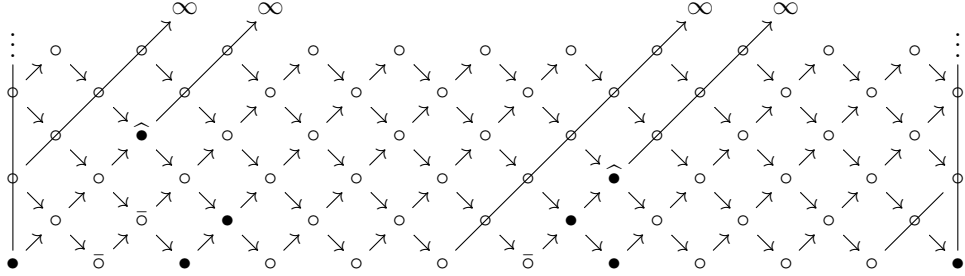

\subsection*{Acknowledgements}
This research started while the second named author was visiting the
University of Verona with a research grant of the Department of
Computer Science. The first named author is partially supported by
Fondazione Cariparo, Progetto di Eccellenza ASATA.

\bibliographystyle{elsarticle-harv}
\def\cprime{$'$}

\end{document}